\theoremstyle{plain}
\newtheorem{lemma}{Lemma}[section]
\newtheorem{theorem}[lemma]{Theorem}
\newtheorem{proposition}[lemma]{Proposition}
\newtheorem{corollary}[lemma]{Corollary}
\newtheorem{conjecture}[lemma]{Conjecture}
\newtheorem{question}[lemma]{Question}
\theoremstyle{definition}
\newtheorem{definition}[lemma]{Definition}
\newtheorem{remark}[lemma]{Remark}
\numberwithin{equation}{section}
\newcommand{\R}{\mathbb{R}}
\newcommand{\RP}{\mathbb{RP}}
\newcommand{\bS}{\mathbb{S}}
\newcommand{\N}{\mathbb{N}}
\newcommand{\Z}{\mathbb{Z}}
\newcommand{\diam}{{\rm diam\,}}
\newcommand{\ve}{\varepsilon}
\renewcommand{\S}{\mathcal{S}}
\newcommand{\RCD}{\mathsf{RCD}}
\newcommand{\CD}{\mathsf{CD}}
\newcommand{\Geo}{{\rm Geo}}
\newcommand{\MCP}{\mathsf{MCP}}
\newcommand{\mm}{\mathfrak m}
\newcommand{\qq}{\mathfrak q}
\newcommand{\QQ}{\mathfrak Q}
\newcommand{\Tan}{{\rm Tan}}
\newcommand{\sfd}{\mathsf d}
\newcommand{\cP}{\mathcal P}
\newcommand{\Opt}{\mathrm{OptGeo}}
\DeclareMathOperator{\vol}{vol}
\newcommand{\cL}{\mathcal{L}}
\newcommand{\cR}{\mathcal{R}}
\newcommand{\cS}{\mathcal{S}}
\newcommand{\cH}{\mathcal{H}}
\newcommand{\ee} {{\rm e}}
\newcommand{\eps}{\varepsilon}
\newcommand{\heps}{\hat\varepsilon}
\newcommand{\beps}{\bar \varepsilon}
\def\co{\colon\thinspace}
\newcommand{\Ss}{\mathbb S}
\DeclareMathOperator{\intr}{int}
\DeclareMathOperator{\supp}{supp}
\newcommand{\cWR}{\mathcal{WR}}
\newcommand{\bRCD}{\boldsymbol\partial}
\begin{document}

\title[On the topology and the boundary of $N$-dimensional $\RCD(K,N)$ spaces]{ On the topology and the boundary of $N$-dimensional $\RCD(K,N)$ spaces }
\author{V. Kapovitch}  \thanks{V. Kapovitch: University of Toronto, email: vtk@math.toronto.edu. Supported in part by a Discovery grant from NSERC}

\author{A. Mondino}  \thanks{A.  Mondino: University of Oxford,  Mathematical Institut,  email:  Andrea.Mondino@maths.ox.ac.uk. Supported by the EPSRC First Grant  EP/R004730/1 ``Optimal transport and
geometric analysis''  and by the  ERC Starting Grant 802689 ``CURVATURE''}
\keywords{Ricci curvature,  optimal transport}

\maketitle

\begin{abstract} 
 We establish topological regularity and stability of $N$-dimensional $\RCD(K,N)$ spaces (up to a small singular set), also called non-collapsed $\RCD(K,N)$ in the literature. We also introduce the notion of a boundary of such spaces and study its properties, including its behavior under Gromov-Hausdorff convergence.
\end{abstract}

\tableofcontents

\section{Introduction}
{
The notion of $\RCD(K,N)$ metric measure spaces (m.m.s.) was proposed and analyzed in \cite{gigli:laplacian, EKS, AMS} (see also \cite{CaMi}), as a finite
dimensional refinement of  $\RCD(K,\infty)$ m.m.s. which were first introduced  and studied in \cite{AGS} (see also \cite{AGMR12}).

 For  $K\in \R, N\in [1,\infty]$, the class of $\RCD(K,N)$ spaces is a subclass of $\CD(K,N)$ spaces pioneered by Lott-Villani  \cite{lottvillani:metric} and Sturm \cite{sturm:I, sturm:II} a few years earlier.  Roughly,  $\RCD(K,N)$ spaces are those $\CD(K,N)$ spaces where the Sobolev space $W^{1,2}(X,\sfd,\mm)$ is a Hilbert space (for a general $\CD(K,N)$ space, $W^{1,2}(X,\sfd,\mm)$ is only  Banach).
The motivation is that, while $\CD(K,N)$ spaces include Finsler manifolds, the class of $\RCD(K,N)$ spaces singles out the ``Riemannian''  $\CD(K,N)$ spaces.

 Both the classes of $\CD(K,N)$ and $\RCD(K,N)$ spaces are stable under pointed measured Gromov Hausdorff convergence  (pmGH for short), see \cite{lottvillani:metric, sturm:I, sturm:II, Vil, AGS, GMS2013}.
 Since the class of $\RCD(K,N)$ spaces includes the Riemannian manifolds with Ricci curvature bounded below by $K$ and dimension bounded above by $N$, the aforementioned stability  results imply that also their pmGH limits (the so-called Ricci-limits, throughly studied by Cheeger-Colding \cite{CC97, CC00a, CC00b}) are $\RCD(K,N)$.
 
 An interesting sub-class of Ricci-limits  already detected by Cheeger-Colding  \cite{CC97}, corresponds to the \emph{non-collapsed} ones. It consists of those Ricci-limits where the approximating sequence of smooth Riemannian manifolds have a uniform strictly positive lower bound on the volume of a unit ball. It follows from Colding's volume convergence \cite{Col97} that if $(X,\sfd,\mm)$ is a limit of $N$-manifolds with lower Ricci bound then it is non-collapsed if and only if $\mm=\cH^{N}$, and if and only if the Hausdorff dimension of $(X,\sfd)$ is $N$, see \cite{CC97}. The motivation for isolating the class of  \emph{non-collapsed} Ricci-limits is that they enjoy stronger structural properties than general (possibly collapsed) Ricci limits, for instance outside of a no-where dense set of measure zero they are topological manifolds.

 It is thus natural to consider those $\RCD(K,N)$ m.m.s. $(X,\sfd,\mm)$, where $\mm=\cH^{N}$, and call them ``non-collapsed''. The class of non-collapsed $\RCD(K,N)$ spaces has been the object of recent research by Kitabeppu \cite{Kitab-noncol}, De Philippis-Gigli \cite{GDPNonColl} (where the synthetic notion of non-collapsed $\RCD(K,N)$ spaces was formalized), Honda \cite{HondaNC}, Ketterer and the first author \cite{Kap-Ket-18}, Antonelli-Bru\'e-Semola \cite{Ant-Brue-Sem-19}.

 {

 \begin{remark}[Comparison between non-collapsed Ricci limits of Cheeger-Colding \cite{CC97} and non-collapsed $\RCD(K,N)$ spaces]
 The class of non-collapsed $\RCD(K,N)$ spaces strictly contains the non-collapsed Ricci limits of Cheeger-Colding \cite{CC97}. Indeed:
 \begin{enumerate}
 \item  \cite{CC97} considered sequences of manifolds without boundary and proved that in the non-collapsing situation the limit does not have boundary. More precisely, in the terminology we introduce here,  \cite{CC97}  prove that the the limit does not have reduced boundary, and it follows from  Theorem \ref{thm-no-bry-mainIntro} that more generally  the limit does not have $\RCD$-boundary either.
 In particular a convex body in $\R^{N}$ with boundary cannot arise as a non-collapsed Ricci limit of manifolds without boundary; however this is a non-collapsed $\RCD(0,N)$ space. 
 \\Although there are results about Gromov Hausdorff (pre)-compactness of $N$-manifolds  with Ricci bounded below and with boundary satisfying suitable conditions (e.g. bounded second fundamental form  \cite{Wong}), extending Cheeger-Colding theory to the corresponding limit spaces with boundary seems to be not yet addressed in the literature. 
 \\The theory of $\RCD(K,N)$ spaces, and this paper in particular, should be useful in this regard. Indeed a Riemannian $N$-manifold $(M,g)$  with Ricci bounded below by $K$ and with convex boundary (i.e. $II_{\partial M}\geq 0$) is a (non-collapsed) $\RCD(K,N)$ space  \cite[Theorem 2.4]{HanMeasRig}.
 Thanks to the stability of (resp. non-collapsed) $\RCD(K,N)$ spaces, it follows that the (resp. non-collapsed) pmGH limits of such objects are (resp. non-collapsed) $\RCD(K,N)$  as well.
  \item By  \cite{palvs} or ~\cite{Ket} { or ~\cite{GGKMS}}, a cone (resp. a spherical suspension) over $\mathbb{RP}^{2}$ is an example of a non-collapsed $\RCD(0,3)$ (resp. $\RCD(1,3)$ space). It was noted in a discussion between De Philippis-Mondino-Topping that such spaces cannot arise as non-collapsed Ricci limits. Indeed on the one hand they are not topological manifolds, and on the other hand it was proved in \cite{Sim, SimTop} that non-collapsed 3-dimensional Ricci limits  are topological manifolds (see also Remark \ref{rem:CoDim3Top}).
 \end{enumerate}
 \end{remark}
}

In this note we study topological and rigidity properties of non-collapsed $\RCD$ spaces.
As in \cite{CC97}, we will adopt the notation that $\R_{+}\times \R^k\ni (\varepsilon,x)\mapsto  \Psi(\varepsilon|x)\in \R_{+}$ denotes a non-negative function satisfying that, for any fixed $x=(x_1,\ldots,x_k)$, $\lim_{\varepsilon\to 0} \Psi(\varepsilon|x)=0$.
 We prove the following results.
}

\begin{theorem}\label{thm:ConvergenceMan}
Fix some $K\in \R$ and $N\in \N$. Let $\{(X_{i},\sfd_{i}, \cH^{N}, \bar{x}_{i})\}_{i\in \N}$ be a sequence of $\RCD(K,N)$ spaces pmGH-converging to a {closed} smooth Riemannian manifold $(M,g)$ { of dimension $N$}.
Then there exists $i_{0}\in \N$ such that for all $i\geq i_{0}$ the space $(X_{i},\sfd_{i})$ is  homeomorphic to $(M,g)$, in particular $(X_{i},\sfd_{i})$ is a topological manifold.

\end{theorem}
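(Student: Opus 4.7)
The plan is to exploit the fact that the smooth limit $(M,g)$ is uniformly Euclidean at small scales, transfer this quantitative regularity to $X_i$ via pmGH-convergence plus non-collapsing, and then use a Reifenberg-type construction to produce a homeomorphism. The conceptual picture is: at every $p\in M$ the tangent space is $\R^N$, hence all blow-downs of balls in $X_i$ near $p$ are arbitrarily close to Euclidean balls once $i$ is large, so $X_i$ must be a topological manifold modeled on the same underlying smooth manifold $M$.

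First, by smoothness and compactness of $(M,g)$, for every $\delta>0$ there exists $r_{0}=r_{0}(\delta,M)>0$ such that for every $p\in M$ and every $r\leq r_{0}$
\[
\sfd_{GH}\bigl(B_{r}(p),B_{r}^{\R^{N}}(0)\bigr)\leq \delta r,\qquad \bigl|\cH^{N}(B_{r}(p))-\omega_{N}r^{N}\bigr|\leq \delta\, \omega_{N}r^{N}.
\]
Combining pmGH-convergence $X_{i}\to M$ with continuity of the measures (which, in the non-collapsed setting, follows from the volume convergence results of Kitabeppu and De Philippis--Gigli), I would then show that for any $\delta>0$ there exists $i_{0}=i_{0}(\delta)$ such that for all $i\geq i_{0}$, all $x\in X_{i}$, and all $r\leq r_{0}/2$, the ball $B_{r}(x)\subset X_{i}$ is $\delta r$-Gromov--Hausdorff close to $B_{r}^{\R^{N}}(0)$ with nearly-Euclidean volume. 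Thus the $(N,\delta)$-Euclidean condition holds uniformly in $x$ and in every scale below $r_{0}$.

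Next, I would invoke the RCD analogue of the Cheeger--Colding intrinsic Reifenberg theorem: on a non-collapsed $\RCD(K,N)$ space, if balls at every scale $\leq r_{0}$ are uniformly GH-close to Euclidean balls of the same radius (with small enough $\delta$), then the space is a topological $N$-manifold and admits bi-Hölder charts. In the smooth limit setting, these local charts can be constructed using $\delta$-splitting maps (harmonic approximations of coordinate functions), whose existence in the $\RCD(K,N)$ setting is by now standard. This already gives that $X_{i}$ is a topological $N$-manifold for $i\geq i_{0}$.

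Finally, to upgrade to homeomorphism with $M$, I would build a global continuous map $F_{i}\co X_{i}\to M$ by regularizing a pmGH $\varepsilon_{i}$-approximation: cover $M$ by finitely many smooth coordinate charts, pull back the Euclidean $\delta$-splitting structure on $X_{i}$ inside each chart, and glue via a smooth partition of unity on $M$. By the local bi-Hölder charts of the previous step, $F_{i}$ is a local homeomorphism for $i$ large. A degree/separation argument then shows $F_{i}$ is surjective (degree $\pm 1$, since $F_{i}$ is arbitrarily close to the identity via $\varepsilon_{i}$-approximation on a manifold that is $\RCD$-regular hence orientable over the top Hausdorff measure class) and injective (two preimages of the same point would lie at controlled distance contradicting the local bi-Hölder estimate). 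The main obstacle is precisely this gluing step: combining local topological regularity into a single globally injective continuous map requires uniform control of the Reifenberg scale across $M$, compatibility of the $\delta$-splitting maps on overlaps (which is where smoothness of $(M,g)$ becomes essential, as it supplies the coordinate system into which the chart-maps flow), and a degree computation that is not automatic in the non-smooth category and so must be reduced to the smooth $M$ side via the approximation $F_{i}$.
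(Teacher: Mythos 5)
Your overall strategy is the right one and largely matches the paper: use smoothness of $M$ plus volume convergence (Theorems \ref{thm:ContHN}, \ref{thm:VolRig}) and Bishop--Gromov monotonicity to show that for $i$ large every point of $X_i$ lies in $(\cR_N)_{\eps,r}$ at a fixed scale $r$, and then invoke a Reifenberg-type theorem. Your first two paragraphs are essentially the paper's argument (the proof of Theorem \ref{top-stability-RCD}, which in turn repeats the volume step from Theorem \ref{thm:topologyHN}).

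Where you diverge is the final step. You propose to hand-build the global homeomorphism $F_i\colon X_i\to M$ by gluing $\delta$-splitting charts with a partition of unity and then running a degree/injectivity argument; you rightly flag this gluing and degree step as the main obstacle. The paper sidesteps it entirely by quoting Cheeger--Colding's Reifenberg \emph{stability} theorem \cite[Theorems A.1.2, A.1.3]{CC97} — restated as Theorems \ref{reifenberg-stab}, \ref{reifenberg-stab-pointed} and Corollary \ref{reifenberg-stab-compact} — which already asserts that two GH-close complete metric spaces, each uniformly Reifenberg-regular at scale $r$, admit bi-H\"older embeddings in both directions that are GH-approximations, hence are bi-H\"older homeomorphic when both are closed manifolds. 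That theorem already contains the Siebenmann-style gluing and the degree count that you propose to redo; reproving it would be a substantial extra task. Two smaller points: (i) the Cheeger--Colding Reifenberg machinery is purely metric — it does not use $\delta$-splitting maps, which are therefore a red herring here; the $\RCD$ structure is only used to \emph{verify} the Reifenberg hypothesis via Bishop--Gromov and volume rigidity; (ii) your orientability remark is unnecessary once you use the packaged stability theorem. So the proposal is not wrong, but it overcomplicates the endgame and leaves a gap exactly where the cited Cheeger--Colding result is designed to plug it.
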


\begin{theorem}\label{Thm:ConeInf}
Fix some  $N\in \N$. Let $(X,\sfd, \cH^{N})$ be an  $\RCD(0,N)$ space.
\\If $X$ admits a tangent space at infinity isometric  to $\R^{N}$ then $(X,\sfd, \cH^{N})$ is isomorphic to $\R^{N}$ as a m.m.s.. 
\\More precisely the following holds.  For every $N\in \N, N\geq 1$ there exists $\varepsilon(N)>0$ with the following property. 
\begin{itemize}
\item If there exists $x\in X, R_{0}\geq 0$ such that $\sfd_{GH}(B_{r}(x),B_{r}(0^{N}))\leq \varepsilon r$ for all $r\geq R_{0}$,  then $X$ is homeomorphic to $\R^{N}$ and $\cH^{N}(B_{r}(x))\geq (1-\Psi(\varepsilon|N))  \omega_{N} r^{N}$ for all $r\geq R_{0}$;
\item If there exists $x\in X, R_{0}\geq 0$ such that $\cH^{N}(B_{r}(x))\geq (1-\varepsilon)  \omega_{N} r^{N}$ then $X$ is homeomorphic to $\R^{N}$ and $\sfd_{GH}(B_{r}(x),B_{r}(0^{N}))\leq \Psi(\varepsilon|N)  r$ for all $r\geq R_{0}$.
\end{itemize}
\end{theorem}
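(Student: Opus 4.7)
The plan is to first establish the rigidity (opening sentence of the theorem) via Bishop--Gromov monotonicity, the ``volume cone implies metric cone'' principle, and a cross-section rigidity; the two quantitative bullets will then follow from a combination of continuity of $\cH^{N}$ along non-collapsed pmGH convergence, an almost-rigidity compactness argument, and a topological stability result to obtain the homeomorphism.

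For the rigidity, Bishop--Gromov gives that $V(r):=\cH^{N}(B_{r}(x))/(\omega_{N}r^{N})$ is non-increasing on an $\RCD(0,N)$ space, while the non-collapsed hypothesis $\mm=\cH^{N}$ ensures $V(r)\le 1$ by De~Philippis--Gigli. If a tangent at infinity is $\R^{N}$, continuity of $\cH^{N}$ under non-collapsed pmGH convergence yields $\lim_{r\to\infty}V(r)=1$, whence monotonicity forces $V(r)\equiv 1$. By the ``volume cone implies metric cone'' theorem in the $\RCD$ setting (De~Philippis--Gigli / Ketterer), $X$ is a Euclidean metric cone with apex $x$ over an $\RCD(N-2,N-1)$ cross-section $\Sigma$ with $\cH^{N-1}(\Sigma)=N\omega_{N}$; applying the maximal-volume rigidity at the cross-section level yields $\Sigma\cong\bS^{N-1}$, so $X$ is isomorphic to $\R^{N}$ as a m.m.s.

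For the first bullet, continuity of $\cH^{N}$ gives $V(r)\ge 1-\Psi(\varepsilon|N)$ for $r\ge R_{0}$, and Bishop--Gromov monotonicity extends the same lower bound to all $r>0$. For the second bullet, a compactness-contradiction against the rigidity statement yields $\sfd_{GH}(B_{r}(x),B_{r}(0^{N}))\le\Psi(\varepsilon|N)r$: a sequence of $\RCD(0,N)$ spaces $(X_k,\sfd_k,\cH^N,x_k)$ with $V_{k}(r_k)\ge 1-\varepsilon_{k}$ but GH-distance bounded away from zero at scale $r_k$ would, after rescaling by $r_k^{-1}$, pmGH-subconverge to an $\RCD(0,N)$ space with $V\equiv 1$, hence to $\R^{N}$ by the rigidity case, contradicting the GH lower bound. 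So in either bullet, every ball of radius $\ge R_{0}$ centered at $x$ is simultaneously almost-Euclidean in both the volumetric and the metric sense, and by Bishop--Gromov monotonicity the volume control extends to all smaller scales.

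For the homeomorphism $X\cong\R^{N}$, the key input is a Reifenberg-type theorem for non-collapsed $\RCD(K,N)$ spaces: once every ball of every radius around $x$ (and, by a standard propagation argument using volume monotonicity together with the triangle inequality, also around every nearby basepoint) is $\Psi(\varepsilon|N)$-GH-close to a Euclidean ball of the same radius, one deduces a bi-H\"older homeomorphism with $\R^{N}$. The main technical obstacle is precisely this topological globalization: Theorem~\ref{thm:ConvergenceMan} is stated for pmGH convergence to a \emph{closed} smooth manifold, and passing to the non-compact target $\R^{N}$ requires either a careful gluing of local homeomorphisms obtained by applying Theorem~\ref{thm:ConvergenceMan} to an exhaustion of $\R^{N}$ by compact domains, or an appeal to a Reifenberg-type theorem for $\RCD$ spaces. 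By contrast, the rigidity chain and the volume$\leftrightarrow$GH equivalence are by now standard in the $\RCD$ literature.
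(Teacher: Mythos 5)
Your proposal is correct and close to the paper's in its outline for the volume$\leftrightarrow$GH equivalence (via Theorems~\ref{thm:VolRig} and~\ref{thm:ContHN}), the extension of the volume bound to all scales by Bishop--Gromov monotonicity, and the reduction of the rigidity case to the almost-rigidity case (you phrase it via ``volume cone implies metric cone,'' the paper deduces it by letting $\varepsilon\to 0$ in the second bullet; both are fine). But there is a genuine gap precisely where you yourself flag ``the main technical obstacle'': you do not actually carry out the topological globalization, which is the main content of the paper's proof.

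Concretely, the Cheeger--Colding Reifenberg theorem (Theorem~\ref{thm:CCReif}) only produces a local bi-H\"older chart on a ball of fixed radius, and its pointed variant Theorem~\ref{reifenberg-stab-pointed} only gives bi-H\"older \emph{embeddings} $B_R(p_i)\hookrightarrow B_{R+\eps_i}(p)$ for each finite $R$; neither of these hands you a global homeomorphism $X\cong\R^N$. So ``an appeal to a Reifenberg-type theorem for $\RCD$ spaces'' does not close the argument, and ``a careful gluing of local homeomorphisms'' is exactly the step that needs to be proved, not just invoked. The paper's proof does this work: it takes a dyadic sequence of scales $R_i=2^{k+i}$, applies the topological stability theorem (Theorem~\ref{top-stability-RCD}) at each scale to get bi-H\"older charts $G_i$ on overlapping annuli $A_i$, observes that the overlaps $G_{i+1}\circ G_i^{-1}$ are GH-close to self-isometries of Euclidean annuli and hence can be straightened by post-composing each $G_{i+1}$ with an orthogonal map, and then invokes Siebenmann's deformation of homeomorphisms theory (via \cite[Theorem 4.11]{Kap07}) to modify the $G_i$ near the overlaps so that they literally agree and glue to a topological embedding $G\colon X\to\R^N$; surjectivity then follows by an invariance-of-domain argument as in Remark~\ref{rem-onto}. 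Without an argument of this type (or an equivalent one), the claim that $X$ is homeomorphic to $\R^N$ is not established, so your proof is incomplete at exactly the point you identified as the hard part.
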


\begin{theorem}[Sphere Theorem]\label{thm:sphere}
For every $N\in \N, N\geq 1$ there exists $\varepsilon(N)>0$ with the following property. 
\\Let $(X,\sfd, \cH^{N})$ be an $\RCD(N-1,N)$ space for some  $N\in \N, N\geq 1$.
\begin{itemize}
\item If $\sfd_{GH}(X,\bS^{N})\leq \varepsilon$ then $X$ is homeomorphic to $\bS^{N}$ and $\cH^{N}(X)\geq (1-\Psi(\varepsilon|N))  \cH^{N}(\bS^{N})$;
\item If $\cH^{N}(X)\geq (1-\varepsilon) \cH^{N}(\bS^{N})$ then  $X$ is homeomorphic to $\bS^{N}$ and $\sfd_{GH}(X,\bS^{N})\leq \Psi(\varepsilon|N)$.
\end{itemize}
\end{theorem}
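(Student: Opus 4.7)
The plan is to reduce Theorem \ref{thm:sphere} to Theorem \ref{thm:ConvergenceMan} via a compactness and contradiction argument, using two standard ingredients for non-collapsed $\RCD(N-1,N)$ spaces: (a) Bishop--Gromov volume comparison with its rigidity, i.e.\ an $\RCD(N-1,N)$ space with $\cH^{N}(X)=\cH^{N}(\bS^{N})$ is isometric to the round sphere; (b) continuity of the $N$-dimensional Hausdorff measure under non-collapsed pmGH convergence (De Philippis--Gigli). Both are available in the references already cited in the introduction.

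First I would establish that the two smallness hypotheses in the two bullets are equivalent up to a $\Psi$-loss. Suppose for contradiction there exist $\delta>0$ and a sequence of $\RCD(N-1,N)$ spaces $(X_{i},\sfd_{i},\cH^{N})$ with $\cH^{N}(X_{i})\geq (1-1/i)\,\cH^{N}(\bS^{N})$ yet $\sfd_{GH}(X_{i},\bS^{N})\geq \delta$. By Bonnet--Myers, $\diam X_{i}\leq \pi$, so Gromov's precompactness produces a pmGH-convergent subsequence with limit $(X_{\infty},\sfd_{\infty},\mm_{\infty})$. Stability passes $\RCD(N-1,N)$ to the limit; non-collapsed volume continuity yields $\mm_{\infty}=\cH^{N}$ and $\cH^{N}(X_{\infty})=\cH^{N}(\bS^{N})$. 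The maximal-volume rigidity then forces $X_{\infty}$ isometric to $\bS^{N}$, contradicting the uniform GH lower bound. The reverse implication (GH-smallness implies volume-almost-maximality) is handled by the same contradiction scheme, invoking only the continuity of $\cH^{N}$.

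With this equivalence in place, a further contradiction argument delivers the topological conclusion. If the theorem failed, one could extract a sequence $(X_{i},\sfd_{i},\cH^{N})$ of $\RCD(N-1,N)$ spaces satisfying either hypothesis with $\varepsilon_{i}\downarrow 0$ but none of them homeomorphic to $\bS^{N}$. By the previous step, after passing to a subsequence this sequence pmGH-converges (in the non-collapsed sense) to $\bS^{N}$. Applying Theorem \ref{thm:ConvergenceMan} with target $(M,g)=\bS^{N}$ gives that $X_{i}$ is homeomorphic to $\bS^{N}$ for all large $i$, a contradiction. Choosing $\varepsilon(N)$ to be the minimum of the thresholds arising from Step~1 and from Theorem \ref{thm:ConvergenceMan} completes the proof.

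The substantive work is entirely absorbed into Theorem \ref{thm:ConvergenceMan} and the maximal-volume rigidity for $\RCD(N-1,N)$ spaces, both used here as black boxes; the sphere theorem itself is then a soft compactness statement. The only point deserving care is that GH-closeness under hypothesis~(i) must be promoted to non-collapsed pmGH convergence, which is automatic since the prescribed measures are already $\cH^{N}$ and the two-sided control on $\cH^{N}(X_{i})$ produced in Step~1 prevents collapse.
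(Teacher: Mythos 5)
Your proposal is correct and follows essentially the same compactness-contradiction strategy as the paper, relying on the same ingredients: GH-continuity of $\cH^{N}$, stability of the non-collapsed condition under pmGH limits, and the topological stability theorem. The only minor difference is that you invoke maximal-volume rigidity for $\RCD(N-1,N)$ spaces as a single black box, whereas the paper derives it in two steps from the Bishop--Gromov inequality (to get $\mathrm{rad}(Y)=\pi$) and an iteration of Ketterer's Maximal Diameter Theorem.
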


We also introduce the notion of a boundary of a non-collapsed $\RCD$ space and establish its various properties. In particular we prove some stability results about behavior of the boundary under limits.

{
The definition of a boundary point is inductive on the dimension of the space. Roughly, 
\begin{align*}
&\text{The \emph{$\RCD$-boundary} $\bRCD X$ of a non-collapsed $\RCD(K,N)$ space $(X,\sfd, \cH^{N})$}\\
&\text{ consists of points admitting a tangent space with boundary}.
\end{align*}
For the precise notions see Definition \ref{def:bRCD}.
\\  From \cite{Kitab-noncol, GDPNonColl} every tangent space to a non-collapsed $\RCD(K,N)$ space is a metric measure cone $C(Z)$ over a non-collapsed $\RCD(N-2,N-1)$ space $Z$ (see Lemma \ref{lem:TanncRCD}). Since the cone $C(Z)$ has boundary if and only if $Z$ has, the induction on the dimension is clear and stops in dimension one where any non-collapsed $\RCD(0,1)$ space is isomorphic to either a line, a circle, a half line or a closed interval  \cite{KL}; by definition we say that the latter two have boundary and the former two don't.

 A somewhat different notion of a boundary of noncollapsed $\RCD$ spaces was proposed in ~\cite{GDPNonColl}. In our notation,   De Philippis and Gigli proposed to call the boundary of $X$ the closure of what we call \emph{reduced} boundary of $X$ where by reduced boundary we mean  $\S^{N-1}\setminus \S^{N-2}$ (see Definition \ref{def:bRCD} ). It's possible that these two definitions are equivalent but this is not clear at the moment. In fact, it's not clear if either of these sets is contained in the other. We do show that the reduced boundary is a subset of the boundary (Lemma~\ref{lem:TanSpXNC}).

 Our notion of boundary agrees with the one of boundary of an Alexandrov space i.e. in case $(X,\sfd)$ is a finite dimensional Alexandrov space with curvature bounded below.  Moreover, for Alexandrov spaces our notion of the boundary is known to coincide with the one suggested by De Philippis and Gigli~\cite{Per-stab}.   
 
 Our notion of boundary is compatible with the  topological boundary in case $X$ is a topological manifold in the following sense:

\begin{theorem}[Corollary \ref{manifold-no-boundary} { and Corollary \ref{cor:bilip}}] \label{manifold-no-boundaryIntro}
Let $(X,\sfd, \cH^{N})$ be a non-collapsed $\RCD(K,N)$ space.  If $(X,\sfd)$ is a  topological $N$-manifold with boundary $\partial X$, then  $\boldsymbol\partial X \subset \partial X$.\\In particular if   $X$ is a topological $N$-manifold without boundary in manifold sense, then it is also without boundary in the $\RCD$ sense.
\\  Moreover, if $(X,\sfd)$ is bi-Lipschitz homeomorphic to a smooth $N$-manifold with boundary  then also the reverse inclusion holds, namely  $\boldsymbol\partial X = \partial X$.
\end{theorem}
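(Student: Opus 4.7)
The plan is to prove the two inclusions $\bRCD X \subseteq \partial X$ (under the topological manifold hypothesis) and $\partial X \subseteq \bRCD X$ (under the bi-Lipschitz hypothesis) separately, both by induction on $N$. For the forward inclusion I would argue the contrapositive: if $p$ has a neighborhood homeomorphic to $\R^N$, then $p \notin \bRCD X$. The base case $N=1$ is immediate from the classification of non-collapsed $\RCD(K,1)$ spaces as line, circle, half-line, or interval (\cite{KL}), where the topological interior is exactly the complement of the endpoints, i.e.\ $X\setminus\bRCD X$.

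For the inductive step, let $(C(Z),0)$ be any tangent space at $p$ with $Z$ a non-collapsed $\RCD(N-2,N-1)$ space. At a non-vertex point $(t,z)\in C(Z)$ the tangent splits off an $\R$-factor whose second factor is a tangent of $Z$ at $z$; this yields the structural identity $\bRCD C(Z) = C(\bRCD Z)$ (with the vertex lying in the left-hand side exactly when $\bRCD Z \neq \emptyset$), reducing the task to showing $\bRCD Z = \emptyset$. The key step is to establish that $Z$ is itself a topological $(N-1)$-manifold without boundary, after which the inductive hypothesis applied to $Z$ forces $\bRCD Z \subseteq \partial Z = \emptyset$. For this I would use that the rescaled sequence $(X, r_k^{-1}\sfd, p)$, whose unit balls are homeomorphic to open $N$-balls for small enough $r_k$, pmGH-converges to $(C(Z),0)$; at the $\cH^N$-a.e.\ non-vertex point where the unique tangent is $\R^N$, Theorem~\ref{thm:ConvergenceMan} upgrades this to the statement that $C(Z)$ is a topological $N$-manifold away from the vertex and that a small metric ball around $0$ is homeomorphic to an open $N$-ball, so the link $Z$ (the unit metric sphere of $C(Z)$) is forced to be a topological $(N-1)$-sphere, in particular without boundary.

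For the reverse inclusion under the bi-Lipschitz hypothesis, let $F\colon M \to X$ be a bi-Lipschitz homeomorphism from a smooth $N$-manifold $M$ with boundary, and let $p \in \partial X$. Because tangent cones transform bi-Lipschitz-covariantly under rescaling (with a universal bi-Lipschitz constant preserved by Gromov's compactness), any tangent cone $(C(Z),0)$ at $p$ is bi-Lipschitz homeomorphic to the Euclidean half-space $\R^{N-1}\times[0,\infty) = C(\bS^{N-1}_+)$, where $\bS^{N-1}_+$ is the closed upper hemisphere. Hence $Z$ is bi-Lipschitz to $\bS^{N-1}_+$, a smooth $(N-1)$-manifold with non-empty boundary, and the inductive hypothesis of the reverse direction applied to $Z$ gives $\partial Z \subseteq \bRCD Z$, so $\bRCD Z \neq \emptyset$ and $p \in \bRCD X$. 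Combined with the forward inclusion (which applies since bi-Lipschitz to a smooth manifold is, in particular, a topological manifold), this yields $\bRCD X = \partial X$.

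The hardest part I anticipate is the topological stability step in the forward inclusion, namely propagating the manifold property of $X$ near $p$ to the manifold property of the cross section $Z$ of every tangent cone at $p$. This will require careful combination of pmGH convergence of rescalings, Theorem~\ref{thm:ConvergenceMan} applied at interior $N$-regular points of $C(Z)\setminus\{0\}$, the matching between metric spheres and topological boundaries of metric balls under small perturbations, and the general structure theory of non-collapsed $\RCD$ tangents; no new ingredient beyond these is expected to be needed.
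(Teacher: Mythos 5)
Your argument for the reverse inclusion (under the bi-Lipschitz hypothesis) is broadly in the same spirit as the paper's: blow up the bi-Lipschitz map to obtain a bi-Lipschitz homeomorphism between a half-space cone and a boundaryless cone, then iterate to reach a contradiction. The paper phrases this as an explicit iterated blow-up culminating in an impossible bi-Lipschitz homeomorphism $\R^N_+\to\R^N$, while you phrase it as induction on dimension; these are essentially equivalent, and the step extracting a bi-Lipschitz map on the links from one on the cones is a standard radial-projection argument that does hold.

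The forward inclusion, however, has a genuine gap at the step you yourself flagged as the hardest. You want to show that if $p$ is an interior manifold point of $X$ and $(C(Z),0)\in\Tan(X,p)$, then $Z$ is a topological $(N-1)$-sphere (or at least a closed topological manifold without boundary), so that the inductive hypothesis applied to $Z$ yields $\bRCD Z=\emptyset$. This claim is false. The manifold property of $X$ near $p$ does \emph{not} descend to the cross-section $Z$ of a tangent cone, because Gromov--Hausdorff limits of topological manifolds need not be topological manifolds. A concrete counterexample in the non-collapsed $\RCD$ world: take $\Sigma^3$ a spherical space form (e.g.\ the Poincar\'e sphere) with the round metric, so $\Sigma^3$ is a closed non-collapsed $\RCD(2,3)$ space. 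The iterated spherical suspension $X:=\Sigma^2(\Sigma^3)$ is a non-collapsed $\RCD(4,5)$ space and, by the Edwards--Cannon double suspension theorem, is homeomorphic to $\bS^5$, hence a topological $5$-manifold without boundary. At a suspension point $p$ of the outer suspension, a (the) tangent cone is $C(Z)$ with $Z=\Sigma(\Sigma^3)$; but $Z$ is \emph{not} a topological manifold, since the link of a suspension point of $Z$ is $\Sigma^3\ne\bS^3$. In particular $Z$ is neither a sphere nor a manifold, so the inductive hypothesis cannot be applied to it. (In this example the desired conclusion $\bRCD Z=\emptyset$ is still true, but not for the reason you give.) Relatedly, the appeal to Theorem~\ref{thm:ConvergenceMan} (or the topological stability theorem) to conclude that a small ball around the vertex of $C(Z)$ is an open $N$-ball is not valid: those theorems require the \emph{limit} of the sequence to be a smooth Riemannian manifold, and $C(Z)$ is not smooth at the vertex.

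The paper avoids this difficulty entirely. Rather than descending one dimension to $Z$, it uses Lemma~\ref{lem:bRCDXIteratedTang} to pass from $p\in\bRCD X$ directly to an iterated tangent isometric to $\R^N_+$; by a diagonal argument one then produces a sequence of rescalings of $X$ (all of them topological manifolds, by hypothesis) converging in pointed GH sense to $(\R^N_+,0)$. The contradiction is extracted via a $\Z_2$-degree argument: exhaust each rescaled manifold by compact codimension-zero submanifolds with boundary (Theorem~\ref{thn-comp-exhaustion}), arrange the GH-approximation maps to send boundaries to the spherical part of $\partial(\bar B_1(0)\cap\R^N_+)$, double both sides, and compute degree. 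The crucial point is that the topological input used is the manifold structure of the \emph{approximants}, never of the limit (or its tangent cross-sections), so the double-suspension phenomenon poses no obstruction. Your proposal, as written, would need to be replaced by an argument of this type, or at minimum by a genuinely new idea circumventing the failure of manifold structure for $Z$.
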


 We are grateful to Alexander Lytchak for pointing out to us that the above theorem can be improved to show the following stronger result:

\begin{theorem}\label{thm:bilip-preserve-bry} (Proposition ~\ref{bilip-preserve-bry})
Let $(X,\sfd_{X},\cH^{N})$ and  $(Y,\sfd_{Y},\cH^{N})$ be non-collapsed  $\RCD(K,N)$ spaces. Assume there is a bi-Lipschitz homeomorphism $f:X\to Y$. Then $f(\bRCD X)=\bRCD Y$.

\end{theorem}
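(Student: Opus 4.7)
My plan is to argue by induction on the dimension $N$. The base case $N=1$ is immediate from the classification of one-dimensional non-collapsed $\RCD(K,1)$ spaces in \cite{KL}: each such space is isomorphic to $\R$, $\Ss^{1}$, a closed interval, a half-line, or a point, and in each case $\bRCD$ coincides with the topologically characterised set of endpoints, which is a bi-Lipschitz (in fact topological) invariant.

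For the inductive step, suppose the result holds for all dimensions strictly less than $N$ and let $f:X\to Y$ be $L$-bi-Lipschitz between non-collapsed $\RCD(K,N)$ spaces. Since $f^{-1}$ is also bi-Lipschitz, it suffices to prove $f(\bRCD X)\subseteq \bRCD Y$. Fix $x\in\bRCD X$, set $y:=f(x)$, and choose $r_i\downarrow 0$ realising a tangent cone $C(Z_x)$ at $x$ with $Z_x$ having $\bRCD$ (which exists by the definition of $\bRCD$ combined with Lemma~\ref{lem:TanncRCD}). By pmGH pre-compactness, along a subsequence $(Y,r_i^{-1}\sfd_Y,y)$ converges to a tangent cone $C(Z_y)$. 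An Arzel\`a--Ascoli argument for uniformly $L$-bi-Lipschitz maps in the pmGH setting then yields, along a further subsequence, an $L$-bi-Lipschitz homeomorphism $F:C(Z_x)\to C(Z_y)$ that necessarily sends apex to apex. The inductive step thus reduces to showing that $Z_y$ has $\bRCD$.

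For this I would prove the following key auxiliary lemma: any $L$-bi-Lipschitz apex-preserving homeomorphism $F:C(Z_1)\to C(Z_2)$ between metric cones over compact non-collapsed $\RCD(N-2,N-1)$ spaces admits an accompanying bi-Lipschitz homeomorphism $\hat F:Z_1\to Z_2$. Granting the lemma, the inductive hypothesis in dimension $N-1$ applied to $\hat F$ gives $\bRCD Z_y=\hat F(\bRCD Z_x)\neq\emptyset$, completing the argument. To prove the key lemma I would exploit the scaling invariance of the cones: for $\lambda>0$, the rescaled map $F_\lambda(p):=\lambda^{-1}F(\lambda p)$ (using the radial cone scaling, which is an isometry of $C(Z_i)$) is still $L$-bi-Lipschitz and apex-preserving, and a direct computation gives $F_\lambda(\mu p)=\mu F_{\mu\lambda}(p)$. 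A careful diagonal compactness argument over a countable dense set of rescaling parameters in $\Q_{>0}$ produces a subsequential limit $F_0$ that is $L$-bi-Lipschitz, apex-preserving, and radially equivariant, i.e.\ $F_0(\lambda p)=\lambda F_0(p)$ for all $\lambda>0$. Such $F_0$ maps each radial ray of $C(Z_1)$ onto a radial ray of $C(Z_2)$ and hence descends via radial projection to the desired bi-Lipschitz $\hat F:Z_1\to Z_2$ (injectivity and surjectivity of $\hat F$ follow at once from homogeneity together with the bijectivity of $F_0$, and bi-Lipschitz constants are controlled using the cone distance formula on annuli $\{1/L \le |p| \le L\}$).

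The principal technical obstacle is securing the radial equivariance of $F_0$: a naive diagonal extraction over $\Q_{>0}$ only yields a family $\{F_0^{(\mu)}\}_\mu$ of $L$-bi-Lipschitz apex-preserving limits related by $F_0(\mu p)=\mu F_0^{(\mu)}(p)$ rather than a single homogeneous map. My approach is to start from a well-chosen sequence $\lambda_k=a^{-k}$ for various bases $a$, so that $\mu\lambda_k=\lambda_{k-m}$ whenever $\mu=a^m$, $m\in\Z$ (which yields $a^m$-homogeneity in the limit along the corresponding subsequence); then combine over a countable family of bases $a$ via a further diagonal extraction, using the uniform $L$-Lipschitz control together with density of the generated multiplicative subgroup in $\R_{>0}$ to upgrade homogeneity from a dense subgroup of scales to every $\lambda>0$.
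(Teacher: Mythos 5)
Your inductive framework (blow up $f$ at $x\in\bRCD X$ to an apex-preserving $L$-bi-Lipschitz map $F\co C(Z_x)\to C(Z_y)$ and transfer the boundary through the cross-sections) is genuinely different from the paper's argument, and it is sound up to the point where everything is made to rest on your ``key auxiliary lemma''; that is exactly where there is a gap. The homogenization step does not work as described: with $F_\lambda(p)=\lambda^{-1}F(\lambda p)$ and the identity $F_\lambda(\mu p)=\mu F_{\mu\lambda}(p)$, the set of subsequential limits of $F_\lambda$ as $\lambda\to 0$ is merely \emph{invariant} under the rescaling action $G\mapsto \mu^{-1}G(\mu\,\cdot\,)$, and invariance does not yield a fixed point. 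Concretely, if $F_{a^{-k_j}}\to F_0$, then $a^{-m}F_0(a^{m}p)$ is the limit of $F_{a^{-(k_j-m)}}(p)$, i.e.\ a limit along the \emph{shifted} subsequence, which need not converge to $F_0$; a compact family of maps invariant under a $\Z$- or $\R_{>0}$-action need not contain a fixed (radially homogeneous) element, just as a flow on a compact set need not have a stationary point. Choosing geometric scales $a^{-k}$ and diagonalizing over bases does not address this, since the problem is precisely the possible disagreement of limits along shifted subsequences. Hence the existence of a homogeneous bi-Lipschitz $F_0$, and with it the induced map $\hat F\co Z_x\to Z_y$, is not established; whether an apex-preserving bi-Lipschitz homeomorphism of metric cones forces the cross-sections to be bi-Lipschitz homeomorphic is a delicate question not available at this generality, and the paper's proof is structured specifically so as never to need it. (Your descent step itself is fine: \emph{given} a homogeneous $F_0$, the cone-distance estimate on the annulus $1/L\le r\le L$ does produce a bi-Lipschitz $\hat F$.)

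By contrast, the paper argues by contradiction and never compares the cross-sections globally: assuming every tangent at $f(p)$ is boundary-free, it blows up $f$ at $p$ to get $f_0\co C(X_0)\to Z_0$ with $\bRCD Z_0=\emptyset$, then blows up \emph{again at a boundary point $p_0=(t_0,x_0)$ of the cone with $t_0>0$} (not at the apex); by the splitting theorem the source tangent is $\R\times C(X_1)$ with $C(X_1)=T_{x_0}X_0$ still having boundary, while the target tangent is again boundary-free. Iterating peels off Euclidean factors and, via the classification of $\RCD(0,1)$ spaces, terminates in a bi-Lipschitz homeomorphism $\R^N_+\to Z_{N-1}$ with $\bRCD Z_{N-1}=\emptyset$, contradicting Corollary~\ref{cor:bilip}. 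If you wish to keep your inductive formulation, the information you actually need (that some tangent at $f(x)$ carries a boundary point) can be transferred by this local move --- blowing up $F$ at a point $(t_0,z)$ with $z\in\bRCD Z_x$, $t_0>0$, and using the iterated-tangent characterization of $\bRCD$ in Lemma~\ref{lem:bRCDXIteratedTang} together with Corollary~\ref{cor:bilip} --- rather than by constructing a homogeneous map or the cross-sectional homeomorphism $\hat F$.
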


One of the main results of the paper is the following structure theorem of non-collapsed $\RCD(K,N)$ spaces, see Theorem \ref{thm-structure} for a more precise statement. It should  be compared with the structure theory of non-collapsed Ricci-limit spaces of Cheeger-Colding \cite{CC97}; a major difference is that while it is known (still from \cite{CC97}) that non-collapsed Ricci limits do not have boundary, in general a non-collapsed $\RCD(K,N)$ space does have boundary (e.g. a  closed convex subset of $\R^{N}$ with nonempty interior). 

\begin{theorem}\label{thm-structureIntro}
Let $(X,\sfd,\cH^{N})$ be a non-collapsed $\RCD(K,N)$ space for some $K\in \R, N\in \N$.
\\Then $\bRCD X$ has Hausdorff dimension at most $N-1$.
\\Moreover  there exists an open subset $M\subset X$ bi-H\"older homeomorphic to a smooth  manifold, such that
$$
X=M \, \mathring\cup \,  \bRCD X \, \mathring\cup  \, (X\setminus (\bRCD X\cup M)),
$$
where $X\setminus (\bRCD X\cup M)$ has Hausdorff dimension at most $N-2$. 
\\In words:   $X$  is the disjoint union of a manifold part of dimension $N$, a boundary part of Hausdorff dimension at most $N-1$, and a singular set of Hausdorff dimension at most $N-2$. 

Moreover, if $\cH^{N-1}(\bRCD X)=0$, then $M$ is path connected  and the induced inner metric on $M$ coincides with the restriction of the ambient metric $\sfd$.
\end{theorem}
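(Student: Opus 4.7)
The plan is to combine three ingredients available in the paper: (i) the stratification by singular strata $\S^k = \{x\in X : \text{no tangent at } x \text{ splits off } \R^{k+1}\}$, together with the dimension estimate $\dimH \S^k \leq k$ valid on non-collapsed $\RCD(K,N)$ spaces; (ii) the inductive characterization of the $\RCD$-boundary through tangent cones (Definition \ref{def:bRCD} and Lemma \ref{lem:TanncRCD}); and (iii) a Reifenberg-type bi-Hölder regularity result for the set of points with $\R^N$-tangent.

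The first step is to establish the inclusion $\bRCD X \subset \S^{N-1}$. If $x \in \bRCD X$, then by definition some tangent at $x$ has nonempty $\RCD$-boundary. By Lemma \ref{lem:TanncRCD} every tangent is a metric cone $C(Z)$ over a non-collapsed $\RCD(N-2,N-1)$ space, and $\bRCD C(Z)\neq\emptyset$ iff $\bRCD Z\neq\emptyset$; iterating and stopping at dimension one (where the classification of \cite{KL} forces the space to be a half-line or a closed interval, hence \emph{not} $\R$), the tangent cannot be $\R^N$. Thus $x\in\S^{N-1}$, and the stratification bound gives $\dimH\bRCD X\leq N-1$.

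Next, set $M:=X\setminus\S^{N-1}$, which is exactly the set of points admitting $\R^N$ as a tangent. Openness of $M$ follows from the quantitative almost-rigidity of volume/tangent already used in Theorem \ref{Thm:ConeInf}. The bi-Hölder homeomorphism of $M$ with a smooth manifold is obtained by applying, with uniform parameters on compact subsets of $M$, the $\RCD$-analogue of Cheeger-Colding's $(\delta,r)$-Reifenberg theorem. For the middle stratum, note that any $x \in \S^{N-1}\setminus\S^{N-2}$ has a tangent splitting off $\R^{N-1}$ but no tangent splitting $\R^N$; the tangent is therefore forced to be the half-space $\R^{N-1}\times[0,\infty)$, which has nonempty $\RCD$-boundary. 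Hence $\S^{N-1}\setminus\S^{N-2}\subset\bRCD X$, so $X\setminus(M\cup\bRCD X)\subset\S^{N-2}$ and the stratification bound yields $\dimH\bigl(X\setminus(M\cup\bRCD X)\bigr)\leq N-2$.

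Finally, assume $\cH^{N-1}(\bRCD X)=0$. Then $X\setminus M\subset\bRCD X\cup\S^{N-2}$ has $\cH^{N-1}$-measure zero. To prove path-connectedness of $M$ and coincidence of the induced length metric with $\sfd|_M$, I would invoke the existence of ``good geodesics'' on non-collapsed $\RCD(K,N)$ spaces: for $\cH^N\otimes\cH^N$-a.e.\ pair $(x,y)$ there is a minimizing geodesic of $X$ whose interior meets the singular set $X\setminus M$ in an $\cH^1$-negligible way, and in particular lies in $M$. Combining this with density and with the local bi-Lipschitz regularity of $M$ on a full-measure subset then shows that any two points of $M$ can be joined inside $M$ by paths whose lengths approach $\sfd(x,y)$, yielding both conclusions.

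The main technical obstacle I expect is the Reifenberg step producing the bi-Hölder manifold structure on $M$: it requires quantitative splitting/non-splitting behaviour of $\RCD$ spaces and a careful treatment of the openness of the $N$-regular set. A secondary but still delicate point is the intrinsic-metric equality on $M$ in the case $\cH^{N-1}(\bRCD X)=0$, which rests on the full strength of the good-geodesics theory for non-collapsed $\RCD$ spaces.
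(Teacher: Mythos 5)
Your overall plan tracks the paper's argument closely — stratification bounds for $\S^k$, Lemma~\ref{lem:TanSpXNC} for the middle stratum, a Reifenberg step for manifold structure, and the disintegration/``almost convexity'' result for the last part — but there are two concrete gaps.

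First, the inclusion $\bRCD X\subset \S^{N-1}$. You argue that the particular tangent cone at $x\in\bRCD X$ with nonempty $\RCD$-boundary is not $\R^N$, and conclude $x\in\S^{N-1}$. This does not follow: $\S^{N-1}$ consists of points where \emph{no} tangent is $\R^N$, while your argument only rules out one tangent. Tangent cones need not be unique, and you have not excluded the possibility that at the same point a different tangent is $\R^N$. The paper closes this by running the contrapositive through the density: if $x\notin\S^{N-1}$ then some tangent is $\R^N$, hence $\vartheta_N(x)=1$ by the monotone limit defining the density, and then \eqref{eq:RegSetDens} forces $x\in\cR_N$ and therefore \emph{every} tangent is $\R^N$, so $x\notin\bRCD X$. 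You need this Bishop--Gromov/volume-rigidity step; without it the first inclusion is not proved.

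Second, your choice $M:=X\setminus\S^{N-1}$. After the fix above this set coincides with $\cR_N$, but $\cR_N$ is in general \emph{not} open: the density $\vartheta_N$ is only lower semicontinuous, so $\vartheta_N^{-1}(1)$ need not be open, and one can have isolated conical singularities accumulating to a regular point. Your appeal to ``quantitative almost-rigidity of volume/tangent'' gives that nearby points are almost-regular, not that they are in $\cR_N$. The paper therefore takes $M=\overset{\circ}{(\cR_N)_\eps}$ for small $\eps$: it proves $\cR_N\subset \overset{\circ}{(\cR_N)_\eps}$ (using GH-continuity of $\cH^N$ and lower semicontinuity of the density), applies Theorem~\ref{thm:topologyHN} to this set for the bi-H\"older structure, and then separately shows $(\cR_N)_\eps\cap\bRCD X=\emptyset$ (by an induction on $N$ using the Sphere Theorem) and $(\cR_N)_\eps\cap\S\subset\S^{N-2}$. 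This also makes $X\setminus M$ closed, which is exactly what Corollary~\ref{non-collapsed-alm-all-geod} requires for the last part — a hypothesis your ``good geodesics'' step would otherwise not be able to meet. Finally, note that the geodesic statement is stronger than you write: the disintegration argument produces a geodesic lying \emph{entirely} in $X\setminus(X\setminus M)$, not merely one meeting the complement in an $\cH^1$-null set.
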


We suspect that the sharp codimension for the ``topologically singular set'' is three:
\begin{conjecture}
 Let $(X,\sfd,\cH^{N})$ be a non-collapsed $\RCD(K,N)$ space. Then there exists an open subset $M\subset X$ such that $M$ is homeomorphic to a topological manifold,  ${\mathcal R}_{N}\subset M$ and $X\setminus(\bRCD X \cup M)$ has codimension at least 3.
\end{conjecture}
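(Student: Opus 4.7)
I would approach this conjecture by induction on $N$, combined with a quantitative stratification argument in the spirit of Cheeger--Naber. The base case $N\leq 2$ is handled by existing classification results: in dimension one, non-collapsed $\RCD(K,1)$ spaces are lines, circles, half-lines, or closed intervals, so $X\setminus \bRCD X$ is already a manifold; in dimension two, non-collapsed $\RCD(K,2)$ spaces are Alexandrov surfaces, so again $X\setminus \bRCD X$ is a topological manifold and the set $X\setminus(\bRCD X\cup M)$ is empty, hence trivially of codimension $\geq 3$.

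For $N\geq 3$, the central observation is that if a tangent cone $T_{p}X$ at a non-boundary point $p$ splits off an $\R^{N-2}$ Euclidean factor, say $T_{p}X \cong \R^{N-2}\times C(\bS^{1}_{\alpha})$ where $\bS^{1}_{\alpha}$ is a circle of length $2\pi(1-\alpha)$ for some $\alpha\in[0,1)$, then $T_{p}X$ is homeomorphic to $\R^{N}$, since the two-dimensional cone $C(\bS^{1}_{\alpha})$ is homeomorphic to $\R^{2}$. The first key step is then to promote this tangent-cone information to a \emph{neighborhood} statement: I would combine an effective/local version of Theorem \ref{thm:ConvergenceMan} with an iterative rescaling argument to conclude that $p$ admits a neighborhood homeomorphic to an open subset of $\R^{N}$, so $p\in M$.

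Granting this, every topologically singular interior point lies in the quantitative stratum
$$
\S^{N-3}=\bigl\{x\in X\setminus \bRCD X : \text{ no tangent cone at }x\text{ splits off an }\R^{N-2}\text{ Euclidean factor}\bigr\}.
$$
The desired bound would then follow from cone-splitting and dimension-reduction techniques, adapted to the non-collapsed $\RCD$ setting using the cone rigidity of Lemma \ref{lem:TanncRCD} together with the volume monotonicity and continuity estimates of \cite{GDPNonColl, Ant-Brue-Sem-19}. These should give $\dimH \S^{N-3}\leq N-3$, and combined with the previous paragraph this yields $\dimH(X\setminus(\bRCD X\cup M))\leq N-3$, as conjectured.

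The hardest step, and the main obstacle I anticipate, is the passage from a tangent-cone splitting at a single point to the topological manifold statement in a whole neighborhood. A single tangent cone being homeomorphic to $\R^{N}$ does not a priori imply that $p$ is a topological manifold point: one needs control on \emph{all} sufficiently small scales (a Reifenberg-type condition) or a direct local stability result that tolerates codimension-two metric singularities of the limit while preserving manifold topology. Sharpening Theorem \ref{thm:ConvergenceMan} to such a local form, where the limit is not required to be smooth but only topologically Euclidean, appears to be the fundamental technical difficulty. A secondary issue is separating the non-boundary points from nearby boundary points during the rescaling, so that the cone-splitting on $X\setminus\bRCD X$ is not obstructed by boundary accumulation; this should be controllable using the structure results for $\bRCD X$ established earlier in the paper, in particular Theorem \ref{thm:bilip-preserve-bry} and the stability of the boundary under pmGH convergence.
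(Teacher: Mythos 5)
This statement is labeled as a \textbf{Conjecture} in the paper; the authors do not prove it, and indeed it is presented as an open problem. So there is no paper proof to compare your proposal against; the relevant question is whether your proposal actually establishes the conjecture, and it does not, as you yourself flag.

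Your outline is the natural one and the technical ingredients you cite are correctly deployed: the base cases $N\le 2$ follow from the classification of $\RCD(K,1)$ spaces and from the Lytchak--Stadler result that non-collapsed $\RCD(K,2)$ spaces are Alexandrov surfaces; the observation that $\R^{N-2}\times C(\bS^{1}_{\alpha})\cong \R^{N}$ topologically is correct, and if $x\notin\bRCD X$ then no tangent cone at $x$ has boundary, so an $(N-2)$-splitting tangent cone must indeed have this form; and the quantitative stratification bound $\dimH\S^{N-3}\le N-3$ is available from De Philippis--Gigli and Antonelli--Bru\'e--Semola. What is missing is exactly what you name as the ``hardest step'': turning a single tangent cone homeomorphic to $\R^{N}$ into a local manifold conclusion at $p$. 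There is at present no analogue for $\RCD$ spaces of Perelman's Stability Theorem, which is what makes the corresponding Alexandrov statement true; the Cheeger--Colding Reifenberg theorem used elsewhere in the paper requires the limit to be $\R^{N}$ \emph{metrically} (close to Euclidean at all small scales), not merely topologically, and a two-dimensional metric cone factor with cone angle $<2\pi$ violates this at every scale. Without either (i) a Perelman-type stability theorem for $\RCD$ spaces, or (ii) a Reifenberg-type theorem allowing a fixed codimension-two singular factor, this step cannot currently be carried out, and that step is precisely why the statement remains a conjecture. Your paragraph identifying this gap is accurate; you should regard it as a correct diagnosis of the difficulty rather than as a step in a proof.
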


\begin{remark}\label{rem:CoDim3Top}
Co-dimension 3 would be sharp in general: indeed $C(\RP^{2})$  is a non-collapsed $\RCD(0,3)$ space and the vertex of the cone is not a manifold point.

Notice the difference with the case of non-collapsed Ricci limits, where a conjecture by Anderson-Cheeger-Colding-Tian states that the space is a manifold out of a singular set of co-dimension $4$. This was proved for 3-dimensional compact Ricci limits by M. Simon \cite{Sim} under a global non-collapsing assumption, for general non-compact non-collapsed 3-dimensional Ricci limits by M. Simon-Topping \cite{SimTop}, and for non-collapsed Ricci-double sided limits of arbitrary dimension by Cheeger-Naber~\cite{Ch-Na-codim4}.
\end{remark}

In Section \ref{Sec:BoundConv} we prove several results about the behaviour of the boundary under pointed Gromov Hausdorff convergence, here we just state the following  which { (sharpens and)} generalizes  to non-collapsed $\RCD(K,N)$ spaces   \cite[Theorem 6.1]{CC97}  by Cheeger-Colding.

\begin{theorem}[Theorem \ref{thm-no-bry-main}]\label{thm-no-bry-mainIntro} 
Let \{$(X_{i}, \sfd_{i}, \cH^{N}_{\sfd_{i}})\}_{i\in \N}$ be a sequence of non-collapsed $\RCD(K,N)$ spaces. Assume that
\begin{itemize}
\item $\{(X_{i}, \sfd_{i}, p_{i})\}_{i\in \N}$ converge to  $(X, \sfd, p)$ in pointed Gromov-Hausdorff sense.
\item $\cH^{N}_{\sfd_{i}}(B_1(p_i))\ge v>0$ for all $i$.
\item Each $(X_{i},\sfd_{i})$ is a topological manifold without boundary.
\end{itemize}
Then $(X, \sfd, \cH^{N})$ is a non-collapsed $\RCD(K,N)$ space with  $\bRCD X =\emptyset$.
\end{theorem}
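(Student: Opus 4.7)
The plan has two stages. Stage~1, that the pmGH limit $(X, \sfd, \cH^N)$ is itself non-collapsed $\RCD(K,N)$, is essentially routine. Under the uniform non-collapsing $\cH^N(B_1(p_i)) \geq v > 0$, Bishop--Gromov comparison produces uniform two-sided control on $\cH^N(B_r(p_i))$ on compact sets of radii, so after passing to a subsequence the measures $\cH^N_{\sfd_i}$ converge weakly to some Radon measure $\mm$ on $X$, and $(X, \sfd, \mm, p)$ is $\RCD(K,N)$ by the standard pmGH-stability. Volume continuity for non-collapsed limits (De Philippis--Gigli) then identifies $\mm = \cH^N$. By the topological-manifold-without-boundary hypothesis combined with Theorem \ref{manifold-no-boundaryIntro}, $\bRCD X_i = \emptyset$ for every $i$.

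Stage~2, the substantive content, is to show $\bRCD X = \emptyset$ by contradiction and iterated tangent cones. Suppose $x \in \bRCD X$. Then by Lemma \ref{lem:TanncRCD} combined with the inductive definition of the boundary, some tangent cone $T_x X \cong C(Z_1)$ satisfies $\bRCD Z_1 \neq \emptyset$. Fix $z_1 \in \bRCD Z_1$ and pass to the tangent cone of $C(Z_1)$ at the interior cone point $(1, z_1)$: since this tangent cone contains an exact radial line, the splitting theorem factors it as $\R \times T_{z_1} Z_1 = \R \times C(Z_2)$ with $\bRCD Z_2 \neq \emptyset$ and $\dim Z_2 = \dim Z_1 - 1$. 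Iterating strictly reduces the cross-section dimension at each step, so after $N-2$ further iterations one obtains an iterated tangent cone of the form $\R^{N-2} \times C(I)$ where $I$ is a compact $1$-dimensional non-collapsed $\RCD(0,1)$ space with boundary, i.e.\ a closed interval of length $\leq \pi$; one further tangent-cone step at an interior point of the straight boundary edge of the flat sector $C(I)$ produces a half-plane factor, yielding the closed half-space $H := \R^{N-1} \times [0, \infty)$ as an iterated tangent cone of $X$ at $x$. A standard diagonal argument then realizes $H$ as the pmGH limit of rescalings $Y_k = (X_{i_k}, \lambda_k^{-1} \sfd_{i_k}, x_{i_k}, \cH^N)$ with $\lambda_k \to 0$; each $Y_k$ remains a topological $N$-manifold without boundary (rescalings are homeomorphisms), is non-collapsed, and has an improved effective curvature lower bound $\lambda_k^2 K \to 0$.

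Stage~3: derive a contradiction from $Y_k \to H$ under the manifold-without-boundary hypothesis on the $Y_k$'s. The origin $o \in H$ has volume density $\tfrac12$ and its small balls have the local topology of half-balls, not full balls. The cleanest route is a quantitative blow-up/volume argument: volume continuity for non-collapsed limits forces $\cH^N(B_r(y_k))/(\omega_N r^N) \to \tfrac12$ for each fixed small $r$ and an appropriate choice $y_k \to o$, and a careful further diagonal extraction of tangent cones at the $y_k$'s (using Bishop--Gromov monotonicity together with rigidity in the non-collapsed $\RCD$ setting) produces a tangent cone of some $Y_k$ isometric to $H$, so $\bRCD Y_k \neq \emptyset$, contradicting Theorem \ref{manifold-no-boundaryIntro} applied to $Y_k$. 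An alternative topological route, using Reifenberg-type local homeomorphism results in the spirit of Theorem \ref{thm:ConvergenceMan}, would transfer the half-ball topology of $H$ near $o$ to the nearby $Y_k$'s and derive a purely topological contradiction with the no-boundary hypothesis on $Y_k$.

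The main obstacle is precisely this final step. Both routes require genuinely new input beyond the iteration machinery: the volume/density route needs a sharp rigidity statement, of the form ``density close to $\tfrac12$ at a point in a non-collapsed $\RCD$ space forces a nearby tangent cone with boundary'', together with a careful commutation of diagonal tangent-cone extractions with the $Y_k \to H$ convergence; the topological route needs a quantitative local-homeomorphism result between non-collapsed $\RCD$ spaces and limits with boundary, in a regime not covered by Theorem \ref{thm:ConvergenceMan}. Navigating either of these technicalities, together with a clean packaging of the iterated tangent cone / diagonal argument of Stage~2, is where the real work of the theorem resides.
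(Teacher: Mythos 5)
Your Stages 1 and 2 match the paper: the limit is non-collapsed $\RCD(K,N)$ by stability plus the volume convergence of De Philippis--Gigli, and, assuming $\bRCD X\neq\emptyset$, Lemma \ref{lem:bRCDXIteratedTang} together with the standard diagonal argument lets one replace the original sequence by rescaled spaces (still topological $N$-manifolds without boundary, non-collapsed, with curvature bounds $K_i\to 0$) converging to $(\R^N_+,0)$. The problem is your Stage 3, which you yourself flag as incomplete, and which is exactly where the theorem's content lies. Both routes you sketch fail as stated. The volume/density route ends by invoking ``$\bRCD Y_k\neq\emptyset$, contradicting Theorem \ref{manifold-no-boundaryIntro}'': this is circular, because Corollary \ref{manifold-no-boundary} (= the first part of Theorem \ref{manifold-no-boundaryIntro}) is obtained in the paper by applying Theorem \ref{thm-no-bry-main} to a constant sequence; you cannot assume it while proving Theorem \ref{thm-no-bry-main}. (For the same reason your Stage 1 remark that $\bRCD X_i=\emptyset$ is both circular and unnecessary --- the proof never uses it.) Moreover, the rigidity you would need --- ``density close to $\tfrac12$ forces a nearby tangent cone with boundary'' --- is not available in the paper and is not how the argument goes; likewise the Reifenberg-type results (Theorems \ref{thm:CCReif}, \ref{reifenberg-stab-pointed}, \ref{top-stability-RCD}) cannot be applied near the boundary point of $\R^N_+$, since points there are not in $(\cR_N)_\eps$, so ``transferring the half-ball topology'' to $Y_k$ near $o$ is precisely what you cannot do.

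The missing idea is the paper's global degree-theoretic doubling argument, which only uses topological stability at an \emph{interior} point of $\R^N_+$ and exploits the manifold hypothesis through a compact exhaustion. Concretely: by Theorem \ref{thn-comp-exhaustion} choose a compact connected $N$-submanifold with boundary $K_i\subset B_1(p_i)$ containing $\bar B_{1-\delta_i}(p_i)$; take a continuous $\delta_i$-GH approximation $f_i\co B_1(p_i)\to B_1(0)\cap\R^N_+$ (partition of unity) and adjust it radially so that $f_i(\partial K_i)\subset \partial B_1(0)\cap\R^N_+$, the spherical part of the boundary. At the interior point $q=(1/2,0,\dots,0)$, which has a neighborhood far from $\partial\R^N_+$, Theorem \ref{top-stability-RCD} gives embeddings $h_i\co B_{1/3}(q_i)\to B_{1/3+\eps_i}(q)$ onto a neighborhood of $q$; interpolate $f_i$ with $h_i$ so the resulting map equals $h_i$ on $B_{1/5}(q_i)$. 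Now double $K_i$ along $\partial K_i$ to get a closed manifold $M_i$, and double $\bar B_1(0)\cap\R^N_+$ along $\partial B_1(0)\cap\R^N_+$ to get $M\cong \bar D^N$; the adjusted map doubles to $\tilde f_i\co M_i\to M$. Its $\Z_2$-degree is $0$ because $M$ is not a closed manifold, yet it is $1$ because $q$ has a unique preimage, lying in $B_{1/5}(q_i)$ where $\tilde f_i$ is a homeomorphism --- a contradiction. This is the step your proposal does not supply, and no form of the density or local-homeomorphism statements you ask for is needed once the argument is organized this way.
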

 Note that Theorem \ref{manifold-no-boundaryIntro} follows directly from a localized version of Theorem \ref{thm-no-bry-mainIntro}, namely Theorem \ref{thm-no-bry-main}. 
 \\

In Section \ref{Sec:SeqOpenNCRCD}, we investigate sequences of $\RCD(K,N)$ spaces where the limit is non-collapsed $\RCD(K,N)$ (or more generally weakly non-collapsed, i.e. $\mm\ll\cH^{N}$). We will  prove  several results stating roughly that if the limit  of a pmGH sequence of $\RCD(K,N)$ spaces is  (weakly) non-collapsed, then the same is true  eventually for the elements of the sequence; thus establishing a sort of ``sequential openness'' of this class of spaces. For the precise statements, see Theorem \ref{thm:Stabwnc} and Theorem \ref{thm:Stabnc}; here we only mention a very special case when the limit is a smooth Riemannian manifold.

\begin{theorem}[Theorem \ref{thm:XiMman}]\label{thm:XiMmanIntro} 
Let $(M,g)$ be a compact Riemannian manifold of dimension $N$. Let $\{(X_{i}, \sfd_{i},\mm_{i}, \bar{x_{i}})\}_{i\in \N}$ be a sequence of $\RCD(K,N)$ spaces, for some $K\in \R$,  converging to 
$(M,g)$ in pointed measured Gromov Hausdorff sense. Then there exists $i_{0}\in \N$ such that  for all $i\ge i_0$ it holds that
\begin{itemize}
\item $(X_{i}, \sfd_{i},\mm_{i})$ is a compact non-collapsed $\RCD(K,N)$: more precisely, there exists a sequence $c_{i}\to 1$ such that $\mm_{i}=c_{i} \cH^{N}$.
\item $(X_{i}, \sfd_{i})$ is homeomorphic to $M$ via a bi-H\"older homeomorphism.
\end{itemize}
\end{theorem}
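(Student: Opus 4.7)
The plan is to split the statement into its measure-theoretic, topological and bi-H\"older parts and address them in order. The measure-theoretic step is an application of the sequential openness of non-collapsedness. Since $(M,g)$ is a smooth compact Riemannian manifold of dimension $N$, one has $\vol_g=\cH^{N}_{\sfd_g}$, so $(M,\sfd_g,\vol_g)$ is a non-collapsed $\RCD(K,N)$ space. Applying Theorem \ref{thm:Stabnc} to $(X_i,\sfd_i,\mm_i,\bar x_i)$ yields that, for $i$ large, each $(X_i,\sfd_i,\mm_i)$ is non-collapsed, i.e.\ $\mm_i = c_i\,\cH^N_{\sfd_i}$ for some constant $c_i>0$. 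Eventual compactness of $X_i$ follows from pmGH convergence to the compact space $M$ together with properness of $\RCD(K,N)$ spaces.

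To establish $c_i\to 1$ I would combine two convergences that are available once the sequence is known to be non-collapsed. On the one hand, measured convergence and compactness give $\mm_i(X_i)\to\vol_g(M)$. On the other hand, Colding-type volume convergence for non-collapsed $\RCD(K,N)$ spaces from \cite{GDPNonColl} yields $\cH^N_{\sfd_i}(X_i)\to \vol_g(M)$. Hence $c_i=\mm_i(X_i)/\cH^N_{\sfd_i}(X_i)\to 1$. Since $c_i\to 1$, the pmGH convergence of $(X_i,\sfd_i,\mm_i,\bar x_i)$ to $(M,g,\vol_g)$ is equivalent to the pmGH convergence of $(X_i,\sfd_i,\cH^N_{\sfd_i},\bar x_i)$ to $(M,g,\cH^N_{\sfd_g})$, so Theorem \ref{thm:ConvergenceMan} applies directly and produces, for $i$ large, a homeomorphism $f_i:X_i\to M$.

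For the bi-H\"older regularity I would invoke a Reifenberg-type construction for non-collapsed $\RCD(K,N)$ spaces, in the spirit of Cheeger-Colding and its $\RCD$ variants via $\delta$-splitting maps. Because $(M,g)$ is smooth and compact there is a uniform scale $r_0>0$ such that, for every $y\in M$, the rescaled ball $r_0^{-1}B_{r_0}(y)$ is arbitrarily GH-close to the Euclidean unit ball. Via the pmGH convergence this closeness transfers to every ball of radius $r_0$ in $X_i$ for $i$ large. The Reifenberg theorem then produces bi-H\"older coordinate charts from such balls onto Euclidean balls with a uniform H\"older exponent, and a covering/patching argument upgrades $f_i$ to a bi-H\"older homeomorphism.

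The main obstacle I anticipate is precisely this last bi-H\"older upgrade: one must control the H\"older exponents and the overlaps of the Reifenberg charts uniformly in $i$, and verify that the charts are compatible with the global homeomorphism $f_i$ produced by Theorem \ref{thm:ConvergenceMan}, so that the composition is globally bi-H\"older rather than only locally bi-H\"older.
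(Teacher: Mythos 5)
Your measure-theoretic step follows exactly the paper's route: invoke Theorem~\ref{thm:Stabnc} to get eventual non-collapsedness and $\mm_i=c_i\cH^N_{\sfd_i}$. Note, however, that the conclusion $c_i\to 1$ is already part of the statement of Theorem~\ref{thm:Stabnc}, so your subsequent paragraph deriving $c_i\to 1$ from volume convergence (Theorem~\ref{thm:ContHN}) is a correct but redundant repetition of what is inside the proof of that theorem.

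For the topological step you take a more roundabout path than the paper. You invoke Theorem~\ref{thm:ConvergenceMan} (which as stated only asserts a homeomorphism) and then propose to upgrade it to bi-H\"older by hand via a Reifenberg chart-patching argument, which you correctly flag as the delicate point: controlling the H\"older exponent and overlaps uniformly in $i$ and reconciling local charts with the global homeomorphism. That difficulty is exactly what Theorem~\ref{top-stability-RCD} (the topological stability theorem) has already resolved: its proof verifies the Reifenberg regularity hypothesis $B_R(p_i)\subset (\cR_N)_{\eps,r}(X_i)$ uniformly in $i$ and then applies Theorem~\ref{reifenberg-stab-pointed} / Corollary~\ref{reifenberg-stab-compact}, which directly output a \emph{global} $\alpha$-bi-H\"older homeomorphism. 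The paper therefore proves Theorem~\ref{thm:XiMman} simply by combining Theorem~\ref{thm:Stabnc} with Theorem~\ref{top-stability-RCD}. So your argument has no genuine gap once you replace the invocation of Theorem~\ref{thm:ConvergenceMan} plus a hand-crafted upgrade with a direct invocation of Theorem~\ref{top-stability-RCD}; as written, however, the patching step is left as a sketch and is precisely where the paper has already done the work for you.
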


{ 
As a natural application of some of the main results of the paper, we next present an almost-rigidity result on the spectrum of the Laplace operator. In the proof we will use the  Sphere Theorem \ref{thm:sphere}, Theorem \ref {thm:XiMmanIntro},  the stability of the spectrum for $\RCD(N-1,N)$ spaces \cite[Theorem 7.8]{GMS2013} and the higher order Obata's  rigidity result \cite[Theorem 1.4]{ketterer3}. { The complete proof can be found at the end of Section \ref{Sec:SeqOpenNCRCD}.}

\begin{corollary}\label{eq:lambda1N=N}
For every  $N\in \N, N\geq 2$, and every  $\varepsilon>0$ there exists $\delta=\Psi(\varepsilon|N)$ with the following property.
If $(X,\sfd,\mm)$ is an $\RCD(N-1,N)$ space such that $N\leq \lambda_{1}\leq \lambda_{2}\leq \ldots \leq \lambda_{N+1} \leq N+\delta$, then 
\begin{enumerate}
\item  $(X,\sfd,\mm)$ is a non-collapsed  $\RCD(N-1,N)$ space; more precisely there exists $c>0$ such that $\mm=c\cH^{N}$.
\item  $(X,\sfd)$ is homeomorphic to ${\mathbb S}^{N}$.
\item  $\cH^{N}(X)\in \left[(1-\varepsilon) \cH^{N}(\mathbb S^{N}),  \cH^{N}(\mathbb S^{N}) \right]$.
\item $\sfd_{GH} (X,{\mathbb S}^{N})\leq \varepsilon.$
\end{enumerate}
\end{corollary}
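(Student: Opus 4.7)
The plan is to argue by contradiction, using pmGH compactness together with a rigidity theorem of Ketterer to reduce to the case where the limit is the round sphere, and then apply the stability statements proved earlier in the paper.

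Suppose the corollary fails. Then there exist $N\geq 2$, $\varepsilon_{0}>0$, and a sequence of $\RCD(N-1,N)$ spaces $(X_{i},\sfd_{i},\mm_{i})$ whose first $N+1$ eigenvalues satisfy $N\leq \lambda_{1}^{i}\leq\cdots\leq\lambda_{N+1}^{i}\leq N+1/i$, such that for every $i$ at least one of (1)--(4) fails at level $\varepsilon_{0}$. Bonnet--Myers for $\RCD(N-1,N)$ spaces forces $\diam(X_{i})\leq\pi$, so after normalizing $\mm_{i}$ to be a probability measure (which does not affect the Laplace spectrum) the sequence is pmGH precompact, and by stability every subsequential limit $(X_{\infty},\sfd_{\infty},\mm_{\infty})$ is again an $\RCD(N-1,N)$ space.

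I would then invoke the spectral stability theorem \cite[Theorem 7.8]{GMS2013} to obtain $\lambda_{k}^{\infty}=N$ for $k=1,\ldots,N+1$. At this point Ketterer's higher order Obata rigidity \cite[Theorem 1.4]{ketterer3} forces $(X_{\infty},\sfd_{\infty},\mm_{\infty})$ to be isomorphic as a m.m.s., up to a positive multiplicative constant on the measure, to the round sphere $(\bS^{N},\sfd_{\bS^{N}},c\cH^{N})$; in particular the limit is a smooth compact Riemannian manifold. Feeding this into Theorem \ref{thm:XiMmanIntro}, for all sufficiently large $i$ there exists $c_{i}>0$ with $\mm_{i}=c_{i}\cH^{N}$ and $(X_{i},\sfd_{i})$ is bi-H\"older homeomorphic to $\bS^{N}$, giving (1) and (2). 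Conclusion (4) is immediate from pmGH convergence to $\bS^{N}$, and for (3) the upper bound $\cH^{N}(X_{i})\leq \cH^{N}(\bS^{N})$ is Bishop--Gromov while the lower bound follows either from volume convergence along the non-collapsed pmGH sequence or by applying the Sphere Theorem \ref{thm:sphere} to $X_{i}$ once it is known to be non-collapsed and GH-close to $\bS^{N}$. All four conclusions therefore hold for large $i$, contradicting our assumption.

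The main technical step is Ketterer's higher-order rigidity: this is what upgrades the spectral hypothesis on $\lambda_{1},\ldots,\lambda_{N+1}$ simultaneously into full metric measure rigidity for the limit, landing us on the smooth round sphere. Everything else is then packaged by Theorem \ref{thm:XiMmanIntro} and the Sphere Theorem; the only mild subtleties are checking that the probability normalization does not affect the spectrum, and that the spectral convergence of \cite{GMS2013} applies to each of the first $N+1$ eigenvalues.
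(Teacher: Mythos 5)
Your proposal is correct and follows essentially the same route as the paper: compactness-and-contradiction, spectral stability from \cite[Theorem 7.8]{GMS2013}, Ketterer's higher-order Obata rigidity \cite[Theorem 1.4]{ketterer3} to identify the limit as the round sphere, then Theorem \ref{thm:XiMmanIntro} and the Sphere Theorem \ref{thm:sphere} (plus Bishop--Gromov for the volume upper bound) to derive conclusions (1)--(4) for large $i$. The minor extra remarks you add (Bonnet--Myers for the diameter bound, the harmlessness of the probability normalization for the spectrum) are accurate and do not change the structure of the argument.
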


In case $(X,\sfd,\mm)$ is a smooth Riemannian $N$-manifold with Ricci $\geq N-1$, Corollary \ref{eq:lambda1N=N} is a consequence of \cite[Theorem 1.1]{PetEigenvalue}\cite{Col96b, Col96a} (see also \cite{BertrandEigenvalue}).
With a similar compactness-contradiction argument used to prove Corollary \ref{eq:lambda1N=N}, one can obtain the following almost version of the  Erbar-Sturm's rigidity result  \cite[Corollary 1.4]{ErSt18}. 
 \begin{corollary}[Almost version of Erbar-Sturm's Rigidity]\label{cor:ESAR}
For every  $N\in \N, N\geq 2$, and every  $\varepsilon>0$ there exists $\delta=\Psi(\varepsilon|N)$ with the following property.
If $(X,\sfd,\mm)$ is an $\RCD(N-1,N)$ space such that 
$$\left|\int_{X}\int_{X} \cos \sfd(x,y)\,  \mm(dx) \, \mm(dy) \right|<\delta,$$
 then the same conclusions {\rm (1)-(4)} of Corollary \ref{eq:lambda1N=N} hold.
\end{corollary}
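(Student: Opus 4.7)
The plan is to argue by compactness and contradiction, paralleling the proof of Corollary~\ref{eq:lambda1N=N}. Assuming the assertion fails, we obtain $\varepsilon_0>0$ and a sequence $\{(X_i,\sfd_i,\mm_i)\}_{i\in\N}$ of $\RCD(N-1,N)$ spaces, normalized so that $\mm_i(X_i)=1$, with
\[
\left|\int_{X_i}\int_{X_i}\cos\sfd_i(x,y)\,\mm_i(dx)\,\mm_i(dy)\right|\longrightarrow 0,
\]
while at least one of conclusions (1)-(4) fails for each $X_i$. By Bonnet-Myers each $(X_i,\sfd_i)$ has diameter at most $\pi$, so by Sturm-Gromov's mGH-precompactness of $\RCD(K,N)$ spaces with uniformly bounded diameter, a subsequence converges in measured Gromov-Hausdorff sense to a compact $\RCD(N-1,N)$ space $(X_\infty,\sfd_\infty,\mm_\infty)$ with $\mm_\infty(X_\infty)=1$.

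Next, one passes to the limit in the functional. Realizing the mGH convergence by isometric embeddings into a common compact ambient metric space, $\cos\sfd(\cdot,\cdot)$ extends to a bounded continuous function on the product and $\mm_i\otimes\mm_i\weak\mm_\infty\otimes\mm_\infty$ weakly; therefore
\[
\int_{X_\infty}\int_{X_\infty}\cos\sfd_\infty(x,y)\,\mm_\infty(dx)\,\mm_\infty(dy)=0.
\]
Erbar-Sturm's rigidity result~\cite[Corollary~1.4]{ErSt18} then forces $(X_\infty,\sfd_\infty,\mm_\infty)$ to be isomorphic, as a m.m.s., to the round sphere $(\mathbb{S}^N,\sfd_{\mathbb{S}^N},\cH^N/\cH^N(\mathbb{S}^N))$.

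Since the limit is a closed smooth $N$-dimensional Riemannian manifold, Theorem~\ref{thm:XiMmanIntro} applied to the sequence delivers, for $i$ large enough, that $(X_i,\sfd_i,\mm_i)$ is non-collapsed $\RCD(N-1,N)$ with $\mm_i=c_i\cH^N$ and $c_i\to 1/\cH^N(\mathbb{S}^N)$ (this is conclusion~(1)), and that $(X_i,\sfd_i)$ is bi-H\"older homeomorphic to $\mathbb{S}^N$ (conclusion~(2)); the mGH convergence yields $\sfd_{GH}(X_i,\mathbb{S}^N)\to 0$ (conclusion~(4)), while $\cH^N(X_i)=1/c_i\to\cH^N(\mathbb{S}^N)$ combined with the Bishop-Gromov upper bound produces~(3). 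This contradicts the assumed failure of (1)-(4). The main obstacle I anticipate is pinning down the exact form of Erbar-Sturm's rigidity: ideally it characterizes $\mathbb{S}^N$ directly in the $\RCD(N-1,N)$ setting, but should it only yield a spherical suspension structure, one would iterate on the base factor, exploiting that the inherited measure is non-degenerate, to reach $\mathbb{S}^N$.
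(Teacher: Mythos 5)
Your proof is correct and follows the compactness-contradiction scheme the paper indicates (the paper itself gives no detailed argument for this corollary, merely remarks ``with a similar compactness-contradiction argument used to prove Corollary~\ref{eq:lambda1N=N}''). The chain---Bonnet--Myers to get uniform diameter $\le\pi$, Gromov/Sturm precompactness, passing the functional to the mGH limit via weak convergence of $\mm_i\otimes\mm_i$ in a common ambient space, Erbar--Sturm's rigidity to identify the limit with the round sphere, and then Theorem~\ref{thm:XiMman} plus volume continuity and Bishop--Gromov to extract conclusions (1)--(4) for large $i$---is exactly what the authors had in mind. Your worry about the precise form of Erbar--Sturm's result is unfounded: \cite[Corollary~1.4]{ErSt18} states that for an $\RCD(N-1,N)$ space with a \emph{probability} reference measure the double integral is $\ge 0$, with equality if and only if $N$ is an integer and the space is isomorphic to the round $\bS^N$ with normalized volume, so no iteration on a spherical-suspension base is needed.

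One point deserves a clarifying remark. You write ``normalized so that $\mm_i(X_i)=1$'' as though this were a harmless normalization, but the hypothesis $\bigl|\int\int\cos\sfd\,\mm\otimes\mm\bigr|<\delta$ is \emph{not} scale-invariant in $\mm$: rescaling any $\RCD(N-1,N)$ space's measure to have tiny total mass makes the double integral as small as you like, so the statement would be false without the implicit assumption that $\mm$ is a probability measure (which is the setting of \cite[Corollary~1.4]{ErSt18}). It would be worth stating explicitly that you are reading $\mm(X)=1$ into the hypothesis, rather than imposing it as a WLOG reduction; this is inherited from the paper's own formulation, which leaves the normalization tacit.
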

Let us stress that Corollary \ref{cor:ESAR} seems new even in case when $(X,\sfd,\mm)$ is a smooth Riemannian $N$-manifold with Ricci $\geq N-1$.

A few days after the present work was posted on arXiv, an independent paper by Honda and Mondello \cite{HM} appeared, containing  similar versions of  Theorem \ref{thm:sphere} and Corollary \ref{eq:lambda1N=N}.
The two papers are quite different in scope though: while we are mainly concerned with the topology and the boundary of non-collapsed
$\RCD$ spaces,   \cite{HM} is more focused on sphere theorems and their application to stratified spaces.

\subsubsection*{Acknowledgements}   
The project started  while both authors were in residence at the Mathematical Sciences Research Institute in Berkeley, California during the Spring 2016 semester, supported by the National Science Foundation under Grant No. DMS-1440140. We thank the organizers of the Differential Geometry Program and MSRI for providing great environment for research and collaboration. We are grateful to Misha Kapovich for suggesting  the proof of Theorem~\ref{thn-comp-exhaustion}, { to Fabio Cavalletti and to the anonymous referee for carefully reading the paper and their comments which improved the exposition. }
\\Vitali Kapovitch is supported in part by a Discovery grant from NSERC.
\\Andrea Mondino acknowledges the support of   the EPSRC First Grant  EP/R004730/1 and of the ERC Starting Grant 802689. 

}

\section{Preliminaries}
{
Throughout the paper a metric measure space (m.m.s. for short) is a  triple $(X,\sfd,\mm)$ where $(X,\sfd)$ is a complete, proper and separable metric space and $\mm$ is a non-negative Borel measure on $X$ finite on bounded subsets and satisfying $\supp(\mm)=X$.  The properness assumption is motivated by the synthetic Ricci curvature lower bounds/dimensional upper bounds we will assume to hold. 

For $k>0$ we will denote  by $\cH^{k}$ the $k$-dimensional Hausdorff measure of $(X,\sfd)$.  The Gromov-Hausdorff distance between two metric spaces is denoted by $\sfd_{GH}$.

\subsection{Ricci curvature lower bounds and dimensional upper bounds for  metric measure spaces}\label{SS:RCDDef}

We denote by 
$$
\Geo(X) : = \{ \gamma \in C([0,1], X):  \sfd(\gamma_{s},\gamma_{t}) = |s-t| \sfd(\gamma_{0},\gamma_{1}), \text{ for every } s,t \in [0,1] \}
$$
the space of constant speed geodesics. The metric space $(X,\sfd)$ is a \emph{geodesic space} if and only if for each $x,y \in X$ 
there exists $\gamma \in \Geo(X)$ so that $\gamma_{0} =x, \gamma_{1} = y$. Given two points $x,y$ in  a geodesic metric space $(X, \sfd)$ we will denote by $[x,y]$ a shortest geodesic between $x$ and $y$.
\\Recall that, for complete geodesic spaces, local compactness is equivalent to properness (a metric space is proper if every closed ball is compact).

\medskip

We denote with  $\mathcal{P}(X)$ the  space of all Borel probability measures over $X$ and with  $\mathcal{P}_{2}(X)$ the space of probability measures with finite second moment.
$\mathcal{P}_{2}(X)$ can be  endowed with the $L^{2}$-Kantorovich-Wasserstein distance  $W_{2}$ defined as follows:  for $\mu_0,\mu_1 \in \mathcal{P}_{2}(X)$,  set
\begin{equation}\label{eq:W2def}
  W_2^2(\mu_0,\mu_1) := \inf_{ \pi} \int_{X\times X} \sfd^2(x,y) \, \pi(dxdy),
\end{equation}
where the infimum is taken over all $\pi \in \mathcal{P}(X \times X)$ with $\mu_0$ and $\mu_1$ as the first and the second marginal.
The space $(X,\sfd)$ is geodesic  if and only if the space $(\mathcal{P}_2(X), W_2)$ is geodesic. 
\\We will also consider the subspace $\mathcal{P}_{2}(X,\sfd,\mm)\subset \mathcal{P}_{2}(X)$
formed by all those measures absolutely continuous with respect with $\mm$.
\medskip

In order to formulate curvature properties for $(X,\sfd,\mm)$  we recall the definition of the  distortion coefficients:  for $K\in \R, N\in [1,\infty), \theta \in (0,\infty), t\in [0,1]$, set 
\begin{equation}\label{eq:deftau}
\tau_{K,N}^{(t)}(\theta): = t^{1/N} \sigma_{K,N-1}^{(t)}(\theta)^{(N-1)/N},
\end{equation}
where the $\sigma$-coefficients are defined 
as follows:
given two numbers $K,N\in \R$ with $N\geq0$, we set for $(t,\theta) \in[0,1] \times \R_{+}$, 
\begin{equation}\label{eq:Defsigma}
\sigma_{K,N}^{(t)}(\theta):= 
\begin{cases}
\infty, & \textrm{if}\ K\theta^{2} \geq N\pi^{2}, \crcr
\displaystyle  \frac{\sin(t\theta\sqrt{K/N})}{\sin(\theta\sqrt{K/N})} & \textrm{if}\ 0< K\theta^{2} <  N\pi^{2}, \crcr
t & \textrm{if}\ K \theta^{2}<0 \ \textrm{and}\ N=0, \ \textrm{or  if}\ K \theta^{2}=0,  \crcr
\displaystyle   \frac{\sinh(t\theta\sqrt{-K/N})}{\sinh(\theta\sqrt{-K/N})} & \textrm{if}\ K\theta^{2} \leq 0 \ \textrm{and}\ N>0.
\end{cases}
\end{equation}

\noindent
Let us also recall the definition of the R\'enyi Entropy functional ${\mathcal E}_{N} : \cP(X) \to [0, \infty]$, 
\begin{equation}\label{eq:defEnt}
{\mathcal E}_{N}(\mu)  := \int_{X} \rho^{1-1/N}(x) \,\mm(dx),
\end{equation}
where $\mu = \rho \mm + \mu^{s}$ with $\mu^{s}\perp \mm$.
\medskip

\begin{definition}[$\CD$ condition]\label{def:CD}
Let $K \in \R$ and $N \in [1,\infty)$. A metric measure space  $(X,\sfd,\mm)$ verifies $\CD(K,N)$ if for any two $\mu_{0},\mu_{1} \in \cP_{2}(X,\sfd,\mm)$ 
with bounded support there exist a $W_{2}$-geodesic $(\mu_{t})_{t\in [0,1]}\subset \cP_{2}(X,\sfd,\mm)$ and  $\pi\in \cP(X\times X)$ $W_{2}$-optimal plan, such that for any $N'\geq N, t\in [0,1]$:
\begin{equation}\label{eq:defCD}
{\mathcal E}_{N'}(\mu_{t}) \geq \int \tau_{K,N'}^{(1-t)} (\sfd(x,y)) \rho_{0}^{-1/N'} 
+ \tau_{K,N'}^{(t)} (\sfd(x,y)) \rho_{1}^{-1/N'} \,\pi(dxdy).
\end{equation}
\end{definition}

Throughout this paper, we will always assume the proper metric measure space $(X,\sfd,\mm)$ to satisfy $\CD(K,N)$, for some $K,N \in \R$. This will imply in particular that $(X,\sfd)$ is geodesic.

A variant of the $\CD$ condition, called  reduced curvature dimension condition and denoted by  $\CD^{*}(K,N)$ \cite{BS10},  asks for the same inequality \eqref{eq:defCD} of $\CD(K,N)$ but  the
coefficients $\tau_{K,N}^{(t)}(\sfd(\gamma_{0},\gamma_{1}))$ and $\tau_{K,N}^{(1-t)}(\sfd(\gamma_{0},\gamma_{1}))$ 
are replaced by $\sigma_{K,N}^{(t)}(\sfd(\gamma_{0},\gamma_{1}))$ and $\sigma_{K,N}^{(1-t)}(\sfd(\gamma_{0},\gamma_{1}))$, respectively.
In general, the $\CD^{*}(K,N)$ condition is weaker than  $\CD(K,N)$. 
\medskip

On $\CD(K,N)$ spaces,  the  classical Bishop-Gromov volume growth estimate holds. In order to state it, for a  fixed a point $x_{0}\in X$ let 
$$ v(r)= \mm(\bar{B}_{r}(x_{0})) $$
be the volume of the closed metric ball of radius $r$ and center $x_{0}$ and let 
$$
s(r):=\limsup_{\delta\to 0} \frac{1}{\delta} \mm(\bar{B}_{r+\delta}(x_{0}) \setminus  B_{r}(x_{0}))
$$
be the codimension one volume of the corresponding spheres.

\begin{theorem}[Bishop-Gromov in $\CD(K,N)$, Theorem 2.3 \cite{sturm:II}]\label{thm:BishopGromov}
Let $(X,\sfd,\mm)$ be a  $\CD(K,N)$ space for some $K,N\in \R, N> 1$. Then for all $x_{0}\in X$ and all $0< r< R\leq \pi \sqrt{N-1/(K \vee 0)}$ it holds:

\begin{equation}\label{eq:BSspheres}
\frac{s(r)}{s(R)} \geq 
\begin{cases}
\left( \frac{\sin(r \sqrt{K/(N-1)}}{ \sin(R \sqrt{K/(N-1)} }  \right)^{N-1},& \quad \text{if } K>0,\medskip \\
\left( \frac r  R  \right)^{N-1},& \quad \text{if } K=0, \medskip \\
\left( \frac{\sinh(r \sqrt{K/(N-1)}}{ \sinh(R \sqrt{K/(N-1)} }  \right)^{N-1},& \quad \text{if } K<0,
\end{cases}
\end{equation}
and 
\begin{equation}\label{eq:BSballs}
\frac{v(r)}{v(R)} \geq { v_{K,N}(r):=}
\begin{cases}
\frac{\int_{0}^{r} \left(\sin(t \sqrt{K/(N-1)})\right)^{N-1} dt}{ \int_{0}^{R} \left(\sin(t \sqrt{K/(N-1)}) \right)^{N-1} dt} ,& \quad \text{if } K>0, \medskip \\
\left(\frac r  R  \right)^{N},& \quad \text{if } K=0, \medskip \\
\frac{\int_{0}^{r} \left(\sinh(t \sqrt{K/(N-1)})\right)^{N-1} dt}{ \int_{0}^{R} \left(\sinh(t \sqrt{K/(N-1)}) \right)^{N-1} dt} ,& \quad \text{if } K<0. 
\end{cases}
\end{equation}

\end{theorem}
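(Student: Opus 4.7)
The plan is to derive the Bishop-Gromov comparison from the $\CD(K,N)$ condition via an intermediate Brunn-Minkowski type inequality, following the scheme originally implemented by Sturm. The basic principle is that along a displacement interpolation emanating from $x_0$, the set of $t$-midpoints between $x_0$ and the sphere of radius $R$ lies at distance $tR$ from $x_0$, and the distortion coefficients $\tau_{K,N}^{(t)}$ precisely encode the comparison with the model space.

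First, I would fix $x_0\in X$ and, to avoid using Dirac masses directly, set $\mu_0^\eps := \mm\llcorner B_\eps(x_0)/\mm(B_\eps(x_0))$ and $\mu_1^\delta := \mm\llcorner A_\delta / \mm(A_\delta)$, where $A_\delta := \bar B_{R+\delta}(x_0)\setminus B_R(x_0)$ is a thin annulus at radius $R$. By Definition \ref{def:CD} there exists a $W_2$-geodesic $(\mu_t)_{t\in[0,1]}$ from $\mu_0^\eps$ to $\mu_1^\delta$ (absolutely continuous with densities $\rho_t$) together with an optimal plan $\pi$ so that \eqref{eq:defCD} holds for all $N'\geq N$. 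Applying \eqref{eq:defCD} with $N' = N$, combining with Jensen's inequality (using concavity of $\theta\mapsto \tau_{K,N}^{(t)}(\theta)$ on the appropriate range and convexity of $r\mapsto r^{1-1/N}$), and observing that $\supp(\mu_t)$ is contained in the set of $t$-midpoints between $B_\eps(x_0)$ and $A_\delta$, one obtains the Brunn-Minkowski type estimate
\begin{equation*}
\mm\bigl(M_t(B_\eps(x_0), A_\delta)\bigr)^{1/N} \;\geq\; \tau_{K,N}^{(1-t)}(\Theta_-)\, \mm(B_\eps(x_0))^{1/N} + \tau_{K,N}^{(t)}(\Theta_+)\,\mm(A_\delta)^{1/N},
\end{equation*}
where $M_t(\cdot,\cdot)$ denotes the set of $t$-intermediate points and $\Theta_\pm$ are the extremal distances between the two sets.

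Next I would specialize this inequality to extract the spherical comparison \eqref{eq:BSspheres}. Choose $t = r/R$ so that the $t$-midpoints between $x_0$ and $A_\delta$ fall inside the annulus at radius $r$; more precisely, $M_t(\{x_0\},A_\delta) \subset \bar B_{r + t\delta}(x_0)\setminus B_r(x_0)$. Letting $\eps\to 0$ (so the first summand above vanishes) and then $\delta\to 0$, divide both sides by $\delta$: the left-hand side yields (an upper bound for) $s(r)^{1/N}\cdot (t\delta)^{1/N}$-type quantities, while the right-hand side produces $\tau_{K,N}^{(t)}(R)\, s(R)^{1/N}\cdot \delta^{1/N}$. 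Using the explicit identity $\tau_{K,N}^{(t)}(R)/t^{1/N} = \sigma_{K,N-1}^{(t)}(R)^{(N-1)/N}$ coming from \eqref{eq:deftau}, and raising to the power $N/(N-1)$, delivers exactly the sphere comparison \eqref{eq:BSspheres}. The ball estimate \eqref{eq:BSballs} then follows by integrating $s$ in $r$ and applying the classical fact that $r\mapsto v(r)/\int_0^r (\text{model sphere})$ is monotone once $s(r)/s(R)$ is bounded by the model ratio.

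The main technical obstacle, and the reason Sturm's argument is delicate, is to rigorously pass from the $\CD$ inequality written for absolutely continuous measures with fixed masses to a pointwise differential statement about $s(r)$. Two issues must be handled carefully: the approximation $\eps\to 0$ requires a uniform control on $\mathcal E_N(\mu_t^\eps)$ (otherwise the inequality could degenerate), and the transport plan may split geodesics so that not all mass at the intermediate time lies in the thin annulus at radius $tR$. Both issues are addressed by choosing $A_\delta$ to be a thin spherical annulus so that the optimal plan from $\delta_{x_0}$ to $A_\delta$ is essentially radial (in the model sense), then using a Fubini-type slicing argument to separate the radial and angular variables, together with a monotonicity argument to replace $\limsup$ by a genuine derivative where needed. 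Once these technicalities are resolved, the three cases $K>0$, $K=0$, $K<0$ in \eqref{eq:BSspheres} and \eqref{eq:BSballs} follow uniformly from the corresponding explicit formulas for $\sigma_{K,N-1}^{(t)}$ in \eqref{eq:Defsigma}.
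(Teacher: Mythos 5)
The paper does not actually prove this theorem: it quotes it verbatim from Sturm, Theorem 2.3 of \cite{sturm:II}. Your outline is precisely Sturm's argument --- derive a $\tau$-weighted Brunn--Minkowski inequality from the $\CD(K,N)$ entropy inequality \eqref{eq:defCD} (Jensen plus the containment of $\supp\mu_t$ in the set of $t$-intermediate points), apply it to $B_\eps(x_0)$ and a thin annulus at radius $R$, send $\eps\to0$ and then $\delta\to0$ to obtain the spherical comparison, and integrate to get the ball comparison --- so the approach is correct and essentially identical to the source. One small algebraic slip: from $s(r)^{1/N}\,t^{1/N}\ge\tau_{K,N}^{(t)}(R)\,s(R)^{1/N}$ and the identity $\tau_{K,N}^{(t)}(\theta)=t^{1/N}\bigl(\sigma_{K,N-1}^{(t)}(\theta)\bigr)^{(N-1)/N}$ one cancels $t^{1/N}$ and raises to the power $N$ (not $N/(N-1)$) to reach \eqref{eq:BSspheres}; also, in the $\limsup$-vs-$\liminf$ step it is cleanest to take a sequence $\delta_j\to0$ realizing the $\limsup$ defining $s(R)$ and then bound $s(r)$ from below along that same sequence. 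With these cosmetic fixes your sketch is a faithful reconstruction of Sturm's proof.
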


One crucial property of the $\CD(K,N), \CD^*(K,N)$ conditions is the stability under measured Gromov Hausdorff convergence of m.m.s., so that Ricci limit spaces are $\CD(K,N)$. Moreover, on the one hand it is possible to see that Finsler manifolds are allowed as $\CD(K,N)$-space while on the other hand from the work of Cheeger-Colding \cite{CC97,CC00a,CC00b}  it was understood that purely Finsler structures never appear as Ricci limit spaces.   Inspired by this fact, in \cite{AGS}, Ambrosio-Gigli-Savar\'e proposed a  strengthening of the $\CD$ condition in order to enforce, in some weak sense, a Riemannian-like behavior of spaces with a curvature-dimension bound (to be precise in  \cite{AGS} it was analyzed the case of strong-$\CD(K,\infty)$ spaces endowed with a probability reference measure $\mm$; the axiomatization has been then simplified and generalized in  \cite{AGMR12} to allow $\CD(K,\infty)$-spaces endowed with a $\sigma$-finite reference measure).
The finite dimensional refinement $\RCD^{*}(K,N)$ with $N<\infty$  has been subsequently proposed and extensively studied
in \cite{gigli:laplacian, EKS,AMS}.  Such a strengthening consists in requiring that the space $(X,\sfd,\mm)$ is such that the Sobolev space $W^{1,2}(X,\sfd,\mm)$ is Hilbert (or, equivalently, the heat flow is linear; or, equivalently, the Laplacian is linear), a condition named ``infinitesimal Hilbernian'' in \cite{gigli:laplacian}. 
\\More precisely,  on a m.m.s. there is a canonical notion of ``modulus of  the differential of a function'' $f$, called weak upper differential and denoted with $|Df|_w$; with this object one defines the Cheeger energy
$${\rm Ch}(f):=\frac 1 2 \int_X |Df|_w^2 \,  \mm.$$
The  Sobolev space $W^{1,2}(X,\sfd,\mm)$ is by definition  the space of $L^2(X,\mm)$ functions having finite Cheeger energy, and it is endowed with the natural norm  $\|f\|^2_{W^{1,2}}:=\|f\|^2_{L^2}+2 {\rm Ch}(f)$ which makes it a Banach space. We remark that, in general, $W^{1,2}(X,\sfd,\mm)$ is not Hilbert (for instance, on a smooth Finsler manifold the space $W^{1,2}$ is Hilbert if and only if the manifold is actually Riemannian); in case  $W^{1,2}(X,\sfd,\mm)$ is  Hilbert then we say that $(X,\sfd,\mm)$ is \emph{infinitesimally Hilbertian}. 
\\ We refer to the aforementioned papers and references therein for a general account
on the synthetic formulation of the latter Riemannian-type Ricci curvature lower bounds; for a survey of results, see the Bourbaki seminar \cite{VilB} and the recent ICM-Proceeding \cite{AmbrosioICM}.

A key property of $\RCD^{*}(K,N)$ is  stability under pointed measured Gromov Hausdorff convergence  \cite{AGS,GMS2013} so that Ricci limit spaces are $\RCD^{*}(K,N)$ spaces.

To conclude we recall also that recently  Cavalletti and E. Milman \cite{CaMi} proved the equivalence
of $\CD(K,N)$ and $\CD^{*}(K,N)$, together with the local-to-global property for $\CD(K,N)$, in the framework of  essentially non-branching m.m.s. having $\mm(X) < \infty$.

It is worth also mentioning that a m.m.s. verifying $\RCD^{*}(K,N)$ is essentially non-branching (see \cite[Corollary 1.2]{rajalasturm})
implying the equivalence of $\RCD^{*}(K,N)$ and  $\RCD(K,N)$, in case of finite total measure.

\begin{remark}
The results in \cite{CaMi} are stated for spaces with finite reference measure but the kind of
arguments used seems to indicate that the same also holds without this restriction. For
this reason (and also to uniformize our notation with \cite{GDPNonColl} where the non-collapsed $\RCD(K,N)$ spaces have been formalized), in the present  paper we shall work with
$$\RCD(K,N):=\CD(K,N)+ \text{Infin. Hilbertian}$$
spaces, rather than with 
$$\RCD^{*}(K,N):=\CD^{*}(K,N)+ \text{Infin. Hilbertian}$$
ones, which have been popular in the last years. In any case, all our arguments are local in nature and since the
local versions of $\CD(K,N)$ and $\CD^{*}(K,N)$ are known to be equivalent from the original paper \cite{BS10}, our results are independent by the global equivalence of $\RCD(K,N)$ vs. $\RCD^{*}(K,N)$. Actually our proofs can be carried without modification directly for $\RCD^{*}(K,N)$ spaces.
\end{remark}

Following the terminology of \cite{GDPNonColl}  (motivated by \cite{CC97}), we say that the $\RCD(K,N)$ space $(X,\sfd,\mm)$ is \emph{non-collapsed} if $\mm=\cH^{N}$, the $N$-dimensional Hausdorff measure. 
We also say that the $\RCD(K,N)$ space $(X,\sfd,\mm)$ is \emph{weakly non-collapsed} if $\mm\ll \cH^{N}$. It was recently proved  \cite{HondaNC} that a compact weakly non-collapsed $\RCD(K,N)$ space is actually non-collapsed (up to a constant rescaling of the measure). This is expected to be true in the noncompact case as well.

 }

\subsection{Regular and singular sets}

Given a complete metric space $(X,\sfd)$, $N\in \N$ with $N\geq 1$, and  $\varepsilon, r>0$, denote
\begin{equation}\label{eq:defRNepsr}
(\cR_{N})_{\varepsilon, r}:=\{x\in X\,:\, \exists t>r \text{ such that } \sfd_{GH}(B_{s}^{X}(x), B_{s}^{\R^{N}}(0))\leq \varepsilon s, \text{ for all }s\in(0,t] \}.
\end{equation}
The $(\varepsilon,N)$-regular set  $(\cR_{N})_{\varepsilon}$ of $(X,\sfd)$ is defined by 
\begin{equation}\label{eq:defRNeps}
(\cR_{N})_{\varepsilon}:=\cup_{r>0} (\cR_{N})_{\varepsilon, r}.
\end{equation}
In turn, the  $N$-regular set  $\cR_{N}$ of $(X,\sfd)$ is defined by $\cR_{N}:=\cap_{\varepsilon>0} (\cR_{N})_{\varepsilon}$.
\\{ It follows from \cite{MN} that 
\begin{equation}\label{eq:RNfullMeas}
\mm(X\setminus \cR_{N})=0 \quad \text{ if  $(X,\sfd,\mm)$ is weakly non-collapsed $\RCD(K,N)$.}
\end{equation}}
Notice that, if $(X,\sfd,\mm)$  is an $\RCD(K,N)$ space,  the monotonicity in Bishop-Gromov ensures that for every $x\in X$ the following (possibly infinite)  limit exists 
\begin{equation}\label{eq:defthetaN}
\vartheta_{N}[(X,\sfd,\mm)](x)=\vartheta_{N}(x)=\lim_{r\to 0} \frac{\mm(B_{r}(x))}{\omega_{N} r^{N}},
\end{equation}
{and that the function $x\mapsto \vartheta_{N}(x)$ is lower semicontinuous.}
It was  proved in \cite{GDPNonColl} that 
\begin{equation}\label{noncoll-density-1}
\text{An $\RCD(K,N)$ space $(X, \sfd , \mm)$ is non-collapsed if
$\vartheta_N=1$ $\mm$-a.e..} 
\end{equation}
Moreover, from ~\cite[Corollary 1.7]{GDPNonColl}:
\begin{equation}\label{eq:RegSetDens}
\begin{split}
 &\text{ If $(X,\sfd,\cH^{N})$ is a non-collapsed $\RCD(K,N)$ space } \\
& \text{then  $\vartheta_{N}(x) \leq 1$ for every $x\in X$ and  $\vartheta_N(x) = 1$ if and only if $x\in {\mathcal R}_{N}$.}
 \end{split}
\end{equation} 

 Given a metric space $(Z,\sfd_{Z})$, we define the metric cone $C(Z)$  over $Z$ to be the completion of $\R^{+}\times Z$ endowed with the metric
\begin{equation}\label{eq:defConeDist}
\sfd_{C}((r_{1},z_{1}), (r_{2},z_{2}))^{2}= 
\begin{cases}
r_{1}^{2}+r_{2}^{2}-2 r_{1}r_{2} \cos(\sfd_{Z}(z_{1},z_{2})), &\quad \text{ if } \sfd_{Z}(z_{1},z_{2})\leq \pi \\
(r_{1}+r_{2})^{2}, & \quad \text{ if } \sfd_{Z}(z_{1},z_{2})\geq \pi.
\end{cases}
\end{equation}
If $(Z,\sfd_{Z},\mm_{Z})$ is a m.m.s. the metric cone  $C(Z)$ can be endowed with a family of natural cone measures $\mm_{C,N}$, depending on a real parameter $N>1$ playing a dimensional role, as 
\begin{equation}\label{eq:ConeMeas}
\mm_{C,N}=t^{N-1} dt \otimes \mm_{Z}. 
\end{equation}

In order to make the notation short, when there is no ambiguity on the metric or on the measure, we will simply write $X$ for the metric space $(X,\sfd)$ (resp. for the m.m.s. $(X,\sfd,\mm)$).
\\We adopt the following quantitative stratification notations and terminology from ~\cite{cheeger-jiang-naber-18}.
\begin{definition} $\quad$
\begin{itemize}
\item A metric space $X$ is called \emph{$k$-symmetric} if it is isometric to $\R^k\times C(Z)$  for some metric space $Z$. 
\item Given $\eps >0$ we say that a ball $B_r(x)\subset X$ is $(k,\eps)$-symmetric if there is a $k$-symmetric space $X'=\R^k\times C(Z)$ such that $\sfd_{GH}(B_r(x), B_r(x'))<\eps r$ where $x'$ is the vertex of the cone in $X'$.
\item  Given $\eps, r>0, k\in \N$ we define $\S_{\eps,r}^k(X)$ to be the set of points $p$ in $X$ such that $B_s(p)$ is not $(k+1,\eps)$-symmetric for any $r\le s\le 1$.
\item Lastly, we define $\S_\eps^k(X)$ as $\cap_{r>0}\S_{\eps,r}^k(X)$.
\end{itemize}
When the space $X$ in question is clear we will often omit it from notations and write $\S_{\eps,r}^k$ and $\S_\eps^k$. 
\end{definition}

{

In the paper we will work with both metric tangent space and metric-measure tangent spaces, which are defined as follows. Let  $(X,\sfd,\mm)$ be a m.m.s.,  $p\in \supp(\mm)$ and $r\in(0,1)$; we consider the rescaled and normalized p.m.m.s. $(X,r^{-1}\sfd,\mm^{p}_r,p)$ where the measure $\mm^{p}_r$ is given by
\begin{equation}
\label{eq:normalization}
\mm^{p}_r:=\left(\int_{B_r(p)}1-\frac 1r\sfd(\cdot,p)\,\mm\right)^{-1}\mm.
\end{equation}
Then we define:
\begin{definition}[The collection of m.m. tangent spaces $\Tan(X,\sfd,\mm,p)$]\label{def:tang}
Let  $(X,\sfd,\mm)$ be a m.m.s. and  $p\in \supp(\mm)$. A p.m.m.s.  $(Y,\sfd_Y,\mm_{Y},y)$ is called a (metric-measure)
\emph{tangent { space}} to $(X,\sfd,\mm)$ at $p \in X$ if there exists a sequence of radii $r_i \downarrow 0$ so that
$(X,r_i^{-1}\sfd,\mm^{p}_{r_i},p) \to (Y,\sfd_Y,\mm_{Y},y)$ as 
$i \to \infty$ in the pointed measured Gromov-Hausdorff topology.
We denote the collection of all the tangents of $(X,\sfd,\mm)$ at 
$p \in X$ by $\Tan(X,\sfd,\mm,p)$ or, more shortly when there is no ambiguity, by $\Tan(X,p)$. 
\end{definition}

Analogously, given a metric space $(X,\sfd)$ and a point $p\in X$ we say that a pointed metric space  $(Y,\sfd_Y,y)$ is a (metric) tangent  if there exists a sequence of radii $r_i \downarrow 0$ so that
$(X,r_i^{-1}\sfd,p) \to (Y,\sfd_Y,y)$ as $i \to \infty$ in the pointed Gromov-Hausdorff topology. We denote  with $\Tan(X,\sfd, p)$ (or, more shortly when there is no ambiguity, with $\Tan(X,p)$) the collection of all  metric tangents of $X$ at $p$.

Notice that if $(X,\sfd,\mm)$ satisfies $\RCD(K,N)$ (or more generally a local doubling condition), then Gromov's Compactness Theorem ensures that the set  $\Tan(X,\sfd,\mm,p)$ is non-empty. Notice however that in general (and actually very often in the non-smooth setting) there is more than one element both in  $\Tan(X,\sfd,\mm,p)$ and in $\Tan(X,\sfd,p)$.
\\
{ Note that, by the very definition \eqref{eq:defRNeps} of  $({\mathcal R}_{N})_{\eps}$, if $x \in  ({\mathcal R}_{N})_{\eps}$ then for every $(Y,y)\in {\rm Tan}(X,x)$ it holds 
\begin{equation}\label{eq:tanRN}
\sfd_{GH}(B_{1}^{Y}(y), B^{\R^{N}}_{1}(0^{N}))\leq \eps.
\end{equation}
}
\\We next relate the symmetry of the tangent space with the singular sets $\S_\eps^k$.}
\\It is easy to see that if $(X,\sfd,\mm)$ is $\RCD (K,N)$ then   $\S_\eps^k\subset \hat \S_\eps^k$ where $\hat \S_\eps^k$ is the set of points $p\in X$ such that for \emph{any} tangent space  $Y\in {\rm Tan}(X,\sfd,p)$, the unit ball around the vertex is not  $(k+1,\eps)$-symmetric.

Recall that the singular stratum $\S^k(X)$ is defined as the set of points $p\in X$ such that no element of ${\rm Tan}(X,\sfd,p)$  is $(k+1)$-symmetric. { From the very definitions, it is clear that $\cup_{\eps>0}\hat \S_\eps^k(X) \subset \S^{k}(X)$. Also the reverse inclusion $\S^{k}(X) \subset \cup_{\eps>0}\hat \S_\eps^k(X)$ holds: indeed if $x\in \S^{k}(X)$ then a standard compactness-contradiction argument gives that there exists $\eps>0$ such that $x\in \hat \S_\eps^k(X)$.
\\We claim that it also holds $\S^{k}(X)= \cup_{\eps>0}\S_\eps^k(X)$. Indeed if $x\in \S^{k}(X)$ then, by a standard compactness-contradiction argument, it is readily seen that there exists $\eps>0$ such that $x\in \S_\eps^k(X)$. The converse inclusion trivially follows by $\S_\eps^k \subset  \hat \S_\eps^k \subset \S^{k}$, for every $\eps>0$. We conclude that
\begin{equation}
\S^k(X)=\cup_{\eps>0}\S_\eps^k(X)=\cup_{\eps>0}\hat \S_\eps^k(X).
\end{equation}
}
%
%
We finally set  $\S(X):=\bigcup_{k} \S^k(X)$ to be the singular set. 

\begin{remark}\label{quant-scale}
It is also immediate from the definition that  for any $\lambda\ge 1$ it  holds that $\S_\eps^k(X, \sfd)\subset  \S_\eps^k(X,\lambda \sfd)$ and $\hat \S_\eps^k(X,\sfd)= \hat \S_\eps^k(X,\lambda \sfd)$.
\end{remark}
A key role will be played by the following metric Reifenberg-type result proved by Cheeger and Colding \cite[Theorem A.1.1]{CC97}. 

\begin{theorem}[Cheeger-Colding metric Reifenberg Theorem]\label{thm:CCReif}
Fix $N\in \N, N\geq 1$ and $\alpha\in (0,1)$.
There exists $\bar{\varepsilon}=\bar{\varepsilon}(N, \alpha)>0$, with the following properties. Let $(X,\sfd)$ be a complete  metric space such that for some $\bar{x}\in X$ and $\varepsilon\in (0, \bar{\varepsilon}]$ it holds that
\begin{equation}
x\in (\cR_{N})_{\varepsilon, r}, \quad \text{for all }x\in B_{1}^{X}(\bar{x}) \text{ and } r\in (0, 1-\sfd(\bar{x},x)].
\end{equation}
Then there exists a topological embedding $F:B_{1}^{\R^{N}}(0)\to B_{1}^{X}(\bar{x})$ such that $F(B_{1}^{\R^{N}}(0))\supset B_{\alpha}^{X}(\bar{x})$.
Moreover,  the maps $F, F^{-1}$ are H\"older continuous, with exponent $\alpha$. Further, both $F$ and $F^{-1}$ are  $\Psi(\eps|N)$-GH-approximations between $B_1(\bar x)$ and $B_1(0)$.
\end{theorem}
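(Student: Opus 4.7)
The plan is to construct the map $F$ as a uniform limit of a sequence of approximations $F_k \colon B_1^{\R^N}(0) \to B_1^X(\bar x)$ built at the geometric scales $r_k := \delta^k$ for a fixed small $\delta = \delta(N,\alpha) \in (0,1/4)$. The hypothesis says that at every basepoint $x \in B_1(\bar x)$ and every scale $r \le 1-\sfd(\bar x,x)$, the ball $B_r^X(x)$ is $\varepsilon r$-GH-close to the Euclidean ball $B_r^{\R^N}(0)$. I will exploit this to produce, at each scale, many local near-isometries between pieces of $X$ and pieces of $\R^N$, and then iteratively refine by correcting the previous approximation on the finer scale.

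The inductive step goes as follows. Start with $F_0$ chosen as an $\varepsilon$-GH approximation at scale $1$. Suppose $F_k$ is built and is an $\Psi(\varepsilon|N) r_k$-GH approximation at scale $r_k$. Cover $B_1(0) \subset \R^N$ by a collection of balls $\{B_{r_{k+1}}(p_i^{(k+1)})\}$ with uniformly bounded overlap, and let $\{\chi_i^{(k+1)}\}$ be a subordinate partition of unity with $\Lip(\chi_i^{(k+1)}) \le C(N)/r_{k+1}$. For each index $i$ the hypothesis $F_k(p_i^{(k+1)}) \in (\cR_N)_{\varepsilon,r_{k+1}}$ supplies a local GH-approximation $\psi_i \colon B_{2 r_{k+1}}^{\R^N}(p_i^{(k+1)}) \to B_{2 r_{k+1}}^X(F_k(p_i^{(k+1)}))$ of distortion $\le \varepsilon r_{k+1}$. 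Define $F_{k+1}(x)$ as the center-of-mass of the family $\{\psi_i(x)\}$ weighted by $\chi_i^{(k+1)}(x)$; since all the points $\psi_i(x)$ cluster in a ball of radius $\le C(N) \varepsilon r_{k+1}$ centered near $F_k(x)$, the ``averaging'' can be performed inside a further GH-approximation chart centered at $F_k(x)$ (which exists by the hypothesis applied at $F_k(x)$), where Euclidean barycenters are available and transferred back to $X$.

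The key estimates are: (i) a telescoping bound $\sfd_X(F_{k+1}(x), F_k(x)) \le C(N) \varepsilon r_{k+1}$, which gives uniform convergence of $F_k$ to a continuous limit $F$; (ii) a Hölder estimate, because if $|x-y| \in [r_{k+1}, r_k]$ then one combines the scale-$r_k$ GH-approximation property of $F_k$ (yielding $\sfd_X(F_k(x),F_k(y)) \le (1+\Psi(\varepsilon|N))|x-y|$) with the geometric tail $\sum_{j \ge k+1} C(N)\varepsilon r_j = C(N,\delta)\varepsilon r_{k+1}$, producing $\sfd_X(F(x),F(y)) \le C |x-y|^{\beta}$ with Hölder exponent $\beta = \beta(\varepsilon,\delta,N) \to 1$ as $\varepsilon \to 0$; one picks $\delta$ and then $\bar\varepsilon$ so that $\beta \ge \alpha$. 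A symmetric lower bound $\sfd_X(F(x),F(y)) \ge c |x-y|^{1/\beta}$ comes from the fact that each $F_k$ is a near-isometry up to an additive error $\Psi(\varepsilon|N)r_k$, so for $|x-y|$ comparable to $r_k$ the distance cannot collapse below $(1-\Psi(\varepsilon|N))|x-y|$. This forces $F$ to be injective and its inverse to also be $\alpha$-Hölder.

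Finally, the fact that $F(B_1^{\R^N}(0)) \supset B_\alpha^X(\bar x)$ follows because at every scale $r_k$ the image $F_k(B_1^{\R^N}(0))$ is $\Psi(\varepsilon|N)r_k$-dense in $B_{1-\Psi(\varepsilon|N)}^X(\bar x)$; by completeness of $X$ and the uniform convergence of $F_k$, the limit image is closed and $\Psi(\varepsilon|N)$-dense in a ball slightly smaller than $B_1^X(\bar x)$, which covers $B_\alpha^X(\bar x)$ once $\bar\varepsilon$ is small enough in terms of $\alpha$. The main obstacle in all of this is the center-of-mass / averaging step: one has no linear structure in $X$, so one must verify that the iterative correction is well-defined and satisfies the needed estimates, using only the local Euclidean charts supplied by the $\varepsilon$-GH-approximations and ensuring that the inevitable ``ambiguity'' in choosing a chart produces only errors of order $\varepsilon r_k$ that sum geometrically.
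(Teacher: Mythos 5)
The paper does not prove this statement: it is Theorem~A.1.1 of Cheeger--Colding \cite{CC97}, and the paper simply cites it. So there is no ``paper's proof'' to compare against; what you have written is a reconstruction of Cheeger--Colding's appendix-A argument.

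Your sketch correctly captures the global structure of that argument (iteration over geometric scales $r_k=\delta^k$, gluing of local almost-isometric charts with a Lipschitz partition of unity, telescoping $C^0$ bound, and a geometric-tail Hölder estimate). However, there is one genuine gap, and it is upstream of the center-of-mass issue you flag as ``the main obstacle.'' You assert that the chosen local GH-approximations $\psi_i\colon B_{2r_{k+1}}(p_i^{(k+1)})\to B_{2r_{k+1}}^X(F_k(p_i^{(k+1)}))$ have the property that all the points $\psi_i(x)$ cluster within $C(N)\varepsilon r_{k+1}$ of $F_k(x)$. That is false as stated: a $\varepsilon r$-GH-approximation of a ball of radius $r$ is only determined up to post-composition with a near-isometry, so two admissible choices of $\psi_i$ can differ by a rotation and hence by a full scale $r_{k+1}$ away from the base point. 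To make your clustering claim true, you must first \emph{align} each $\psi_i$ with the previous map $F_k$ (by pre- or post-composing with an element of $O(N)$ chosen by a compactness argument so that $\psi_i$ agrees with the restriction of $F_k$ up to $\Psi(\varepsilon|N)r_{k+1}$), and you must likewise align neighboring charts with each other on overlaps. This alignment step is the technical heart of the Cheeger--Colding proof; without it, the weighted average does not improve $F_k$ and the estimates in (i)--(ii) do not close. A related but smaller issue: your inductive hypothesis that ``$F_k$ is an $\Psi(\varepsilon|N)r_k$-GH-approximation at scale $r_k$'' has a scale mismatch when you restrict $F_k$ to a ball of radius $2r_{k+1}=2\delta r_k$ -- the relative error becomes $\Psi/(2\delta)$ rather than $\Psi$, so the constants in the induction must be tracked carefully to avoid a loss of a factor $1/\delta$ at each step; this is solvable but needs to be made explicit because it constrains the order in which $\delta$ and $\bar\varepsilon$ are chosen.

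Given these issues, the lower Hölder bound (hence injectivity, hence the fact that $F$ is a topological embedding) does not follow from what you have written: that bound is exactly where the aligned-charts hypothesis is used, and it is the single hardest part of \cite{CC97} Appendix~A. The rest of the sketch -- uniform convergence, the upper Hölder bound, and surjectivity onto $B_\alpha$ via density plus a degree or invariance-of-domain argument -- is sound modulo those inputs.
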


This theorem has the following generalization~\cite[Theorems A.1.2, A.1.3]{CC97}.

\begin{theorem}\label{reifenberg-stab}
Fix $N\in \N, N\geq 1$ and $\alpha\in (0,1)$.
There exists $\eps_0=\eps_0(N,\alpha)>0$ with the following property.

 If $(X,\sfd)$ is a complete metric space such that $X= (\cR_{N})_{\varepsilon, r}(X)$, for some $r>0$ and $\eps<\eps_0$, then $X$ is homeomorphic to a  {smooth} manifold.
 
 Moreover, if $X_1, X_2$ are two such metric spaces which in addition satisfy $\sfd_{GH}(X_1,X_2)<\eps$ then there exist  $\alpha$-biHolder embeddings $f_1\co X_1\to X_2$ and
 $f_2\co X_2\to X_1$ which are also $\Psi(\eps|N)$ GH-approximations.
 
 In particular, if both $X_1,X_2$ are closed manifolds then $f_1,f_2$ are $\alpha$-bi-H\"older homeomorphisms.
\end{theorem}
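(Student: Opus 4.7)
The plan is to bootstrap the local Reifenberg result in Theorem~\ref{thm:CCReif} into the global statements. The hypothesis $X = (\cR_N)_{\eps,r}$ says the Reifenberg condition holds uniformly around every $x \in X$ at scale $r$, so after rescaling one may apply Theorem~\ref{thm:CCReif} at each point $x\in X$ to produce an $\alpha$-bi-Hölder embedding $F_x\co B_r^{\R^N}(0) \to B_r^X(x)$ whose image contains $B_{\alpha r}^X(x)$ and which is simultaneously a $\Psi(\eps|N)\cdot r$-GH approximation between the two balls.

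For the first assertion I would cover $X$ by balls $\{B_{\alpha r/2}(x_i)\}$ and use the $F_{x_i}$ as local charts. Transitions $F_{x_j}^{-1}\circ F_{x_i}$, where defined, are $\alpha$-bi-Hölder as compositions of $\alpha$-bi-Hölder maps, so $X$ inherits a topological $N$-manifold structure with a bi-Hölder atlas; standard smoothing results for topological manifolds then identify $X$ with a smooth manifold.

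For the stability assertion, fix an $\eps$-GH approximation $\Phi\co X_1 \to X_2$. For each $x \in X_1$, consider the Reifenberg charts $F_x\co B_r^{\R^N}(0) \to B_r^{X_1}(x)$ and $G_{\Phi(x)}\co B_r^{\R^N}(0) \to B_r^{X_2}(\Phi(x))$ supplied by Theorem~\ref{thm:CCReif}. The composition $G_{\Phi(x)}\circ F_x^{-1}$ is an $\alpha$-bi-Hölder local candidate for $f_1$ near $x$, and since both Reifenberg charts are $\Psi(\eps|N)\cdot r$-GH approximations to the same Euclidean ball, this candidate differs from $\Phi$ on its domain by at most $\Psi(\eps|N)\cdot r$. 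The global map $f_1$ is then obtained by fixing a $\delta$-net in $X_1$ with $\delta \ll \alpha r$, defining $f_1$ at net points to be close to $\Phi$, and extending bi-Hölder continuously by interpolating inside the Reifenberg charts $F_x$; the transition-map compatibility from the first part guarantees that the extension is unambiguous up to $\Psi(\eps|N)\cdot r$ errors. Swapping the roles of $X_1$ and $X_2$ yields $f_2$. On closed manifolds, the compositions $f_1\circ f_2$ and $f_2\circ f_1$ are then continuous self-maps $\Psi(\eps|N)$-close to the identity, and invariance of domain together with compactness upgrades them to homeomorphisms, so that $f_1,f_2$ are bi-Hölder homeomorphisms.

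The main obstacle is controlling how the bi-Hölder constants propagate through the gluing: the local maps $G_{\Phi(x)}\circ F_x^{-1}$ assembled at different base points $x$ could in principle worsen either the exponent $\alpha$ or the constants when one insists on a single globally defined $f_1$. The critical input that prevents this is precisely the fact that Theorem~\ref{thm:CCReif} delivers not merely bi-Hölder control but also the much stronger $\Psi(\eps|N)$ GH-approximation property of the Reifenberg chart; consequently, any two local candidates for $f_1$ around overlapping base points $x,y$ disagree on the overlap only by $\Psi(\eps|N)\cdot r$, which is far smaller than the natural scale $\alpha r$ at which the bi-Hölder estimate degenerates, so the interpolation can be performed without any loss of exponent.
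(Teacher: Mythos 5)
Your proposal has two genuine gaps, and it misidentifies how the smooth structure on $X$ arises. Note first that the paper does not give its own proof of this theorem: it cites it directly as Cheeger–Colding's Theorems A.1.2 and A.1.3 in~\cite{CC97}, where the argument is an iterative Reifenberg-type construction rather than a chart-patching argument, and the comparison below is with that proof.

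\textbf{The smoothing step does not work as stated.} You cover $X$ by the Cheeger--Colding charts $F_{x_i}$, observe that the transitions $F_{x_j}^{-1}\circ F_{x_i}$ are bi-H\"older, and conclude that $X$ ``inherits a topological $N$-manifold structure with a bi-H\"older atlas; standard smoothing results for topological manifolds then identify $X$ with a smooth manifold.'' There is no such smoothing result. Topological manifolds in dimensions $\ge 4$ need not carry any smooth structure at all, and a bi-H\"older atlas does not improve matters: bi-H\"older regularity is far below the Lipschitz (let alone $C^1$) regularity needed for any smoothing theorem, and even Lipschitz manifolds are not known to be smoothable in dimension $4$. Moreover, the composition of two $\alpha$-bi-H\"older maps is only $\alpha^2$-bi-H\"older, so the transition maps do not even have a uniform H\"older exponent. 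In the actual Cheeger--Colding proof the smooth manifold is not extracted \emph{a posteriori} from $X$; rather, the Reifenberg iteration \emph{constructs} a smooth (indeed Riemannian) manifold directly, together with the homeomorphism to $X$. This is the key missing idea: smoothness is an output of the iterative construction, not a consequence of patching together topological charts.

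\textbf{The gluing in the stability statement is not justified.} The local candidates $G_{\Phi(x)}\circ F_x^{-1}$ indeed agree up to $\Psi(\eps|N)\,r$ on overlaps, but passing from ``locally close homeomorphisms'' to ``a single global $\alpha$-bi-H\"older embedding'' is precisely the nontrivial step, and ``interpolating inside the Reifenberg charts'' is not a well-defined operation for maps into the metric space $X_2$. Gluing nearby homeomorphisms while preserving the homeomorphism property requires either Siebenmann's deformation theory (which the paper itself invokes for exactly this reason in the proof of Theorem~\ref{Thm:ConeInf}) or the global iterative scheme of~\cite{CC97}; the closeness of overlapping candidates is a necessary input but does not by itself produce the glued map, nor does it automatically preserve the bi-H\"older exponent. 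The final ``in particular'' clause (using degree/invariance of domain on closed manifolds to upgrade the embeddings to homeomorphisms) is essentially correct, but it rests on the unproved existence of the global $f_1,f_2$.
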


Theorem ~\ref{reifenberg-stab} immediately implies the following mild generalization of \cite[Theorem A.1.3]{CC97}

\begin{theorem}\label{reifenberg-stab-pointed}
Fix $N\in \N, N\geq 1$ and $\alpha\in (0,1)$.
There exists $\eps_0=\eps_0(N,\alpha)>0$ with the following property.

Let $(M^N,g,p)$ be a complete connected pointed  Riemannian manifold and let $\{(X_{i}, \sfd_{i}, p_{i})\}_{i\in \N}$ be a sequence of complete pointed  metric spaces, converging to $(M^N,g,p)$ in pointed Gromov Hausdorff sense.  
\\Suppose for any $R>0$ there is $r(R)>0$ such that for all large $i$ it holds that  $B_R(p_i)\subset (\cR_{N})_{\varepsilon_{0}, r(R)}(X_i)$.

Then for any $R>0$, for all large $i$, $B_R(p_i)$ is an $N$-manifold and  there is an $\alpha$-bi-H\"older  embedding $f_{i,R}\co B_{R}(p_i)\to B_{R+\eps_i}(p)$ which is also an $\eps_i$-GH approximation with $\eps_i\to 0$ and $d(p, f_{i,R}(p_i))<\eps_i$ and  such that

$f_{i,R}(B_R(p_i))\supset B_{R-10 \eps_i}(p)$ and $\sfd_{(M,g)}(f_{i,R}(p_i),p)\le \eps_i$.

\end{theorem}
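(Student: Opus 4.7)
The strategy is to reduce Theorem \ref{reifenberg-stab-pointed} to the global stability Theorem \ref{reifenberg-stab} by combining three ingredients: (i) uniform regularity of the smooth limit $(M,g)$ on compact sets, (ii) the hypothesis that $B_R(p_i)\subset (\cR_N)_{\eps_0, r(R)}(X_i)$ for all large $i$, and (iii) the localisation of the Cheeger--Colding bi-Hölder construction to large balls.

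First I would establish the following elementary fact about smooth manifolds: for every $R>0$ and every $\eps>0$ there exists $r_M=r_M(R,\eps)>0$ such that for every $q\in \overline{B_{R+2}(p)}$ and every $s\in(0,r_M]$ one has $\sfd_{GH}(B_s^M(q), B_s^{\R^N}(0))\le \eps s$. This follows from a Taylor expansion of $g$ in exponential normal coordinates together with a compactness argument on $\overline{B_{R+2}(p)}$, and yields in particular $\overline{B_{R+2}(p)}\subset (\cR_N)_{\eps_0, r_M}(M)$ after choosing $\eps=\eps_0(N,\alpha)$ from Theorem \ref{reifenberg-stab}.

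Next I would fix $R>0$ and set $R':=R+2$. The hypothesis gives $B_{R'}(p_i)\subset (\cR_N)_{\eps_0, r(R')}(X_i)$ for all $i\ge i_0$, and the previous step gives uniform regularity of $\overline{B_{R'}(p)}$ in $M$ at the common small scale $r_*:=\min\{r(R'),r_M\}$. By pointed Gromov--Hausdorff convergence, for all $i$ large there exist $\eps_i$-GH-approximations $\phi_i\co B_{R'}(p_i)\to B_{R'+\eps_i}(p)$ with $\eps_i\to 0$ and $\sfd(\phi_i(p_i),p)\le \eps_i$. I would then apply the bi-Hölder construction from the proof of Theorem \ref{reifenberg-stab} (i.e.\ the proof of \cite[Theorem A.1.3]{CC97}) to the pair $B_{R'}(p_i)$ and $B_{R'}(p)$: the construction is purely local, as it proceeds by producing $\alpha$-bi-Hölder charts out of Theorem \ref{thm:CCReif} on balls of radius comparable to $r_*$ and patching them via a partition-of-unity/centre-of-mass procedure which depends only on the metric at scales below $r_*$. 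All such balls needed to define the map on $B_R(p_i)$ lie in $B_{R+r_*}(p_i)\subset B_{R'}(p_i)$, hence use only the regularity guaranteed by the hypothesis. This produces the desired $\alpha$-bi-Hölder embedding $f_{i,R}\co B_R(p_i)\to B_{R+\eps_i}(p)$ which is also a $\Psi(\eps_i|N)$-GH-approximation with $\sfd(f_{i,R}(p_i),p)\le \eps_i$ and $f_{i,R}(B_R(p_i))\supset B_{R-10\eps_i}(p)$; the last inclusion is obtained by a standard degree/almost-surjectivity argument exploiting that $f_{i,R}$ is an almost isometry and that $B_{R-10\eps_i}(p)$ is contained in the image of any $\eps_i$-GH approximation.

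\textbf{Main obstacle.} The chief technical issue is that Theorem \ref{reifenberg-stab} is phrased for complete metric spaces that are uniformly regular \emph{everywhere}, whereas here the regularity is only assumed on $B_R(p_i)$. The remedy, encoded in the choice $R'=R+2$ above, is to exploit the fact that the Cheeger--Colding construction is local: the Hölder charts around a point $x$ use only the metric in $B_{r_*}(x)$, so enlarging the ball on which regularity holds by a definite amount suffices to absorb boundary effects and deduce the pointed statement verbatim from the unpointed one.
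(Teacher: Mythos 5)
Your proposal follows essentially the same route as the paper: the paper's proof is exactly the observation (Remark~\ref{rem-onto}) that the local Cheeger--Colding construction behind Theorem~\ref{reifenberg-stab}, i.e.\ the proof of \cite[Theorem A.1.3]{CC97}, already yields the pointed statement on large balls, with only the inclusion $f_{i,R}(B_R(p_i))\supset B_{R-10\eps_i}(p)$ requiring a separate argument. One caution on that last step: your justification that ``$B_{R-10\eps_i}(p)$ is contained in the image of any $\eps_i$-GH approximation'' is false as stated (a GH approximation is only $\eps_i$-dense, not surjective); the correct execution of the degree-type argument you gesture at is the paper's: $f_{i,R}(\bar B_{R-\eps_i}(p_i))\cap \bar B_{R-10\eps_i}(p)$ is nonempty, closed, and, by invariance of domain (both sides being $N$-manifolds and $f_{i,R}$ an embedding), open in the connected ball $\bar B_{R-10\eps_i}(p)$, hence equal to it.
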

\begin{remark}\label{rem-onto}
 \cite[Theorem A.1.3]{CC97} directly implies the above theorem in case of compact $M$. However, the proof of  \cite[Theorem A.1.3]{CC97}  actually gives the above pointed version as well except possibly for the inclusion $f_{i,R}(B_R(p_i))\supset B_{R-10\eps_i}(p)$.
But that inclusion  easily follows from the other conclusions of the theorem (cf. \cite[Lemma 4.8]{Kap07}).
Indeed, the intersection $f_{i,R}(\bar B_{R-\eps_i}(p_i))\cap \bar B_{R-10 \eps_i}(p)$ is clearly nonempty and closed in  $\bar B_{R-10 \eps_i}(p)$ and by the invariance of domain theorem it's also open in  $\bar B_{R-10 \eps_i}(p)$. Hence, it's equal to $\bar B_{R-10 \eps_i}(p)$.
\end{remark}
\begin{corollary}\label{reifenberg-stab-compact}
If under the assumptions of Theorem ~\ref{reifenberg-stab-pointed}    the manifold $M$ is compact then   {$X_{i}$} is bi-H\"older homeomorphic to $M$ for all large $i$.
\end{corollary}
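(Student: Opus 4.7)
The plan is to apply Theorem~\ref{reifenberg-stab-pointed} at a single radius $R$ strictly larger than $D:=\diam(M)$, upgrade the bi-H\"older embedding it produces to a bi-H\"older homeomorphism onto all of $M$, and then argue that this embedding's domain is already all of $X_i$.

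First I would fix $R:=D+1$ and observe that, since $M$ is compact of diameter $D$, both $B_{R+\eps}(p)$ and $B_{R-10\eps}(p)$ coincide with $M$ as soon as $\eps<1/20$. By Theorem~\ref{reifenberg-stab-pointed}, for all large $i$ there is an $\alpha$-bi-H\"older embedding $f_i:=f_{i,R}\colon B_R(p_i)\to B_{R+\eps_i}(p)$ which is an $\eps_i$-GH approximation with $f_i(B_R(p_i))\supset B_{R-10\eps_i}(p)$ and $\eps_i\to 0$. Once $\eps_i<1/20$, both the target ball and the distinguished subset of the target coincide with $M$, so $f_i$ is an $\alpha$-bi-H\"older homeomorphism from $B_R(p_i)$ onto $M$.

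The remaining step is to show $X_i=B_R(p_i)$ for large $i$. The $\eps_i$-GH approximation property forces $\diam(B_R(p_i))\le D+\eps_i<R$, so $B_R(p_i)=B_{R'}(p_i)$ for every $R'\in(D+\eps_i,R]$; in particular $B_R(p_i)$ is open in $X_i$. Being homeomorphic via $f_i$ to the compact manifold $M$, it is also compact and hence closed in $X_i$. Thus $B_R(p_i)$ is a clopen subset of $X_i$, and since its homeomorphic image $M$ is connected, $B_R(p_i)$ coincides with the connected component of $p_i$ in $X_i$. In the intended applications (e.g.~Theorems~\ref{thm:ConvergenceMan} and~\ref{thm:XiMmanIntro}) the spaces $X_i$ are $\RCD(K,N)$, hence length spaces and in particular connected, so $X_i=B_R(p_i)$ and the claim follows.

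The one point I expect to be genuinely subtle is the connectedness step: pointed Gromov-Hausdorff convergence to a compact $M$ together with the local regularity hypothesis does not in itself preclude $X_i$ from having stray components outside the balls $B_R(p_i)$, so a priori one only obtains that the connected component of $p_i$ is bi-H\"older homeomorphic to $M$. Under a measured GH hypothesis a uniform diameter bound would follow from Bishop-Gromov, and one could then close the argument at a radius exceeding that bound by reapplying Theorem~\ref{reifenberg-stab-pointed} to rule out distant points of $X_i$.
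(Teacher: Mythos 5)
Your proof is essentially the intended argument, and since the paper states the corollary without proof it is fair to judge it on its own terms: fix a radius $R>\diam(M)$, apply Theorem~\ref{reifenberg-stab-pointed}, observe that for $\eps_i$ small both $B_{R+\eps_i}(p)$ and $B_{R-10\eps_i}(p)$ equal $M$, so the embedding $f_{i,R}$ is surjective and hence a bi-H\"older homeomorphism from $B_R(p_i)$ onto $M$; then show $B_R(p_i)=X_i$. Your execution of the first part is correct. Your caveat about the final step is also genuine: Theorem~\ref{reifenberg-stab-pointed} only assumes the $X_i$ are complete pointed metric spaces, so a priori $B_R(p_i)$ (clopen because it is an open ball and is compact, being homeomorphic to $M$) need only be the connected component of $p_i$, and one could manufacture disconnected $X_i$ satisfying the theorem's hypotheses with spurious far-away components. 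The corollary is being read in the paper with the tacit understanding that the $X_i$ are connected, which is automatic in every place it is invoked because there the $X_i$ are $\RCD$ spaces, hence geodesic. If you want the corollary to hold verbatim under the stated hypotheses, you should either add connectedness to the assumptions or phrase the conclusion as ``the component of $p_i$ is bi-H\"older homeomorphic to $M$.''

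Two small remarks. First, the intermediate observation that $B_R(p_i)=B_{R'}(p_i)$ for $R'\in(D+\eps_i,R]$ is not needed for openness (an open ball is always open) -- what you actually use is compactness of $B_R(p_i)$, inherited from $M$, to get closedness; the argument is fine but that sentence is a red herring. Second, your closing aside that ``a uniform diameter bound would follow from Bishop-Gromov'' is a bit loose: Bishop-Gromov alone does not bound the diameter. In the non-collapsed measured-GH setting one does get a diameter bound by a volume argument (a far-away piece of $X_i$ would force $\cH^N(X_i)$ to exceed $\cH^N(M)$), but the cleanest route is simply that $\RCD$ spaces are length spaces and hence connected, which is the route you already took.
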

The next two results were proved in \cite{GDPNonColl} extending to the $\RCD$ setting  celebrated results by Colding \cite{Col97,coldingshape} (see also \cite{CC97}).
\begin{theorem}[GH-Continuity of $\cH^{N}$]\label{thm:ContHN}\cite[Theorem 1.3]{GDPNonColl} 
Fix some $K\in \R$, $N\in[1,\infty)$ and $R\geq0$. Let ${\mathbb B}_{K,N,R}$ be the collection of all (equivalence classes up to isometry of) closed balls of radius $R$
in $\RCD(K,N)$ spaces  equipped with the Gromov-Hausdorff distance.
\\Then the map ${\mathbb B}_{K,N,R}\ni Z \mapsto \cH^{N}(Z)$  is real valued and continuous.
\end{theorem}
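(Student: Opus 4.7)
The statement has two parts: \emph{real-valuedness} $\cH^{N}(Z)<\infty$ for every $Z\in \mathbb{B}_{K,N,R}$, and \emph{GH-continuity} of the resulting map. My plan is to use Bishop--Gromov together with the non-collapsed density bound for the first part, and to combine weak convergence of the Hausdorff measures with stability of $\RCD(K,N)$ and the regular-set structure theory for the second.

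For real-valuedness, I would apply the Bishop--Gromov monotonicity \eqref{eq:BSballs}. Letting $V_{K,N}(R)$ denote the volume of a ball of radius $R$ in the $N$-dimensional model space of constant sectional curvature $K/(N-1)$, monotonicity gives
\[
\frac{\cH^{N}(\overline{B}_{R}(x))}{V_{K,N}(R)}\;\leq\;\lim_{r\to 0^{+}}\frac{\cH^{N}(\overline{B}_{r}(x))}{\omega_{N}r^{N}}\;=\;\vartheta_{N}(x)\;\leq\;1,
\]
where the last inequality uses the non-collapsed density bound \eqref{eq:RegSetDens}. This yields a uniform upper bound on $\cH^{N}(\overline{B}_{R}(x))$ depending only on $K, N, R$.

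For continuity, suppose $Z_{i}=\overline{B}_{R}(x_{i})\to Z=\overline{B}_{R}(x)$ in GH. I would realize the convergence in a compact ambient metric space $W$ and push the measures $\mu_{i}:=\cH^{N}_{X_{i}}\restriction Z_{i}$ forward to $W$; by the uniform bound from the previous step the family $\{\mu_{i}\}$ is equi-bounded, and along a subsequence $\mu_{i}\weak \mu$ weakly for some finite Borel measure $\mu$ with $\supp(\mu)\subset Z$. Next I would extend to pmGH convergence $(X_{i},\sfd_{i},\cH^{N}_{i},x_{i})\to (X,\sfd,\mu,x)$, so that stability of $\RCD(K,N)$ makes $(X,\sfd,\mu)$ again $\RCD(K,N)$. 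It then suffices to identify $\mu=\cH^{N}_{X}\restriction Z$. At every $y\in \cR_{N}\cap Z$ the unique tangent of $(X,\sfd)$ at $y$ is Euclidean $\R^{N}$, so by a Colding-type almost-volume rigidity in the non-collapsed $\RCD$ setting, the $N$-density of $\mu$ at $y$ would exist and equal $\omega_{N}$. Since $\cH^{N}(Z\setminus \cR_{N})=0$ by \eqref{eq:RNfullMeas}, this forces $\mu=\cH^{N}\restriction Z$, and hence $\cH^{N}(Z_{i})\to \cH^{N}(Z)$ along the extracted subsequence; since the subsequence is arbitrary, the full sequence converges.

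The hardest part will be the almost-volume rigidity invoked above: one needs that if $\sfd_{GH}(\overline{B}_{r}(x_{i}),\overline{B}_{r}(0^{N}))\leq \eta r$ then $|\cH^{N}(\overline{B}_{r}(x_{i}))-\omega_{N}r^{N}|\leq \Psi(\eta|N)\cdot r^{N}$, quantitatively. In the smooth Riemannian setting this is Colding's volume convergence \cite{Col97}; in the non-collapsed $\RCD(K,N)$ setting it is the main technical content of \cite{GDPNonColl}, resting on the cone structure of tangents to non-collapsed $\RCD(K,N)$ spaces together with the rigidity in Bishop--Gromov monotonicity. A subtler ancillary point is upgrading GH-convergence of the closed balls to pmGH-convergence of the ambient spaces (or at least of slightly enlarged open balls), which is standard modulo diagonal extraction given the uniform volume control.
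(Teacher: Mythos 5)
The statement you are asked to prove is cited in the paper as \cite[Theorem 1.3]{GDPNonColl} and is not re-proved there, so there is no proof in the paper to compare against; I evaluate your proposal on its own terms.

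Your soft compactness scheme (embed in an ambient space, extract a weak limit $\mu$ of the pushed-forward Hausdorff measures, invoke $\RCD$ stability, identify $\mu$ with $\cH^N$ by computing densities at regular points) places the entire difficulty into the ``Colding-type almost-volume rigidity'' that you invoke at the end, and there it becomes circular. The upper half of that rigidity (a ball GH-close to Euclidean has volume at most $(1+\Psi)\omega_N r^N$) is indeed elementary from Bishop--Gromov plus $\vartheta_N\le 1$. But the lower half --- GH-closeness to $\R^N$ at scale $r$ forces $\cH^N(\overline B_r(x_i))\geq (1-\Psi(\eta|N))\omega_N r^N$ --- is precisely what GH-continuity of $\cH^N$ asserts when specialized to sequences converging to a Euclidean ball; quantitatively the two statements are equivalent. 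There is no independent Colding-type lemma for non-collapsed $\RCD$ spaces you can quote here: in the smooth case Colding's argument uses the segment inequality and almost-splitting, and De~Philippis--Gigli's $\RCD$ version is a new argument (cone structure of tangents, a maximal-function comparison between the reference measure and the spherical Hausdorff measure, and a Vitali-type covering), not a consequence of $\RCD$ stability plus rectifiability. Writing that it ``is the main technical content of \cite{GDPNonColl}'' is to say it is the theorem you are trying to prove.

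Two further gaps persist even granting the rigidity. First, knowing the $N$-density of $\mu$ is $\omega_N$ at every $y\in\cR_N$, together with $\cH^N(Z\setminus\cR_N)=0$, does not yet give $\mu=\cH^N|_Z$: it controls $\mu$ at $\cH^N$-a.e.\ point but not at $\mu$-a.e.\ point, so a singular part of $\mu$ concentrated on $Z\setminus\cR_N$ is not ruled out (compare $\cH^N+\delta_{y_0}$, which has $N$-density $\omega_N$ at $\cH^N$-a.e.\ point). You would need an independent argument, e.g.\ via constancy of dimension \cite{BS18} and $\mm\ll\cH^k$ on the $k$-regular stratum \cite{KM}, together with showing that the constant $k$ equals $N$; moreover the application of \eqref{eq:RNfullMeas} presupposes that the limit is already weakly non-collapsed, which is part of what has to be shown. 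Second, the theorem concerns balls in arbitrary $\RCD(K,N)$ spaces, not only non-collapsed ones, so in the real-valuedness step you may not invoke Theorem~\ref{thm:BishopGromov} (which controls the reference measure $\mm$, not $\cH^N$) nor the non-collapsed density bound \eqref{eq:RegSetDens}. The correct elementary bound is by covering: the $\RCD(K,N)$ doubling property gives at most $C(K,N,R)\,r^{-N}$ balls of radius $r$ covering $B_R(x)$, whence $\cH^N_\delta(B_R(x))\le C'(K,N,R)$ uniformly in $\delta$.
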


\begin{theorem}[Volume Rigidity]\label{thm:VolRig}\cite[Theorem 1.6]{GDPNonColl}
For every $\varepsilon>0$ and $N\in \N$, $N\geq 1$, there exists $\delta=\delta(\varepsilon,N)>0$ such that the following holds. 
Let $(X,\sfd, \cH^{N})$ be a non-collapsed $\RCD(-\delta,N)$  space and assume there exists $\bar{x}\in X$ satisfying $\cH^{N}(B_{1}^{X}(\bar{x})) \geq (1-\delta) \cH^{N}(B_{1}^{\R^{N}}(0) )$.Then 
$$\sfd_{GH}(\bar{B}_{1/2}^{X}(\bar{x}), \bar{B}_{1/2}^{\R^{N}}(0))\leq \varepsilon.$$
\end{theorem}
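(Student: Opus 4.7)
My plan is to prove Theorem \ref{thm:VolRig} by a standard compactness-contradiction argument, building on the GH-continuity of $\cH^{N}$ (Theorem \ref{thm:ContHN}) together with the volume-cone-implies-metric-cone rigidity available in the non-collapsed $\RCD$ setting.

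\textbf{Step 1: Set up the contradiction and extract a limit.} Suppose the statement fails for some $\varepsilon_{0}>0$. Then there exist $\delta_{i}\downarrow 0$ and non-collapsed $\RCD(-\delta_{i},N)$ spaces $(X_{i},\sfd_{i},\cH^{N})$ with points $\bar x_{i}$ such that
\[
\cH^{N}(B^{X_{i}}_{1}(\bar x_{i}))\ge (1-\delta_{i})\,\omega_{N}, \qquad \sfd_{GH}(\bar B^{X_{i}}_{1/2}(\bar x_{i}),\bar B^{\R^{N}}_{1/2}(0))>\varepsilon_{0}.
\]
By Bishop--Gromov (Theorem \ref{thm:BishopGromov}), the family is uniformly doubling on bounded sets, so Gromov's precompactness theorem yields a subsequence pGH-converging to some pointed space $(X_{\infty},\sfd_{\infty},\bar x_{\infty})$. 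Using the volume bounds from below together with the uniform upper bound from Bishop-Gromov, the rescaled Hausdorff measures are tight on balls, so passing to a further subsequence we obtain pmGH-convergence $(X_{i},\sfd_{i},\cH^{N},\bar x_{i})\to (X_{\infty},\sfd_{\infty},\mm_{\infty},\bar x_{\infty})$ for some Radon limit measure $\mm_{\infty}$, and stability of the $\RCD$ condition gives that the limit is $\RCD(0,N)$.

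\textbf{Step 2: Identify $\mm_{\infty}$ with $\cH^{N}$ and prove the limit is non-collapsed.} The GH-continuity of $\cH^{N}$ (Theorem \ref{thm:ContHN}) applied to balls of any radius $R$ yields $\cH^{N}(\bar B^{X_{i}}_{R}(\bar x_{i}))\to \cH^{N}(\bar B^{X_{\infty}}_{R}(\bar x_{\infty}))$. On the other hand the weak convergence of measures gives $\mm_{\infty}(\bar B_{R}(\bar x_{\infty}))\le \liminf_{i}\cH^{N}(\bar B_{R}(\bar x_{i}))$ with the reverse inequality on open balls, and a routine argument (using that the boundary $\partial B_{R}$ has zero measure for a.e.\ $R$) identifies $\mm_{\infty}=\cH^{N}$ locally. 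Hence the limit is a non-collapsed $\RCD(0,N)$ space with $\cH^{N}(\bar B^{X_{\infty}}_{1}(\bar x_{\infty}))=\omega_{N}$.

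\textbf{Step 3: Deduce Euclidean rigidity on $B_{1}(\bar x_{\infty})$.} By Bishop--Gromov with $K=0$, the ratio $r\mapsto \cH^{N}(B_{r}(\bar x_{\infty}))/(\omega_{N}r^{N})$ is non-increasing. Combined with \eqref{eq:RegSetDens} (which forces $\vartheta_{N}\le 1$ on non-collapsed $\RCD(K,N)$ spaces), the equality $\cH^{N}(B_{1}(\bar x_{\infty}))=\omega_{N}$ forces this ratio to be identically $1$ on $(0,1]$. The volume-cone-implies-metric-cone theorem in the $\RCD$ setting (as established, in the form suited for our needs, in \cite{GDPNonColl}, using \cite{DPG16} in the background) then implies $B_{1}(\bar x_{\infty})$ is isometric to a metric cone $C(Z)$ with vertex $\bar x_{\infty}$ and cross-section $Z$ being a non-collapsed $\RCD(N-2,N-1)$ space whose $\cH^{N-1}$-measure equals $\cH^{N-1}(\bS^{N-1})$; the sharp maximal volume rigidity then identifies $Z$ with $\bS^{N-1}$, and hence $B_{1}(\bar x_{\infty})$ is isometric to $B_{1}^{\R^{N}}(0)$.

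\textbf{Step 4: Contradiction.} Restricting to half-radius, $\bar B^{X_{\infty}}_{1/2}(\bar x_{\infty})$ is isometric to $\bar B^{\R^{N}}_{1/2}(0)$, contradicting $\sfd_{GH}(\bar B^{X_{i}}_{1/2}(\bar x_{i}),\bar B^{\R^{N}}_{1/2}(0))>\varepsilon_{0}$ passed to the limit. The main obstacle is Step 3: ensuring that in the non-collapsed $\RCD$ framework, the equality case of Bishop--Gromov rigorously yields a conical structure \emph{with} the correct link being $\bS^{N-1}$. In the Ricci-limit setting this is classical (Cheeger--Colding), but in the $\RCD$ setting it requires the machinery of \cite{GDPNonColl} identifying tangent spaces of non-collapsed $\RCD$ spaces as metric measure cones over non-collapsed $\RCD(N-2,N-1)$ spaces; the maximal volume rigidity for $(N-1)$-dimensional spaces (ultimately propagating by induction down to dimension 1, handled by \cite{KL}) then closes the argument.
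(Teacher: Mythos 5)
The paper does not prove this theorem; it is quoted directly from \cite[Theorem~1.6]{GDPNonColl}, so there is no in-paper proof to compare against. Your compactness-contradiction strategy is a natural one, and it is in the same spirit as arguments the paper itself uses elsewhere (e.g.\ the Sphere Theorem \ref{thm:Sphere}), so on its own terms the blueprint is sound. There are, however, a few places where the sketch is thinner than it can afford to be.

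In Step~2 the identification $\mm_\infty=\cH^N_{\sfd_\infty}$ is precisely the stability statement \cite[Theorem~1.2]{GDPNonColl}; reconstructing it from GH-continuity of $\cH^N$ and weak convergence as you outline is possible but requires running the ball-comparison argument at \emph{every} center (with moving base points), not just at $\bar x_\infty$, to conclude equality of measures globally. It is cleaner to invoke the stability theorem outright.

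In Step~3, two points. First, the volume-cone-implies-metric-cone theorem gives that a ball around $\bar x_\infty$ is isometric to a \emph{ball in} the cone $C(Z)$, not that the ball is itself a cone; this is only a wording slip but worth fixing since the cone structure near the boundary sphere is what you actually use. Second, and more substantively, the ``sharp maximal volume rigidity'' identifying $Z\cong\bS^{N-1}$ is the real content and is left as an assertion. The argument is: the cone measure identity gives $\cH^{N-1}(Z)=\cH^{N-1}(\bS^{N-1})$; Bishop--Gromov for $\RCD(N-2,N-1)$ combined with $\vartheta_{N-1}\le 1$ forces $\mathrm{rad}(Z)=\pi$ (otherwise $Z$ sits inside a strictly smaller ball whose volume is $<\cH^{N-1}(\bS^{N-1})$); Bonnet--Myers gives $\mathrm{diam}(Z)\le\pi$, so $\mathrm{diam}(Z)=\pi$; Ketterer's Maximal Diameter Theorem \cite{Ket} identifies $Z$ as a spherical suspension over a non-collapsed $\RCD(N-3,N-2)$ space with maximal volume; iterate down to dimension one, where \cite{KL} closes the induction. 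Finally, for $N=1$ the conical/suspension machinery degenerates and you should note that the classification of $\RCD(0,1)$ spaces handles the conclusion directly. With these fills the argument is complete; I also note there is no circularity, since the density bound $\vartheta_N\le 1$ used in Step~3 is established in \cite{GDPNonColl} by a covering argument independent of volume rigidity.
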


{
Combining the GH-continuity of $\cH^{N}$ (Theorem \ref{thm:ContHN}) with the  volume rigidity (Theorem \ref{thm:VolRig}) gives the next characterization of  $(\cR_{N})_{\eps}$ in terms of the density $\vartheta_{N}$ defined in \eqref{eq:defthetaN}.
\begin{corollary}\label{cor:RNepsTheta}
Let $(X,\sfd, \cH^{N})$ be a non-collapsed $\RCD(K,N)$  space for some $K\in \R, N\in \N$ and let $x\in X$. The following holds: 
\begin{itemize}
\item If $\vartheta_{N}(x)\geq 1-\eps$ then $x\in (\cR_{N})_{\Psi(\eps|K,N)}$ .
\item Conversely, if   $x\in (\cR_{N})_{\eps}$ then  $\vartheta_{N}(x)\geq 1-\Psi(\eps|K,N)$.
\end{itemize}
\end{corollary}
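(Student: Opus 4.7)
The plan is to combine the definition of $\vartheta_{N}$ as a scale-zero limit with Volume Rigidity (Theorem~\ref{thm:VolRig}) for the first implication, and to use the GH-continuity of $\cH^{N}$ (Theorem~\ref{thm:ContHN}) together with Gromov precompactness for the converse.

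\emph{Forward direction.} Assume $\vartheta_{N}(x)\geq 1-\eps$. By the very definition $\vartheta_{N}(x)=\lim_{r\to 0}\cH^{N}(B_{r}(x))/(\omega_{N}r^{N})$, for every $\eta>0$ there exists $r_{0}=r_{0}(x,\eta)>0$ such that $\cH^{N}(B_{r}(x))\geq(1-\eps-\eta)\omega_{N}r^{N}$ for every $r\leq r_{0}$. After rescaling by $1/r$, the pointed space $(X,r^{-1}\sfd,r^{-N}\cH^{N},x)$ is non-collapsed $\RCD(r^{2}K,N)$, and the unit ball around $x$ has $\cH^{N}$-mass at least $(1-\eps-\eta)\omega_{N}$. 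Given a target $\eps'>0$, let $\delta=\delta(\eps',N)>0$ be the constant produced by Theorem~\ref{thm:VolRig}; choosing $\eta>0$ with $\eps+\eta\leq\delta$ and $r>0$ with $r\leq r_{0}(x,\eta)$ and $r^{2}|K|\leq\delta$ (the latter vacuous if $K\geq 0$), Volume Rigidity yields $\sfd_{GH}(B_{r/2}(x),B_{r/2}(0^{N}))\leq \eps'\, r$. Setting $\eps':=\Psi(\eps|K,N)/2$ and letting $s:=r/2$ range over all sufficiently small scales proves $x\in(\cR_{N})_{\Psi(\eps|K,N)}$.

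\emph{Reverse direction.} Assume $x\in(\cR_{N})_{\eps}$, so that there exists $t>0$ with $\sfd_{GH}(B_{s}(x),B_{s}(0^{N}))\leq\eps\, s$ for every $s\in(0,t]$. For each $s\in(0,\min(t,1)]$, rescaling by $1/s$ presents the closed unit ball around $x$ in $(X,s^{-1}\sfd,s^{-N}\cH^{N})$ as an $\eps$-GH-approximation of $\bar B_{1}(0^{N})\subset\R^{N}$ sitting inside a non-collapsed $\RCD(K',N)$ space with $K':=s^{2}K\in[-|K|,|K|]$. The class of all closed unit balls centered at the basepoint in such spaces is GH-precompact by Gromov's theorem, and by Theorem~\ref{thm:ContHN} the map $Z\mapsto\cH^{N}(Z)$ is continuous on it, hence uniformly continuous on its compact closure. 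There is therefore a modulus $\omega_{K,N}:\mathbb{R}_{+}\to\mathbb{R}_{+}$, with $\omega_{K,N}(\eps)\to 0$ as $\eps\to 0$, such that $|\cH^{N}(B_{s}(x))/(\omega_{N}s^{N})-1|\leq\omega_{K,N}(\eps)$ for all such $s$. Letting $s\to 0$ and invoking $\vartheta_{N}(x)\leq 1$ from \eqref{eq:RegSetDens} gives $\vartheta_{N}(x)\geq 1-\Psi(\eps|K,N)$.

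The main technical point is the uniformity of the modulus in the converse direction: Theorem~\ref{thm:ContHN} is only stated as qualitative continuity, so producing the quantitative $\Psi(\eps|K,N)$ requires the Gromov-precompactness argument above to turn continuity into uniform continuity. The forward direction is a routine rescaling reduction to Theorem~\ref{thm:VolRig}, the only care needed being that the rescaling $r\to 0$ simultaneously makes the effective curvature parameter $r^{2}K$ and the density defect $1-\cH^{N}(B_{r}(x))/(\omega_{N}r^{N})$ fall below the threshold $\delta(\eps',N)$ dictated by Volume Rigidity.
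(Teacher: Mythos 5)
Your proof is correct and matches the route the paper indicates (the corollary is stated without a written proof, attributed simply to ``combining'' Theorems \ref{thm:ContHN} and \ref{thm:VolRig}): the forward implication is a rescaling reduction to Volume Rigidity, and the converse uses the GH-continuity of $\cH^{N}$ upgraded to a uniform modulus via Gromov precompactness --- the same compactness-contradiction mechanism the paper uses elsewhere. The only point requiring care, which you handle correctly, is tuning the scale $r$ so that both the rescaled curvature $r^{2}K$ and the density defect $1-\cH^{N}(B_{r}(x))/(\omega_{N}r^{N})$ fall below the threshold $\delta(\eps',N)$ of Theorem \ref{thm:VolRig}.
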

}

Let $(\cWR_N)_\eps(X)$ denote the set of points in $X$ such that \emph{some} tangent space  $T_xX$ satisfies  
\[\sfd_{GH}(B_1^{T_xX}(o), B_{1}^{\R^{N}}(0))\le \eps.
\]

Combining the above two theorems and Bishop-Gromov volume comparison (see for instance the proof of Theorem \ref{thm:topologyHN}) we get: 

\begin{corollary}\label{cor-reg}
 For any $N\in\N, K\in \R$ there exists $\eps(\delta,K,N)=\Psi(\delta|K,N)$ such that 
if  $(X,\sfd,\cH^{N})$ is a non-collapsed $\RCD(K,N)$ space then  $X\backslash \S^{N-1}_\delta\subset (\cR_{N})_{\eps(\delta|K,N)}$  and $(\cWR_N)_\eps\subset (\mathcal R_N)_{\Psi(\eps|K,N)}$.
\end{corollary}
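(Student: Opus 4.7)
The plan is to reduce both inclusions to density estimates of the form $\vartheta_N(x)\ge 1-\Psi$, from which the conclusions follow by Corollary~\ref{cor:RNepsTheta}.

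For $(\cWR_N)_\eps\subset (\cR_N)_{\Psi(\eps|K,N)}$, I would pick a tangent $(T_xX,o)\in \Tan(X,x)$ with $\sfd_{GH}(B_1^{T_xX}(o),B_1^{\R^N}(0))\le \eps$. Since such a tangent is a non-collapsed $\RCD(0,N)$ metric cone (Lemma~\ref{lem:TanncRCD}), both balls lie in $\mathbb B_{0,N,1}$ and Theorem~\ref{thm:ContHN} gives $\cH^N(B_1^{T_xX}(o))\ge (1-\Psi(\eps|N))\omega_N$. The cone structure yields $\vartheta_N^{T_xX}(o)=\cH^N(B_1^{T_xX}(o))/\omega_N$, which equals $\vartheta_N^X(x)$ by Bishop-Gromov monotonicity (the density passes to tangents). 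Then Corollary~\ref{cor:RNepsTheta} completes this case.

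For $X\setminus\S^{N-1}_\delta\subset (\cR_N)_{\Psi(\delta|K,N)}$, I would argue by compactness and contradiction. Assuming failure, I would extract $\delta_n\to 0$, non-collapsed $\RCD(K,N)$ spaces $X_n$ and points $x_n$ such that $B_{s_n}(x_n)$ is $(N,\delta_n)$-symmetric at some scale $s_n\in(0,1]$, yet $\vartheta_N^{X_n}(x_n)\le 1-\eps_0$ for some fixed $\eps_0>0$. After rescaling each $X_n$ by $1/s_n$ (giving $\RCD(Ks_n^2,N)$ spaces $\tilde X_n$ with $|Ks_n^2|\le |K|$) and passing to a pmGH subsequential limit $(\tilde X_n,x_n)\to (Y,y)$, one obtains a non-collapsed $\RCD(K',N)$ space in which $B_1^Y(y)$ is isometric to the unit ball at the vertex of some $\R^N\times C(Z_\infty)$. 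The crucial rigidity step is: $\cH^N(B_1^Y(y))\le v_{K',N}(1)<\infty$ by Bishop-Gromov, while the $N$-dimensional Hausdorff measure of $B_1$ at the vertex of $\R^N\times C(Z_\infty)$ is finite only if $\dim_H C(Z_\infty)=0$; the non-collapsing and the dimension bound thus force $C(Z_\infty)=\{*\}$, and hence $B_r^Y(y)$ is isometric to $B_r^{\R^N}(0)$ for every $r\le 1$. Then Theorem~\ref{thm:ContHN} yields $\cH^N(B_r^{\tilde X_n}(x_n))\to \omega_N r^N$ for each fixed $r\le 1$, and combining with Bishop-Gromov monotonicity of $r\mapsto \cH^N(B_r^{\tilde X_n}(x_n))/v_{Ks_n^2,N}(r)$ at a sufficiently small $r$ (where $v_{Ks_n^2,N}(r)/(\omega_N r^N)$ is close to $1$), I would deduce $\vartheta_N^{\tilde X_n}(x_n)\to 1$. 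As density is scale-invariant under rescaling of the metric, $\vartheta_N^{X_n}(x_n)\to 1$, contradicting the assumption.

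The hardest part will be the dimensional-rigidity step: forcing the cone factor $C(Z_\infty)$ to be trivial in the limit model. This is where both the non-collapsing assumption and the upper dimension bound $N$ play an essential role, excluding limit models of the form $\R^N\times C(Z_\infty)$ with a non-degenerate cone factor.
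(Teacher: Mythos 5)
Your two-part strategy (direct density argument for $(\cWR_N)_\eps$, compactness--contradiction for $X\setminus\S^{N-1}_\delta$, both reduced to Corollary~\ref{cor:RNepsTheta}) is the right shape, and the $(\cWR_N)_\eps$ part is essentially what the paper has in mind: Lemma~\ref{lem:TanncRCD} makes the tangent a non-collapsed $\RCD(0,N)$ cone, Theorem~\ref{thm:ContHN} gives almost-maximal volume of $B_1^{T_xX}(o)$, the cone structure converts this to $\vartheta_N^{T_xX}(o)$, and Bishop--Gromov identifies this with $\vartheta_N^X(x)$. That is correct.

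For the first inclusion, however, the compactness argument as written has a genuine gap in the step you yourself flag as the hardest. You assert that $B_1^Y(y)$ is isometric to the unit ball at the vertex of some $\R^N\times C(Z_\infty)$, but this is not automatic: the models $\R^N\times C(Z_n)$ are not assumed to be $\RCD$ (so no Gromov precompactness applies to them), the class of $N$-symmetric spaces is not evidently GH-closed, and you only control a single ball rather than the whole ambient cone. The subsequent rigidity step (``$\cH^N(B_1^Y(y))\le v_{K',N}(1)<\infty$ by Bishop--Gromov'') also presupposes that $Y$ is non-collapsed in order to invoke the $\cH^N$ upper bound of \eqref{eq:RegSetDens}, which is circular since non-collapsedness of the limit is exactly what you still need to establish (the Bishop--Gromov estimate \eqref{eq:BSballs} controls the reference measure $\mm_Y$, not $\cH^N$, unless you already know $\mm_Y=\cH^N$). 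So as written the rigidity step does not close.

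The gap can be avoided, and your own reduction points to how. You do not need $B_1^Y(y)$ to be isometric to a model ball; you only need it to \emph{contain} an isometric copy of $B_1^{\R^N}(0)$ centered at $y$. This is robust: the models $B_1(x'_n)$ contain $B_1^{\R^N}(0)\times\{\text{vertex}\}$ isometrically, the restriction of the GH $\delta_n$-approximations to this fixed compact disk is a family of $\delta_n$-approximations from $B_1^{\R^N}(0)$ into $B_1^{\tilde X_n}(x_n)$, and a standard Arzel\`a--Ascoli argument on the GH-realizing metrics produces a limiting isometric embedding $B_1^{\R^N}(0)\hookrightarrow B_1^Y(y)$ sending $0\mapsto y$. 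From this, $\cH^N(B_r^Y(y))\ge\omega_N r^N$ for all $r<1$, so Theorem~\ref{thm:ContHN} (applied to the $\RCD$ balls $B_1^{\tilde X_n}(x_n)$ and $B_1^Y(y)$, the latter being a ball in the $\RCD(K',N)$ pmGH limit) shows $\cH^N(B_1^{\tilde X_n}(x_n))\to\cH^N(B_1^Y(y))\ge\omega_N>0$, which is exactly the lower volume bound guaranteeing non-collapse of the limit via \cite[Theorem~1.2]{GDPNonColl}. Then $\vartheta_N^Y(y)\ge 1$, and \eqref{eq:RegSetDens} gives $\vartheta_N^Y(y)\le 1$, so $\vartheta_N^Y(y)=1$. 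Finally, lower semicontinuity of $\vartheta_N$ under GH convergence (Bishop--Gromov monotonicity plus Theorem~\ref{thm:ContHN} at a small scale, as you sketch) together with scale-invariance of the density yields $\vartheta_N^{X_n}(x_n)\to 1$, contradicting your assumption via Corollary~\ref{cor:RNepsTheta}. This route bypasses the unverified cone-limit claim entirely and is, I believe, closer to what the paper's parenthetical ``see for instance the proof of Theorem~\ref{thm:topologyHN}'' is pointing at.
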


\section{Topological regularity}

The next theorem extends to the $\RCD$ setting a celebrated  result by Cheeger-Colding \cite[Theorem A.1.8]{CC97}
{ 
\begin{theorem}\label{thm:topologyHN}
Fix $K\in \R$ and $N\in \N, N\geq 1$.  For every $\alpha\in (0,1)$, there exist $\bar\varepsilon=\bar\varepsilon(K,N,\alpha)>0$ { such that for any $0<\eps\le \bar\varepsilon$ we can find  $\bar{r}=\bar{r}(K,N,\alpha,\varepsilon)$ satisfying the next assertion}.
\\Let $(X,\sfd, \cH^{N})$ be an $\RCD(K,N)$ space and let $\bar{x}\in (\cR_{N})_{\varepsilon,r}$ be a  $(N,\varepsilon,r)$-regular point, for some $r\in (0, \bar{r})$.
Then there exists a topological embedding $F:B_{\alpha r}^{\R^{N}}(0)\to B_{\alpha r}^{X}(\bar{x})$ such that $F(B_{\alpha r}^{\R^{N}}(0))\supset B_{\alpha r}^{X}(\bar{x})$.
Moreover,  the maps $F, F^{-1}$ are H\"older continuous, with exponent $\alpha$.  Further, both $F$ and $F^{-1}$ are  $\Psi(\eps|N)$-GH-approximations.
\end{theorem}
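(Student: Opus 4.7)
The plan is to reduce to the Cheeger--Colding metric Reifenberg theorem (Theorem \ref{thm:CCReif}), applied on a ball slightly larger than $B_{\alpha r}(\bar x)$. The main work is to upgrade the $(N,\varepsilon, r)$-regularity, given only at the single point $\bar x$, to regularity of every nearby point at every scale; once this is done, Theorem \ref{thm:CCReif} provides the desired H\"older embedding. After rescaling the metric by $r^{-1}$ we may assume $r=1$, so that the space becomes $\RCD(r^{2}K,N)$; by choosing $\bar r = \bar r(K,N,\alpha,\varepsilon)$ small enough, the comparison function $v_{r^{2}K,N}(s)$ is within $\Psi(\bar r|K,N)$ of $\omega_{N}s^{N}$ on a bounded range of $s$. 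By the hypothesis there is $t>1$ with $\sfd_{GH}(B_{s}(\bar x), B^{\R^{N}}_{s}(0))\leq \varepsilon s$ for every $s\in(0,t]$, and by the continuity of Hausdorff measure (Theorem \ref{thm:ContHN}) this gives
\[
\cH^{N}(B_{s}(\bar x))\geq (1-\Psi(\varepsilon|N))\omega_{N}s^{N}\qquad\text{for all }s\in(0,t].
\]

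Fix an auxiliary $\rho\in(\alpha,1)$. For $y\in B_{\rho}(\bar x)$ and $R\in (\sfd(\bar x,y), t]$, the inclusion $B_{R-\sfd(\bar x,y)}(\bar x)\subset B_{R}(y)$ together with the previous estimate gives the volume lower bound $\cH^{N}(B_{R}(y))\geq (1-\Psi(\varepsilon|N))(R-\sfd(\bar x,y))^{N}\omega_{N}$. Conversely, the non-collapsing bound $\vartheta_{N}(y)\leq 1$ from \eqref{eq:RegSetDens} combined with Bishop--Gromov monotonicity (Theorem \ref{thm:BishopGromov}) forces the ratio $s\mapsto\cH^{N}(B_{s}(y))/v_{r^{2}K,N}(s)$ to be non-increasing with limit $\leq 1$ as $s\to 0^+$. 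Choosing $R$ close to the maximal available scale $t$, the ratio at scale $R$ is pinched close to $1$, and by monotonicity $\cH^{N}(B_{s}(y))\geq (1-\delta)\omega_{N}s^{N}$ at every scale $s\in(0,R]$ with $\delta = \Psi(\varepsilon,1-\rho,\bar r|K,N)$ as small as desired.

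Applying the volume rigidity theorem (Theorem \ref{thm:VolRig}) at each scale $s$ (after rescaling by $s^{-1}$), this near-Euclidean volume translates into $\sfd_{GH}(B_{s/2}(y), B_{s/2}^{\R^{N}}(0))\leq \Psi(\delta|N)\cdot s/2$, so that $y\in (\cR_{N})_{\Psi(\delta|N), s_{y}}$ for some $s_{y}> \rho - \sfd(\bar x,y)$ provided $\rho$ is chosen sufficiently close to $\alpha$. Taking $\bar\varepsilon$ and $\bar r$ small enough so that $\Psi(\delta|N)$ falls below the Reifenberg threshold $\bar\varepsilon(N,\alpha/\rho)$ of Theorem \ref{thm:CCReif}, we may rescale by $\rho^{-1}$ and apply Theorem \ref{thm:CCReif} with parameter $\alpha/\rho\in(0,1)$ on $B_{\rho}(\bar x)$: this produces a bi-H\"older topological embedding $F\co B_{\rho}^{\R^{N}}(0)\to B_{\rho}(\bar x)$ whose image contains $B_{\alpha}(\bar x)$, with both $F$ and $F^{-1}$ being $\Psi(\varepsilon|N)$-GH approximations. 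Appropriately restricting $F$ and undoing the rescalings gives the embedding claimed.

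The central technical obstacle is the propagation of regularity from $\bar x$ to all of $B_{\rho}(\bar x)$ uniformly over scales. This is where the non-collapsing hypothesis becomes crucial: the one-sided density bound $\vartheta_{N}\leq 1$ of \eqref{eq:RegSetDens}, combined with Bishop--Gromov monotonicity, pinches the volume ratio between two values close to $1$ at every scale simultaneously, turning a single near-Euclidean volume estimate at one scale into near-maximal volume at every smaller scale, which is what allows volume rigidity to be iterated across all scales at once.
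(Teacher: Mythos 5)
Your overall route (volume continuity, Bishop--Gromov, Theorem \ref{thm:VolRig}, then Theorem \ref{thm:CCReif}) is the same as the paper's, but the propagation step contains a genuine gap. After rescaling so that $r=1$, you take $y\in B_{\rho}(\bar x)$ with $\rho\in(\alpha,1)$, so $\sfd(\bar x,y)$ may be comparable to the largest controlled scale $t$, which is only guaranteed to be slightly larger than $1$. The inclusion $B_{R-\sfd(\bar x,y)}(\bar x)\subset B_{R}(y)$ gives only $\cH^{N}(B_{R}(y))\geq(1-\Psi(\varepsilon|N))\,\omega_{N}\,(R-\sfd(\bar x,y))^{N}$, so the Bishop--Gromov ratio of $y$ at scale $R\leq t$ is bounded below merely by roughly $\bigl((R-\sfd(\bar x,y))/R\bigr)^{N}$, which for $\sfd(\bar x,y)$ close to $\rho$ and $R$ close to $t\approx 1$ is about $(1-\rho)^{N}$: bounded away from $1$, and degenerating (not improving) as $\rho\to 1$. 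Your claim that ``the ratio at scale $R$ is pinched close to $1$'' with $\delta=\Psi(\varepsilon,1-\rho,\bar r|K,N)$ therefore has the dependence exactly backwards, and the upper bound $\vartheta_{N}(y)\leq 1$ from \eqref{eq:RegSetDens} cannot rescue it, since the pinch fails from below, not from above. Monotonicity then only yields $\cH^{N}(B_{s}(y))\gtrsim(1-\rho)^{N}\omega_{N}s^{N}$ at small scales, which is far from the almost-maximal volume required to invoke Theorem \ref{thm:VolRig}, so the Reifenberg hypothesis on $B_{\rho}(\bar x)$ is not established.

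This is precisely why the paper propagates regularity only to a small ball $B_{\eta}(\bar x)$ with $\eta$ a small free parameter: there the inclusion $B_{1}(\bar x)\subset B_{1+\eta}(y)$ gives a ratio at scale $1+\eta$ at least $(1-\Psi(\varepsilon,\delta|N))(1+\eta)^{-N}=1-\Psi(\varepsilon,\delta,\eta|N)$, genuinely close to $1$, after which volume rigidity and Theorem \ref{thm:CCReif} are applied on that smaller ball. If you want almost-maximal density at points whose distance from $\bar x$ is a definite fraction of the controlled scale, the crude inclusion argument cannot deliver it; you would need an additional ingredient, for instance a compactness/contradiction argument exploiting the regularity of $\bar x$ at \emph{all} scales $s\leq t$ together with volume (measure) convergence under non-collapsed GH convergence (Theorem \ref{thm:ContHN}, localized to off-center balls), to first obtain almost-Euclidean volume of a mesoscale ball $B_{\sigma}(y)$ with $\sigma$ comparable to $1-\rho$, and only then use Bishop--Gromov monotonicity to descend to all scales $s\leq\sigma$ before applying Theorem \ref{thm:VolRig} and Theorem \ref{thm:CCReif}. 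As written, your argument does not close.
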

}

\begin{proof}

First of all we fix $K\in \R$, $N\in \N, N\geq 1$, $\alpha\in (0,1)$ and an  $\RCD(K,N)$ space  $(X,\sfd, \cH^{N})$.  
\\Notice that there exists $\bar{r} =\bar{r}(K,N,\delta)$ such that  the rescaled space $(X,\sfd/\bar{r}, \cH^{N})$ is  $\RCD(-\delta, N)$; in the latter space, $\cH^{N}$ is the Hausdorff measure corresponding to the rescaled distance $\sfd/{\bar{r}}$. In order to keep the notation short, let us denote $X/{\bar{r}}:=(X,\sfd/{\bar{r}},  \cH^{N}_{\sfd/{\bar{r}}})$. 
By definition of $(N,\varepsilon, {\bar{r}})$-regular point, it holds 
\begin{equation}\label{eq:dGHX/r}
\sfd_{GH}(B_{1}^{X/{\bar{r}}}(\bar{x}), B_{1}^{\R^{N}}(0))\leq \varepsilon.
\end{equation}
The GH-continuity of $\cH^{N}$ (see Theorem \ref{thm:ContHN}) combined with \eqref{eq:dGHX/r} gives that 
\begin{equation}\label{eq:HNMax}
\cH^{N}(B_{1}^{X/\bar{r}}(\bar{x})) \geq (1-\Psi(\varepsilon|N))\, \cH^{N}(B_{1}^{\R^{N}}(0)).
\end{equation}
We now claim that \eqref{eq:HNMax} combined with Bishop-Gromov monotonicity of the volume  implies that any point $x\in B_{\eta}(\bar{x})$ has almost maximal volume at every (sufficiently small) scale, i.e.:
\begin{equation}\label{eq:AlmMaxVolBrho}
\frac{\cH^{N}(B^{X/r}_{\rho}(x))}{\omega_{N} \rho^{N}}\geq 1-\Psi(\varepsilon,\delta, \eta|N), \quad \text{ for all } x\in B_{\eta}(\bar{x}), \,  \rho\in (0, 1). 
\end{equation}
Indeed using that $B^{X/r}_{1}(\bar{x})\subset B^{X/r}_{1+\eta}(x)$ for every $x\in B^{X/r}_{\eta}(\bar x)$ and recalling Bishop-Gromov inequality \eqref{eq:BSballs},  we obtain that for every $\rho\in (0, 1+\eta)$ it holds
\begin{align}
\frac{\cH^{N}(B^{X/r}_{\rho}(x))}{v_{-\delta,N}(\rho)}& \geq  \frac{\cH^{N}(B^{X/r}_{1+\eta}(x))}{v_{-\delta,N}(1+\eta)} \geq  \frac{\cH^{N}(B^{X/r}_{1}(\bar x))}{v_{-\delta,N}(1+\eta)} \nonumber \\
& \overset{\eqref{eq:HNMax}}{\geq}   (1-\Psi(\varepsilon|N))\, \frac{\cH^{N}(B_{1}^{\R^{N}}(0))}{v_{-\delta,N}(1+\eta)} \geq 1-\Psi(\varepsilon,\delta, \eta|N). \label{HNvdelta}
\end{align}
The claim \eqref{eq:AlmMaxVolBrho} follows from  \eqref{HNvdelta} combined with the estimate $v_{-\delta,N}(\rho)\geq (1-\Psi(\delta|N)) \omega_{N}  \rho^{N} $ for every $\rho\in (0, 2)$.
\\In virtue of Theorem \ref{thm:VolRig},  the claim \eqref{eq:AlmMaxVolBrho} implies that
\begin{equation}\label{eq:dGHBrho}
\sfd_{GH}(\bar{B}_{\rho}^{X/r}(x), \bar{B}_{\rho}^{\R^{N}}(0))\leq \Psi(\varepsilon,\delta, \eta|N) \rho,  \quad \text{ for all } x\in B^{X/r}_{\eta}(\bar{x}), \,  \rho\in (0, 1/2).
\end{equation}
In other terms,  coming back to the orginal scale of $(X,\sfd)$, we have just proved that  $B^{X}_{\eta r}(\bar{x})\subset (\cR_{N})_{\Psi(\varepsilon,\delta, \eta|N),r/2}.$
\\ The conclusion follows now from Theorem \ref{thm:CCReif}.
\end{proof}

{Combining \eqref{eq:RNfullMeas} with Theorem \ref{thm:topologyHN} we obtain:}
\begin{corollary}
Let $(X,\sfd, \cH^{N})$ be an $\RCD(K,N)$ space for some $K\in \R$,  $N\in \N, N\geq 1$. Fix $\alpha\in (0,1)$.
\\Then there exists an open subset $U\subset X$ such that
\begin{itemize}
 \item $\cR_{N}\subset U$. In particular $U$ is dense in $X$ and of full measure, i.e. $\mm(X\setminus U)=0$.
\item $U$ is a $C^{\alpha}$-manifold, i.e. it is a topological manifold  with $C^{\alpha}$ charts.
 \end{itemize}
\end{corollary}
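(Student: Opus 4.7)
The plan is to combine the measure-theoretic fact \eqref{eq:RNfullMeas} that $\mm(X\setminus \cR_N)=0$ for weakly non-collapsed $\RCD(K,N)$ spaces (and $(X,\sfd,\cH^N)$ is trivially such, since $\cH^N\ll\cH^N$) with the local bi-Hölder structure provided by Theorem \ref{thm:topologyHN}. The strategy is to assemble $U$ as a union of open balls, each of which carries a bi-Hölder chart to $\R^N$, chosen so that every point of $\cR_N$ is covered.

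Concretely, I would first fix $\alpha\in(0,1)$, apply Theorem \ref{thm:topologyHN} to obtain the threshold $\bar\varepsilon=\bar\varepsilon(K,N,\alpha)$, pick any $\varepsilon\in(0,\bar\varepsilon]$ and the corresponding scale $\bar r=\bar r(K,N,\alpha,\varepsilon)$, and then set
\[
U \;:=\; \bigcup\bigl\{\, B^X_{\alpha r}(x)\;:\; r\in(0,\bar r),\ x\in(\cR_N)_{\varepsilon,r}\,\bigr\}.
\]
This $U$ is open as a union of open metric balls. The inclusion $\cR_N\subset U$ is immediate from the definition of $\cR_N$: every $x\in\cR_N$ lies in $(\cR_N)_\varepsilon=\bigcup_{r>0}(\cR_N)_{\varepsilon,r}$, so choosing any $r\in(0,\bar r)$ with $x\in(\cR_N)_{\varepsilon,r}$ gives $x\in B^X_{\alpha r}(x)\subset U$. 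Full measure then follows from \eqref{eq:RNfullMeas}, and density is automatic since $\supp\cH^N=X$ (a full-measure subset of a space whose reference measure has full support is dense).

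For the manifold structure, each building block $B^X_{\alpha r}(x)\subset U$ is, by Theorem \ref{thm:topologyHN}, the image of a bi-Hölder embedding (with exponent $\alpha$) from an open subset of $\R^N$; its inverse supplies a chart. If two such charts $\phi_1,\phi_2$ have overlapping domains, the transition map $\phi_2\circ\phi_1^{-1}$ is a composition of bi-Hölder-$\alpha$ homeomorphisms between open subsets of $\R^N$, hence itself bi-Hölder-$\alpha$, which is exactly the meaning of a $C^\alpha$-atlas here. So $U$ inherits a $C^\alpha$-manifold structure.

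There is no substantive obstacle beyond bookkeeping; the only mild subtlety is that the sets $(\cR_N)_{\varepsilon,r}$ are not a priori open, which is precisely why one cannot simply take their union and must instead inflate each point by the topological ball $B^X_{\alpha r}(x)$ guaranteed by Theorem \ref{thm:topologyHN}. Everything else is a direct transcription of that theorem together with \eqref{eq:RNfullMeas}.
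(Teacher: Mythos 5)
Your proof is correct and takes exactly the route the paper intends (the paper offers no written proof beyond ``Combining \eqref{eq:RNfullMeas} with Theorem \ref{thm:topologyHN} we obtain:''). One small quibble worth flagging: the composition of two $\alpha$-H\"older maps is in general only $\alpha^2$-H\"older, so your remark that the transition map $\phi_2\circ\phi_1^{-1}$ is ``bi-H\"older-$\alpha$'' overstates things slightly; this is harmless since the corollary only asserts that the \emph{charts} are $C^\alpha$ (which your maps $F^{-1}$ indeed are), and in any case one can restore exponent $\alpha$ on the overlaps by running the construction at a parameter $\alpha'>\alpha$ with $(\alpha')^2\ge\alpha$, which is always possible since $\alpha<1$.
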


The following theorem is a stronger version of Theorem~\ref{thm:ConvergenceMan} which includes possibly noncompact limits.
\begin{theorem}[Topological stability]\label{top-stability-RCD}
Let $0<\alpha<1$.
Suppose $(X_i,p_i)\to (M^N,p)$ is a pmGH converging pointed sequence of non-collapsed $\RCD(K,N)$ spaces where $M$ is a smooth Riemannian manifold. Then
for any fixed $R>0$ for all large $i$  it holds that the balls $B_R(p_i)$ are  topological manifolds and there exist $\alpha$-bi-H\"older embeddings $f_{i}:B_R(p_i)\to  {B_{R+\eps_i}(p)}$ which are also $\eps_i$-GH approximations with $\eps_i\to 0$ and such that
$f_i(B_R(p_i))\supset B_{R-10\eps_i}(p)$ and $\sfd_{(M,g)}(f_{i,R}(p_i),p)\le \eps_i$.

In particular, if $M$ is compact then $X_i$ is $\alpha$-bi-H\"older homeomorphic to $M$ for all large $i$.
\end{theorem}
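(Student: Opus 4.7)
The plan is to reduce the theorem to a direct application of Theorem \ref{reifenberg-stab-pointed}: I will verify that for every $R>0$ there exists $r(R)>0$ with $B_R(p_i) \subset (\mathcal{R}_N)_{\varepsilon_0, r(R)}(X_i)$ for all sufficiently large $i$, where $\varepsilon_0 = \varepsilon_0(N,\alpha)$ is the constant provided by that theorem. Once this is verified, the $\alpha$-bi-H\"older embeddings $f_i$ with the claimed GH-approximation bounds and with $f_i(B_R(p_i))\supset B_{R-10\varepsilon_i}(p)$ and $\sfd_{(M,g)}(f_i(p_i),p)\le \varepsilon_i$ follow at once from Theorem \ref{reifenberg-stab-pointed}.

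To establish the inclusion, I would first transfer the Euclidean approximation from $M$ to $X_i$. Fix a small auxiliary $\varepsilon>0$, to be chosen later depending on $\varepsilon_0, N, K$. By smoothness of $(M,g)$ and compactness of $\overline{B}_{R+1}^M(p)$, there exists $r_0 = r_0(M,R,\varepsilon)$ such that $\sfd_{GH}(B_s^M(q), B_s^{\mathbb{R}^N}(0)) \leq (\varepsilon/4)\, s$ for every $q \in \overline{B}_{R+1}^M(p)$ and every $s \in (0,r_0]$. The pmGH convergence $(X_i,p_i)\to (M,p)$ provides $\eta_i$-GH approximations between $\overline{B}_{R+1}^{X_i}(p_i)$ and $\overline{B}_{R+1}^M(p)$ with $\eta_i\to 0$; each $x_i \in B_R(p_i)$ therefore corresponds to a nearby $q_i \in \overline{B}_{R+1}^M(p)$, and a standard argument shows $\sfd_{GH}(B_{r_0}^{X_i}(x_i), B_{r_0}^M(q_i)) = o(r_0)$ uniformly in $x_i$. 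Combining the estimates gives
\[
\sfd_{GH}(B_{r_0}^{X_i}(x_i), B_{r_0}^{\mathbb{R}^N}(0)) \leq \tfrac{\varepsilon}{2}\, r_0 \qquad \text{for all large } i \text{ and all } x_i \in B_R(p_i).
\]

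The next step propagates almost-Euclidean behavior from the single scale $r_0$ to all scales $\rho \in (0, r_0/2]$, using exactly the argument of Theorem \ref{thm:topologyHN}: after rescaling so that the space becomes $\RCD(-\delta,N)$ with $\delta$ small, the volume GH-continuity (Theorem \ref{thm:ContHN}) turns the GH-closeness at scale $r_0$ into almost-maximal volume $\mathcal{H}^N(B_{r_0}^{X_i}(x_i)) \geq (1-\Psi(\varepsilon|N))\omega_N r_0^N$; Bishop--Gromov monotonicity propagates the almost-maximality to every smaller scale; and volume rigidity (Theorem \ref{thm:VolRig}) converts almost-maximal volume back into GH-closeness to $B_\rho^{\mathbb{R}^N}(0)$. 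Choosing $\varepsilon$ small enough yields $x_i \in (\mathcal{R}_N)_{\varepsilon_0, r_0/2}(X_i)$, establishing the inclusion with $r(R) := r_0/2$. The main obstacle here is uniformity: the constants must be independent of the base point and of the index $i$. This is resolved on the $M$-side by compactness of $\overline{B}_{R+1}^M(p)$ and smoothness, and on the $X_i$-side by the quantitative, $K,N$-dependent nature of Bishop--Gromov and of Theorems \ref{thm:ContHN} and \ref{thm:VolRig}.

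Finally, for the ``in particular'' assertion, choose $R > \diam(M)+10$. For large $i$ one has $B_{R-10\varepsilon_i}(p) = M$, so $f_i(B_R(p_i)) = M$ and $f_i$ is a bi-H\"older homeomorphism $B_R(p_i) \to M$. Then $B_R(p_i)$ is compact (being homeomorphic to $M$), hence closed in $X_i$; since it is also open (as a metric ball) and $X_i$ is connected (being a geodesic length space), I conclude $X_i = B_R(p_i)$, and $f_i: X_i \to M$ is the desired global bi-H\"older homeomorphism.
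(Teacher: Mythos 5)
Your proposal is correct and follows essentially the same route as the paper's own (very terse) proof: reduce to Theorem~\ref{reifenberg-stab-pointed} by showing that for each $R>0$ all points of $B_R(p_i)$ lie in $(\cR_N)_{\varepsilon_0,r(R)}(X_i)$ for large $i$, using the GH-continuity/Bishop--Gromov/volume-rigidity chain exactly as in Theorem~\ref{thm:topologyHN}. The only cosmetic difference is that you derive the compact case by hand (choosing $R>\diam M+10$ and using connectedness of the geodesic space $X_i$) instead of invoking Corollary~\ref{reifenberg-stab-compact}; both are fine.
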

\begin{proof}
Let $\eps>0$ be an arbitrary  positive real number. The same argument as in the proof of Theorem ~\ref{thm:topologyHN} shows that for any fixed $R>0$ there is $r>0$ such that for all large $i$ all points in $B_R(p_i)$ lie in $ (\cR_{N})_{\varepsilon, r}(X_i)$. Now the result follows by Theorem ~\ref{reifenberg-stab-pointed} if $\eps>0$ is chosen small enough.
\end{proof}
\begin{remark}
The same proof shows that Theorem~\ref{top-stability-RCD} remains true if $M$ is a non-collapsed $\RCD(K,N)$ space with all points lying in $(\cR_{N})_{\eps_1}$  for some sufficiently small $\eps_1=\eps_1(N)$.
\end{remark}

Using Bishop-Gromov inequality and arguing similarly to the proof of Theorem \ref{thm:topologyHN}, we obtain the next two rigidity and almost rigidity results which are the $\RCD$ counterparts of \cite[Theorem A.1.10, A.1.11]{CC97} established by Cheeger-Colding for smooth Riemannian manifolds.
\\In order to state the next result, let us recall the notion of tangent space at infinity for an $\RCD(0,N)$ space $(X,\sfd, \cH^{N})$. First of all note that, if $(X,\sfd, \cH^{N}_{\sfd})$ is $\RCD(0,N)$, then for every $r>0$ the rescaled space  $(X,r \,\sfd, \cH^{N}_{r \, \sfd})$ is still $\RCD(0,N)$; Gromov's Compactness Theorem thus implies that, for any sequence $r_{i}\downarrow 0$, the sequence $(X,r_{i}\, \sfd, \cH^{N}_{r_{i}\, \sfd})$ admits a subsequence which is pmGH-converging to some $\RCD(0,N)$ space, called \emph{a tangent space at infinity}.  In  general the tangent space at infinity may not be unique and need not be a metric cone (see for instance \cite[Example 8.95]{CC97}).

\begin{theorem}\label{Thm:ConeInf}
Fix some  $N\in \N$. Let $(X,\sfd, \cH^{N})$ be an  $\RCD(0,N)$ space.
\\If $X$ admits a tangent space at infinity isometric  to $\R^{N}$ then $(X,\sfd, \cH^{N})$ is isomorphic to $\R^{N}$ as a m.m.s..
\\Moreover  the  following almost rigidity holds.  For every $N\in \N, N\geq 1$ there exists $\varepsilon(N)>0$ with the following property. 
\begin{itemize}
\item If there exists $p\in X, R_{0}\geq 0$ such that $\sfd_{GH}(B_{r}(p),B_{r}(0^{N}))\leq \varepsilon r$ for all $r\geq R_{0}$,  then $X$ is homeomorphic to $\R^{N}$ and $\cH^{N}(B_{r}(p))\geq (1-\Psi(\varepsilon|N))  \omega_{N} r^{N}$ for all $r\geq R_{0}$;
\item If there exists $x\in X, R_{0}\geq 0$ such that $\cH^{N}(B_{r}(p))\geq (1-\varepsilon)  \omega_{N} r^{N}$ then $X$ is homeomorphic to $\R^{N}$ and $\sfd_{GH}(B_{r}(p),B_{r}(0^{N}))\leq \Psi(\varepsilon|N)  r$ for all $r\geq R_{0}$.
\end{itemize}
\end{theorem}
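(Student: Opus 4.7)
I would split the argument into the exact rigidity statement and the two almost-rigidity bullets, reducing the latter to the former via a compactness--contradiction argument.

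\emph{Rigidity.} By the tangent-at-infinity hypothesis there is a sequence $r_{i}\downarrow 0$ with $(X, r_{i}\sfd, \cH^{N}_{r_{i}\sfd}, p)\to(\R^{N}, d_{E}, \cH^{N}, 0)$ in pmGH. The GH-continuity of $\cH^{N}$ (Theorem~\ref{thm:ContHN}) applied to the unit balls of the rescaled spaces yields
\[
\frac{\cH^{N}(B_{1/r_{i}}(p))}{\omega_{N}(1/r_{i})^{N}}\longrightarrow 1,
\]
so the asymptotic volume density at $p$ equals $1$. Since $(X,\sfd,\cH^{N})$ is $\RCD(0,N)$, Bishop--Gromov (Theorem~\ref{thm:BishopGromov}) gives that $r\mapsto \cH^{N}(B_{r}(p))/(\omega_{N}r^{N})$ is non-increasing, while \eqref{eq:RegSetDens} says $\vartheta_{N}(p)\le 1$. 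Together these force $\cH^{N}(B_{r}(p))=\omega_{N}r^{N}$ for every $r>0$. The equality case of Bishop--Gromov (the ``volume cone implies metric cone'' theorem for $\RCD$ spaces) then shows that $X$ is a metric cone over a non-collapsed $\RCD(N-2,N-1)$ space $Z$ whose $(N-1)$-volume equals that of the round sphere; by the volume rigidity one dimension lower $Z=\bS^{N-1}$, and hence $X=C(\bS^{N-1})=\R^{N}$ as a metric measure space.

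\emph{Almost rigidity.} For the first bullet I proceed by contradiction: suppose there exist $\eps_{k}\downarrow 0$, non-collapsed $\RCD(0,N)$ spaces $(X_{k},\sfd_{k},\cH^{N})$, points $p_{k}$ and radii $R_{0,k}\ge 0$ with $\sfd_{GH}(B_{r}(p_{k}),B_{r}(0^{N}))\le \eps_{k}r$ for every $r\ge R_{0,k}$, while either $X_{k}\not\cong\R^{N}$ or the claimed volume bound fails. Rescaling the metric by $(\max\{R_{0,k},1\})^{-1}$ preserves $\RCD(0,N)$ and converts the hypothesis into $\sfd_{GH}(B_{r}(p_{k}),B_{r}(0^{N}))\le \eps_{k}r$ for every $r\ge 1$. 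By Gromov compactness and pmGH-stability of $\RCD(0,N)$, pass to a subsequential limit $(Y,\sfd_{Y},\cH^{N},p_{\infty})$; letting $k\to\infty$ in the rescaled hypothesis shows $B_{r}^{Y}(p_{\infty})$ is isometric to $B_{r}^{\R^{N}}(0)$ for every $r\ge 1$, so a tangent of $Y$ at infinity is $\R^{N}$. By the rigidity already proved, $Y=\R^{N}$, and Theorem~\ref{top-stability-RCD} then produces a bi-H\"older homeomorphism $X_{k}\to\R^{N}$ for large $k$; the volume bound follows by combining Theorem~\ref{thm:ContHN} with Bishop--Gromov monotonicity. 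The second bullet is symmetric: the volume assumption together with Bishop--Gromov gives $\cH^{N}(B_{r}(p))/(\omega_{N}r^{N}) \in [1-\eps,1]$ for all $r>0$, hence by Volume Rigidity (Theorem~\ref{thm:VolRig}) $\sfd_{GH}(B_{r}(p),B_{r}(0^{N}))\le \Psi(\eps|N)\,r$ for $r\ge R_{0}$, and the same compactness--contradiction argument yields the global homeomorphism.

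\emph{Main obstacle.} The subtle point is upgrading the \emph{local} bi-H\"older homeomorphism produced by Theorem~\ref{top-stability-RCD} on every fixed ball of radius $R$ to a \emph{global} homeomorphism $X_{k}\cong\R^{N}$ on the possibly non-compact space. The key observation is that, under either hypothesis, Bishop--Gromov pins $\cH^{N}(B_{r}(x))/(\omega_{N}r^{N})$ into $[1-\Psi(\eps_{k}|N),1]$ uniformly in $x\in X_{k}$ and $r>0$: the upper bound is $\vartheta_{N}(x)\le 1$ from \eqref{eq:RegSetDens}, while the lower bound comes from the translation-invariance of the asymptotic volume density in $\RCD(0,N)$ (using $B_{R-\sfd(x,p_{k})}(p_{k})\subset B_{R}(x)\subset B_{R+\sfd(x,p_{k})}(p_{k})$ and letting $R\to\infty$). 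Consequently every point of $X_{k}$ lies in $(\cR_{N})_{\Psi(\eps_{k}|N)}$, and invoking Theorem~\ref{reifenberg-stab-pointed} on an exhaustion by balls (using the surjectivity addendum of Remark~\ref{rem-onto} to ensure the maps on successive scales match up) yields the desired proper bi-H\"older homeomorphism $X_{k}\to\R^{N}$, completing the contradiction.
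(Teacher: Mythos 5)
Your reduction is sound up to the last step: the equivalence of the two hypotheses via Theorem~\ref{thm:ContHN} and Theorem~\ref{thm:VolRig}, the observation that in an $\RCD(0,N)$ space the asymptotic volume ratio is base-point independent so that Bishop--Gromov pins $\cH^{N}(B_{r}(x))/(\omega_{N}r^{N})$ into $[1-\Psi(\eps|N),1]$ for \emph{every} $x$ and $r$, and hence that the whole space is uniformly Reifenberg-flat at all points and scales, all match what the paper does (your separate treatment of the exact rigidity via volume-cone-implies-metric-cone is a legitimate alternative to deducing it from the almost-rigidity). However, the step you flag as the main obstacle is exactly where the proposal has a genuine gap. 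Note first that your intermediate claim that Theorem~\ref{top-stability-RCD} ``produces a bi-H\"older homeomorphism $X_{k}\to\R^{N}$ for large $k$'' is not available: that theorem gives a global homeomorphism only when the limit is compact, and for $\R^{N}$ it only gives, for each fixed $R$, embeddings of $B_{R}(p_{k})$ for $k$ large depending on $R$. Your fix --- ``invoking Theorem~\ref{reifenberg-stab-pointed} on an exhaustion by balls (using the surjectivity addendum of Remark~\ref{rem-onto} to ensure the maps on successive scales match up)'' --- does not close this. Remark~\ref{rem-onto} is purely an invariance-of-domain argument for surjectivity onto a slightly smaller ball; it provides no mechanism for making the embeddings produced at different scales compatible. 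Those embeddings are only canonical up to (approximately) orthogonal transformations of $\R^{N}$, and even after aligning them by rotations they are merely $\Psi(\eps)R_{i}$-close, not equal, on the overlaps, so their ``union'' is not a well-defined map, let alone injective. One also cannot pass to a limit of the scale-$R$ maps on a fixed ball, since their displacement control degrades linearly in $R$.

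This is precisely the point where the paper's proof has real content: it works with embeddings $G_{i}$ defined on dyadic annuli $A_{i}=\{0.4R_{i}<\sfd(p,\cdot)<1.1R_{i}\}$, shows by a rescaling/limit argument that the transition maps $G_{i+1}\circ G_{i}^{-1}$ are $\Psi(\eps|N)$-close to isometries of the ball so that after post-composing with orthogonal matrices consecutive maps are close on overlaps, and then invokes Siebenmann's deformation-of-homeomorphisms theory (via \cite[Theorem 4.11]{Kap07}) to perturb $G_{i}$ and $G_{i+1}$ near $A_{i}\cap A_{i+1}$ so that they agree there; only then does gluing yield a global topological embedding $G\colon X\to\R^{N}$, whose surjectivity is obtained by the Remark~\ref{rem-onto} argument. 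Without this (or some substitute gluing/recognition argument), the passage from ball-wise bi-H\"older embeddings to a global homeomorphism $X\cong\R^{N}$ is unproven, so the proposal as written does not establish the theorem.
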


\begin{proof}
The proof of Theorem ~\ref{Thm:ConeInf}  follows from the proof of Theorem~\ref{top-stability-RCD} in verbatim the same way as in the proof of \cite[Theorem  A.1.11]{CC97} (which is a smooth analog  of Theorem ~\ref{Thm:ConeInf} )  from \cite[Remark A.1.47]{CC97}.

Instead of using \cite[Remark A.1.47]{CC97} one can also argue as follows. It follows from Volume Rigidity (Theorem~\ref{thm:VolRig}) and GH-Continuity of $\cH^{N}$ (Theorem~\ref{thm:ContHN}) that the assumptions of the bullet  points are equivalent. We will therefore assume that both hold.
Volume rigidity and Theorem ~\ref{thm:topologyHN} easily imply that $X$ is a topological $N$-manifold.

The main part is to prove that $X$ is homeomorphic to $\R^N$.

  We have that for any $\eps>0$  there is a large enough $R_0=2^k$ so that for all $r\ge R_0$ it holds that $\sfd_{GH}(B_{r}(x),B_{r}(0^{N}))\leq \eps r$. When $\eps$ is small enough, by Theorem~\ref{top-stability-RCD}, this implies that there exists a bi-H\"older embedding $F_0\co B_{R_0}(0)\to  B_{(1+\Psi(\eps|N))R_0}(p)$   whose image contains  $B_{R_0}(p)$ and which is a $\Psi(\eps|N)R_0$-GH-approximation from  $B_{R_0}(0)\subset \R^N$ to $ B_{R_0}(p)$. 

For each $i>0$ we can get similarly constructed maps $F_i\co B_{1.1R_i}(0)\to  B_{(1.1+\Psi(\eps|N))R_i}(p)$ where $R_i=2^{k+i}$. For any $i>0$ let $C_i$ be the annulus in $\R^N$ equal to $\{0.4R_i<|x|<1.1R_i\}$ and let $A_i$ be the annulus in $X$ given by $\{0.4 R_i<\sfd(p, \cdot)< 1.1R_i\}$. The maps $F_i$ give bi-H\"older embeddings $C_i\to \{(0.4-\Psi(\eps|N))R_i<\sfd(p,\cdot)< (1.1+\Psi(\eps|N))R_i\}$ which are also $\Psi(\eps|N)R_i$-GH-approximations from $C_i$ to $A_i$. Note that the maps $G_i=F_i^{-1} \co A_i\to \R^N$ and $G_{i+1}=F_{i+1}^{-1} \co A_{i+1}\to \R^N$ need not a-priori be uniformly close on the overlaps but they can be made close by post-composing with orthogonal maps in $\R^N$.  Indeed the composition $f_i=G_{i+1}\circ F_i$ gives a $\Psi(\eps|N)R_i$-GH-approximation from  $B_{1.1R_i}(0)\subset \R^n$ to itself. Rescaling the domain and the target by $1/R_i$ this means that $f_i\co B_{1.1}(0)=\frac{1}{R_i} B_{1.1R_i}(0)\to B_{1.1}(0)=\frac{1}{R_i} B_{1.1R_i}(0)$ is an $\Psi(\eps|N)$-GH-approximation. By the limit argument it's $\Psi(\eps|N)$-close to a self  isometry of $B_{1.1}(0)$, i.e to a linear map given by some orthogonal matrix $B_i$. Going back to the unrescaled spaces this means that $f_i\co B_{1.1R_i}(0)\to B_{1.1R_i}(0)$ is  $\Psi(\eps|N)R_i$-close to $B_i$.
Thus, after post composing  $G_{i+1}$ with an orthogonal map we can assume that $G_i$ and $G_{i+1}$ are $\Psi(\eps|N)R_i$-close on
$A_i\cap A_{i+1}$ for all $i\ge 0$.

 Note that $G_i$ is an embedding which is also an $R_i\Psi(\eps)$-GH-approximation from  $A_i$ to the annulus $C_i$ in $\R^N$.

Using  Siebenmann's deformation of homeomorphisms theory \cite{Sieb},  this implies  (\cite[Theorem 4.11]{Kap07}) that if $\eps>0$ is small enough then  for each $i\ge 0$ it's possible to modify  $G_i$ and $G_{i+1}$ on a small neighborhood of $A_i\cap A_{i+1}$ such that they become equal there and the ``glued" map $ G\co X\to \R^N$  is still a topological embedding.
 By the same argument as in Remark~\ref{rem-onto} the map $G$ is onto i.e. it's a homeomorphism.

\end{proof}

\begin{theorem}[Sphere Theorem]\label{thm:Sphere}
For every $N\in \N, N\geq 1$ there exists $\varepsilon(N)>0$ with the following property. 
\\Let $(X,\sfd, \cH^{N})$ be an $\RCD(N-1,N)$ space for some  $N\in \N, N\geq 1$.
\begin{itemize}
\item If $\sfd_{GH}(X,\bS^{N})\leq \varepsilon$ then $X$ is homeomorphic to $\bS^{N}$ and $\cH^{N}(X)\geq (1-\Psi(\varepsilon|N))  \cH^{N}(\bS^{N})$;
\item If $\cH^{N}(X)\geq (1-\varepsilon) \cH^{N}(\bS^{N})$ then  $X$ is homeomorphic to $\bS^{N}$ and $\sfd_{GH}(X,\bS^{N})\leq \Psi(\varepsilon|N)$.
\end{itemize}
\end{theorem}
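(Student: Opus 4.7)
The plan is to parallel the almost-rigidity argument in Theorem \ref{Thm:ConeInf}: first show the two hypotheses are equivalent up to replacing $\varepsilon$ by $\Psi(\varepsilon|N)$; then use Bishop--Gromov to propagate the almost-maximal total volume to every small ball of $X$; finally invoke the Reifenberg-type stability Theorem \ref{reifenberg-stab} with $X_{1}=X$ and $X_{2}=\bS^{N}$ to produce a bi-H\"older homeomorphism.

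For the equivalence, $\sfd_{GH}(X,\bS^{N})\le \varepsilon$ implies $\cH^{N}(X)\ge (1-\Psi(\varepsilon|N))\cH^{N}(\bS^{N})$ immediately from the GH-continuity of $\cH^{N}$ (Theorem \ref{thm:ContHN}) together with the Bishop--Gromov upper bound $\cH^{N}(X)\le \cH^{N}(\bS^{N})$. The converse direction is argued by compactness-contradiction: were it to fail, there would exist $\delta_{0}>0$ and a sequence of $\RCD(N-1,N)$ spaces $(X_{i},\sfd_{i},\cH^{N})$ with $\cH^{N}(X_{i})\to \cH^{N}(\bS^{N})$ yet $\sfd_{GH}(X_{i},\bS^{N})\ge \delta_{0}$. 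Since $\diam(X_{i})\le \pi$ by Bonnet--Myers for $\RCD(N-1,N)$ (with $N\ge 2$), a subsequence pmGH-converges, by stability of $\RCD$ and Theorem \ref{thm:ContHN}, to a non-collapsed $\RCD(N-1,N)$ space $(Y,\sfd_{Y},\cH^{N})$ with $\cH^{N}(Y)=\cH^{N}(\bS^{N})$. The maximal-volume rigidity then forces $Y\cong \bS^{N}$: Bishop--Gromov equality gives $\diam(Y)=\pi$, so Ketterer's maximal diameter theorem identifies $Y$ with a spherical suspension over a non-collapsed $\RCD(N-2,N-1)$ space of maximal $\cH^{N-1}$-volume, and iterating down to dimension one, where the only non-collapsed $\RCD(0,1)$ space of $\cH^{1}$-measure $2\pi$ is $\bS^{1}$, yields $Y\cong \bS^{N}$, a contradiction. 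The one-dimensional case of the theorem is handled separately using the classification of non-collapsed $\RCD(0,1)$ spaces from \cite{KL}.

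Assuming now both hypotheses hold with $\varepsilon$ replaced by $\Psi(\varepsilon|N)$, Bishop--Gromov \eqref{eq:BSballs} applied for $K=N-1$ gives, for every $x\in X$ and every $r\in(0,\pi)$,
$$
\frac{\cH^{N}(B_{r}(x))}{V_{\bS^{N}}(B_{r})} \;\ge\; \frac{\cH^{N}(X)}{\cH^{N}(\bS^{N})} \;\ge\; 1-\Psi(\varepsilon|N),
$$
where $V_{\bS^{N}}(B_{r})$ denotes the volume of a radius-$r$ ball in $\bS^{N}$. Since $V_{\bS^{N}}(B_{r})=\omega_{N} r^{N}(1+O(r^{2}))$, for small $r$ this yields $\cH^{N}(B_{r}(x))\ge (1-\Psi(\varepsilon,r|N))\,\omega_{N} r^{N}$. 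Rescaling by $r^{-1}$ produces an $\RCD(-r^{2}(N-1),N)$ space, and the volume rigidity Theorem \ref{thm:VolRig} gives $\sfd_{GH}(B_{r/2}(x), B_{r/2}^{\R^{N}}(0))\le \Psi(\varepsilon,r|N)\,r$, uniformly in $x\in X$ and in $r$ below some threshold. Consequently, for any prescribed $\eta>0$ there is $r_{0}=r_{0}(\eta,N)>0$ such that, provided $\varepsilon$ is sufficiently small in terms of $\eta$ and $N$, every point of $X$ lies in $(\cR_{N})_{\eta,r_{0}}(X)$. Since $\bS^{N}$ trivially satisfies the same regularity, Theorem \ref{reifenberg-stab} applied with $X_{1}=X$, $X_{2}=\bS^{N}$ produces a bi-H\"older homeomorphism, concluding the topological claim; the quantitative bounds on volume and GH-distance then follow from the equivalence established in the previous step.

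The main obstacle is the maximal-volume rigidity invoked in the compactness step: the iterative descent through spherical suspensions via Ketterer's maximal diameter theorem requires checking at each stage that the suspension factor inherits both non-collapsing and maximality of $\cH^{N-1}$-volume, which can be read off the warped-product cone measure \eqref{eq:ConeMeas}. Alternatively, this rigidity can be extracted directly from the equality case in the $\RCD(N-1,N)$ Bishop--Gromov inequality together with the structure theory of non-collapsed $\RCD$ spaces.
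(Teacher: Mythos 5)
Your proof is correct and follows essentially the same route as the paper: compactness-contradiction, GH-continuity of $\cH^{N}$, the iteration of Ketterer's maximal diameter theorem to identify the limit as $\bS^{N}$, and a Reifenberg-type argument for the topological conclusion. The only organizational difference is that the paper runs two separate compactness-contradiction arguments (one per bullet) each ending with a black-box appeal to the topological stability Theorem~\ref{top-stability-RCD}, whereas you first prove the equivalence of the two hypotheses and then rederive the needed pointwise regularity (Bishop--Gromov plus volume rigidity) to invoke Theorem~\ref{reifenberg-stab} directly on the pair $(X,\bS^{N})$; these amount to the same mathematics since Theorem~\ref{top-stability-RCD} is itself proved from those ingredients. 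Two minor slips, neither fatal: the rescaled space should be $\RCD(r^{2}(N-1),N)$ (not $\RCD(-r^{2}(N-1),N)$), which is still $\RCD(-\delta,N)$ so the argument goes through; and the statement ``the only non-collapsed $\RCD(0,1)$ space of $\cH^{1}$-measure $2\pi$ is $\bS^{1}$'' needs the additional constraint $\diam\le\pi$ coming from the maximal diameter theorem to rule out a long interval, a detail the paper also leaves implicit.
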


{

\begin{proof} 
Both the statements can be proved by a compactness-contradiction argument.

We first discuss the first statement: if $(X,\sfd_{i}, \cH^{N}), i\in \N,$ is a sequence of $\RCD(N-1,N)$ spaces such that $(X,\sfd_{i})$ is GH-converging to $\bS^{N}$, then  the GH-continuity of $\cH^{N}$ (Theorem \ref{thm:ContHN}) implies that  $\cH^{N}(X_{i})\to  \cH^{N}(\bS^{N})$ and thus  $(X,\sfd_{i}, \cH^{N})$ converges to $\bS^{N}$ in mGH sense.  We conclude that $(X,\sfd_{i})$ is homeomorphic to $\bS^{N}$ by the topological stability Theorem \ref{top-stability-RCD}.

Regarding the second statement:  let $(X,\sfd_{i}, \cH^{N}), i\in \N,$ be a sequence of $\RCD(N-1,N)$ spaces such that $\cH^{N}(X_{i})\to  \cH^{N}(\bS^{N})$. By Gromov's Compactness Theorem there exists an $\RCD(N-1,N)$ space $(Y,\sfd_{Y}, \mm_{Y})$ such that, up to subsequences,  $(X,\sfd_{i}, \cH^{N})\to (Y,\sfd_{Y}, \mm_{Y})$ in mGH sense; moreover, by the stability of non-collapsed $\RCD(K,N)$ spaces (see \cite[Theorem 1.2]{GDPNonColl}) it follows that $\mm_{Y}=\cH^{N}$. In particular $\cH^{N}(Y)=\lim_{i\to \infty}  \cH^{N}(X_{i})=  \cH^{N}(\bS^{N})$. By Bishop-Gromov inequality \eqref{eq:BSballs} it follows that ${\rm rad}(Y)=\pi$, where
$$
{\rm rad}(Y):= \inf_{x\in Y} \sup_{y\in Y} \sfd_{Y}(x,y)=\inf\{r>0 \, :\,  \exists\, x\in Y \text{ s.t. } Y\subset B_{r}(x)\} 
$$
is the radius of $(Y,\sfd_{Y})$. An iteration of the Maximal Diameter Theorem \cite{Ket} gives that  $(Y,\sfd_{Y}, \mm_{Y})$ is isomorphic as metric measure space to $\bS^{N}$. Thus $(X,\sfd_{i})\to \bS^{N}$ in GH-sense and $(X,\sfd_{i})$ is homeomorphic to $\bS^{N}$ by the topological stability Theorem \ref{top-stability-RCD}.
\end{proof}
}

\section{Boundary of a non-collapsed $\RCD$ space}  
In this section we define the boundary of a non-collapsed  $\RCD(K,N)$ space and study its properties. 

At the core of this definition is the following lemma.

\begin{lemma}\label{lem:TanncRCD}
Let $(X,\sfd,\cH^{N})$ be a non-collapsed $\RCD(K,N)$ space. Then for every $x\in X$,  every  $Y\in {\rm Tan}(X,\sfd, \cH^{N}, x)$ is a metric-measure cone over a non-collapsed $\RCD(N-2,N-1)$ space $Z$, i.e. $Y=C(Z)$.
\end{lemma}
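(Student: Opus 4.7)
My strategy is to recognize the tangent $Y$ as a non-collapsed $\RCD(0,N)$ space in which the Bishop--Gromov inequality is saturated at the basepoint, and then invoke the ``volume cone implies metric cone'' rigidity of De Philippis--Gigli to conclude $Y = C(Z)$ with $Z$ non-collapsed $\RCD(N-2, N-1)$.

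First I would fix $(Y, \sfd_Y, \mm_Y, y_0) \in \Tan(X, \sfd, \cH^N, x)$, realized as a pmGH limit along some $r_i \downarrow 0$. Each rescaled space $(X, r_i^{-1}\sfd)$ is $\RCD(r_i^2 K, N)$, so by pmGH stability of $\RCD$ the limit $Y$ is $\RCD(0, N)$. Since $(X,\sfd,\cH^N)$ is non-collapsed, Hausdorff measure rescales as $\cH^N_{r_i^{-1}\sfd} = r_i^{-N}\cH^N_\sfd$; the normalization prefactor in Definition \ref{def:tang} then converges, by Bishop--Gromov and finiteness of $\vartheta_N(x)$ from \eqref{eq:RegSetDens}, to a positive constant. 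Hence $\mm_Y = c\,\cH^N_{\sfd_Y}$ for some $c > 0$, so (after a harmless renormalization) $Y$ is non-collapsed $\RCD(0, N)$.

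Next, using the GH-continuity of $\cH^N$ on balls (Theorem \ref{thm:ContHN}) together with Bishop--Gromov monotonicity in $X$, for every $s > 0$
\[
\frac{\cH^N(B_s^Y(y_0))}{\omega_N s^N} \;=\; \lim_{i\to\infty}\frac{\cH^N(B_{r_i s}^X(x))}{\omega_N (r_i s)^N} \;=\; \vartheta_N(x).
\]
Thus $s\mapsto \cH^N(B_s^Y(y_0))/(\omega_N s^N)$ is constant on $(0,\infty)$, i.e.\ the equality case of Bishop--Gromov holds at $y_0$ (with $K=0$). At this point I would invoke the volume-cone-implies-metric-cone theorem of De Philippis--Gigli \cite{GDPNonColl}: this forces $Y$ to be a metric measure cone $C(Z)$ over some $\RCD(N-2, N-1)$ space $Z$. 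Finally, matching the cone measure $t^{N-1}\,dt\otimes \mm_Z$ on $C(Z)$ against $\cH^N_{\sfd_Y}$, which factorizes as a multiple of $t^{N-1}\,dt \otimes \cH^{N-1}_{\sfd_Z}$, yields $\mm_Z \propto \cH^{N-1}_{\sfd_Z}$, so that $Z$ is itself non-collapsed.

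The main substantive ingredient is the last invocation: the volume-cone-to-metric-cone rigidity on non-collapsed $\RCD(0,N)$ spaces. Everything else amounts to bookkeeping on how $\cH^N$ transforms under scaling and how the tangent-measure normalization interacts with finiteness and positivity of the density $\vartheta_N(x)$.
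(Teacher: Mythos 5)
Your plan of attack is in essence the same as the paper's, but with one real weakness in the final step.

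Where you agree: you both reduce to the De Philippis--Gigli theory. The paper directly cites the result (see the proof reference to \cite{GDPNonColl}, Step 2 on p.~645) that every tangent of a non-collapsed $\RCD(K,N)$ space is a metric measure cone and is non-collapsed $\RCD(0,N)$, and then cites Ketterer to get $Z\in\RCD(N-2,N-1)$. You instead re-derive the cone structure by checking that $s\mapsto\cH^N(B^Y_s(y_0))/(\omega_N s^N)$ is constant and invoking the volume-cone-implies-metric-cone theorem of \cite{GDPNonColl}. That is precisely the mechanism behind the cited result, so this part is fine (if a bit more work than necessary).

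The gap is in the last step. You claim that $\cH^N_{\sfd_Y}$ ``factorizes as a multiple of $t^{N-1}\,dt\otimes\cH^{N-1}_{\sfd_Z}$'' and then compare with $\mm_Y = t^{N-1}\,dt\otimes\mm_Z$ to conclude $\mm_Z\propto\cH^{N-1}_{\sfd_Z}$. But the identity $\cH^N_{C(Z)} = c\, t^{N-1}\,dt\otimes\cH^{N-1}_Z$ is not a free Fubini-type fact for an arbitrary base $Z$ -- it is essentially equivalent to the statement you are trying to prove (that $\cH^{N-1}_Z$ is, up to normalization, the ``right'' volume measure on $Z$), so as written the step is close to circular, and even if one grants it, it only delivers $\mm_Z = c\,\cH^{N-1}_{\sfd_Z}$ rather than $c=1$, which is what non-collapsedness requires. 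The paper avoids this by a pointwise density comparison: from the cone structure \eqref{eq:defConeDist}--\eqref{eq:ConeMeas} one reads off that $\vartheta_N[(Y,\sfd_Y,\mm_Y)](t,z)=\vartheta_{N-1}[(Z,\sfd_Z,\mm_Z)](z)$ for all $t>0$, and since $\vartheta_N=1$ $\mm_Y$-a.e.\ on the non-collapsed space $Y$ (by \eqref{eq:RegSetDens}), one gets $\vartheta_{N-1}=1$ $\mm_Z$-a.e.; then \eqref{noncoll-density-1} yields that $Z$ is non-collapsed, with no ambiguous constant. I would replace your measure-matching argument with this density argument.
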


\begin{proof}
First of all recall that tangent  { spaces} to non-collapsed $\RCD(K,N)$ spaces are metric measure cones and are non-collapsed $\RCD(0,N)$ spaces (see \cite[Step 2, Page 645]{GDPNonColl}), i.e.  for every $x\in X$,  every  $Y\in {\rm Tan}(X,\sfd, x)$ is a  metric-measure cone and a non-collapsed $\RCD(0,N)$ space. Thus there exists a m.m.s $(Z,\sfd_{Z}, \mm_{Z})$ such that $Y=C(Z)$.  By \cite[Theorem 1.2]{Ket} it follows that $Z$ satisfies $\RCD(N-2,N-1)$. Using on the one hand that $Y$ is non-collapsed $\RCD(0,N)$ and on the other hand that the metric-measure structure on $Y$ is given by the $(0,N)$-cone structure $(Y, \sfd_{Y}, \mm_{Y}=\cH^{N}_{\sfd_{Y}})=(C(Z), \sfd_{C}, \mm_{C,N})$, from the definitions \eqref{eq:defConeDist}-\eqref{eq:ConeMeas},  it is easy to check that $\vartheta_{N-1}[(Z,\sfd_{Z},\mm_{Z})]=1$ $\mm_{Z}$-a.e..
 Indeed, the metric-measure cone structure  \eqref{eq:defConeDist}-\eqref{eq:ConeMeas}  implies that for any $z\in Z, t>0$ it holds that $\vartheta_N(t,z)=\vartheta_{N-1}(z)$ where the first density is with respect to $\mm_{Y}$ and the second is with respect to $\mm_{Z}$. Since $\vartheta_{N}=1$ $\mm_{Y}$-a.e. on $Y$ this immediately yields that $\vartheta_{N-1}=1$ $\mm_{Z}$-a.e. on $Z$.
By \eqref{noncoll-density-1} we conclude that  $Z$ is a non-collapsed $\RCD(N-2,N-1)$ space.
\end{proof}

\begin{definition}[Boundary of a non-collapsed $\RCD(K,N)$ space]\label{def:bRCD}
Given $(X,\sfd,\cH^{N})$, a non-collapsed $\RCD(K,N)$ space for some $K\in \R, N\in \N$, we define the {$\RCD$-}boundary of $X$ as
\begin{equation}\label{eq:defBoundary}
\bRCD X:=\{x\in X \,:\, \exists Y\in {\rm Tan}(X,\sfd,x), \, Y=C(Z), \, \bRCD Z\neq \emptyset\},
\end{equation}
and the reduced boundary of $X$ as
\begin{equation}\label{eq:defBoundary}
\bRCD^* X:= \S^{N-1}\setminus \S^{N-2}.
\end{equation}
\end{definition}

Note that, thanks to Lemma \ref{lem:TanncRCD} the definition of $\bRCD X$ is inductive on the dimension $N$ of the non-collapsed  $\RCD(K,N)$ space.
The base of this inductive definition lies on the classification of $\RCD(0,1)$ spaces $(X,\sfd, \cH^{1})$ proved in \cite{KL}: such an $(X,\sfd,\cH^{1} )$ is  isomorphic (up to rescaling) to either the singleton $\{x\}$, the unit circle ${\mathbb S}^{1}$, the real line $\R$, the half line $[0,\infty)\subset \R$, or the segment $[0,1]\subset \R$.  Of course, in the first three cases we say that (by definition) $\bRCD X=\emptyset$, in the last two cases we set (again by definition)  $\bRCD X=\{0\}$,  $\bRCD X=\{0,1\}$ respectively.
\\

{

In the next lemma we give a characterization of the $\RCD$-boundary in terms of iterated tangent spaces.  Let us first clarify the meaning of iterated tangent space. Given $x\in X$ we write $T_{x}X$ to denote an arbitrary element in $\Tan(X,\sfd,x)$ (no claim of uniqueness is made here). An $N$-iterated tangent space at $x\in X$ is a metric space   $T_{\xi_N}T_{\xi_{N-1}}\ldots T_{x} X$, where $\xi_1:=x, \xi_2\in T_{\xi_1}X,$ etc. .

\begin{lemma}\label{lem:bRCDXIteratedTang}
Let $(X,\sfd,\cH^{N})$ be a non-collapsed $\RCD(K,N)$ space for some $K\in \R, N\in \N$. Then
\begin{equation}\label{eq:bRCDTangents}
\bRCD X=\{ x\in X \,:\, \exists \text{ an $N$-iterated tangent space  at $x$, i.e. $T_{\xi_N}T_{\xi_{N-1}}\ldots T_{x} X$, isometric to  $\R^N_+$}\},
\end{equation}
where  $\R^N_+:=\{(x_1,\ldots,x_N)| x_1\ge 0\}$ is the $N$-dimensional Euclidean half-space.
\end{lemma}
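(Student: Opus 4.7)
The proof proceeds by induction on $N$. For the base case $N=1$, apply the classification of non-collapsed $\RCD(K,1)$ spaces from \cite{KL}: up to rescaling each such space is $\R$, $\bS^1$, $[0,\infty)$, or $[0,1]$, and direct inspection shows that in each case the set $\bRCD X$ given by Definition \ref{def:bRCD} coincides with the set of points whose tangent equals $\R^1_+=[0,\infty)$.

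For the inductive step, fix $N\ge 2$ and assume the characterization in dimension $N-1$. By Lemma \ref{lem:TanncRCD}, every tangent $T_xX$ has the form $C(Z)$ for some non-collapsed $\RCD(N-2,N-1)$ space $Z$. For the inclusion of $\bRCD X$ in the iterated-tangent set: given $x\in\bRCD X$ pick $z\in\bRCD Z$; the inductive hypothesis on $Z$ yields an $(N-1)$-iterated tangent of $Z$ at $z$ equal to $\R^{N-1}_+$. Prepend the tangent step $\xi_2:=(1,z)\in C(Z)$: the warped-product structure of a cone away from its vertex gives $T_{\xi_2}C(Z)=\R\times T_zZ$, and the iteration is continued inside the second factor via the identity $T_{(t,\eta)}(\R\times M)=\R\times T_\eta M$. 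The output is $\R\times\R^{N-1}_+=\R^N_+$, as required.

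For the reverse inclusion, suppose $Y_N:=T_{\xi_N}\cdots T_{\xi_2}T_xX\cong\R^N_+$ with $T_xX=C(Z_1)$. Let $k_0\ge 2$ be the smallest index at which $\xi_{k_0}$ is not the cone vertex of $Y_{k_0-1}$; by scale-invariance of a cone at its vertex, $Y_1=\cdots=Y_{k_0-1}=C(Z_1)$. If no such $k_0$ exists, then $C(Z_1)=\R^N_+$ forces $Z_1\cong\bS^{N-1}_+$ (the closed upper hemisphere); any boundary point $p$ of $\bS^{N-1}_+$ has $T_p\bS^{N-1}_+=\R^{N-1}_+$ (since $\bS^{N-1}_+$ is a smooth manifold with boundary), and padding with further self-similar tangents at the boundary origin of $\R^{N-1}_+$ gives an $(N-1)$-iterated tangent of $Z_1$ equal to $\R^{N-1}_+$, hence $p\in\bRCD Z_1$ by induction. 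Otherwise $\xi_{k_0}=(r_0,z_0)$ with $r_0>0$, the warped-product identity gives $Y_{k_0}=\R\times T_{z_0}Z_1$, and further iterations propagate inside the second factor to yield $Y_N=\R\times W$ where $W$ is an $(N-k_0+1)$-iterated tangent of $Z_1$ at $z_0$. The isometry $\R\times W\cong\R^N_+$ forces $W\cong\R^{N-1}_+$ (uniqueness of the splitting of a Euclidean half-space); padding $W$ with $k_0-2$ further self-similar tangents at the boundary origin produces an $(N-1)$-iterated tangent of $Z_1$ equal to $\R^{N-1}_+$, and the inductive hypothesis on $Z_1$ then gives $z_0\in\bRCD Z_1$. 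In either case $\bRCD Z_1\neq\emptyset$, hence $x\in\bRCD X$ by Definition \ref{def:bRCD}.

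The main difficulty is that the iteration in the reverse direction may stall on cone vertices; this is what forces the separate treatment of the extreme case $Z_1\cong\bS^{N-1}_+$ and the padding trick exploiting the self-similarity $T_0\R^{N-1}_+=\R^{N-1}_+$. Two technical inputs worth isolating and checking carefully are the warped-product tangent identity $T_{(r,z)}C(Z)=\R\times T_zZ$ for $r>0$, and the splitting uniqueness $\R\times W\cong\R^N_+\Rightarrow W\cong\R^{N-1}_+$.
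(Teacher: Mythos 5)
Your proof is correct, and it follows the same broad strategy as the paper (induction on $N$, with the forward direction tracing the cone/tangent chain and the reverse direction unwinding it). The forward inclusion $\bRCD X \subset \{\text{iterated tangent }\cong \R^N_+\}$ is essentially identical to the paper's argument for that direction (pick $x_1\in\bRCD X_1$, set $\xi_2=(1,x_1)$, use $T_{\xi_2}C(X_1)=\R\times T_{x_1}X_1$, apply the inductive hypothesis to $X_1$). The $N=1$ base case also matches.

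The genuine point of comparison is the reverse inclusion. The paper dispatches it with a single sentence (``follows by the fact that $\R^N_+$ has non-empty $\RCD$-boundary \dots and we can induct on the dimension up to $\R_+$''), which elides the real difficulty you correctly identify: the iterated tangent chain may stall at cone vertices, so one cannot just peel off one $\R$-factor per step. Your argument makes this explicit with the index $k_0$, the separate treatment of the all-vertex case $C(Z_1)\cong\R^N_+$, and the padding trick using the self-similarity of $\R^{N-1}_+$ at its boundary origin to match iteration lengths. This is a legitimate and more transparent rendering of the inductive step, and in my reading it is what the paper's one-liner is implicitly invoking. The two technical inputs you isolate are both correct and needed: the identity $T_{(r,z)}C(Z)=\R\times T_zZ$ for $r>0$ is exactly what the paper itself uses in the other direction, and the splitting uniqueness $\R\times W\cong\R^N_+\Rightarrow W\cong\R^{N-1}_+$ follows from the uniqueness of the maximal Euclidean factor of an $\RCD(0,N)$ space (a consequence of Gigli's splitting theorem; since $W$ is an iterated tangent of a non-collapsed $\RCD$ space, it lies in that class). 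It would be worth citing that splitting/de Rham uniqueness explicitly, since it is not stated in the paper, but it is a standard fact.

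One minor point to tighten: when you conclude ``$C(Z_1)=\R^N_+$ forces $Z_1\cong\bS^{N-1}_+$,'' you should note that an abstract isometry $C(Z_1)\cong\R^N_+$ need not send the cone vertex to the origin, but it must send it to a boundary point of $\R^N_+$ (otherwise the link there would be $\bS^{N-1}$ and $C(Z_1)$ would be all of $\R^N$), and at every boundary point of $\R^N_+$ the link is $\bS^{N-1}_+$. With this small clarification the step is airtight.
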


\begin{proof}
``$\supset$'': follows by the fact that  $\R^N_+$ has non-empty $\RCD$-boundary according to Definition \ref{def:bRCD} since it is a cone over the half sphere $\bS^{N-1}_{+}:=\bS^{N-1} \cap \R^{N}_{+}$, and we can induct on the dimension up to $\R_{+}$.

``$\subset$''. We argue by induction on $N\in \N$. 
\\ Case $N=1$: if $X$ is non-collapsed $\RCD(K,1)$ with non-empty boundary then it is isometric to either a segment or a half line; in both cases, if  $x\in \bRCD X$ then $\Tan(X,\sfd,x)=\{\R_{+}\}$. 
\\ Case $N>1$: if $X$ is non-collapsed $\RCD(K,N)$ and $x\in \bRCD X$ then (by Lemma \ref{lem:TanncRCD} and  Definition \ref{def:bRCD}) there exists $T_{x}X \in \Tan(X,\sfd,x)$ with $T_{x}X=C(X_{1})$, $X_{1}$ non-collapsed $\RCD(N-2,N-1)$ space with $\bRCD X_{1} \neq \emptyset$. Let $x_{1}\in \bRCD X_{1}$ and note that, by inductive assumption, there exists an $N-1$-iterated tangent space  to $X_{1}$ at $x_{1}$ isometric to $\R^{N-1}_{+}$. Choosing $\xi_{2}:=(1,x_{1})\in C(X_{1})=T_{x}X$ we get that $T_{\xi_{2}}T_{x}X=\R\times T_{x_{1}}X_{1}$. Thus $X$ has an $N$-iterated tangent space isometric to $\R^{N-1}_{+}\times \R=\R^{N}_{+}$.
\end{proof}
}

The definition of the boundary of a non-collapsed $\RCD(K,N)$ space is similar to the definition of the boundary of an Alexandrov space. However  unlike in the Alexandrov case where tangent  spaces are known to be unique we don't know at the moment if it's possible to have points where some tangent  spaces have boundary and others don't.
\begin{question}\label{q:boundTan}
Is it true that if \emph{some} tangent space at  some point $p$ in a non-collapsed $\RCD(K,N)$ space $X$ has boundary then \emph{every} tangent space at $p$ has boundary?
\end{question}

\begin{remark}
The answer to Question \ref{q:boundTan} is ``Yes" if $N\le 3$. Indeed, if $N\le 2$ then $X$ is Alexandrov by ~\cite{LS} and hence tangent  spaces are unique. If $N=3$ then any  $Y\in {\rm Tan}(X,\sfd,p)$ has the form  $Y=C(Z)$ where $Z$ is a 2-dimensional Alexandrov space with $curv\ge 1$. For a non-collapsing sequence of Alexandrov spaces it is known that if the elements of the sequence have (resp. don't have) boundary then the same holds for the limit. Therefore the subset of $ {\rm Tan}(X,\sfd,p)$ consisting of tangent  spaces that have boundary is closed and the same is true for its complement.

Since the space of tangent  spaces $ {\rm Tan}(X,\sfd,p)$ is connected (see e.g.  \cite[Lemma 2.1]{LS} or \cite[Lemma 3.2]{Donaldson-Sun}), the claim follows.
\end{remark}

\begin{lemma}\label{lem:TanSpXNC}
Let  $(X,\sfd,\cH^{N})$ be a non-collapsed $\RCD(K,N)$ space.
\\ Then for every $x\in \S^{N-1}\setminus \S^{N-2}$,  { there exists a} tangent space at $x$ isomorphic to the half space $\R^N_+:=\{(x_{1}, \dots, x_{N})\,:\, x_{1} \geq 0\}$.
\\In particular, $\bRCD^* X\subset \bRCD X$  { and  the Hausdorff dimension of  $\bRCD X \setminus \bRCD^* X$ is at most $N-2$.}
\end{lemma}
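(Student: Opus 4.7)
My plan is to pinpoint an explicit tangent at $x \in \S^{N-1} \setminus \S^{N-2}$ of the form $\R^N_+$, and to derive both conclusions from that identification. The hypothesis $x \notin \S^{N-2}$ yields, directly from the definition of the singular stratum recalled earlier, a tangent $Y \in \Tan(X,\sfd,x)$ which is $(N-1)$-symmetric, i.e.\ $Y \cong \R^{N-1} \times C(Z)$ for some metric space $Z$. By Lemma \ref{lem:TanncRCD}, $Y$ is moreover a non-collapsed $\RCD(0,N)$ metric-measure cone. Applying the $\RCD$ splitting theorem to the Euclidean factor and using the preservation of non-collapsing under splitting (cf.\ \cite{Kitab-noncol, GDPNonColl}), $C(Z)$ is forced to be a non-collapsed $\RCD(0,1)$ space. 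Since $C(Z)$ is a metric cone with vertex, the classification of non-collapsed $\RCD(0,1)$ spaces from \cite{KL} leaves only two possibilities: $C(Z) \cong [0,\infty)$ or $C(Z) \cong \R$. The case $C(Z) \cong \R$ would give $Y \cong \R^N$ and hence $x \in \cR_N = X \setminus \S^{N-1}$, contradicting the standing hypothesis; therefore $Y \cong \R^N_+$, as desired.

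The inclusion $\bRCD^* X \subset \bRCD X$ then reduces to checking $\bRCD \R^N_+ \neq \emptyset$, which I would verify by induction on $N$. The base case $N=1$ is the definitional fact from \cite{KL} that the half-line $[0,\infty)$ has $\RCD$-boundary equal to $\{0\}$. For $N \geq 2$, I would write $\R^N_+ = C(\bS^{N-1}_+)$, pick any point of the equator $\bS^{N-2} \subset \bS^{N-1}_+$, note that its metric tangent inside $\bS^{N-1}_+$ is isometric to $\R^{N-1}_+$, and apply the inductive hypothesis together with Definition \ref{def:bRCD}.

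For the dimension estimate I would decompose $\bRCD X \setminus \bRCD^* X = (\bRCD X \cap \S^{N-2}) \cup (\bRCD X \setminus \S^{N-1})$ and show both pieces lie in $\S^{N-2}$. The first trivially does; for the second, an entirely parallel induction shows that $\R^N = C(\bS^{N-1})$ has empty $\RCD$-boundary (since all iterated tangents of $\bS^{N-1}$ are Euclidean), while any $x \in X \setminus \S^{N-1}$ admits an $\R^N$-tangent and hence, by the GH-continuity of $\cH^N$ (Theorem \ref{thm:ContHN}) and Volume Rigidity (Theorem \ref{thm:VolRig}), satisfies $\vartheta_N(x)=1$, placing $x \in \cR_N$ with \emph{every} tangent isometric to $\R^N$; thus $x \notin \bRCD X$. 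Combining these gives $\bRCD X \setminus \bRCD^* X \subset \S^{N-2}$, and the bound follows from the standard codimension-two Hausdorff estimate $\dimH \S^{N-2} \leq N-2$ for non-collapsed $\RCD(K,N)$ spaces. The main technical step is the splitting identification of $C(Z)$ as a non-collapsed $\RCD(0,1)$ cone; the deepest external input is the stratification estimate for $\S^{N-2}$, while the rest is careful inductive bookkeeping of the model cones encoded in Definition \ref{def:bRCD}.
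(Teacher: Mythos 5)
Your proposal is correct and follows essentially the same route as the paper's: both hinge on the classification of one-dimensional $\RCD$ spaces from \cite{KL}, both use that the tangent $Y$ must be a metric cone to cut down the possibilities, and both finish by combining the identification $Y\cong\R^N_+$ with the codimension estimate $\dim_{\mathcal H}\S^{N-2}\le N-2$ from \cite{GDPNonColl}. The only cosmetic difference is the bookkeeping of the one-dimensional factor: you read off the cone factor $C(Z)$ directly from the $(N-1)$-symmetry and then invoke the splitting theorem to see it is non-collapsed $\RCD(0,1)$, whereas the paper writes $Y=Z\times\R^{N-1}$ with $Z$ an $\RCD(0,1)$ space and then excludes $[a,b]$ and $\bS^1$ because $Y$ would otherwise fail to be a cone; these are two phrasings of the same elimination. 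Two small remarks: (i) to rule out $C(Z)\cong\R$ you do not actually need the equality $\cR_N=X\setminus\S^{N-1}$ (which requires Bishop--Gromov monotonicity) — it is enough to observe that $Y\cong\R^N$ is $N$-symmetric, which already contradicts $x\in\S^{N-1}$; (ii) in your induction it is cleaner to phrase the claim as $\bRCD\bS^{n-1}_+\ne\emptyset$ for all $n\ge 2$ (with the base case being the definitional boundary of an interval), since what Definition~\ref{def:bRCD} literally requires is non-emptiness of the boundary of the \emph{link}, not of the half-space itself; these two statements are equivalent only after a further unwinding at the vertex, which is worth making explicit.
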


\begin{proof}
Recall that every tangent space to an $\RCD(K,N)$ space is a non-collapsed  $\RCD(0,N)$ metric cone \cite[Proposition 2.8]{GDPNonColl}. 
Note that
\begin{equation}\label{eq:SN-2N-1}
\S^{N-1}(X)\setminus \S^{N-2}(X)=\{x\in X\,:\, \not \exists T_{x}X \text{ splitting } \R^{N} \text{ but } \exists T_{x}X \text{ splitting } \R^{N-1}\}.
\end{equation}
Now, if $Y$ is an $\RCD(0,N)$ space splitting $\R^{N-1}$ then $Y=Z\times \R^{N-1}$ with $Z$ an $\RCD(0,1)$ space. By the classification of 1-dimensional $\RCD$ spaces \cite{KL}, $Z$ can be isometric to either a singleton $\{0\}$, a half-line $\R^{+}$, a line $\R$, a closed interval  $[a,b]\subset \R$, or a circle ${\mathbb S}^{1}$. 
\\The case $Z=\{0\}$ is excluded since it would imply $Y=\R^{N-1}$ which has infinite $N$-density  $\vartheta_{N}$ and hence is not a non-collapsed $\RCD(0,N)$. The case $Z=\R$ is excluded since it would imply that $Y=\R^{N}$. The cases $Z={\mathbb S}^{1}$ and  $Z=[a,b]$ are excluded since they would imply that 
$Y$ is not a metric cone. 
\\ Thus the only possibility is that $Z$ is a half line $\R^{+}$ and hence $Y$ is isomorphic to the half space $\R^N_+$.
The claim follows from the combination of this last observation with \eqref{eq:SN-2N-1}.
\\{  It is easy to see that  $(\bRCD X \setminus \bRCD^* X)\subset \cS^{N-2}$ (see for instance the beginning of the proof of Theorem \ref{thm-structure}).   By \cite[Theorem  1.8]{GDPNonColl}, for all $k$ it holds that
\begin{equation}\label{H-dim-Sk}
\dim_{\mathcal H}\cS^{k}\le k.
\end{equation}
We conclude that the Hausdorff dimension of  $\bRCD X \setminus \bRCD^* X$ is at most $N-2$.}
\end{proof}

The above notion of boundary  is compatible with the one of topological manifold with boundary, see  Corollary \autoref{manifold-no-boundary}.
\\It is clear that $\bRCD^*X$ need not be closed. For instance if $X=[0,1]\times[0,1]\subset \R^{2}$ then it is easy to see that $\bRCD^*X=(0,1)\times\{0,1\}\cup \{0,1\}\times(0,1)$ while $\bRCD X=[0,1]\times\{0,1\}\cup \{0,1\}\times[0,1]$. It is not clear if in general $\bRCD X\subset X$ is a closed subset.

\begin{question}
 Let $(X,\sfd,\cH^{N})$ be a non-collapsed $\RCD(K,N)$ space.  Is $\bRCD X$ necessarily a closed subset of $X$?
 \end{question}

 A closely related question is the following:
\begin{question}\label{bry-reduced-bry}
Is it true that if $X$ is a non-collapsed $\RCD(K,N)$ space and $\bRCD X\ne\emptyset$ then $\bRCD^* X\ne\emptyset$ also?
\end{question}

Another closely related question is

\begin{question}\label{bry-closure-reduced-bry}
Let $X$ be a non-collapsed $\RCD(K,N)$ space. Is it true that $\bRCD X$ is equal to the closure of $\bRCD^* X$?
\end{question}
A positive answer to this question would mean that our definition of the boundary is equivalent to the one suggested by De Philippis and Gigli in ~\cite{GDPNonColl}.

The following conjecture is inspired by the theory of finite perimeter sets, and in particular by De Giorgi's Theorem.

\begin{conjecture}
 Let $(X,\sfd,\cH^{N})$ be a non-collapsed $\RCD(K,N)$ space.   Then  $\bRCD X$ (or, equivalently, $\bRCD^{*} X$ in view of Lemma \ref{lem:TanSpXNC}) is $\cH^{N-1}$ rectifiable.
 \end{conjecture}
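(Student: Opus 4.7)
The natural strategy is to mirror the proof of De Giorgi's rectifiability theorem, with the half-space $\R^{N}_{+}$ playing the role of the halfspace blow-up at points of the reduced boundary. The first step is to reduce the conjecture to $\cH^{N-1}$-rectifiability of $\bRCD^{*}X$ only: by Lemma~\ref{lem:TanSpXNC} the difference $\bRCD X\setminus \bRCD^{*}X$ is contained in $\S^{N-2}$, hence has Hausdorff dimension at most $N-2$ and is thereby $\cH^{N-1}$-negligible. So it suffices to show that $\bRCD^{*}X = \S^{N-1}\setminus \S^{N-2}$ is $(N-1)$-rectifiable.

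The second step is a quantitative stratification. For $\eps,r>0$ define
\[
\bRCD^{*,\eps,r}X := \bigl\{x\in X : \sfd_{GH}(B_{s}(x),B_{s}(0^{N-1,+}))\leq \eps s \text{ for all } s\in(0,r]\bigr\},
\]
where $0^{N-1,+}$ denotes the origin in $\R^{N}_{+}$. Using the volume rigidity Theorem~\ref{thm:VolRig} together with the metric-measure cone structure of tangent spaces (Lemma~\ref{lem:TanncRCD}) and a compactness argument analogous to the proof of Theorem~\ref{thm:topologyHN}, one shows that at $\cH^{N-1}$-a.e.\ point of $\bRCD^{*}X$ the unique (up to isometry) tangent space is $\R^{N}_{+}$, and the convergence of rescalings to this half-space is uniform on annular scales. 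Consequently $\bRCD^{*}X = \bigcup_{\eps>0}\bigcap_{r>0}\bRCD^{*,\eps,r}X$ up to $\cH^{N-1}$-null sets, and it is enough to produce bi-Lipschitz charts from open subsets of $\R^{N-1}$ onto subsets of each $\bRCD^{*,\eps,r}X$ that cover $\cH^{N-1}$-almost all of it.

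The third, and main, step is to construct these bi-Lipschitz charts. The Reifenberg-type Theorem~\ref{thm:CCReif} only produces bi-H\"older maps, which is insufficient for rectifiability. To upgrade to bi-Lipschitz one should adapt the $\delta$-splitting map machinery of Cheeger-Colding (and its $\RCD$ counterpart by Mondino-Naber, Bru\'e-Semola, and others): at a reduced boundary point with tangent space $\R^{N}_{+}$, one constructs $N-1$ harmonic $\delta$-splitting functions $u_{1},\dots,u_{N-1}$ which, when combined, yield a map $\Phi\co B_{r}(x)\to \R^{N-1}$ close to the projection of the model half-space. A Lusin-type argument, based on segment-type inequalities on $\RCD$ spaces and on pointwise Hessian bounds along approximate regular sets, then shows that $\Phi$ restricted to a large $\cH^{N-1}$-portion of $\bRCD^{*}X\cap B_{r}(x)$ is bi-Lipschitz. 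Iterating over a countable basis of scales and summing the Lusin sets exhausts $\bRCD^{*}X$ up to $\cH^{N-1}$-null pieces.

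The hard part is precisely this splitting-and-Lusin step at boundary points: the standard $\RCD$ splitting theorem produces Euclidean factors, not half-space factors, so one needs a boundary variant in which a doubling or reflection procedure reduces the analysis to the two-sided (interior) case, or alternatively a direct Lipschitz approximation by harmonic functions with Neumann-type behavior at $\bRCD X$. Developing this calculus of harmonic functions near $\bRCD X$, together with an effective Reifenberg theorem for flat half-spaces analogous to the Cheeger-Naber-Valtorta rectifiable-Reifenberg theorem, is the main obstacle; once this is available, the rectifiability of $\bRCD X$ follows by the scheme sketched above.
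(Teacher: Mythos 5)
This statement is labelled a \emph{conjecture} in the paper, and the paper offers no proof of it; there is therefore nothing in the source to compare your argument against. Your write-up is accordingly best read as a roadmap rather than a proof, and indeed you say so yourself: you identify the decisive step (bi-Lipschitz charts at $\R^N_+$-regular points via a boundary version of the $\delta$-splitting/Lusin machinery and a half-space rectifiable-Reifenberg theorem) and explicitly state that developing it ``is the main obstacle.'' A sketch that flags its own missing engine is not a proof, and no amount of framing can change that; so the honest verdict is that the conjecture remains unproven in your proposal, exactly as it does in the paper.

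On the substance, the easy reductions you perform are sound and follow directly from what is in the paper: $\bRCD X \setminus \bRCD^* X \subset \S^{N-2}$ has Hausdorff dimension at most $N-2$ by Lemma~\ref{lem:TanSpXNC} and~\cite[Theorem~1.8]{GDPNonColl}, so it is $\cH^{N-1}$-null and you may restrict to $\bRCD^* X$. The quantitative stratification $\bRCD^{*,\eps,r}X$ is a reasonable thing to introduce, but your claim that at $\cH^{N-1}$-a.e.\ reduced-boundary point the tangent is unique and the convergence to $\R^N_+$ is uniform on annular scales is itself a nontrivial assertion; in the interior the analogous uniqueness (a.e.\ Euclidean tangent) is a theorem of Mondino-Naber with substantial input, and near the boundary nothing of the sort is established in this paper --- note the paper explicitly leaves open even whether \emph{every} tangent at a boundary point has boundary (Question~\ref{q:boundTan}). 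Finally, the core step you defer --- harmonic splitting maps with Neumann-type behaviour, or a reflection/doubling reduction to the interior case, combined with a rectifiable Reifenberg theorem for flat half-space configurations --- is precisely where all the real work lies; the paper contains no $\delta$-splitting calculus, no Lusin-type bi-Lipschitz approximation, and no boundary Reifenberg theorem, so none of this can be cited from the source. Your strategy is plausible and points in a productive direction, but as written it is a plan for a proof, not a proof.
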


The next result says that a non-collapsed $\RCD(K,N)$ space  $X$  is the disjoint union of a manifold part of dimension $N$ which is open in $X$, a boundary part of Hausdorff dimension at most $N-1$, and a singular set of Hausdorff dimension at most $N-2$.

\begin{theorem}\label{thm-structure}
Let $(X,\sfd,\cH^{N})$ be a non-collapsed $\RCD(K,N)$ space for some $K\in \R, N\in \N$.
\\Then $\bRCD X\subset \S^{N-1}$, in particular the Hausdorff dimension of $\bRCD X$ is at most $N-1$.
\\Moreover  there exists $\eps_{0}=\eps_{0}(K,N)$ such that for  $\eps\in (0,\eps_{0})$ 
the following properties hold

\begin{enumerate}
\item \label{regular-part} $\mathcal R_N\subset \overset{\circ}{({\mathcal R}_{N})_{\eps}}\subset X$ and $\overset{\circ}{({\mathcal R}_{N})_{\eps}}\subset X$ is $\alpha(\eps)$-bi-H\"older homeomorphic to a smooth  manifold, where $\alpha(\eps)\to 1$ as $\eps\to0$.
\item\label{regular-no-bry} ${ ({\mathcal R}_{N})_{\eps}}\cap \bRCD X=\emptyset$.
\item \label{reg-intr-sing} ${ ({\mathcal R}_{N})_{\eps}} \cap \S\subset \S^{N-2}$, in particular it has Hausdorff dimension at most $N-2$.
\item\label{reg-connected} If $\cH^{N-1}(\S)=0$ (equivalently, if  $\cH^{N-1}(\bRCD^* X)=0$), then  $\overset{\circ}{({\mathcal R}_{N})_{\eps}}$ is path connected. Moreover, the induced inner metric on $\overset{\circ}{({\mathcal R}_{N})_{\eps}}$ coincides with the restriction of the ambient metric $\sfd$.
\end{enumerate}
It follows that 
$$
X= \overset{\circ}{({\mathcal R}_{N})_{\eps}} \, \mathring\cup \,  \bRCD X \, \mathring\cup  \, (\S^{N-2}\setminus (\bRCD X\cup \overset{\circ}{({\mathcal R}_{N})_{\eps}})).
$$

In words:   $X$  is the disjoint union of a manifold part of dimension $N$, a boundary part of Hausdorff dimension at most $N-1$, and a singular set of Hausdorff dimension at most $N-2$. 
\end{theorem}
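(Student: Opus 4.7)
The plan is to prove the four items in order, proceeding by induction on $N$ and relying on the characterization of $\bRCD X$ via iterated tangents (Lemma~\ref{lem:bRCDXIteratedTang}) together with Theorems~\ref{thm:topologyHN}, \ref{reifenberg-stab}, and~\ref{thm:sphere}. The base case $N=1$ follows from the classification of one-dimensional non-collapsed $\RCD$ spaces in~\cite{KL}.

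For the inclusion $\bRCD X\subset \S^{N-1}$: if some tangent at $x$ is isometric to $\R^N$, then volume convergence (Theorem~\ref{thm:ContHN}) forces $\vartheta_N(x)=1$, and since every tangent at $x$ is a non-collapsed $\RCD(0,N)$ cone $C(Z)$ with vertex density $\vartheta_N(x)=1$, Bishop--Gromov rigidity forces $C(Z)=\R^N$; iterating, every iterated tangent at $x$ is $\R^N$. But Lemma~\ref{lem:bRCDXIteratedTang} produces, for each $x\in\bRCD X$, an $N$-iterated tangent equal to $\R^N_+$, giving a contradiction. Hence $x\in\S^{N-1}$, and the Hausdorff-dimension bound follows from~\eqref{H-dim-Sk}.

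For items (1)--(3), choose $\eps_0(K,N)$ so small that Theorems~\ref{reifenberg-stab} and~\ref{thm:sphere} apply in every dimension $\le N$. The inclusion $\mathcal R_N\subset\overset{\circ}{(\mathcal R_N)_\eps}$ uses the propagation argument already employed in the proof of Theorem~\ref{thm:topologyHN}: at any $x$ with $\vartheta_N(x)=1$, Bishop--Gromov monotonicity combined with volume rigidity (Theorem~\ref{thm:VolRig}) forces all balls centered sufficiently close to $x$, at all sufficiently small scales, to be $\eps$-GH close to Euclidean balls. A local application of Theorem~\ref{reifenberg-stab} on each ball compactly contained in $\overset{\circ}{(\mathcal R_N)_\eps}$ then yields~(1). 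For~(2), given $x\in (\mathcal R_N)_\eps$, the cone structure of any tangent $T_xX=C(Z)$ propagates the $\eps$-GH-closeness of the unit ball at the vertex to every scale; Theorems~\ref{thm:ContHN} and~\ref{thm:VolRig} force the cross-section $Z$ to be $\Psi(\eps|N)$-GH close to $\bS^{N-1}$; the Sphere Theorem~\ref{thm:sphere} then gives $Z\cong\bS^{N-1}$, and the inductive structure theorem applied in dimension $N-1$ yields $\bRCD Z=\emptyset$. Thus no tangent at $x$ has boundary, so $x\notin\bRCD X$. For~(3), any $x\in(\mathcal R_N)_\eps\cap\S$ has no Euclidean tangent (since $x\in\S$), while any $(N-1)$-symmetric non-collapsed $\RCD(0,N)$ cone is, by the classification of $\RCD(0,1)$ spaces~\cite{KL}, either $\R^N$ or $\R^N_+$; closeness to $\R^N$ rules out $\R^N_+$ for $\eps\le\eps_0$. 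Hence $x\in\S^{N-2}$, and~\eqref{H-dim-Sk} concludes.

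For item~(4), Lemma~\ref{lem:TanSpXNC} together with~\eqref{H-dim-Sk} gives $\cH^{N-1}(\bRCD X\setminus\bRCD^{*}X)=0$, so the two vanishing hypotheses $\cH^{N-1}(\bRCD^{*}X)=0$ and $\cH^{N-1}(\S)=0$ are equivalent, and the complement $X\setminus\overset{\circ}{(\mathcal R_N)_\eps}\subset\S$ is $\cH^{N-1}$-null. Using that on any $\RCD(K,N)$ space, $W_2$-optimal dynamical plans between $\cH^N$-absolutely-continuous measures have $\cH^N$-absolutely-continuous intermediate marginals, such plans are concentrated on geodesics disjoint from any prescribed $\cH^{N-1}$-null set; applying this to pairs of small balls around $p,q\in \overset{\circ}{(\mathcal R_N)_\eps}$ produces, for any $\eta>0$, continuous paths from $p$ to $q$ inside $\overset{\circ}{(\mathcal R_N)_\eps}$ of length at most $\sfd(p,q)+\eta$, yielding both path-connectedness and the identification of the inner metric with $\sfd$. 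The final decomposition is then assembled from the disjointness provided by~(2) together with $X\setminus\overset{\circ}{(\mathcal R_N)_\eps}\subset \bRCD X\cup(\S\setminus\overset{\circ}{(\mathcal R_N)_\eps})$ and item~(3). The main obstacle is item~(2), where the induction on $N$ is activated and one must transfer GH-smallness from a tangent to its cross-section and invoke the structure theorem in dimension $N-1$; a secondary delicate point is the inner-metric equality in~(4), which rests not on topological connectedness but on the $\RCD$-specific absolute continuity of $W_2$-geodesic interpolations.
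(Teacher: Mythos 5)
Your treatment of the preliminary inclusion $\bRCD X\subset \S^{N-1}$ and of items (1)--(3) is essentially the paper's argument: the paper also gets (1) from the almost-maximal-volume propagation behind Theorem~\ref{thm:topologyHN} (phrased via lower semicontinuity of $\vartheta_N$ and Corollary~\ref{cor:RNepsTheta}), and gets (2) by induction on $N$ through the chain ``tangent cone $C(Z)$ with $\bRCD Z\neq\emptyset$, cross-section $Z$ GH-close to $\bS^{N-1}$, Sphere Theorem, hence $Z$ entirely $\eps$-regular, hence $\bRCD Z=\emptyset$ by induction.'' One imprecision in your (2): the Sphere Theorem gives that $Z$ is \emph{homeomorphic} to $\bS^{N-1}$ with almost maximal $\cH^{N-1}$-volume, not $Z\cong\bS^{N-1}$ as a metric space, and ``the inductive structure theorem yields $\bRCD Z=\emptyset$'' only after you insert the missing bridge: almost maximal volume plus Bishop--Gromov forces $\vartheta_{N-1}\ge 1-\Psi(\eps|N-1)$ at \emph{every} point of $Z$, so $Z=({\mathcal R}_{N-1})_{\eps_0}(Z)$ (Corollary~\ref{cor:RNepsTheta}), and only then does item (2) in dimension $N-1$ apply. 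This is fixable. Your argument for (3) (closeness to $\R^N$ excludes the half-space, which is the only boundary alternative among $(N-1)$-symmetric non-collapsed cones) is a slight variant of the paper's route via Lemma~\ref{lem:TanSpXNC} and is fine.

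The genuine gap is in item (4). You claim that since $W_2$-optimal dynamical plans between $\cH^N$-absolutely continuous measures have absolutely continuous intermediate marginals, such plans are concentrated on geodesics disjoint from any prescribed $\cH^{N-1}$-null set $S$. That implication does not hold: absolute continuity of each time-marginal only uses that $S$ is $\cH^{N}$-null, and by Fubini it yields merely that almost every geodesic spends a Lebesgue-null set of times in $S$, not that it avoids $S$. Indeed the same mechanism would apply verbatim to a separating hypersurface piece in $\R^N$ (which is $\cH^N$-null but not $\cH^{N-1}$-null), where \emph{every} geodesic between the two balls crosses $S$; so no argument that only exploits $\cH^N$-nullity of the time-marginals can deliver the conclusion, which genuinely requires the codimension-one smallness. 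This avoidance statement is precisely the content of the paper's Appendix (Lemma~\ref{lem-vol-est-col}, Proposition~\ref{alm-all-geod}, Corollary~\ref{non-collapsed-alm-all-geod}): one fixes $x_1$, performs the needle/disintegration of $\cH^N$ along the distance function from $x_1$ (Theorem~\ref{thm:DisintMCP}), and uses the $\MCP$ density estimates along needles together with Bishop--Gromov to show that if a positive-measure set of points had all geodesics from $x_1$ meeting $S$, then the codimension-one content $\mm_{-1}(S)$ (hence $\cH^{N-1}(S)$, by Ahlfors regularity) would be positive. With that corollary in hand, the paper concludes (4) by picking, for $x,y$ regular and $\delta$ small, a point $y'\in B_\delta(y)$ joined to $x$ by a geodesic avoiding $X\setminus\overset{\circ}{({\mathcal R}_{N})_{\eps}}$ and concatenating with $[y'y]$. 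Your proposal omits this entire ingredient, which is the technical heart of item (4).
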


\begin{proof}
We first show that $x\notin \S^{N-1} \Rightarrow x\notin \bRCD X$:  If $x\notin \S^{N-1}$ then there exists a tangent space to $x$ isomorphic to $\R^{N}$.  By Bishop-Gromov monotonicity it follows that \emph{every} tangent space to $x$ is isomorphic to $\R^{N}$, and thus $x\notin \bRCD X$.  

Since  $\bRCD X\subset \S^{N-1}$, \eqref{H-dim-Sk}  implies  that $ \bRCD X$  has Hausdorff dimension at most $N-1$.

\textbf{Proof of \eqref{regular-part} .} 
We claim that $({\mathcal R}_{N})_{\eps}\subset  \overset{\circ}{({\mathcal R}_{N})}_{\Psi(\eps|N)}$.
\\ {Combining \eqref{eq:tanRN} with Theorem~\ref{thm:ContHN}}, it holds that $|\vol(B_{1}^{Y}(y))-\omega_n|\le \Psi(\eps|N)$. Therefore $1\ge \vartheta_N(x)\ge 1- \Psi(\eps|N)$. By semicontinuity of $\vartheta_N$ this implies that $1\ge \vartheta_N(z)\ge 1- \Psi(\eps|N)$ for all $z$ sufficiently close to $x$. By Corollary \ref{cor:RNepsTheta} this implies that all $z$ near $x$ belong to ${({\mathcal R}_{N})}_{\Psi(\eps|N)}$, i.e.  $({\mathcal R}_{N})_{\eps}\subset  \overset{\circ}{({\mathcal R}_{N})}_{\Psi(\eps|N)}$ as claimed. By Theorem~\ref{thm:topologyHN} this implies \eqref{regular-part} as soon as $\eps_0$ is small enough so that for all $0<\eps<\eps_0$ it holds that 
$\Psi(\eps|N)< \bar\varepsilon(K,N,\alpha)$ given by Theorem~\ref{thm:topologyHN}.
\\

\textbf{Proof of \eqref{regular-no-bry}.} 
We argue by induction on $N$. The base of induction $N=1$ is easy due to the classification of $\RCD(K,1)$ spaces.

Suppose the statement holds for $N-1\ge 1$ and we need to prove it for $N$.

As above,  if $x \in  ({\mathcal R}_{N})_{\eps}$ then for every $(Y,y)\in {\rm Tan}(X,x)$ it holds  \eqref{eq:tanRN}.
On the other hand, if $x\in \bRCD X$, then there exists $(\bar{Y},\bar{y})\in {\rm Tan}(X,x)$ such that $Y=C(Z)$ where $Z$ is a non-collapsed $\RCD(N-2,N-1)$ space with boundary.
\\The estimate \eqref{eq:tanRN} implies that 
\begin{equation}\label{eq:ZRN}
\sfd_{GH}(Z, {\mathbb S}^{N-1})\leq \Psi(\eps|N).
\end{equation}
From the Sphere Theorem \ref{thm:Sphere}, we infer that $Z$ has almost maximal volume, i.e.  $\cH^{N-1}(Z)\geq (1-\Psi(\eps|N-1)) \cH^{N-1}( {\mathbb S}^{N-1})$. By the Bishop-Gromov monotonicity of volumes, it follows that for every $z\in Z$ it holds
$$
\lim_{r\to 0} \frac{\cH^{N-1}(B_{r}(z))}{\omega_{N-1} r^{N-1}}\geq 1-\Psi(\eps|N-1).
$$
{ If  $\eps\le \bar{\eps}(N)$, Corollary \ref{cor:RNepsTheta} implies} that  $Z=({\mathcal R}_{N-1})_{\eps_0(N-2,N-1)}(Z)$ and therefore $\bRCD Z=\emptyset$ by the induction assumption. This is a contradiction and hence $({\mathcal R}_{N})_{\eps}\cap \bRCD X=\emptyset$.
\\

\textbf{Proof of \eqref{reg-intr-sing}.} Notice that, from the very definition of singular set and reduced boundary, and from  Lemma \ref{lem:TanSpXNC} we get
\begin{equation}\label{SdeX}
\S\setminus \S^{N-2}=\S^{N-1}\setminus \S^{N-2}=\bRCD^* X\subset \bRCD X.
\end{equation}
Thus $$ ({\mathcal R}_{N})_{\eps}\cap (\S\setminus \S^{N-2})\subset ({\mathcal R}_{N})_{\eps}\cap  \bRCD X=\emptyset,$$ where in the last identity we used \eqref{regular-no-bry}.
We conclude that $({\mathcal R}_{N})_{\eps}\cap \S\subset \S^{N-2}$. 
\\ {In particular, by \eqref{H-dim-Sk},}   $({\mathcal R}_{N})_{\eps}\cap \S$ has Hausdorff dimension at most $N-2$.
\\

\textbf{Proof of \eqref{reg-connected}.}  

{ First note that, by \eqref{H-dim-Sk},  $\cH^{N-1}(S)=0$ if and only if $\cH^{N-1}(\bRCD^* X)=0$.}

Let $0<\eps<\eps_0$.
By part \eqref{regular-part}  we know that $B=X\backslash \overset{\circ}{({\mathcal R}_{N})_{\eps}}\subset \S$. By the assumption this implies that $\mathcal H^{N-1}(B)=0$. Also, $B$ is obviously closed. 

Let $x,y\in X\backslash B= \overset{\circ}{({\mathcal R}_{N})_{\eps}}$. Then for any small $\delta>0$ the ball $B_\delta(y)$ lies in $\overset{\circ}{({\mathcal R}_{N})_{\eps}}$. By Corollary~\ref{non-collapsed-alm-all-geod} 
 there is $y'\in B_\delta(y)$ and a shortest geodesic $[x,y']$ which is entirely contained in  $\overset{\circ}{({\mathcal R}_{N})_{\eps}}$. Then the concatenation of $[xy']$ and any shortest $[y'y]$ lies in  $\overset{\circ}{({\mathcal R}_{N})_{\eps}}$ and has length $\le \sfd(x,y)+2\delta$. Since this holds for all small $\delta$ this proves  \eqref{reg-connected}.

\end{proof}

We suspect that the condition that $\mathcal H^{N-1}(\S)=0$ in part \eqref{reg-connected}  of Theorem~\ref{thm-structure} is not needed.
\begin{conjecture}
 Let $(X,\sfd,\cH^{N})$ be a non-collapsed $\RCD(K,N)$ space. Then  $\overset{\circ}{({\mathcal R}_{N})_{\eps}}$ is path connected for all small $\eps$.
 
 \end{conjecture}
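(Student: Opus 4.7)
Write $M_{\eps}:=\overset{\circ}{(\cR_{N})_{\eps}}$. Since by Theorem~\ref{thm:topologyHN} the set $M_{\eps}$ is an open topological $N$-manifold (bi-H\"older to a smooth one), it is locally path connected, so path connectedness is equivalent to connectedness. My plan is to reduce the conjecture to a local one-sidedness statement at every boundary point and then combine this with a chaining argument along generic geodesics.

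The first step is to dispose of the low-dimensional obstructions. By part~\eqref{reg-intr-sing} of Theorem~\ref{thm-structure}, $M_{\eps}\cap\S\subset \S^{N-2}$, while the disjoint decomposition given at the end of that theorem yields $X\setminus M_{\eps}\subset \bRCD X\cup \S^{N-2}$. Any fixed set of Hausdorff dimension at most $N-2$ is $\cH^{N-1}$-null, so the argument of part~\eqref{reg-connected} of Theorem~\ref{thm-structure} applies essentially unchanged and shows that such a set cannot disconnect $M_{\eps}$: for any two points one produces nearby regular endpoints joined by a geodesic missing the exceptional set. Thus only the part of $\bRCD X$ of genuine Hausdorff dimension $N-1$ can possibly obstruct connectedness.

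The next step is to prove that $\bRCD X$ does not locally disconnect $M_{\eps}$ either, via a local one-sidedness property: for every $p\in \bRCD X$ there is an arbitrarily small neighborhood $W$ of $p$ such that $W\cap M_{\eps}$ is connected. If $p\in \bRCD^{*}X$, then by Lemma~\ref{lem:TanSpXNC} some tangent space at $p$ is isometric to the half-space $\R^{N}_{+}$, and one expects a boundary analogue of Theorems~\ref{thm:CCReif} and~\ref{reifenberg-stab} to provide a bi-H\"older homeomorphism from $W$ to a half-ball; then $W\cap M_{\eps}$ is homeomorphic to an open subset of the open half-ball and in particular connected. If instead $p\in \bRCD X\setminus \bRCD^{*}X$, Lemma~\ref{lem:TanSpXNC} places $p$ in $\S^{N-2}$, and this contribution is already absorbed into the first step. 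Combining these local models with a generic-geodesic argument---any geodesic between two regular points that meets $\bRCD^{*}X$ can be locally deformed to one side using the half-ball model, while every other obstacle is $\cH^{N-1}$-null---produces a path between any two points of $M_{\eps}$ within $M_{\eps}$.

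The main obstacle, and the reason the statement remains a conjecture, is the boundary Reifenberg theorem needed at reduced boundary points. What is required is a quantitative, uniform statement: at every scale $s\in(0,r_{0}(p))$ the ball $B_{s}(p)$ is $\Psi(\eps|N)$-GH-close to a half-ball of radius $s$, rather than merely at the infinitesimal level guaranteed by Lemma~\ref{lem:TanSpXNC}. The natural mechanism would be a boundary version of the volume rigidity Theorem~\ref{thm:VolRig}---almost-maximal hemispherical volume density forcing GH-proximity to $\R^{N}_{+}$---combined with $\cH^{N-1}$-rectifiability of $\bRCD X$, which is precisely the companion conjecture stated just before Theorem~\ref{thm-structure}. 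Both ingredients are currently open, and a proof of either would plausibly settle the present conjecture; bridging this gap is where I expect the real difficulty to lie.
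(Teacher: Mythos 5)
The statement is labeled a conjecture in the paper and no proof is given, so there is nothing to compare your sketch against; you have correctly recognized that it is open. Your diagnosis of the obstruction --- the absence of a boundary Reifenberg theorem and of a boundary version of volume rigidity at reduced boundary points, together with the unproven $\cH^{N-1}$-rectifiability of $\bRCD X$ --- is accurate, and your plan is a sensible line of attack.

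Two remarks are worth recording. First, the paper's comment immediately after the conjecture points to a potentially softer route. For Alexandrov spaces the result follows from Petrunin's theorem that tangent cones are isometric along the interior of a geodesic; this implies that a geodesic emanating from a regular point has regular interior, so regular points can be joined by geodesics lying entirely in the regular set, and no boundary Reifenberg model is needed. An $\RCD$ analogue of Petrunin's result would settle the conjecture by a mechanism entirely different from (and possibly simpler than) the boundary-structure apparatus you propose; you should keep it in view as an alternative target. Second, a technical caveat in your preliminary reduction: the argument of Theorem~\ref{thm-structure}~\eqref{reg-connected} applies Corollary~\ref{non-collapsed-alm-all-geod} to the whole closed set $B=X\setminus \overset{\circ}{(\cR_N)_\eps}$ and therefore requires $\cH^{N-1}(B)=0$. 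You cannot literally split $B$ into an $\cH^{N-1}$-null piece handled by the geodesic-avoidance lemma and a boundary piece handled by local half-ball models and treat them independently, because that lemma only sees the union; the two mechanisms would have to be interleaved into a single deformation of a chosen path, and it is precisely in doing so that the missing uniform boundary-scale estimates are needed. So even the ``first step'' of your outline is not a self-contained reduction --- the gap is already present there, not only at the boundary Reifenberg stage.
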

Note, that this is known to be true for Alexandrov spaces: it follows from a result of Petrunin, stating that for Alexandrov spaces tangent  spaces are isometric along interiors of geodesics ~\cite{petruninsecondvariation}.

\section{Boundary and convergence}\label{Sec:BoundConv}

In~\cite[Theorem 6.1]{CC97} Cheeger and Colding proved that the limit of a non-collapsing sequence of $N$-manifolds with Ricci curvature bounded below satisfies the property that the singular set
$\S$ is contained in $\S^{N-2}$; following our terminology, the limit space  has  empty reduced boundary. We show that this theorem has the following natural generalization to non-collapsed $\RCD$ spaces.

\begin{theorem}\label{thm-no-bry-main}
Let \{$(X_{i}, \sfd_{i},  \cH^{N})\}_{i\in \N}$ be a sequence of non-collapsed $\RCD(K,N)$ spaces. Assume that
\begin{itemize}
\item $\{(X_{i}, \sfd_{i}, p_{i})\}_{i\in \N}$ converge to  $(X, \sfd, p)$ in pointed Gromov-Hausdorff sense.
\item The open ball $B_1(p_i)$ is a topological $N$-manifold and $\cH^{N}(B_1(p_i))\ge v>0$ for all $i$.
\end{itemize}
Then $(X, \sfd, \cH^{N})$ is a non-collapsed $\RCD(K,N)$ space with  $\bRCD X \cap B_1(p)=\emptyset$.
\\ In particular $\S(X)\cap B_1(p)\subset \S^{N-2}(X)$ by Lemma~\ref{lem:TanSpXNC}.
\end{theorem}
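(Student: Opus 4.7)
The plan is to prove the two conclusions separately, with the bulk of the argument devoted to the boundary statement.

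For part (1), the identification of the limit as a non-collapsed $\RCD(K,N)$ space, I would first combine the volume hypothesis $\cH^N(B_1(p_i))\ge v$ with Bishop--Gromov (Theorem \ref{thm:BishopGromov}) to obtain a uniform local doubling constant for the measures $\cH^N_{\sfd_i}$ on balls of any fixed radius around $p_i$. This allows me to upgrade, along a subsequence, the pGH convergence to pmGH convergence of $(X_i,\sfd_i,\cH^N,p_i)$ to some $(X,\sfd,\mm,p)$. Stability of the $\RCD(K,N)$ condition then gives that $(X,\sfd,\mm)$ is $\RCD(K,N)$. To identify $\mm=\cH^N$, I would apply the GH-continuity of $\cH^N$ on balls (Theorem \ref{thm:ContHN}) around arbitrary points $x\in X$: this forces $\mm(B_r(x))=\cH^N(B_r(x))$ for all small enough $r>0$, hence $\mm=\cH^N$ as Borel measures.

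For part (2), I would argue by contradiction. Assume $y\in\bRCD X\cap B_1(p)$. By Lemma \ref{lem:bRCDXIteratedTang} there is an $N$-iterated tangent space at $y$ isometric to the Euclidean half-space $\R^N_+$. A standard diagonal argument --- realizing iterated tangents as honest tangents at nearby points via repeated rescaling and extraction --- then produces, for every $\eps>0$, a point $y^*\in B_1(p)$ and a radius $r>0$ such that the rescaled pointed ball $(B_r^X(y^*),r^{-1}\sfd,y^*)$ is $\eps$-GH-close to $(B_1(0),\R^N_+,0)$. Using the pGH convergence $X_i\to X$, pick $y_i^*\in X_i$ with $y_i^*\to y^*$; then for all large $i$ the ball $(B_r^{X_i}(y_i^*),r^{-1}\sfd_i,y_i^*)$ is $2\eps$-GH-close to $(B_1(0),\R^N_+,0)$.

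Now I extract the contradiction. Since $B_1(p_i)\subset X_i$ is a topological $N$-manifold without boundary, the localized version of Theorem \ref{manifold-no-boundaryIntro} forces $\bRCD X_i\cap B_1(p_i)=\emptyset$. On the other hand, starting from the half-space approximation produced in the previous step, I would run another blow-up/diagonal procedure in $X_i$: first rescale each $X_i$ at $y_i^*$ by $r^{-1}$; by the uniform $\RCD(r^2K,N)$ bounds and non-collapsing (inherited from part (1) and the volume lower bound plus Bishop--Gromov), pass to a pmGH subsequential limit along a diagonal $(i_j,\eps_j)\to(\infty,0)$, which by construction is an $\RCD(0,N)$ space $\eps_j$-close to $\R^N_+$; together with the volume rigidity Theorem \ref{thm:VolRig} and the GH-continuity Theorem \ref{thm:ContHN}, the limit must in fact be $\R^N_+$. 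This limit is then realized as a tangent to some $X_{i_j}$ at the appropriately chosen point in $B_1(p_{i_j})$, producing a point of $\bRCD X_{i_j}\cap B_1(p_{i_j})$ and contradicting the vanishing of $\bRCD X_{i_j}$ inside $B_1(p_{i_j})$.

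The main obstacle is the final blow-up step, i.e.\ producing an \emph{exact} half-space tangent at a point of some $X_i$ inside $B_1(p_i)$ from the merely \emph{approximate} half-space picture delivered by the pGH convergence. Executing this cleanly requires balancing two parameters (the GH-scale $\eps$ in $X$ and the sequence index $i$), and invoking the non-collapsing on the rescaled $X_i$ to apply Gromov compactness and the stability of non-collapsed $\RCD$ \emph{after} rescaling; the inductive characterization of $\bRCD$ in Definition \ref{def:bRCD} then allows to recognize the half-space limit as a genuine boundary witness in $X_i$.
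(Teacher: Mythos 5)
Your part (1) and the initial reduction in part (2) match the paper: both use Gromov compactness plus stability of the non-collapsed $\RCD(K,N)$ condition to identify the limit, and both use Lemma~\ref{lem:bRCDXIteratedTang} together with a diagonal argument over scales and base points to reduce to the model situation $(X_i,p_i)\to(\R^N_+,0)$ with $X_i$ non-collapsed $\RCD(K_i,N)$, $K_i\to 0$. After that, however, your argument has two independent fatal gaps.

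First, the invocation of ``the localized version of Theorem~\ref{manifold-no-boundaryIntro}'' to conclude $\bRCD X_i\cap B_1(p_i)=\emptyset$ is circular. In the paper, Corollary~\ref{manifold-no-boundary} (and hence Theorem~\ref{manifold-no-boundaryIntro}) is \emph{deduced from} Theorem~\ref{thm-no-bry-main} by applying it to the constant sequence $X_i\equiv X$. You cannot take that corollary as an input when proving the theorem itself. At the time of proving Theorem~\ref{thm-no-bry-main}, the only thing known about $X_i$ is its topological manifold structure, not the vanishing of its $\RCD$-boundary; one must extract a contradiction directly from the topology.

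Second, even setting the circularity aside, the final blow-up step does not work. A diagonal pmGH limit of rescalings $(X_{i_j},r_j^{-1}\sfd_{i_j},y_{i_j}^*)$ taken over a sequence of \emph{different} spaces $X_{i_j}$ is a limit of the whole sequence, not a tangent space of any fixed $X_{i_j}$. Tangent spaces are defined by blowing up a single space at all small scales; GH-closeness of \emph{one} ball $B_{r}(y_{i}^*)\subset X_i$ to a ball in $\R^N_+$ (which is all that pGH convergence plus volume rigidity buys you for any fixed $i$) says nothing about the behavior of $X_i$ at scales $\ll r$, and hence cannot force the existence of a half-space tangent in $X_i$. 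The paper sidesteps this entirely: after reducing to $(X_i,p_i)\to(\R^N_+,0)$, it never attempts to locate a boundary point in $X_i$. Instead it produces continuous $\delta_i$-GH approximations $f_i\colon B_1(p_i)\to B_1(0)\cap\R^N_+$, uses Theorem~\ref{thn-comp-exhaustion} to exhaust the manifold $B_1(p_i)$ by compact submanifolds-with-boundary $K_i$, doubles $K_i$ to a closed manifold $M_i$ and doubles $\bar B_1(0)\cap\R^N_+$ along its spherical boundary to get $M\cong\bar D^N$, then shows the induced map $\tilde f_i\colon M_i\to M$ has $\Z_2$-degree $0$ (because $M$ is not a closed manifold) while the topological stability theorem~\ref{top-stability-RCD} forces degree $1$ near a regular interior point. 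That purely topological contradiction is the missing ingredient in your proposal.
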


{We will prove Theorem \ref{thm-no-bry-main} later in the section, let us first draw some consequences.} Applying this theorem to a constant sequence immediately gives

\begin{corollary}\label{manifold-no-boundary}
Let $(X,\sfd,\cH^{N})$ be a non-collapsed $\RCD(K,N)$ space. 
\\Suppose $(X,\sfd)$ is a topological $N$-manifold with boundary $\partial X$. Then  $\bRCD X \subset \partial X$.\\In particular if   $X$ is a topological $N$-manifold without boundary in manifold sense, then it is also without boundary in the $\RCD$ sense.

\end{corollary}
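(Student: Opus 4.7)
The plan is to deduce Corollary \ref{manifold-no-boundary} from Theorem \ref{thm-no-bry-main} by applying the latter to the constant sequence $X_i \equiv X$, localized around a chosen interior point in the manifold sense. Once localization is set up, the corollary is essentially a formal consequence.

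Specifically, fix an arbitrary $x \in X \setminus \partial X$; I aim to show $x \notin \bRCD X$. Since $X$ is a topological $N$-manifold with boundary $\partial X$, and since $X \setminus \partial X$ is open in $X$ and itself a topological $N$-manifold without boundary, there exists $r > 0$ with $B_r(x) \subset X \setminus \partial X$. Thus $B_r(x)$ is an open topological $N$-manifold without boundary.

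Next, consider the rescaled metric measure space $(X, \sfd/r, \cH^{N}_{\sfd/r})$, which remains non-collapsed $\RCD(Kr^2, N)$. Form the constant sequence whose every term equals this rescaled space with basepoint $x$. This sequence trivially pGH-converges to itself; the open unit ball in the rescaled metric coincides with $B_r(x)$ in the original metric, which is a topological $N$-manifold without boundary; and its Hausdorff measure is strictly positive because $x \in \supp(\cH^{N}) = X$. The hypotheses of Theorem \ref{thm-no-bry-main} are therefore satisfied, and it gives $\bRCD X \cap B_1(x) = \emptyset$ in the rescaled metric, equivalently $\bRCD X \cap B_r(x) = \emptyset$ in the original metric. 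In particular $x \notin \bRCD X$.

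Since $x \in X \setminus \partial X$ was arbitrary, this yields $\bRCD X \subset \partial X$; the parenthetical second assertion is just the case $\partial X = \emptyset$. There is no substantial obstacle: the whole argument is a formal localization, with all genuine content packed into Theorem \ref{thm-no-bry-main}. The only thing worth verifying explicitly is that the $\RCD$-boundary is genuinely local, which is immediate from Definition \ref{def:bRCD} because membership of $x$ in $\bRCD X$ depends only on the collection $\mathrm{Tan}(X,\sfd,x)$, which is determined by $X$ in any neighbourhood of $x$.
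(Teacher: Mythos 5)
Your proof is correct and follows essentially the same route as the paper, which reads off Corollary \ref{manifold-no-boundary} directly from Theorem \ref{thm-no-bry-main} applied to a constant sequence. You have simply made the localization and rescaling step explicit, along with the (correct) observation that $\bRCD$ is scale-invariant and local because it is defined via tangent spaces.
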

{
We don't know if in the above Corollary the inclusion $\bRCD X \subset \partial X$ is always an equality.  This is known to be true if $X$ is an Alexandrov space with curvature bounded below~\cite{Per-stab}.
We can also prove it in the following special case.

\begin{corollary}\label{cor:bilip}
Let $(X,\sfd,\cH^{N})$ be a non-collapsed  $\RCD(K,N)$ space which is  bi-Lipschitz homeomorphic to a smooth $N$-manifold with boundary. 
Then the $\RCD$ boundary of $X$ agrees with the manifold boundary, i.e. $\bRCD X=\partial X$.
\end{corollary}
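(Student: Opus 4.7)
The inclusion $\bRCD X \subset \partial X$ is exactly Corollary \ref{manifold-no-boundary} applied to $X$ (which is a topological $N$-manifold with boundary since it is bi-Lipschitz homeomorphic to $M$). To prove the reverse inclusion $\partial X \subset \bRCD X$, I would argue by induction on $N$. The base case $N=1$ follows at once from the classification of $\RCD(K,1)$ spaces in \cite{KL}: a non-collapsed $\RCD(K,1)$ space bi-Lipschitz to a smooth $1$-manifold with boundary is topologically a closed interval or a half-line, hence isomorphic as a m.m.s.\ to one of these, and its $\RCD$-boundary coincides with the manifold boundary by Definition \ref{def:bRCD}.

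For the inductive step, fix $p \in \partial X$, let $f\co M \to X$ be the given bi-Lipschitz homeomorphism and set $q := f^{-1}(p) \in \partial M$. Composing $f$ with a smooth boundary chart of $M$ around $q$, we obtain a bi-Lipschitz homeomorphism $\psi$ with bi-Lipschitz constant $L$, from a convex open neighborhood of $0\in \R^N_+$ onto a neighborhood of $p$ in $X$, with $\psi(0) = p$. Rescaling by a sequence $r_i \downarrow 0$ and passing to a subsequence, pGH convergence yields a tangent cone $Y = T_p X = C(Z)$ of $X$ at $p$, and the rescaled maps (all bi-Lipschitz with the same constant $L$) converge by Arzel\`a--Ascoli to a bi-Lipschitz homeomorphism $\Psi\co \R^N_+ \to Y$ with constant $L$, sending $0\in \R^N_+$ to the cone vertex $o$ of $Y$ (both being the basepoints of the respective pGH limits). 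Writing the cone coordinates of $\Psi((1,\zeta))$ as $(r(\zeta), z(\zeta))$ for $\zeta\in \bS^{N-1}_+$ and noting that $L^{-1}\le r(\zeta)\le L$, one then defines an ``angular projection'' $\phi\co \bS^{N-1}_+ \to Z$ by $\phi(\zeta) := z(\zeta)$; performing the analogous construction for $\Psi^{-1}$ and estimating via the cone distance formula \eqref{eq:defConeDist} shows that $\phi$ is a bi-Lipschitz homeomorphism.

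Now $\bS^{N-1}_+$, with its round metric, is a smooth $(N-1)$-manifold with totally geodesic (hence convex) boundary $\bS^{N-2}$, so it is a non-collapsed $\RCD(N-2,N-1)$ space by \cite[Theorem 2.4]{HanMeasRig}. The inductive hypothesis applied to the bi-Lipschitz map $\phi\co \bS^{N-1}_+ \to Z$ yields $\bRCD Z = \phi(\bS^{N-2}) \neq \emptyset$, so the tangent cone $T_pX = C(Z)$ witnesses, via Definition \ref{def:bRCD}, that $p \in \bRCD X$, completing the induction. The main obstacle will be the rigorous construction of the cross-section map $\phi$: a bi-Lipschitz homeomorphism between metric cones need not respect their radial-angular decompositions, and even knowing the vertex goes to the vertex, one must carefully use the uniform radial control $L^{-1}\le r(\zeta)\le L$ together with the cone distance formula \eqref{eq:defConeDist} to convert the bi-Lipschitz bound on $\Psi$ into a bi-Lipschitz bound on $\phi$.
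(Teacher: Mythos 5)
Your first inclusion and your reduction to a blow-up map $\Psi\co\R^N_+\to T_pX=C(Z)$ match the paper's argument, but the inductive step has a genuine gap precisely at the point you flag. The ``angular projection'' $\phi(\zeta)=z(\zeta)$, where $\Psi((1,\zeta))=(r(\zeta),z(\zeta))$, is \emph{not} bi-Lipschitz in general, and need not even be injective. To see the problem quantitatively, fix $\zeta,\zeta'\in\bS^{N-1}_+$ close together and write $(r,z)=\Psi((1,\zeta))$, $(r',z')=\Psi((1,\zeta'))$. For small $\sfd_Z(z,z')$ the cone distance formula \eqref{eq:defConeDist} gives
\[
\sfd_{C(Z)}\bigl((r,z),(r',z')\bigr)^2 \;\approx\; (r-r')^2 \;+\; rr'\,\sfd_Z(z,z')^2 .
\]
The upper bi-Lipschitz bound on $\Psi$, together with $r,r'\ge L^{-1}$, does control $\sfd_Z(z,z')$ from above. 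But the lower bi-Lipschitz bound only controls the \emph{sum} $(r-r')^2+rr'\sfd_Z(z,z')^2$ from below: the radial difference $|r-r'|$ can absorb all of it, leaving $\sfd_Z(z,z')$ arbitrarily small (or zero) compared to $\sfd_{\bS^{N-1}_+}(\zeta,\zeta')$. Concretely, there are bi-Lipschitz self-homeomorphisms of $\R^2$ fixing the origin whose derivative along the unit circle at some point is purely radial in the target — the Jacobian stays nondegenerate because the radial derivative of the angle is nonzero — so the induced map on circles has vanishing derivative there; one can push this to make two points of the unit circle land on a common ray, so $\phi$ is not even injective. Constructing the analogous map from $\Psi^{-1}$ does not rescue this, because that map is not the inverse of $\phi$ (it reads off the angular part at a different radius). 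So the inductive hypothesis cannot be applied as stated, and there is no obvious repair: a bi-Lipschitz equivalence $C(A)\simeq C(B)$ fixing vertices does not in general descend to a bi-Lipschitz equivalence $A\simeq B$.

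The paper avoids this issue entirely by never extracting a cross-section map. Instead it iterates the blow-up on the \emph{cones themselves}: assume for contradiction some (equivalently, by definition, every) tangent at $f(p)$ has empty boundary, blow up $f$ at $p$ to get an $L$-bi-Lipschitz $f_0\co\R^N_+\to C(Z_0)$ with $\bRCD Z_0=\emptyset$, then pick a boundary point $p_0\in\partial\R^N_+\setminus\{0\}$ whose image is off the vertex, $f_0(p_0)=(t_0,z_0)$ with $t_0>0$. Blowing up $f_0$ at $p_0$ keeps the domain $\R^N_+$ (a boundary blow-up of a half-space is a half-space) and splits the target as $\R\times C(Z_1)$ with $C(Z_1)=T_{z_0}Z_0$, which again has empty boundary since $z_0\in Z_0$ and $\bRCD Z_0=\emptyset$. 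Iterating and at each stage choosing $p_k\in\partial\R^N_+$ with $f_k(p_k)\notin\R^k\times\{o\}$ (possible because $f_k$ is bi-Lipschitz and $\partial\R^N_+$ has dimension $N-1>k$) one eventually produces a homeomorphism $\R^N_+\to\R^N$, contradicting invariance of domain. No bi-Lipschitz structure on cross-sections is needed; the final contradiction is purely topological. I would encourage you to rework your inductive step along these lines rather than via the angular projection.
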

\begin{proof}
Suppose $f\co M^N\to X$ is an $L$-bi-Lipschitz homeomorphism where $M$ is a smooth Riemannian manifold with boundary, 
$X$ is a non-collapsed $\RCD(K,N)$ space and $L>0$. By Corollary~\ref{manifold-no-boundary} it holds that 
$\bRCD X\subset f(\partial M)$.
Suppose there is $p\in\partial M$ such that  \emph{some} tangent space $(T_{f(p)}X,o)=\lim_{r_j\to 0} \frac{1}{r_i}(X,f(p))$ does not have boundary.   
Looking at $f\co \frac{1}{r_i}(M,p)\to  \frac{1}{r_i}(X,f(p))$  by Arzela-Ascoli's Theorem we can pass to a subsequence and get a limit map (which can be thought of as ``a differential'' of $f$ at $p$) $f_0\co T_pM=\R^N_+\to T_{f(p)}X=C(Z_0)$ which is also an $L$-bi-Lipschitz homeomorphism.{   Here we use the short-hand notation $T_{f(p)}X=C(Z_0)$ to denote a tangent space, without any claim of uniqueness; moreover, we will use the suggestive notation $f_0=d_{p} f\co T_pM=\R^N_+\to T_{f(p)}X=C(Z_0)$ without any claim of differentiability of $f$ at $p$, but just to stress that $f_{0}$ is a blow up of $f$ at $p$.}
Observe that $C(Z_0)$ is  a non-collapsed $\RCD(0,N)$ space and a metric cone, and $Z_0$ is a noncollapsed $\RCD(N-2,N-1)$ space with $\bRCD Z_0=\emptyset$.

 Let $p_0\in \partial \R^N_+$ be any point different from the origin { and consider $q=f_0(p_0)$}.
Then clearly $q$ is not the vertex of the cone $C(Z_0)$, i.e. it has the form  $q=(t_0,z_0)$ where $z_0\in Z_0$ and $t_0>0$. 

 {Repeating the same blow-up procedure for $f_0: \R^N_+\to C(Z_{0})$ at $p_0$ we obtain ``a differential''} $f_1=d_{p_0}{f_0}\co  \R^N_+\to T_{q}C(Z_0)=\R\times C(Z_1)$ where $C(Z_1)=T_{z_0}Z_0$  is  a noncollapsed $\RCD(0,N-1)$ space and a metric cone and $Z_1$ is a noncollapsed $\RCD(N-3,N-2)$ space with $\bRCD Z_1=\emptyset$.  Proceeding by induction for any $k\le N-1$, we can construct bi-Lipschitz homeomorphisms $f_k\co \R^N_+\to \R^k\times C(Z_k)$ where $C(Z_k)$ is a noncollapsed $\RCD(0,N-k)$ space without boundary.

 Indeed, suppose $k<N-1$ and we have already constructed $f_k$. Since $f_k$ is bi-Lipschitz and $k<N-1$, we can find  $p_k$ in $\partial \R^N_+$ such that { $f_{k}(p_k)\notin \R^k\times \{o\}$}, i.e. $f_{k}(p_k)=(x_k, t_k, z_k)$ with
$x_k\in \R^k, t_k>0$ and $z_k\in Z_k$. Then we can set $f_{k+1}$ { to be a blowup of $f_{k}$ at $p_{k}$, i.e. $$f_{k+1}=d_{p_{k}}{f_k}:T_{p_k}\R^N_+=\R^N_+\to T_{f_{k}(p_k)} (\R^k\times C(Z_k))=\R^{k+1}\times C(Z_{k+1})$$} where $ C(Z_{k+1})=T_{z_k}Z_k$.

 On the last step  we get a map  $f_{N-1}\co \R^N_+\to \R^{N-1}\times C(Z_{N-1})$ where $C(Z_{N-1})$ is a noncollapsed  $\RCD(0,1)$ space without boundary and a metric cone. By the classification of $\RCD(0,1)$ spaces this can only be $\R$. This means that we have a homeomorphism $f_{N-1}\co \R^N_+\to \R^{N}$. This is  impossible and therefore $f(\partial M)=\bRCD X$.
\end{proof}

\begin{remark}
It's easy to see that the proof of Corollary~\ref{cor:bilip} works more generally if, instead of assuming that $X$ is bi-Lipschitz to a smooth manifold, we assume that $X$ is a Lipschitz manifold with boundary and the metric $\sfd$ is compatible with the Lipschitz structure on $X$. In other words, if $X$ admits an atlas of charts which are bi-Lipschitz maps to open subsets of $\R^N_+$.
\end{remark}

\begin{remark}
The proof of Corollary~\ref{cor:bilip} shows that for $X$ satisfying the assumptions of the Corollary, the answer to Question ~\ref{q:boundTan} is positive; i.e. a point $p\in X$ belongs to $\bRCD X$ if and only if \emph{every} tangent space $T_pX$ has  boundary.
\end{remark}

As it was  suggested  to the authors by Alexander Lytchak,  using a similar blow up argument Corollary \ref{cor:bilip}   implies the following stronger result.

\begin{proposition}\label{bilip-preserve-bry}
Let $(X,\sfd_{X},\cH^{N})$ and  $(Y,\sfd_{Y},\cH^{N})$ be non-collapsed  $\RCD(K,N)$ spaces. Assume there is a bi-Lipschitz homeomorphism $f:X\to Y$. Then $f(\bRCD X)=\bRCD Y$.
\end{proposition}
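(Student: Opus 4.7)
By considering $f^{-1}$ as well, it suffices to prove $f(\bRCD X)\subset \bRCD Y$. I would argue by contradiction, mirroring the iterated blow-up scheme of the proof of Corollary~\ref{cor:bilip} and replacing its final appeal to the smooth structure of $M$ with the iterated-tangent characterization of $\bRCD$ provided by Lemma~\ref{lem:bRCDXIteratedTang}. Accordingly, fix $p\in \bRCD X$ and suppose, toward a contradiction, that $f(p)\notin \bRCD Y$.

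The crucial auxiliary step is the following cone lemma: for every non-collapsed $\RCD(0,N)$ metric cone $C(Z)$ with $N\ge 2$, $\bRCD C(Z)\neq\emptyset$ if and only if $\bRCD Z\neq\emptyset$. The ``if'' direction is immediate because the cone vertex admits $C(Z)$ itself as a tangent. For the ``only if'' direction I would argue by induction on $N$, with the base case $N=2$ handled by direct inspection of the classification of $1$-dimensional $\RCD$ spaces. For the inductive step, given $q\in \bRCD C(Z)$: the case where $q$ is the vertex is trivial, while if $q=(t,z)$ with $t>0$, Lemma~\ref{lem:bRCDXIteratedTang} produces an $N$-iterated tangent at $q$ isometric to $\R^N_+$; writing $T_qC(Z)=\R\times T_zZ$ and using that iterated tangents of a space of the form $\R\times A$ factor as $\R\times(\text{iterated tangents of }A)$, this forces an iterated tangent of $T_zZ$ isomorphic to $\R^{N-1}_+$, and then Lemma~\ref{lem:bRCDXIteratedTang} applied within $Z$, together with the inductive hypothesis of the cone lemma applied to the cone $T_zZ$, yields $z\in\bRCD Z$.

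Granted the cone lemma, the assumption $f(p)\notin \bRCD Y$ means that every tangent $T_{f(p)}Y=C(Z_Y)$ has $\bRCD Z_Y=\emptyset$, hence $\bRCD T_{f(p)}Y=\emptyset$. Applying the same reasoning pointwise, whenever a non-collapsed $\RCD$ space $A$ satisfies $\bRCD A=\emptyset$ then every tangent at every point of $A$ has empty $\bRCD$-boundary. Iterating this observation, every iterated tangent of $Y$ at $f(p)$ has empty $\bRCD$-boundary.

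On the other hand, since $p\in \bRCD X$, Lemma~\ref{lem:bRCDXIteratedTang} provides $\xi_1=p,\xi_2,\dots,\xi_N$ with $T_{\xi_N}\cdots T_{\xi_1}X$ isometric to $\R^N_+$. Iterating the Arzela-Ascoli compactness argument $N$ times along the corresponding rescaling radii (exactly as in the proof of Corollary~\ref{cor:bilip}) produces a bi-Lipschitz homeomorphism $\hat f\co \R^N_+\to T$, where $T$ is an $N$-iterated tangent of $Y$ at $f(p)$. Since $\R^N_+$ is a smooth manifold with boundary, Corollary~\ref{cor:bilip} applied to $\hat f$ gives $\bRCD T\neq\emptyset$, contradicting the conclusion of the previous paragraph; therefore $f(p)\in \bRCD Y$. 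The main technical obstacle will be the careful inductive proof of the cone lemma, which requires a clean treatment of the dichotomy between vertex and non-vertex blow-up points, and of the product behaviour of iterated tangents.
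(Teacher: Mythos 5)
Your proof is correct and follows essentially the same iterated blow-up strategy as the paper's: at each blow-up step the boundary property is preserved on the $X$-side and the no-boundary property is preserved on the $Y$-side, and the final contradiction comes from applying Corollary~\ref{cor:bilip} to a bi-Lipschitz map $\R^N_+\to T$ with $\bRCD T=\emptyset$. Your explicit ``cone lemma'' ($\bRCD C(Z)\neq\emptyset$ iff $\bRCD Z\neq\emptyset$) is a useful clarification of a fact the paper invokes implicitly when it asserts, at each stage, that the tangent $Z_k$ on the $Y$-side has empty $\RCD$-boundary.
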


\begin{proof}
Clearly, since $f^{-1}:Y\to X$ is also a bi-Lipschitz homeomorphism, it is enough to show that $f(\bRCD X)\subset \bRCD Y$. We argue by contradiction. If it is not the case, then there exists  $p\in \bRCD X$ (i.e. there exists a tangent space $T_{p}X=C(X_0)$ with non-empty $\RCD$-boundary) such that  every tangent space $T_{f(p)} Y$ at $f(p)\in Y$ has empty $\RCD$-boundary. As in the proof of Corollary \ref{cor:bilip}, there is no claim of uniqueness of tangent  { spaces}, we use the shorthand notation $T_{p}X$ just for convenience.

Also, again as in the proof of Corollary \ref{cor:bilip} we can  pass to a subsequence and get a limit  bi-Lipschitz ``blow up" map  $f_0=d_{p} f\co T_pX=C(X_{0})\to T_{f(p)}Y=:Z_0$. Pick a point $p_0\in \bRCD C(X_0)$ different from the origin ({ whose existence follows directly from Definition \ref{def:bRCD}}). Then $p_0=(t_0, x_0)$ where $t_0>0$ and $x_0\in \bRCD X_0$. Recall that $\bRCD Z_0=\emptyset$ and hence $f_0(p_0)$ is not a boundary point. Next we can take
$f_1=d_{p_0}f_0\co T_{p_0}C(X_0)\to T_{f_0(p_0)}Z_0=: Z_1$ which is a bi-Lipschitz homeomorphism and $T_{p_0}C(X_0)$ has boundary while $Z_1$ does not. Note that, by the Splitting Theorem {\cite{giglisplittingshort}}, $T_{p_0}C(X_0)\cong \R\times C(X_1)$ where $C(X_1)=T_{x_0}X_0$ is non-collapsed $\RCD(N-1,0)$ { with $\bRCD C(X_{1})\neq \emptyset$}.  We can iterate this construction further to get, for $k=0,\ldots, N-1$, bi-Lipschitz homeomorphisms $f_k\co \R^k\times C(X_k)\to Z_k$ where each $Z_k$ is a non-collapsed $\RCD(0,N)$ space without boundary and each $C(X_k)$ is a non-collapsed $\RCD(0,N-k)$ space with boundary. On the very last step the space $C(X_{N-1})$ is a non-collapsed $\RCD(0,1)$ space with boundary which can only happen if  $C(X_{N-1})\cong [0,\infty)$,  by \cite{KL}.
Therefore $f_{N-1}\co \R^N_+\to Z_{N-1}$ is a bi-Lipschitz homeomorphism and $\bRCD(Z_{N-1})=\emptyset$. Since $\R^N_+$ is a smooth manifold with boundary this is impossible by Corollary \ref{cor:bilip}. Therefore $f(\bRCD X) \subset \bRCD Y$.

\end{proof}

Theorem~\ref{thm-no-bry-main}
also immediately implies the following result.

\begin{corollary}\label{thm-no-bry}

Let $\delta=\delta(N)$ be small enough so that $\eps(\delta,N)$ provided by Corollary~\ref{cor-reg} satisfy 
\[
\eps(\delta,N)<\beps(N,1/2),
\]
{ where $\beps(N,1/2)$ was given in Theorem \ref{thm:CCReif}}.
Then the following holds.

Let \{$(X_{i}, \sfd_{i}, \cH^{N})\}_{i\in \N}$ be a sequence of non-collapsed $\RCD(K,N)$ spaces. Assume that
\begin{itemize}
\item $\{(X_{i}, \sfd_{i}, p_{i})\}_{i\in \N}$ converge to  $(X, \sfd, p)$ in pointed Gromov-Hausdorff sense.
\item  $X_i\cap B_1(p_i)\subset (\cWR_N)_\eps(X_i)$   and $\cH^{N}(B_1(p_i))\ge v>0$ for all $i$.
\end{itemize}
Then $(X, \sfd, \cH^{N})$ is a non-collapsed $\RCD(K,N)$ space with  $\bRCD X \cap B_1(p)=\emptyset$.
\end{corollary}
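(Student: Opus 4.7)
The plan is to reduce the corollary to Theorem~\ref{thm-no-bry-main} by showing that, under the smallness condition on $\eps$ dictated by the hypothesis $\eps(\delta,K,N)<\beps(N,1/2)$, the ball $B_1(p_i)$ is automatically a topological $N$-manifold for every $i$. Once this is verified, Theorem~\ref{thm-no-bry-main} applies as stated and delivers the conclusion.

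First I would invoke the second assertion of Corollary~\ref{cor-reg}: the hypothesis $B_1(p_i)\subset (\cWR_N)_\eps(X_i)$ yields $B_1(p_i)\subset (\cR_N)_{\Psi(\eps|K,N)}(X_i)$, so every $x\in B_1(p_i)$ is $\Psi(\eps|K,N)$-regular at \emph{some} scale depending on $x$. By Corollary~\ref{cor:RNepsTheta} this furnishes the density bound $\vartheta_N(x)\ge 1-\Psi(\eps|K,N)$.

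The central technical point, and main obstacle, is to upgrade this pointwise regularity to \emph{uniform} regularity on a small ball around $x$ at a uniform scale, since this is precisely the hypothesis required by the metric Reifenberg Theorem~\ref{thm:CCReif}. To achieve this I would first use the lower semi-continuity of $\vartheta_N$ to propagate the density bound to a neighborhood $U_x$ of each $x$, namely $\vartheta_N(y)\ge 1-\Psi(\eps|K,N)$ for all $y\in U_x$. Bishop-Gromov monotonicity (Theorem~\ref{thm:BishopGromov}) applied at each such $y$ then gives $\cH^N(B_s(y))/(\omega_N s^N)\ge 1-\Psi(\eps|K,N)$ for all $s$ below a uniform threshold $s_0=s_0(K,N,\eps)$, where $s_0$ is chosen small enough to absorb the curvature correction $v_{K,N}(s)/(\omega_N s^N)\to 1$. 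Now, rescaling $(X_i,\sfd_i)$ by $1/s$ turns it into an $\RCD(Ks^2,N)$ space with $Ks^2$ as small as one wants; the Volume Rigidity Theorem~\ref{thm:VolRig} applied in the rescaled space yields $\sfd_{GH}(B_s(y),B_s(0^N))\le \Psi(\eps|K,N)\,s$ uniformly in $y\in U_x$ and $s\le s_0$. Hence $U_x\subset (\cR_N)_{\Psi(\eps|K,N),s_0}$ with $s_0$ uniform on $U_x$.

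Once this uniform regularity at a uniform scale is in place, the smallness assumption $\eps(\delta,K,N)<\beps(N,1/2)$ (combined with Remark~\ref{quant-scale} to handle the rescaling to unit scale) allows the metric Reifenberg Theorem~\ref{thm:CCReif} to be applied around each $x$, producing a topological embedding of a Euclidean ball onto a neighborhood of $x$ in $X_i$. Since being a topological manifold is a local property, $B_1(p_i)$ is a topological $N$-manifold for every $i$. The remaining hypotheses of Theorem~\ref{thm-no-bry-main} (pGH convergence and the uniform volume lower bound) are exactly those of the present corollary, so that theorem applies and gives that $(X,\sfd,\cH^N)$ is a non-collapsed $\RCD(K,N)$ space with $\bRCD X\cap B_1(p)=\emptyset$.
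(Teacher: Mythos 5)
Your overall reduction is the same as the paper's: show that each $B_1(p_i)$ is a topological $N$-manifold, then invoke Theorem~\ref{thm-no-bry-main}. That structure is correct. However, there is a genuine gap in the step where you try to establish uniform-scale regularity from the density bound.

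You claim that from $\vartheta_N(y)\ge 1-\Psi(\eps|K,N)$, Bishop--Gromov monotonicity yields $\cH^N(B_s(y))/(\omega_N s^N)\ge 1-\Psi(\eps|K,N)$ for all $s\le s_0$ with a \emph{uniform} threshold $s_0=s_0(K,N,\eps)$. This reverses the direction of the monotonicity. Bishop--Gromov says that $s\mapsto \cH^N(B_s(y))/v_{K,N}(s)$ is non-increasing; its limit as $s\to 0^+$ is $\vartheta_N(y)$, which is therefore the \emph{supremum} of the ratio. Hence for every $s>0$ one gets the \emph{upper} bound $\cH^N(B_s(y))/v_{K,N}(s)\le\vartheta_N(y)$, not a lower bound. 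A lower bound on the density alone does not propagate to a lower volume bound at a uniform positive scale: the scale at which $\cH^N(B_s(y))/(\omega_N s^N)$ becomes close to $\vartheta_N(y)$ can depend on $y$. This is precisely why the proof of Theorem~\ref{thm:topologyHN} in the paper starts from a GH closeness at a \emph{fixed positive scale} $\bar r$ at $\bar x$ (the content of $\bar x\in(\cR_N)_{\eps,\bar r}$), converts it into a volume bound at scale $\bar r$, and only then propagates \emph{downward} in scale and to nearby points via Bishop--Gromov.

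The fix is to not re-derive the uniformization at all. Once Corollary~\ref{cor-reg} gives $B_1(p_i)\subset(\cR_N)_{\Psi(\eps|K,N)}$, Theorem~\ref{thm:topologyHN} already provides, around each $x\in B_1(p_i)$, a bi-H\"older topological embedding of a Euclidean ball (the uniformization from the point-dependent scale $r_x$ is handled internally there). Since being a topological manifold is local, this gives that $B_1(p_i)$ is a topological $N$-manifold, and then Theorem~\ref{thm-no-bry-main} applies. This is exactly what the paper's (two-sentence) proof does, with the minor caveat that it names Theorem~\ref{thm:CCReif} rather than Theorem~\ref{thm:topologyHN}; either way the cited statements, unlike your reconstruction, take the volume lower bound at a fixed scale as input rather than the density.
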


\begin{proof}[Proof of Corollary~\ref{thm-no-bry}]
Observe that by Corollary~\ref{cor-reg}, all $X_i$ satisfy $X_i\cap B_1(p_i)\subset (\cR_{N})_{\beps}(X_i)$ for all $i$ and hence all   $X_i\cap B_1(p_i)$ are topological $N$-manifolds by the Cheeger-Colding-Reifenberg Theorem \ref{thm:CCReif}. Now the result follows by Theorem~\ref{thm-no-bry-main}.
\end{proof}

For the proof of Theorem~\ref{thm-no-bry-main}  we will need the following well-known folklore result in topology.

\begin{theorem}\label{thn-comp-exhaustion}
Let $M^n$ be a  connected non-compact topological manifold. Then $M$ admits an exhaustion  $K_0\subset K_1\subset\ldots$
by compact connected $n$-dimensional submanifolds  $K_i$ with boundary such that $\cup_i K_i=M$ and $K_i\subset \intr K_{i+1}$ for all $i$.
\end{theorem}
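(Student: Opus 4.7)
The plan is to first write $M$ as a countable ascending union of compact sets assembled from coordinate balls, and then at each stage replace the compact set by a compact codimension-zero submanifold-with-boundary containing it in its interior. The underlying reason this works is that $M$, being a connected, second-countable, locally compact, Hausdorff topological manifold, is metrizable and $\sigma$-compact, and in such a manifold every compact subset admits a regular neighborhood that is a compact codimension-zero submanifold with boundary.

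More concretely, I would fix a countable locally finite cover $\{B_i\}_{i \in \N}$ of $M$ by open chart domains $\phi_i \co B_i \to \R^n$ with $\bar B_i$ compact, and shrink to $W_i := \phi_i^{-1}(B(0,1))$ so that each $\bar W_i = \phi_i^{-1}(\bar B(0,1))$ is a compact $n$-ball whose boundary sphere $\phi_i^{-1}(S^{n-1})$ is bicollared in $M$ (via the chart). Then I construct $\{K_j\}$ inductively: set $K_0 := \bar W_0$, which is already a compact connected $n$-submanifold-with-boundary. Given $K_j$ with $\bar W_0,\dots,\bar W_{j-1}\subset K_j$, let $m(j)$ be the smallest index with $\bar W_{m(j)} \not\subset K_j$; using connectedness of $M$, choose a path from $K_j$ to $\bar W_{m(j)}$ and cover it by finitely many elements of $\{W_i\}$, so that the resulting finite union $L_{j+1} := K_j \cup \bar W_{m(j)} \cup (\text{balls along the path})$ is compact and connected. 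Finally define $K_{j+1}$ to be a regular neighborhood of $L_{j+1}$ in $M$, i.e.\ a compact $n$-submanifold with boundary satisfying $L_{j+1} \subset \intr K_{j+1}$. By bookkeeping on the indices $m(j)$, each $\bar W_i$ is absorbed into some $K_j$, and since $\{W_i\}$ covers $M$ one has $\bigcup_j K_j = M$; the inclusions $K_j \subset \intr K_{j+1}$ are built into the regular neighborhood step, and connectedness of $K_{j+1}$ is inherited from that of $L_{j+1}$.

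The main obstacle, and the only non-formal step, is the existence of such a regular neighborhood in a \emph{purely topological} manifold; in the smooth or PL category one invokes Sard's theorem on a proper function or PL regular-neighborhood theory. In the topological category, one can proceed as follows: given a compact $C \subset M$ and an open neighborhood $U \supset C$ with $\bar U$ compact, cover $C$ by finitely many charted closed balls inside $U$ and build the regular neighborhood one ball at a time. When adjoining a new closed ball $\bar W$ to a previously constructed compact $n$-submanifold-with-boundary $K$, one uses Brown's collaring theorem to obtain bicollars of $\partial K$ and of $\partial W$, and then inside the overlap one performs a small topological isotopy, supported in a collar, to arrange that $\partial K$ and $\partial W$ are in general position (locally two bicollared $(n-1)$-submanifolds meeting in a bicollared $(n-2)$-submanifold). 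The union then has only manifold-with-corners singularities, which are straightened to manifold-with-boundary structure via the standard corner-smoothing in a collar. Iterating through the finite cover produces the required regular neighborhood, completing the inductive step and hence the proof.
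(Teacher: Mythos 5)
The overall strategy — reduce to the existence, for any compact $C\subset M$, of a compact codimension-zero submanifold-with-boundary $K$ with $C\subset\intr K$, and then build the exhaustion inductively using connectedness and a locally finite chart cover — is sound, and it is a genuinely different route from the paper's, which simply quotes smoothability of topological $n$-manifolds for $n\le 4$ (Moise for $n\le3$, Quinn for noncompact $n=4$) and then exhausts by regular sublevel sets of a proper smooth function, and quotes existence of topological handle decompositions (Kirby--Siebenmann for $n\ge6$, Freedman--Quinn for $n=5$) in the remaining dimensions.

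The gap is in the step you flag as ``the only non-formal step'' and then dispose of too quickly: putting the two bicollared codimension-one submanifolds $\partial K$ and $\partial W$ into general position by ``a small topological isotopy supported in a collar.'' In the topological category there is no Sard/Thom transversality available: a bicollar gives you a local product structure, but the ``height'' of $\partial W$ over $\partial K$ is only a continuous function, and continuous functions need not have any regular values. Topological transversality for locally flat submanifolds is a theorem of the same depth as the ones the paper invokes — it is the content of Kirby--Siebenmann's Essay III (for $n\ne 4$) together with Quinn's work (for $n=4$), and in fact it is closely tied to the existence of handle decompositions. So as written your proof is circular in disguise: the innocuous-looking isotopy is exactly where Kirby--Siebenmann/Quinn re-enter, and without an explicit appeal to topological transversality (or smoothability, or handles) the regular-neighborhood construction is unjustified. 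The corner-straightening you mention afterwards is genuinely elementary (it is the homeomorphism of a quarter-space onto a half-space), but it only kicks in once transversality has been secured. If you make the appeal to topological transversality explicit, your argument becomes a correct alternative to the paper's; presented as a routine local isotopy, it has a real hole.
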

\begin{proof}
Since we don't know an explicit reference to this statement in literature we briefly sketch the argument from known results.

For $n\le 3$ all topological $n$-manifolds are smoothable and for smooth manifolds the statement easily follows by taking an exhaustion by regular sublevel sets of a smooth proper function.

In dimension $4$ it was proved by Quinn that any noncompact connected manifold is smoothable \cite{QuinnIII} hence the same argument applies.
In dimensions $\ge 6$ the result immediately follows from work of Kirby and Siebenmann ~\cite{KS} who proved that for $n\ge 6$ all topological manifolds admit handle decompositions.
Existence of handle decompositions was proved by Freedman and Quinn \cite{QuinnIII} for $n=5$ which gives the proof in that dimension also.
\end{proof}

\begin{proof}[Proof of Theorem ~\ref{thm-no-bry-main}]
We first observe that $(X,\sfd,\cH^{N})$ is a non-collapsed $\RCD(K,N)$ space: the fact that $(X_{i},\sfd_{i},\cH^{N},p_{i})$ are $\RCD(K,N)$ implies by Gromov's compactness theorem that they converge (up to subsequences) to a limit p.m.m.s. $(Y,\sfd_{Y},\mm,\bar{y})$ in the pointed measured Gromov Hausdorff sense. Since by assumption $(X_{i},\sfd_{i},p_{i}) \to (X,\sfd, p)$ in pmGH sense, then $(X,\sfd, p)$ is isometric to $(Y,\sfd_{Y},y)$.  By the stability of $\RCD(K,N)$ under pmGH convergence \cite{lottvillani:metric, sturm:II, Vil, AGS, GMS2013} it follows that  $(X,\sfd,\mm)$ is  $\RCD(K,N)$. The  assumption $\cH^{N}(B_1(p_i))\ge v>0$ implies that $(X,\sfd,\mm)$ is  a non-collapsed $\RCD(K,N)$ space by~\cite[Theorem 1.2]{GDPNonColl}, i.e. $\mm=\cH^{N}$.

Since all the spaces involved are non-collapsed $\RCD(K,N)$, the background measure is always the $N$-dimensional Hausdorff measure.  We will therefore suppress the measure in notations for $\RCD$ spaces occurring in the proof.

Suppose by contradiction that $\bRCD X\ne \emptyset$.  { From Lemma \ref{lem:bRCDXIteratedTang},} if $x\in\bRCD X$ then some iterated tangent space $T_{\xi_N}T_{\xi_{N-1}}\ldots T_{\xi_1} X$ is isometric to 
$\R^N_+=\{(x_1,\ldots,x_N)| x_1\ge 0\}$. Here $x=\xi_1\in X, \xi_2\in T_{\xi_1}X$ etc.

Next note that if $(Y_i, \sfd_i, \hat y_i,)\to (Y, \sfd, \hat y)$ is a pointed GH-converging sequence of  spaces then for any $y\in Y$ and any tangent space $T_yY$ by a diagonal argument there exists a sequence of rescalings $\lambda_k\to\infty$, a subsequence $Y_{i_k},$ and a sequence of base points $y_{i_k}$ such that $(Y_{i_k},\lambda_k d_{i_k},y_{i_k})\to (T_yY,o)$ as $k\to\infty$.

Combining the above observations implies that after passing to a subsequence, up to a change of base points and rescalings we can assume that
to begin with $(X_i,p_i)\to (\R^N_+,p)$ where $p=0$ and $X_i$ is $\RCD(K_i,N)$ with $K_i\to 0$.

Let $f_i\co B_1(p_i)\to B_1(0)\cap \R^N_+$ be a $\delta_i$- Gromov-Hausdorff approximation with $\delta_i\to 0$. By a standard partition of unity center of mass argument we can assume that $f_i$ is continuous.
Namely,  let $g_i\co B_1(p_i)\to B_1(0)\cap \R^N_+$ be a $\delta_i$-GH approximation. Take a maximal finite $\delta_i$-separated net $\{x_1,\ldots, x_m\}$ in  $B_1(p_i)$. Then the balls $\{B_{\delta_i}(x_j)\}_{j=1}^m$ cover $B_1(p_i)$. Let $\lambda_j(x)$ be a partition of unity subordinate to this cover.

Set $f_i(x):=\sum_j\lambda_j(x)g_j(x_j)$. Then $f_i$ is continuous and uniformly $10\delta_i$ close to $g_i$.

Since  all $X_i$ are topological manifolds, by   Theorem~\ref{thn-comp-exhaustion} there exist compact connected submanifolds with boundary $K_i\subset B_1(p_i)$ such that $\bar B_{1-\delta_i}(p_i)\subset K_i$.

Since $f_i$ is $\delta_i$-GH approximation and $\partial B_1(0)\cap \R^N_+$ is exactly the unit sphere around $0$ in $\R^N_+$  we must have that 
$f_i(\partial K_i)$ is contained in the $2\delta_i$-neighborhood of $\partial B_1(0)\cap \R^N_+$. By adjusting the maps $f_i$ (along radial projections in $\R^N$) we can assume that

\begin{equation}\label{bry-incl}
f_i(\partial K_i)\subset \partial B_1(0)\cap \R^N_+.
\end{equation}

Note that $\partial B_1(0)\cap \R^N_+$ is a proper submanifold of codimension 0 homeomorphic to $\bar D^{N-1}$ in $\partial (B_1(0)\cap \R^N_+)\cong \Ss^{N-1}$ and the same holds for $\bar B_1(0)\cap \partial \R^N_+$.

Let $q=(1/2,0,\ldots 0)\in \R^N_+$ and let $q_i\in X_i$ be such that $\sfd(q, f_i(q_i))\le \delta_i$. Obviously, such $q_i$ exists. By modifying the map $f_i$ slightly by a post-composition with a self homeomorphism of $\R^N_+$ which is identity outside $B_{0.49}(q)$  we can assume that $f_i(q_i)=q$.

By the Topological Stability Theorem \ref{top-stability-RCD} for all large $i$ there exist topological embeddings $h_i\co B_{1/3}(q_i)\to B_{1/3+\eps_i}(q)$ with $\eps_i\to 0$ which are also $\eps_i$-GH approximations and such that $h_i(B_{1/3}(q_i))\supset B_{1/3-10\eps_i}(q)$.

By a straight line interpolation we can change $f_i$ slightly  to a $\Psi(\delta_i,\eps_i|N)$-close map $\hat f_i$ such that $\hat f_i=h_i$ on $B_{1/5}(q_i)$ and $\hat f_i=f_i$ outside $B_{1/3}(q_i)$. Indeed, let $\lambda\co\R\to \R$ be a continuous function with $0\le\lambda\le 1$, 
$\lambda(x)=0$ for $x\le 1/5$ and $\lambda=1$ for $x\ge 1/4$. Then  $\hat f_i(x)=\lambda (\sfd(x,q))f_i(x)+(1-\lambda (\sfd(x,q)))h_i^{-1}(x)$ works.

Now, let $M_i$ be the double of $K_i$ along its boundary and let $M$ be the double of $ \bar B_1(0)\cap \R^N_+$ along $\partial B_1(0)\cap \R^N_+$. Note that $M$ is topologically a closed disk $\bar D^N$ and $M_i$ is a connected closed manifold without boundary.

By~\eqref{bry-incl} we can ``double"  $\hat f_i$ along $\partial K_i$ and extend it to a map $\tilde f_i\co M_i\to M$. Then if we compute $\Z_2$ degree of $\tilde f_i$ on the one hand it must be zero since $M$ is not a closed manifold. On the other hand it must be equal to 1 for large $i$ since $\tilde f_i$ is a homeomorphism on $B_{1/5}(q_i)$ and $q$ has a unique preimage under $\tilde f_i$ and this preimage is contained in $B_{1/5}(q_i)$. This is a contradiction and hence $\bRCD X=\emptyset$.
\end{proof}

Next we will show that  Theorem~\ref{thm-no-bry-main}  still holds if the elements of the sequence are allowed to have more severe singularities provided the singular set is reasonably small.

We will  make use of the following generalization to  non-collapsed $\RCD(K,N)$ spaces due to Antonelli, Bru\'e and Semola~\cite{Ant-Brue-Sem-19}  of the quantitative stratification result of Cheeger and Naber ~\cite{CheNab13}, originally proved for smooth Riemannian manifolds with Ricci and volume bounded below.

\begin{theorem}[Quantitative Stratification, \cite{CheNab13,Ant-Brue-Sem-19}]\label{thm-quant-str-1}
Given $v>0$, $\eps>0$, $0<\eta<1$ and non-negative integers $k<N$ there exists $c(k,K,N,v,\eps,\eta)$ such that if $(X,\sfd, \cH^{N})$ is a non-collapsed $\RCD(K,N)$ space with $\cH^{N} (B_1(p))\ge v$ then

\[
\cH^{N} \left(B_r(\S^k_{\eps,r}\cap B_1(p))\right)\le c(k,K,N,v,\eps,\eta)r^{N-k-\eta}, \quad \forall r\in (0,1).
\]
\end{theorem}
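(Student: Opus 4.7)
The plan is to follow the Cheeger-Naber strategy from \cite{CheNab13}, transplanted to the $\RCD$ framework using the non-collapsed tools established by De Philippis-Gigli \cite{GDPNonColl} together with the splitting theorem. The argument has three conceptually independent ingredients: a \emph{cone-splitting principle}, a \emph{quantitative volume rigidity} (the engine that converts metric symmetry into density drop), and an \emph{iterated Vitali covering} over dyadic scales $r_j = 2^{-j}$ in $[r,1]$.

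First, the cone-splitting principle: if $B_s(x)$ is $(k,\delta)$-symmetric, modelled on $\R^k\times C(Z)$, and contains a point $y$ which is quantitatively $\delta$-independent of the $\R^k$-factor and such that $B_s(y)$ is $(0,\delta)$-symmetric (close to a metric cone with vertex near $y$), then $B_{s/2}(x)$ is $(k+1, \Psi(\delta|K,N))$-symmetric. In the smooth setting this is Cheeger-Naber's cone-splitting lemma; in the $\RCD$ setting the same conclusion follows by a compactness-contradiction argument from Gigli's splitting theorem combined with the pmGH-stability of non-collapsed $\RCD(K,N)$ spaces and the continuity of $\cH^N$ (Theorem \ref{thm:ContHN}), exactly as in \cite{Ant-Brue-Sem-19}.

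Second, the quantitative rigidity: by Bishop-Gromov (Theorem \ref{thm:BishopGromov}) the normalized volume ratio $r\mapsto \cH^N(B_r(x))/v_{K,N}(r)$ is monotone non-increasing and bounded above by $\vartheta_N(x)\le 1$ (by \eqref{eq:RegSetDens}). A standard compactness argument using volume convergence (Theorem \ref{thm:ContHN}) and volume rigidity (Theorem \ref{thm:VolRig}) shows that for every $\eps>0$ there is $\eta=\eta(K,N,\eps)>0$ such that whenever the density drop between scales satisfies
\[
\frac{\cH^N(B_{2s}(x))}{v_{K,N}(2s)}-\frac{\cH^N(B_{s}(x))}{v_{K,N}(s)}<\eta,
\]
the ball $B_s(x)$ is $(0,\eps)$-symmetric. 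Together with cone-splitting, this upgrades to the key implication: at a point $p\in\S^k_{\eps,r}\cap B_1(p_0)$, any scale $s\in[r,1]$ at which the density drop is smaller than $\eta(K,N,\eps)$ can only contribute "extra" cone structure of dimension $\le k$, never $k+1$.

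Third, the covering argument, run on the dyadic scales $s_j = 2^{-j}$, $0\le j\le J$ with $2^{-J}\simeq r$. Because $\vartheta_N\le 1$ and is bounded below at scale $1$ by $v$ (plus the Bishop-Gromov correction), the total variation of the monotone density along these scales is bounded by an explicit constant $C(K,N,v)$. Hence at any $p\in\S^k_{\eps,r}\cap B_1(p_0)$ the number of "bad" scales (those with density drop $\ge\eta$) is at most $C(K,N,v)/\eta$, independent of $r$. Refining a starting Vitali cover of $\S^k_{\eps,r}\cap B_1(p_0)$ inductively in $j$ and using cone-splitting to bound the number of sub-balls at each refinement by $C(k,K,N,\eta)\cdot 2^{k+\eta'}$ (with $\eta'\to 0$ as $\eta\to 0$) produces at most $C(k,K,N,v,\eps,\eta)\, r^{-(k+\eta)}$ balls of radius $r$ covering $\S^k_{\eps,r}\cap B_1(p_0)$. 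Multiplying by the volume of each ball, which is at most $C(K,N)\, r^N$ by Bishop-Gromov, yields the claimed estimate.

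The main obstacle, as usual in this circle of ideas, is the cone-splitting step: one needs enough regularity of the splitting map in the non-smooth $\RCD$ context to iterate it quantitatively. This is the part that genuinely requires the recent $\RCD$-machinery (Gigli's splitting, the almost-splitting of Bru\'e-Naber-Semola, and non-collapsed convergence from \cite{GDPNonColl}); the rest of the scheme, once cone-splitting is available, is the same dyadic scale-counting argument as in the smooth case.
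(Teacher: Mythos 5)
The paper does not actually prove this theorem: it imports it directly from Antonelli--Bru\'e--Semola~\cite{Ant-Brue-Sem-19}, who in turn transport the Cheeger--Naber quantitative stratification argument~\cite{CheNab13} to the non-collapsed $\RCD$ setting. So there is no ``paper proof'' to compare against; the only thing one can do is assess whether your sketch is a faithful account of the cited argument.

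At the level of outline it is. The three ingredients you isolate --- (i) cone-splitting, (ii) an $\eta$-regularity statement converting a small drop in the Bishop--Gromov ratio across a pair of scales into approximate cone structure (relying on the volume continuity and rigidity Theorems~\ref{thm:ContHN} and~\ref{thm:VolRig} together with compactness of non-collapsed $\RCD(K,N)$ spaces), and (iii) a dyadic Vitali refinement --- are precisely the ingredients used in both~\cite{CheNab13} and~\cite{Ant-Brue-Sem-19}, and it is correct that the only genuinely $\RCD$-specific input is Gigli's splitting theorem feeding into the almost cone-splitting step; the scale-counting is unchanged from the smooth case. Two minor imprecisions are worth flagging, though neither is a gap in the underlying argument. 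First, your covering step says each refinement produces ``$C\cdot 2^{k+\eta'}$'' sub-balls; the actual bookkeeping distinguishes \emph{good} scales (where cone-splitting confines $\S^k_{\eps,r}$ near a $k$-plane and one gets $\lesssim 2^k$ sub-balls) from \emph{bad} scales (density-drop $\ge \eta'$, where one can only guarantee $\lesssim 2^N$ sub-balls). The saving comes from the fact that a point sees at most $C(K,N,v)/\eta'$ bad scales, so the total cover size is $\lesssim 2^{kJ}\cdot 2^{N\cdot C/\eta'}$; the exponent loss $\eta$ in the statement then arises from absorbing the bad-scale factor into a power of $r$, and it is not literally the same parameter as the density-drop threshold $\eta'$ --- your sketch conflates the two. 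Second, the cone-splitting hypothesis needs $y$ to be almost a cone vertex at a scale comparable to $s$ (not merely $B_s(y)$ being $(0,\delta)$-symmetric at that single scale) and to lie at definite distance from the tip set of the $\R^k$-factor; you gesture at this but do not state it precisely. Neither affects the correctness of the scheme, only its precision.
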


We will prove:

\begin{theorem}\label{thm-no-bry-2}
For any $K\in \R$ and $N\in \N$ there exists $\hat \eps(K,N)$  such that the following holds.

Let \{$(X_{i}, \sfd_{i}, \cH^{N}_{\sfd_{i}})\}_{i\in \N}$ be a sequence of non-collapsed $\RCD(K,N)$ spaces. Assume that
\begin{itemize}
\item $\{(X_{i}, \sfd_{i}, p_{i})\}_{i\in \N}$ converge to  $(X, \sfd, p)$ in pointed Gromov-Hausdorff sense.
\item  $\cH^{N}(B_1(p_i))\ge v>0$ for all $i\in \N$.
\item  For any $i\in \N$ it holds that $\hat \S_{\hat \eps}^{N-1}(X_i)\subset \S^{N-2}_{\hat\eps}(X_i)$.
\end{itemize}
Then $(X, \sfd, \cH^{N})$ is a non-collapsed $\RCD(K,N)$ space with  $\bRCD X=\emptyset$. 
\end{theorem}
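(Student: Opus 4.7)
The limit $(X,\sfd,\cH^N)$ is a non-collapsed $\RCD(K,N)$ space by pmGH-stability of $\RCD(K,N)$ combined with the uniform volume lower bound via \cite[Theorem 1.2]{GDPNonColl}, exactly as in the first step of the proof of Theorem \ref{thm-no-bry-main}. The content of the statement is therefore to prove that $\bRCD X=\emptyset$, which we do by contradiction. Assuming $x\in\bRCD X$, Lemma \ref{lem:bRCDXIteratedTang} yields an $N$-iterated tangent at $x$ isometric to $\R^N_+$, and a standard diagonal argument combining the outer pmGH-convergence with iterated blow-ups produces, after passing to a subsequence, basepoints $q_i\in X_i$ and scales $\lambda_i\to+\infty$ such that $(X_i,\lambda_i\sfd_i,q_i)\to(\R^N_+,0)$ in pointed GH sense, each $(X_i,\lambda_i\sfd_i)$ being non-collapsed $\RCD(\lambda_i^{-2}K,N)$. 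The hypothesis $\hat\S^{N-1}_{\hat\eps}\subset\S^{N-2}_{\hat\eps}$ is preserved under these rescalings: by Remark \ref{quant-scale}, $\hat\S^{N-1}_{\hat\eps}$ is scale-invariant while $\S^{N-2}_{\hat\eps}$ only enlarges under rescaling by $\lambda_i\ge 1$. Relabelling, we may assume $(X_i,\sfd_i,p_i)\to(\R^N_+,0)$ directly, with $K\rightsquigarrow K_i\to 0$.

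Next, we identify a large manifold part in each $X_i$. For $x\in B_1(p_i)\setminus\hat\S^{N-1}_{\hat\eps}(X_i)$ some tangent $Y\in\Tan(X_i,x)$ is $(N,\hat\eps)$-symmetric; since by Lemma \ref{lem:TanncRCD} every tangent of a non-collapsed $\RCD(K,N)$ space is itself an $N$-dimensional non-collapsed $\RCD(0,N)$ metric cone, such symmetry forces $\sfd_{GH}(B_1^Y(o),B_1^{\R^N}(0))\le\Psi(\hat\eps|N)$. The GH-continuity of $\cH^N$ (Theorem \ref{thm:ContHN}), Bishop-Gromov monotonicity, and Corollary \ref{cor:RNepsTheta} then place $x$ in $(\cR_N)_{\Psi(\hat\eps|N),\rho}$ for some $\rho=\rho(x)>0$, and Theorem \ref{thm:topologyHN} makes $x$ a topological $N$-manifold point provided $\hat\eps$ is chosen small enough. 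By hypothesis, $M_i:=B_1(p_i)\setminus\S^{N-2}_{\hat\eps}(X_i)$ is therefore an open topological $N$-manifold, and the Quantitative Stratification Theorem \ref{thm-quant-str-1} controls its complement uniformly via $\cH^N\bigl(B_r(\S^{N-2}_{\hat\eps,r}(X_i)\cap B_1(p_i))\bigr)\le c\,r^{2-\eta}$; in particular $\S^{N-2}_{\hat\eps}(X_i)\cap B_1(p_i)$ has Hausdorff codimension at least two.

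The contradiction is now obtained by running the doubling/$\Z_2$-degree argument of Theorem \ref{thm-no-bry-main} on the manifold part $M_i$. Construct continuous $\delta_i$-GH-approximations $f_i\co X_i\cap B_1(p_i)\to B_1(0)\cap\R^N_+$ as there, arrange $f_i(q_i)=q:=(1/2,0,\ldots,0)$ for a suitable $q_i$ which, for large $i$, automatically lies well inside $M_i$ (since $q$ is at definite distance from $\partial\R^N_+$ and hence $q_i$ is at a point of near-maximal density), and use the Topological Stability Theorem \ref{top-stability-RCD} in a fixed-size ball around $q_i$ to produce a bi-Hölder chart yielding local $\Z_2$-degree one at $q$. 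For the global degree computation one first surgeries off a small open tubular neighborhood $T_i$ of $\S^{N-2}_{\hat\eps}(X_i)\cap B_1(p_i)$ — possible with arbitrarily small $\cH^N(T_i)$ by the quantitative stratification estimate — and applies Theorem \ref{thn-comp-exhaustion} to the open manifold $M_i\setminus T_i$ to produce a compact connected $N$-manifold with boundary $K_i\subset M_i\setminus T_i$ approximating $\bar B_{1-\delta_i}(p_i)$; the proof of Theorem \ref{thm-no-bry-main} then applies to $K_i$ with its doubled version, provided the $\partial T_i$-portion of $\partial K_i$ is routed to $\partial(B_1(0)\cap\R^N_+)$ in a way that does not alter the local degree at $q$. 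The main obstacle is exactly this last topological step — realizing a global closed pseudo-manifold structure compatible with the target doubling while the singular locus persists — and the key technical input that makes it available is the Hausdorff codimension-two bound together with quantitative volume decay from Theorem \ref{thm-quant-str-1}: the singular set is negligible enough (both measure-theoretically and dimensionally) to be either surgically removed or absorbed into a Čech-cohomological version of the degree argument, on which the target $\bar D^N$ still has $H_N(\cdot;\Z_2)=0$ and the contradiction is reached.
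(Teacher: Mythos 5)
Your opening reductions are fine: the limit is non-collapsed $\RCD(K,N)$ by stability plus the volume lower bound, and the blow-up producing $(X_i,\sfd_i,p_i)\to(\R^N_+,0)$ with $K_i\to 0$, using Remark~\ref{quant-scale} to propagate the hypothesis $\hat\S^{N-1}_{\hat\eps}\subset\S^{N-2}_{\hat\eps}$ under rescaling, matches the paper. The identification of the complement of $\hat\S^{N-1}_{\hat\eps}$ as a manifold locus is also essentially right.

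The gap is in the last step, and it is not a small one: you propose to excise a tubular neighborhood $T_i$ of $\S^{N-2}_{\hat\eps}(X_i)\cap B_1(p_i)$, double the resulting compact manifold-with-boundary, and run the $\Z_2$-degree argument while ``routing the $\partial T_i$-portion of $\partial K_i$ to $\partial(B_1(0)\cap\R^N_+)$.'' The quantitative stratification bound controls $\cH^N(T_i)$ but gives no control whatsoever on the \emph{topology} of $\partial T_i$ or on how $f_i\lvert_{\partial T_i}$ sits relative to the target. In the degree argument the crucial point is that the map from the doubled domain lands in a topological disk $M\cong\bar D^N$, which forces degree $0$; once you introduce a new boundary component of unknown topological type and arbitrarily reassign where it maps, both the ``degree $0$'' computation and the ``degree $1$ at $q$'' computation cease to be justified. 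You flag this yourself as ``the main obstacle,'' but the resolution you offer (``surgically removed or absorbed into a \v{C}ech-cohomological version'') is not an argument — it is a hope that measure smallness implies topological tameness, which is false in general.

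The paper takes a genuinely different route that avoids the surgery issue entirely: rather than trying to make the degree argument work on a space that still carries the singular set, it uses quantitative stratification (with $k=N-2$ and $\eta=1/2$, giving volume decay $\lesssim r^{3/2}$ for tubular neighborhoods of $\S^{N-2}_{\hat\eps,3r}$) to \emph{locate}, near $h_i(\partial\R^N_+)$, a ball $B_r(q_i)$ of definite radius $r$ whose $3r$-neighborhood meets no point of $\S^{N-2}_{\hat\eps,3r}$; such a ball exists because the set near the limit boundary has volume $\sim\omega_{N-1}r$ while the bad neighborhood has volume $\lesssim r^{3/2}\ll r$. By the hypothesis $\hat\S^{N-1}_{\hat\eps}\subset\S^{N-2}_{\hat\eps}$ and Corollary~\ref{cor-reg}, every point of $B_r(q_i)$ is then $\bar\eps$-Reifenberg regular, so the \emph{entire} rescaled ball is a topological manifold; passing to the limit $(X_i,q_i)\to(\R^N_+,q)$ with $q\in\partial\R^N_+$ and invoking Corollary~\ref{thm-no-bry} (which reduces to Theorem~\ref{thm-no-bry-main}) gives the contradiction. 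In other words, the quantitative stratification estimate is used to find a \emph{singularity-free} window in which the already-established manifold-case theorem applies, not to justify a surgery. Your proof is not salvageable as written without supplying the missing topological control; you should instead adopt the paper's localization strategy.
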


We believe that the following natural conjecture should hold in general.
\begin{conjecture}\label{conj-no-bry-stable}
Let \{$(X_{i}, \sfd_{i}, \cH^{N}_{\sfd_{i}})\}_{i\in \N}$ be a sequence of non-collapsed $\RCD(K,N)$ spaces. Assume that
\begin{itemize}
\item $\{(X_{i}, \sfd_{i}, p_{i})\}_{i\in \N}$ converge to  $(X, \sfd, p)$ in pointed Gromov-Hausdorff sense.
\item  $\cH^{N}(B_1(p_i))\ge v>0$ for all $i\in \N$.
\item  $\bRCD X_i=\emptyset$ for all $i$.
\end{itemize}
Then $(X, \sfd, \cH^{N})$ is a non-collapsed $\RCD(K,N)$ space with  $\bRCD X=\emptyset$
\end{conjecture}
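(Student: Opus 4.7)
My plan is to adapt the proof of Theorem~\ref{thm-no-bry-main}, replacing the topological manifold structure of the $X_i$ with the weaker ``pseudomanifold'' structure guaranteed by the hypothesis: under $\hat\S^{N-1}_{\hat\eps}(X_i)\subset\S^{N-2}_{\hat\eps}(X_i)$, each $X_i$ will turn out to be a topological $N$-manifold away from a closed set of Hausdorff dimension at most $N-2$, and this codimension-$\ge 2$ singular set is too small to affect the mod-$2$ degree calculation at the heart of the original argument. First I would observe, as at the start of the proof of Theorem~\ref{thm-no-bry-main}, that $(X,\sfd,\cH^N)$ is itself a non-collapsed $\RCD(K,N)$ space by stability and \cite[Theorem~1.2]{GDPNonColl}. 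Supposing for contradiction that some $x\in\bRCD X$, Lemma~\ref{lem:bRCDXIteratedTang} furnishes an iterated tangent at $x$ isometric to $\R^N_+$, and a standard diagonal extraction together with a rescaling reduces the problem to $(X_i,p_i)\to(\R^N_+,0)$ in pGH sense with each $X_i$ non-collapsed $\RCD(K_i,N)$, $K_i\to 0$. The hypothesis is scale-invariant by Remark~\ref{quant-scale} and hence preserved by the rescaling.

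Next I would extract the structural consequences. Choosing $\hat\eps=\hat\eps(K,N)$ small enough that $\Psi(\hat\eps|K,N)$ from Corollary~\ref{cor-reg} lies below the threshold of Theorem~\ref{thm:topologyHN}, the hypothesis combined with Corollary~\ref{cor-reg} gives
\[
B_1(p_i)\setminus \S^{N-2}_{\hat\eps}(X_i)\subset (\mathcal R_N)_{\Psi(\hat\eps|K,N)}(X_i),
\]
so this complement is an open topological $N$-manifold by Theorem~\ref{thm:topologyHN}. Setting $S_i:=\S^{N-2}_{\hat\eps}(X_i)\cap B_1(p_i)$, the quantitative stratification Theorem~\ref{thm-quant-str-1} applied with $k=N-2$ provides a uniform Minkowski-type bound
\[
\cH^N(B_r(S_i))\le c(K,N,v,\hat\eps,\eta)\,r^{2-\eta},\qquad r\in(0,1).
\]
Consequently $S_i$ has Hausdorff dimension at most $N-2$, possesses arbitrarily thin tubular neighborhoods of small volume, and does not locally disconnect $X_i$.

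Having these structural facts, I would adapt the degree argument of Theorem~\ref{thm-no-bry-main}. Continuous $\delta_i$-GH approximations $f_i\colon\bar B_1(p_i)\to\bar B_1(0)\cap\R^N_+$ are built via the partition-of-unity trick used there. Using the volume bound on $B_r(S_i)$ together with a coarea-type slicing, I would select a generic radius $r_i\to 0$ so that $K_i:=\overline{\bar B_{1-\delta_i}(p_i)\setminus B_{r_i}(S_i)}$ is a compact topological $N$-manifold with boundary lying in the regular part $X_i\setminus S_i$. An adjustment of $f_i$ on a neighborhood of $\partial K_i$ (by radial projection in $\R^N$ for the outer spherical component, and by collapse onto a single point of $\partial B_1(0)\cap\R^N_+$ for each inner component coming from the tube around $S_i$) then achieves $f_i(\partial K_i)\subset\partial B_1(0)\cap\R^N_+$. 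Picking an interior target $q=(1/2,0,\ldots,0)$ and, via the topological stability Theorem~\ref{top-stability-RCD}, making $f_i$ a homeomorphism on $B_{1/5}(q_i)$ with $f_i(q_i)=q$ for some regular $q_i\in K_i\setminus S_i$ (which is possible because $S_i$ has codimension at least two), I would double $K_i$ along $\partial K_i$ to obtain a closed $N$-manifold $M_i$ and double $\bar B_1(0)\cap\R^N_+$ along $\partial B_1(0)\cap\R^N_+$ to obtain $M\cong\bar D^N$. The doubled map $\tilde f_i\colon M_i\to M$ is a local homeomorphism at $q_i$ with $q$ as unique preimage, so its mod-$2$ local degree at $q$ equals $1$; but $H_N(\bar D^N;\Z_2)=0$ forces the global mod-$2$ degree to vanish, a contradiction.

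The hard part will be the construction of $K_i$ as a genuine topological $N$-manifold with boundary. The coarea consequence of the Minkowski bound on $B_r(S_i)$ supplies an abundance of radii $r_i$ with small ``level'' content, but promoting such an $r_i$ to one for which $\partial B_{r_i}(S_i)$ is a bona fide topological $(N-1)$-submanifold of $X_i\setminus S_i$ transverse to $\partial B_{1-\delta_i}(p_i)$ is a Sard-type statement in the non-smooth $\RCD$ setting; it appears to require Siebenmann-type collaring arguments in the open topological manifold $X_i\setminus S_i$, in the spirit of Remark~\ref{rem-onto}. An alternative, possibly cleaner route would be to bypass the excision entirely by working with the \v{C}ech mod-$2$ fundamental class of the $N$-pseudomanifold-with-boundary $\bar B_1(p_i)$, which exists because its singular set has Hausdorff dimension at most $N-2$, and to carry out the degree calculation directly in the pseudomanifold category.
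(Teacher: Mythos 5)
Your proposal does not actually engage with the conjecture's hypothesis. You write ``under $\hat\S^{N-1}_{\hat\eps}(X_i)\subset\S^{N-2}_{\hat\eps}(X_i)$\ldots'' as if that were what is assumed; but Conjecture~\ref{conj-no-bry-stable} only assumes $\bRCD X_i = \emptyset$, whereas the quantitative inclusion $\hat\S^{N-1}_{\hat\eps}(X_i)\subset\S^{N-2}_{\hat\eps}(X_i)$ at a fixed $\hat\eps=\hat\eps(K,N)$ is precisely the stronger hypothesis of Theorem~\ref{thm-no-bry-2}. What your entire argument actually uses is the quantitative stratification estimate of Theorem~\ref{thm-quant-str-1} at a \emph{uniform} scale $\hat\eps$, which supplies the uniform Minkowski bound on $S_i$; this is exactly what is \emph{not} available from $\bRCD X_i = \emptyset$ alone.

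To see the gap concretely: $\bRCD X_i = \emptyset$ gives $\bRCD^* X_i = \emptyset$, i.e.\ $\S^{N-1}(X_i) = \S^{N-2}(X_i)$. But $\S^{N-1} = \bigcup_{\eps>0}\hat\S^{N-1}_\eps$ and $\S^{N-2} = \bigcup_{\eps>0}\S^{N-2}_\eps$, so this qualitative identity merely says that every ``almost-boundary'' point is eventually captured by $\S^{N-2}_{\eps(x)}$ at a point-dependent scale $\eps(x)$; nothing forces these scales to be bounded away from $0$ uniformly across the sequence $\{X_i\}$. In a worst-case scenario, each $X_i$ is boundary-free yet has a thin strip where tangents are only $\eps_i$-far from $\R^N_+$, with $\eps_i\to 0$; then there is no single $\hat\eps$ for which Theorem~\ref{thm-quant-str-1} controls the offending set, the tube volume bound $\cH^N(B_r(S_i))\lesssim r^{2-\eta}$ is unavailable, and the ``pseudomanifold'' excision $K_i$ cannot be constructed with uniform control. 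Your alternative \v{C}ech fundamental-class route suffers from the same deficiency, since it again needs a uniform volume or Hausdorff-content bound on the singular set to legitimize the degree computation in the limit. This is precisely why the paper states Conjecture~\ref{conj-no-bry-stable} as an open problem (and connects it to Question~\ref{bry-reduced-bry}); what you have proposed is, in substance, the proof of Theorem~\ref{thm-no-bry-2}, not of the conjecture.
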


\begin{remark}
The corresponding statement is known to be true for Alexandrov spaces.  This follows from Perelman's Stability Theorem but can also be proved by more elementary methods similar to the proofs of Theorems \ref{thm-no-bry-main} and ~\ref{thm-no-bry-2}.

Also note that if the answer to Question ~\ref{bry-reduced-bry} is positive then Conjecture~\ref{conj-no-bry-stable} is equivalent to conjecturing that if $\S^{N-1}(X_i)\subset \S^{N-2}(X_i)$ then $\bRCD X=\emptyset$.
\end{remark}

It is also natural to ask the opposite question.
\begin{question}
 Suppose  $(X_i,\sfd_{i},p_i)\to (X,\sfd, p)$  is a converging sequence of non-collapsed $\RCD(K,N)$ spaces with $\cH^{N}(B_1(p_i))\ge v>0$ for all $i$. Suppose further that
$\bRCD X_i\cap B_1(p)\ne \emptyset$ (resp. $\bRCD^*  X_i\cap B_1(p)\ne \emptyset$). Does this imply that $\bRCD X\ne \emptyset$ (resp. $\bRCD^* X \ne \emptyset$) as well? This again is known for Alexandrov spaces by Perelman's Stability Theorem.
\end{question}

\begin{proof}[Proof of Theorem~\ref{thm-no-bry-2}]
Let $\alpha=1/2$ and let $\beps=\beps(N,\alpha)$ be the constant provided by { Theorem \ref{thm:CCReif}}. Further let $\delta>0$ be such that $\eps(\delta,K,N)$ given by  Corollary~\ref{cor-reg} is smaller than $\beps$.

Finally, set $\hat\eps=\min \{\delta, \beps\}$. We claim that $\hat\eps$ satisfies the conclusions of the theorem. Suppose $(X_i,p_i)\to (X,p)$ is a contradicting sequence.

Recall that by  Remark~\ref{quant-scale} the inclusion $\hat S^{N-1}_{\hat\eps}(X_i)\subset S^{N-2}_{\hat\eps}(X_i)$ remains true after rescaling the metric by any $\lambda\ge 1$.
Therefore,  as in the proof of Corollary~\ref{thm-no-bry} we can assume that $(X,p)=(\R^N_+,0)$ and $X_i$ is non-collapsed $\RCD(K_i,N)$ with $K_i\to 0$.

Let $f_i\co (B_1(p_i),0)\to (B_1(0)\cap \R^N_+,0)$ and $h_i\co  (B_1(0)\cap \R^N_+,0)\to (B_1(p_i),0)$ be $\delta_i$-GH-approximations with $f_i\circ h_i$ and   $h_i\circ f_i$ both $\delta_i$-close to identity.

Let $\eta=1/2, k=N-2$ and let  $c(k,-1, N,v,\hat\eps,\eta)$ be given by the Quantitative Stratification Theorem \ref{thm-quant-str-1} where $v=\omega_N/10$.

Fix an $r>0$ be small enough so that $100c(N-2,-1,N,v,\hat\eps,\eta)r^{3/2}\le \omega_{N-1}r$.
\\Let $U_{i,r}=B_r(h_i(\partial \R^N_+))\cap B_1(p_i)$. Then by volume continuity (Theorem~\ref{thm:ContHN}) we have that 
\[
\cH^{N}( U_{i,r})\ge \frac{\omega_{N-1}}{2}r.
\]
On the other hand, by Theorem~\ref{thm-quant-str-1} we have that
\[
\cH^{N} \left( B_{3r}(S^{N-2}_{\hat\eps,3r})\cap B_1(p)\right)\le c(N-2,-1,N,v,\hat\eps,\eta)(3r)^{3/2}<\frac{\omega_{N-1}}{2}r.
\]
Therefore there exists $\hat q_i\in U_{i,r}\backslash B_{3r}(S^{N-2}_{\hat\eps,3r})$.

By above $B_{3r}(\hat q_i)$ contains no points from $S^{N-2}_{\heps,3r}$. We can find $q_i\in B_{3r}(\hat q_i)$ such that  $B_r(q_i)$ contains no points from $S^{N-2}_{\heps,3r}$ and $\sfd(f_i(q_i), \partial \R^N_+)\le \Psi(\delta_i)$.
\\After passing to a subsequence we can assume that $(X_i,q_i)\to (\R^N_+,q)$ with $q\in \partial \R^N_+$.

By the assumptions of the theorem the above implies that $B_r(q_i)\cap \hat S^{N-1}_{\hat \eps}=\emptyset$.
By the definition of $\hat S^{N-1}_\eps$ this means that for any $x\in B_r(q_i)$ \emph{some} tangent space $T_xX_i$ is $\hat \eps$ close to $\R^N$. Since $\hat \eps\le\delta$, by Corollary~\ref{cor-reg} \emph{any} tangent space $T_xX_i$ is $\beps(N)$-close to $\R^N$. This means that $B_r(q_i)$ satisfy the regularity assumptions in Corollary~\ref{thm-no-bry} for all large $i$. Now Corollary~\ref{thm-no-bry} implies the result.

\end{proof}

In \cite{KaLytPe} a different notion of a boundary, called metric-measure boundary or mm-boundary was introduced. 
\begin{question}
What is the relation between $\bRCD X$ and the mm-boundary of $X$? In particular, is it true that if  $\bRCD X=\emptyset$ then the mm-boundary of $X$ is zero? Is the same true if we only assume that $\bRCD^*X=\emptyset$?
\end{question}

\section{``Sequential openness'' of weakly non-collapsed $\RCD(K,N)$ spaces}\label{Sec:SeqOpenNCRCD}

Following the terminology proposed in \cite{GDPNonColl}, we say  that an $\RCD(K,N)$ space $(X,\sfd,\mm)$ is  \emph{weakly non-collapsed} if $\mm\ll \cH^{N}$. It was recently proved by Honda \cite{HondaNC} that a compact weakly non-collapsed $\RCD(K,N)$ space $(X,\sfd,\mm)$ is non-collapsed (up to a constant rescaling of the measure), i.e. $\mm=c \cH^{N}$ for some constant $c>0$. 

The goal of the present section is to prove a series of results stating roughly that if the limit  of a pmGH sequence of $\RCD(K,N)$ spaces is  (weakly) non-collapsed, then the same is true  eventually for the elements of the sequence; thus establishing a sort of ``sequential openness'' of this class of spaces.

\begin{theorem}\label{thm:Stabwnc}
Let $(X,\sfd,\mm, \bar{x})$ be a pointed weakly non-collapsed $\RCD(K',N)$ space for some $K'\in \R, N\in \N$. Let $\{(X_{i}, \sfd_{i},\mm_{i}, \bar{x}_{i})\}_{i\in \N}$ be a sequence of pointed $\RCD(K,N)$ spaces, for some $K\in \R$,  converging to 
$(X,\sfd,\mm, \bar{x})$ in pointed measured Gromov Hausdorff sense. Then there exists $i_{0}\in \N$ such that $(X_{i}, \sfd_{i},\mm_{i})$ is a  weakly non-collapsed $\RCD(K,N)$ space for every $i\geq i_{0}$.
\end{theorem}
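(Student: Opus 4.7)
The strategy is to recast weak non-collapsedness as a condition on the local positivity of the $N$-dimensional Hausdorff measure, which is continuous along pmGH limits of $\RCD(K,N)$ spaces by Theorem~\ref{thm:ContHN}.

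\medskip

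The key step is the following characterization: for an $\RCD(K,N)$ space $(Y,\sfd_Y,\mm_Y)$, one has $\mm_Y\ll\cH^N$ if and only if $\cH^N(\bar B_r(y))>0$ for some $y\in Y$ and some $r>0$. The forward implication is immediate from $\supp\mm_Y=Y$ combined with absolute continuity: every ball has positive $\mm_Y$-measure, hence positive $\cH^N$-measure. For the reverse implication, the Bru\'e--Semola constancy-of-dimension theorem yields a unique essential dimension $k\in\{1,\ldots,N\}$ of $Y$, and the Mondino--Naber rectifiability theorem shows that $\mm_Y$ is concentrated on the $k$-rectifiable set $\cR_k$, which satisfies $\cH^N(\cR_k)=0$ whenever $k<N$. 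The iterated stratification bounds of~\cite{Ant-Brue-Sem-19} then force the complementary singular set to have Hausdorff dimension at most $k$, so that $\cH^N(Y)=0$, contradicting the assumed positivity on a ball; hence $k=N$ and $\mm_Y\ll\cH^N$.

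\medskip

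Given this characterization, the theorem follows quickly. Since $X$ is weakly non-collapsed, $\cH^N(\bar B_{r_0}(\bar x))>0$ for every $r_0>0$; choose $r_0$ to be a continuity radius for the Gromov--Hausdorff convergence of closed balls $\bar B_{r_0}(\bar x_i)\to\bar B_{r_0}(\bar x)$ (which exists for all but countably many $r_0$). Theorem~\ref{thm:ContHN} then gives
\[
\cH^N(\bar B_{r_0}(\bar x_i))\ \longrightarrow\ \cH^N(\bar B_{r_0}(\bar x))>0,
\]
so $\cH^N(\bar B_{r_0}(\bar x_i))>0$ for every $i\geq i_0$ sufficiently large. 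Applying the characterization on the $\RCD(K,N)$ space $(X_i,\sfd_i,\mm_i)$ yields $\mm_i\ll\cH^N$, i.e.\ $X_i$ is weakly non-collapsed, as desired.

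\medskip

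The main technical obstacle is the reverse implication of the characterization: namely, controlling the $\cH^N$-measure of the $\mm_Y$-negligible singular set of a possibly collapsing $\RCD(K,N)$ space of essential dimension $k<N$. In the non-collapsed setting this is handled at once by the stratification bound $\dimH(\cS^{N-1})\leq N-1$, whereas in the weakly non-collapsed or collapsing regime it requires invoking the quantitative stratification adapted to the essential dimension $k$, together with an inductive argument on the dimension of tangent cones as in Lemma~\ref{lem:TanncRCD}.
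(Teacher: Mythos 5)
Your approach is genuinely different from the paper's, and if your characterization lemma were established it would give a clean, essentially global, argument. The paper's proof is local: it selects an $N$-regular point $\bar{x}$ in $X$ (this exists because $X$ is weakly non-collapsed), transports approximate $N$-regularity to balls around $\bar{x}_i$ in $X_i$ using pmGH convergence, applies the $\eps$-regularity theorem of Mondino--Naber to produce a positive-$\mm_i$-measure subset $U_i\subset X_i$ carrying a $(1+\eps)$-bi-Lipschitz map to a positive-Lebesgue-measure subset of $\R^N$, deduces $\cH^N(U_i)>0$ and hence $\mm_i(\cR_N(X_i))>0$, and then invokes the rectifiability of $\RCD$ spaces together with the constancy-of-dimension theorem of Bru\'e--Semola to conclude $\mm_i\ll\cH^N$. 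Your proposal instead routes everything through the GH-continuity of $\cH^N$ (Theorem~\ref{thm:ContHN}) via the claim that for an $\RCD(K,N)$ space $Y$, $\mm_Y\ll\cH^N$ if and only if $\cH^N(\bar B_r(y))>0$ for some ball.

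The reverse implication of that characterization is the heart of the matter, and the sketch you give does not establish it. There are three concrete issues. First, you invoke the quantitative stratification of~\cite{Ant-Brue-Sem-19} (Theorem~\ref{thm-quant-str-1}), but that theorem is stated and proved only for \emph{non-collapsed} $\RCD(K,N)$ spaces, i.e.\ those with $\mm=\cH^N$. You are applying it to $Y$ precisely under the hypothesis that $Y$ has essential dimension $k<N$, which is the collapsed regime where the theorem's hypotheses fail. The qualitative bound $\dim_{\mathcal H}\cS^j\le j$ does hold more generally, but it is not the content of~\cite{Ant-Brue-Sem-19}, and it yields $\dim_{\mathcal H}\cS^{N-1}\le N-1$ rather than the ``at most $k$'' you assert. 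Second, and more seriously, the complement of $\cR_k$ in $Y$ is not just the singular set $\cS$: it also contains the higher regular strata $\cR_j$ for $j>k$. The strata with $k<j<N$ are $j$-rectifiable and hence $\cH^N$-null, but to rule out positive $\cH^N$-measure coming from $\cR_N$ you need $\cR_N=\emptyset$, i.e.\ that $\R^N\in\Tan(X,\sfd,p)$ for some $p$ forces $\mm\ll\cH^N$. That is exactly the implication (2)$\Rightarrow$(4) of Theorem~\ref{noncol-equiv}, and the paper's proof of (1)$\Rightarrow$(4) in that theorem relies on Theorem~\ref{thm:Stabnc}, which in turn uses the very Theorem~\ref{thm:Stabwnc} you are trying to prove; invoking it here would be circular unless you supply an independent argument. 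Third, Lemma~\ref{lem:TanncRCD}, which you cite for the ``inductive argument on tangent cones,'' applies to tangent spaces of \emph{non-collapsed} $\RCD(K,N)$ spaces and gives no information about a possibly collapsing $Y$. The paper's $\eps$-regularity route sidesteps all three difficulties.
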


\begin{proof}
Without loss of generality we can assume that $\bar{x}$ is an $N$-regular point for $(X,\sfd,\mm)$. In particular, for every $\delta>0$ there exists $r=r(\bar{x}, \delta)$ such that
\begin{equation*}
\sfd_{mGH} \left( \left(B_{r}^{X}(\bar{x}), \sfd,  \frac{1} {\mm(B_{r}^{X}(\bar{x}))} \mm\llcorner B_{r}^{X}(\bar{x})\right), \left(B_{r}^{\R^{N}}(0^{N}),\sfd_{E},\frac{1}{\cL^{N}(B_{r}^{\R^{N}}(0^{N}))} \cL^{N} \llcorner (B_{r}^{\R^{N}}(0^{N}) \right) \right)\leq \delta r.
\end{equation*}
Since by assumption $(X_{i}, \sfd_{i},\mm_{i}, \bar{x}_{i})\to (X,\sfd,\mm, \bar{x})$ in pmGH sense,  there exists $i_{0}=i_{0}(\delta, {r})\in \N$  such that for all $i\geq i_{0}$:
\begin{equation}\label{eq:mGHBrxi}
\sfd_{mGH} \left( \left(B_{r}^{X_{i}}(\bar{x}_{i}), \sfd_{i},  \frac{1} {\mm_{i}(B_{r}^{X_{i}}(\bar{x}_{i}))} \mm_{i}\llcorner B_{r}^{X_{i}}(\bar{x}_{i})\right), \left(B_{r}^{\R^{N}}(0^{N}),\sfd_{E},\frac{1}{\cL^{N}(B_{r}^{\R^{N}}(0^{N}))} \cL^{N} \llcorner (B_{r}^{\R^{N}}(0^{N})) \right) \right)
\leq 2 \delta r.
\end{equation}
Combining \eqref{eq:mGHBrxi} with $\eps$-regularity \cite[Theorem 6.8]{MN} it follows that,  for all $i\geq i_{0}$, there exist a subset $U_{i}\subset B_{r}^{X_{i}}(\bar{x})$ with $\mm_{i}(U)>0$ and a  $(1+\eps)$-bi-Lipschitz  map $u_{i}:U_{i}\to u_{i}(U_{i})\subset \R^{N}$, where $\eps=\Psi(\delta|K,N)$. Moreover, from \cite[Proposition 3.2]{GDPNonColl}, it holds that $\cL^{N}(u_{i}(U_{i}))>0$. Since $u_{i}$ is $(1+\eps)$-bi-Lipschitz it follows that $\cH^{N}(U_{i})>0$.
\\From the rectifiability of $\RCD(K,N)$ spaces as metric measure spaces \cite[Theorem 1.2]{KM} (see also \cite{DePRinMar} and \cite{GiPa} for independent proofs), it follows that the regular stratum ${\mathcal R}_{N}(X_{i})$ of dimension $N$  satisfies:
\begin{equation}\label{eq:RN>0}
\mm_{i}({\mathcal R}_{N}(X_{i}))>0, \quad \mm_{i}\llcorner {\mathcal R}_{N}(X_{i})\ll \cH^{N}_{\sfd_{i}}.
\end{equation}
The constancy of the dimension in $\RCD(K,N)$ spaces proved in \cite{BS18} yields that 
\begin{equation}\label{eq:RCDconst}
\mm_{i}(X_{i}\setminus {\mathcal R}_{N}(X_{i}))=0.
\end{equation}
The combination of \eqref{eq:RN>0} with \eqref{eq:RCDconst} gives that $(X_{i}, \sfd_{i},\mm_{i})$ is a  weakly non-collapsed $\RCD(K,N)$ space for every $i\geq i_{0}$.
\end{proof}

{
\begin{remark}
The above theorem also follows from ~\cite{KitaPOTA} where it is proved that the geometric dimension of $\RCD(K,N)$ spaces is lower semicontinuous under pmGH convergence. This implies that  $\mathcal R_N(X_i)$ has positive measure for large $i$ which by the same argument as above using \cite[Theorem 1.2]{KM} yields the result. 
\end{remark}
}
\begin{theorem}\label{thm:Stabnc}
Let $(X,\sfd,\cH^{N})$ be a compact non-collapsed $\RCD(K',N)$ space for some $K'\in \R, N\in \N$. Let $\{(X_{i}, \sfd_{i},\mm_{i})\}_{i\in \N}$ be a sequence of $\RCD(K,N)$ spaces, for some $K\in \R$,  converging to 
$(X,\sfd,\mm)$ in measured Gromov Hausdorff sense. Then there exists $i_{0}\in \N$ such that $(X_{i}, \sfd_{i},\mm_{i})$ is a compact non-collapsed $\RCD(K',N)$ space for every $i\geq i_{0}$.
More precisely, there exists a sequence $c_{i}\to 1$ such that $\mm_{i}=c_{i} \cH^{N}_{\sfd_{i}}$.
\end{theorem}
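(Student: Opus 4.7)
The plan is to combine the weakly non-collapsed stability result (Theorem~\ref{thm:Stabwnc}) with Honda's rigidity theorem \cite{HondaNC} for compact weakly non-collapsed spaces, and then pin down the multiplicative constants using volume continuity.

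First, I would apply Theorem~\ref{thm:Stabwnc} directly: since $(X,\sfd,\cH^{N})$ is weakly non-collapsed (actually non-collapsed) and $(X_{i},\sfd_{i},\mm_{i})\to (X,\sfd,\cH^{N})$ in mGH sense, there exists $i_{1}\in\N$ such that $(X_{i},\sfd_{i},\mm_{i})$ is a weakly non-collapsed $\RCD(K,N)$ space for every $i\geq i_{1}$. Because the limit $X$ is compact, mGH convergence yields $\mathrm{diam}(X_{i})\to\mathrm{diam}(X)<\infty$; combined with the properness of $\RCD(K,N)$ spaces, this forces $X_{i}$ to be compact for all sufficiently large $i$.

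Next, I would invoke Honda's theorem (recalled in Subsection~\ref{SS:RCDDef}): every compact weakly non-collapsed $\RCD(K,N)$ space is non-collapsed up to a constant multiplicative rescaling of the measure. Applied to each $X_{i}$ with $i$ large, this produces constants $c_{i}>0$ such that $\mm_{i}=c_{i}\,\cH^{N}_{\sfd_{i}}$; in particular $(X_{i},\sfd_{i},\cH^{N}_{\sfd_{i}})$ is a compact non-collapsed $\RCD(K,N)$ space.

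Finally, I would verify $c_{i}\to 1$ by comparing total masses. On the one hand, mGH convergence of compact spaces gives $\mm_{i}(X_{i})\to \cH^{N}(X)$. On the other hand, fix $R>\mathrm{diam}(X)+1$, so that for all large $i$ and suitable base points $x_{i}\in X_{i}$, $x\in X$ one has $X_{i}=\bar{B}_{R}^{X_{i}}(x_{i})$ and $X=\bar{B}_{R}^{X}(x)$; then the GH-continuity of $\cH^{N}$ on closed balls in $\RCD(K,N)$ spaces (Theorem~\ref{thm:ContHN}) yields $\cH^{N}_{\sfd_{i}}(X_{i})\to \cH^{N}(X)\in(0,\infty)$. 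Taking the ratio gives $c_{i}=\mm_{i}(X_{i})/\cH^{N}_{\sfd_{i}}(X_{i})\to 1$. The only non-routine ingredient is Honda's compact-case rigidity, which is available from the literature; beyond that, the argument is a straightforward bookkeeping of previously established facts, so I do not anticipate a substantive obstacle.
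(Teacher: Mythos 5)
Your proposal is correct and follows essentially the same route as the paper: apply Theorem~\ref{thm:Stabwnc} to get weak non-collapsedness of the $X_i$, note compactness for large $i$, invoke Honda's rigidity to obtain $\mm_i=c_i\,\cH^N_{\sfd_i}$, and deduce $c_i\to 1$ by comparing the mGH convergence of total masses with the GH-continuity of $\cH^N$ (Theorem~\ref{thm:ContHN}) applied at a radius exceeding the diameters. No gaps.
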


\begin{proof}
From Theorem \ref{thm:Stabwnc} we have that  $(X_{i}, \sfd_{i},\mm_{i})$ is a  weakly non-collapsed $\RCD(K,N)$ space for every $i\geq i_{0}$. Moreover, since the limit space $X$ is compact,  from the definition of pmGH convergence we have that $X_{i}$ is compact as well, for large $i$. Since every compact weakly non-collapsed $\RCD$ space is actually non-collapsed \cite{HondaNC} up to rescaling the background measure by a constant, we infer that there exist constants $c_{i}>0$ such that $\mm_{i}=c_{i} \cH^{N}_{\sfd_{i}}$.

We now claim that $c_{i}\to 1$.
\\The mGH convergence of $(X_{i}, \sfd_{i},\mm_{i}=c_{i} \cH^{N}_{\sfd_{i}})$ to $(X,\sfd,\cH^{N})$ ensures that 
\begin{equation}\label{eq:ConvVolXi}
c_{i} \cH^{N}_{\sfd_{i}}(X_{i}) \to \cH^{N}(X).
\end{equation}
On the other hand, the GH convergence of  $(X_{i}, \sfd_{i})$ to $(X,\sfd)$ combined with the volume continuity Theorem \ref{thm:ContHN} (applied with  $R>\limsup \diam X_{i}$) yields that
\begin{equation}\label{eq:ConvHNXi}
\cH^{N}_{\sfd_{i}}(X_{i}) \to \cH^{N}(X).
\end{equation}
Putting together \eqref{eq:ConvVolXi} and  \eqref{eq:ConvHNXi} gives the claim $c_{i}\to 1$.
\end{proof}
{

Collecting some results of the paper with others in the literature we obtain the following theorem { (compare also with \cite{GDPNonColl, KitaPOTA, AHPT}).}

\begin{theorem}\label{noncol-equiv}
Let $(X,\sfd,\mm)$ be an $\RCD(K,N)$ space. Then the following are equivalent:
\begin{enumerate}
\item There exists   $p \in X$ and a tangent space $(Y,\sfd_{Y},\mm_{Y})\in \Tan(X,\sfd,\mm, p)$ with $\mm_{Y}\ll \cH^{N}_{\sfd_{Y}}$.
\item There exists a point $p\in X$ such that $\R^{N}\in \Tan(X,\sfd,p)$, i.e. $\R^{N}$ with Euclidean metric is a metric tangent space at $p$.
\item For $\mm$-a.e. $p\in X$ the tangent space at $p$ is unique and isomorphic as a m.m.s. to Euclidean $\R^{N}$ (endowed with the suitable rescaled measure).
\item   $(X,\sfd,\mm)$ is a weakly  non-collapsed $\RCD(K,N)$ space, i.e. $\mm\ll \cH^{N}$.
\end{enumerate}
If moreover $(X,\sfd)$ is compact, then all the above statements are equivalent to  $(X,\sfd,\mm)$ being a non-collapsed $\RCD(K,N)$ space up to constant a normalization of $\mm$, i.e. $\mm=c \cH^{N}$ for some constant $c>0$.
\end{theorem}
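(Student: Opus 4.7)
The plan is to organize the equivalence as (4) $\Rightarrow$ (3) plus the trivial arrows (3) $\Rightarrow$ (1), (2), (4), and then close the cycle via (2) $\Rightarrow$ (1) $\Rightarrow$ (4), using the stability result Theorem \ref{thm:Stabwnc} proved earlier in this section for the last and most substantive step. The arrows (3) $\Rightarrow$ (1), (2), (4) are immediate from the definitions, since an $N$-regular point has Euclidean tangent $(\R^N, \sfd_E, c\cL^N)$ which has absolutely continuous tangent measure, Euclidean metric tangent, and forces $\mm \ll \cH^N$ globally through the rectifiability decomposition. Conversely, (4) $\Rightarrow$ (3) is essentially a restatement of the rectifiability theory already recalled: by \eqref{eq:RNfullMeas} one has $\mm(X\setminus \cR_N)=0$, and the additional information that the tangent at such a point is $\mm$-a.e.\ unique and isomorphic as an m.m.s.\ to $(\R^N,\sfd_E, c\cL^N)$ is assembled from the Mondino--Naber $\varepsilon$-regularity \cite{MN}, the constancy of dimension \cite{BS18}, and the analysis of the tangent measure in \cite{GDPNonColl,KM,DePRinMar,GiPa}.

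For (2) $\Rightarrow$ (1), I would start from $(\R^N, 0) \in \Tan(X,\sfd,p)$ along radii $r_i \downarrow 0$ and apply Gromov's pmGH compactness to the rescaled family $(X, r_i^{-1}\sfd, \mm^p_{r_i}, p)$; a further subsequence produces a measured tangent $(Y, \sfd_Y, \mm_Y, y) \in \Tan(X,\sfd,\mm,p)$ whose underlying pointed metric space is forced to coincide with $(\R^N,\sfd_E,0)$. Since $(\R^N,\sfd_E,\mm_Y)$ inherits the $\RCD(0,N)$ property by stability, an $N$-fold application of the Splitting Theorem \cite{giglisplittingshort} along the coordinate directions of $\R^N$ forces $\mm_Y = c\cL^N$ for some $c>0$, giving $\mm_Y \ll \cH^N_{\sfd_Y}$ as required by (1).

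The substantive step (1) $\Rightarrow$ (4) is where Theorem \ref{thm:Stabwnc} enters. Given a tangent $(Y,\sfd_Y,\mm_Y,y) \in \Tan(X,\sfd,\mm,p)$ with $\mm_Y \ll \cH^N_{\sfd_Y}$, the limit $(Y,\sfd_Y,\mm_Y)$ is a weakly non-collapsed $\RCD(0,N)$ space. The approximating sequence $(X, r_i^{-1}\sfd, \mm^p_{r_i}, p)$ consists of $\RCD(Kr_i^2, N)$ spaces, hence eventually $\RCD(-1, N)$, converging in pmGH to $(Y,\sfd_Y,\mm_Y,y)$. Theorem \ref{thm:Stabwnc} then forces the rescaled spaces to be eventually weakly non-collapsed; since the property $\mm \ll \cH^N$ is invariant under rescaling of the metric and constant rescaling of the measure, this yields $\mm \ll \cH^N$ on $X$ itself, i.e.\ (4).

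Finally, the compact addendum is a direct invocation of \cite{HondaNC}, which upgrades weak non-collapsing on a compact $\RCD(K,N)$ space to $\mm = c\cH^N$ for some $c>0$. The only delicate point in the overall argument is the use of Theorem \ref{thm:Stabwnc}, which itself rests on the rectifiability theory \cite{KM}, the $\varepsilon$-regularity of Mondino--Naber \cite{MN}, and constancy of dimension \cite{BS18}; once this external ingredient is available, the main obstacle is simply verifying that the convergence $(X, r_i^{-1}\sfd, \mm^p_{r_i}, p) \to (Y,\sfd_Y,\mm_Y,y)$ satisfies the hypotheses of Theorem \ref{thm:Stabwnc} and that the conclusion pulls back to $X$ under rescaling, after which the remaining implications are routine.
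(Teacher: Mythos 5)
Your proof is correct and follows the same cycle as the paper: the substantive arrow is (1) $\Rightarrow$ (4) via the ``sequential openness'' of weakly non-collapsed spaces applied to the blow-up sequence, (4) $\Rightarrow$ (3) is assembled from \cite{MN}, \cite{BS18} and \cite{KM}, (3) $\Rightarrow$ (2) is trivial, and the compact addendum is \cite{HondaNC}. Two remarks on where you diverge, both in your favour. First, in (1) $\Rightarrow$ (4) you invoke Theorem \ref{thm:Stabwnc}; the paper's text cites Theorem \ref{thm:Stabnc} at this point, but that statement requires a \emph{compact non-collapsed} limit and would not apply to a (possibly noncompact, merely weakly non-collapsed) tangent cone, so your choice is the correct reading of the argument. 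Second, for (2) $\Rightarrow$ (1) the paper extracts a measured tangent $(\R^{N},\sfd_{E},\mm_{\infty})$ and then quotes \cite[Corollary 8.2]{CaMoAIM} to get $\mm_{\infty}\ll\cL^{N}$, whereas you iterate the splitting theorem of \cite{giglisplittingshort} $N$ times. Your route is valid and even gives the stronger conclusion $\mm_{Y}=c\,\cL^{N}$, using only a tool the paper already employs elsewhere; to make it airtight you should record the two small facts it rests on, namely that if Euclidean $\R^{k}$ splits metrically as $\R\times Y'$ then $Y'$ is isometric to Euclidean $\R^{k-1}$ (the fibers $\R\times\{y'\}$ are mutually parallel lines, so the quotient is Euclidean), and that in the last step the $N'\in[1,2)$ case of the splitting theorem identifies the remaining $\RCD(0,1)$ factor, whose underlying metric space is $\R$, with $(\R,\sfd_{E},c\,\cL^{1})$ rather than merely some measure on $\R$. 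With those two observations spelled out, the argument is complete.
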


\begin{proof}
The final claim in case of compact  $(X,\sfd)$ is a direct consequence of \cite{HondaNC}.
\medskip

(1)$ \Rightarrow$ (4). Since $(Y,\sfd, \mm_{Y})$ is a tangent space of an $\RCD(K,N)$ space, then $Y$ is an $\RCD(0,N)$ space as well. The assumption then implies that $Y$ is  weakly non-collapsed $\RCD(0,N)$. Hence (4)  follows directly by applying Theorem \ref {thm:Stabnc} to the blow up sequence pmGH converging to  $(Y,\sfd, \mm_{Y})$.
\medskip

(4) $\Rightarrow$ (3) Follows by combining the next results:  $\mm$-a.e. $x\in X$ has unique tangent space which is isomorphic to a Euclidean space \cite{MN},  $\mm$-a.e. uniqueness of the dimension of Euclidean tangent spaces \cite{BS18},  on the $k$-regular stratum  it holds $\mm\ll \cH^{k}$ (\cite[Theorem 1.2]{KM};  see also \cite{DePRinMar} and \cite{GiPa} for independent proofs).  
\medskip

(3) $\Rightarrow$ (2) is trivial.
\medskip 

(2)$ \Rightarrow$ (1). By the compactness of $\RCD(-1,N)$ spaces, there exists a Radon measure $\mm_{\infty}$ on $\R^{N}$ with $\supp \mm_{\infty}=\R^{N}$ so that   $(\R^{N},\sfd_{E}, \mm_{\infty})$ is a metric measure tangent space. In particular,  $(\R^{N},\sfd_{E}, \mm_{\infty})$ verifies $\RCD(0,N)$. It follows from \cite[Corollary 8.2]{CaMoAIM} that $\mm_{\infty}\ll \cL^{N}$ and thus $(\R^{N},\sfd_{E}, \mm_{\infty})$ is a weakly non-collapsed $\RCD(0,N)$ space appearing as a tangent.

\end{proof}
}

The combination of  Theorem \ref{top-stability-RCD} and Theorem \ref{thm:Stabnc} gives the following result.

\begin{theorem}\label{thm:XiMman} 
Let $(M,g)$ be a compact Riemannian manifold of dimension $N$. {Let $\{(X_{i}, \sfd_{i},\mm_{i})\}_{i\in \N}$ be a sequence of $\RCD(K,N)$ spaces, for some $K\in \R$,  mGH converging to 
$(M,g)$.} Then there exists $i_{0}\in \N$ such that 
\begin{itemize}
\item $(X_{i}, \sfd_{i},\mm_{i})$ is a compact non-collapsed $\RCD(K,N)$ space for every $i\geq i_{0}$: more precisely, there exists a sequence $c_{i}\to 1$ such that $\mm_{i}=c_{i} \cH^{N}$.
\item $(X_{i}, \sfd_{i})$ is homeomorphic to $M$ via bi-H\"older homeomorphisms.
\end{itemize}
\end{theorem}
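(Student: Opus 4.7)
The plan is to reduce the statement to a direct application of Theorem \ref{thm:Stabnc} followed by Theorem \ref{top-stability-RCD}, after first verifying that the limit Riemannian manifold $(M,g)$ itself qualifies as a non-collapsed $\RCD(K,N)$ space in the sense required by those theorems.

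First, I would observe that $(M,g)$, equipped with its Riemannian volume $\vol_g$, is a non-collapsed $\RCD(K,N)$ space. Indeed, by the stability of $\RCD(K,N)$ under pmGH convergence, the mGH limit of $\{(X_i,\sfd_i,\mm_i)\}$ is automatically $\RCD(K,N)$, hence $(M,\sfd_g,\vol_g)$ is $\RCD(K,N)$ — equivalently $\Ric_g\ge K$. Since $(M,g)$ is a smooth compact $N$-manifold, $\vol_g=\cH^N_{\sfd_g}$, so $(M,\sfd_g,\cH^N)$ is a compact non-collapsed $\RCD(K,N)$ space.

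Next, I would apply Theorem \ref{thm:Stabnc} to the converging sequence $(X_i,\sfd_i,\mm_i)\to(M,\sfd_g,\cH^N)$. This immediately yields the first bullet: for $i$ large enough, each $(X_i,\sfd_i,\mm_i)$ is a compact non-collapsed $\RCD(K,N)$ space, with $\mm_i=c_i\cH^N_{\sfd_i}$ for some constants $c_i\to 1$. In particular, $(X_i,\sfd_i,\cH^N_{\sfd_i})$ itself is a non-collapsed $\RCD(K,N)$ space (the normalization of the reference measure by the positive constant $1/c_i$ does not affect the curvature-dimension condition).

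Finally, for the second bullet, I would pick an arbitrary $\alpha\in(0,1)$ and invoke Theorem \ref{top-stability-RCD} (the topological stability for non-collapsed $\RCD(K,N)$ spaces): since $(X_i,\sfd_i,\cH^N_{\sfd_i})$ is (eventually) a sequence of non-collapsed $\RCD(K,N)$ spaces pmGH-converging to the smooth Riemannian manifold $(M,g)$, and since $M$ is compact, the final assertion of Theorem \ref{top-stability-RCD} gives that $X_i$ is $\alpha$-bi-H\"older homeomorphic to $M$ for all sufficiently large $i$. There is no real obstacle here beyond carefully checking the hypotheses of Theorems \ref{thm:Stabnc} and \ref{top-stability-RCD}; the substance of the result is entirely contained in those two statements, and the only verification needed is that the smooth limit $(M,g)$ fits into the non-collapsed $\RCD$ framework, which we handled in the first step.
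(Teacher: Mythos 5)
Your proof is correct and follows exactly the paper's approach: the paper states that Theorem \ref{thm:XiMman} is obtained by combining Theorem \ref{thm:Stabnc} (for the non-collapsedness of the $X_i$ and the normalization $\mm_i=c_i\cH^N$) with Theorem \ref{top-stability-RCD} (for the bi-H\"older homeomorphism), which is precisely what you do, including the preliminary check that $(M,\sfd_g,\cH^N)$ qualifies as a compact non-collapsed $\RCD(K,N)$ space and that dropping the factor $c_i$ preserves the $\RCD$ condition.
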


{
 We can now combine the results obtained so far to give a proof of Corollary \ref{eq:lambda1N=N}.}
}

{
\begin{proof}[Proof of Corollary \ref{eq:lambda1N=N}]
Assume by contradiction that it is not true. Then we can find $\varepsilon_{0}>0$ and  a sequence $\{(X_{i},\sfd_{i},\mm_{i})\}_{i\in \N}$ of $\RCD(N-1,N)$ spaces such that $\lambda_{j}(X_{i})\to N$ as $i\to \infty$ for every $j=1,\dots, N+1$, and such that one of the conclusions $(1)-(4)$ fail for  $\varepsilon=\varepsilon_{0}$ for all $i\in \N$.

 By Gromov's compactness Theorem, stability of $\RCD(N-1,N)$ and stability of the spectrum  \cite[Theorem 7.8]{GMS2013} under mGH convergence (as well as equivalence of pmG and mGH for uniformly doubling spaces, see \cite[Theorem 3.30 and Theorem  3.33]{GMS2013})  we get that  there exists an $\RCD(N-1,N)$ m.m.s. $(Y,\sfd_{Y},\mm_{Y})$ such that,  up to subsequences,  
 $$(X_{i},\sfd_{i},\mm_{i})\to (Y,\sfd_{Y},\mm_{Y}) \text{ in pmGH-sense and $\lambda_{1}(Y)=\lambda_{2}(Y)=\ldots \lambda_{N+1}(Y)=N$}. $$
 Applying   Obata's  rigidity result \cite[Theorem 1.4]{ketterer3}, we get that  $(Y,\sfd_{Y},\mm_{Y})$ is isomorphic as m.m.s. to the standard round sphere   ${\mathbb S}^{N}$ of unit radius (endowed with the standard Riemannian metric and volume measure). By Theorem \ref {thm:XiMmanIntro}  we then infer that, for large $i$ in the converging subsequence, $(X_{i},\sfd_{i}\mm_{i})$ is a non-collapsed $\RCD(N-1,N)$ space.
Moreover, by construction $\sfd_{GH} (X_{i},{\mathbb S}^{N}) \to 0$ as $i\to \infty$. Thus applying the Sphere Theorem \ref{thm:sphere}, we obtain that $(X_{i},\sfd_{i})$ is homeomorphic to ${\mathbb S}^{N}$ and $\cH^{N}(X_{i})\to  \cH^{N}(\mathbb S^{N})$. Since by Bishop-Gromov it holds also $\cH^{N}(X_{i})\leq \cH^{N}(\mathbb S^{N})$, we see that all the conclusions $(1)-(4)$ are satisfied for $\varepsilon=\varepsilon_{0}/2$ for infinitely many $i$. Contradiction.
\end{proof}

}
\appendix

\section{Almost convexity of large sets}

It is a classical result in measure theory that if $E\subset \R^{N}$ is closed and $\cH^{N-1}(E)=0$, then $\R^{N}\setminus E$ is connected. It turns out that this fact admits a natural generalization to { essentially non-branching $\MCP(K,N)$ spaces and in particular to}  $\RCD(K,N)$ spaces. The proof of this very statement was  given by Cheeger-Colding in the framework of Ricci limits \cite[Theorem 3.9]{CC00a}.

Here we prove it for general {essentially non-branching $\MCP(K,N)$ spaces}. Let us note here that our proof is inspired by but is somewhat different from the one of Cheeger and Colding.
Since this is the only result in the current paper that applies to a more general class of spaces than non-collapsed $\RCD(K,N)$ spaces,
we have placed it in an appendix.
{
\subsection{Essentially non-branching $\MCP(K,N)$ spaces: definition and basic properties}
Roughly, $\MCP(K,N)$ space are those m.m.s. $(X,\sfd,\mm)$ where Bishop-Gromov volume comparison Theorem holds (where the model space is the $N$ dimensional space form with constant Ricci curvature $K$). Here we briefly recall the important definitions, in order to make this appendix as self-contained as possible.  
\medskip

 For any $t\in [0,1]$,  let ${\rm e}_{t}$ denote the evaluation map: 
$$
  {\rm e}_{t} : \Geo(X) \to X, \qquad {\rm e}_{t}(\gamma) : = \gamma_{t}.
$$
Any geodesic $(\mu_t)_{t \in [0,1]}$ in $(\mathcal{P}_2(X), W_2)$  can be lifted to a measure $\Pi \in {\mathcal {P}}(\Geo(X))$, 
so that $({\rm e}_t)_\sharp \, \Pi = \mu_t$ for all $t \in [0,1]$. 
\\Given $\mu_{0},\mu_{1} \in \mathcal{P}_{2}(X)$, we denote by 
$\Opt(\mu_{0},\mu_{1})$ the space of all $\Pi \in \mathcal{P}(\Geo(X))$ for which $({\rm e}_0,{\rm e}_1)_\sharp\, \Pi$ 
realizes the minimum in \eqref{eq:W2def}. Such a $\Pi$ is called \emph{dynamical optimal plan}. If $(X,\sfd)$ is geodesic, then the set  $\Opt(\mu_{0},\mu_{1})$ is non-empty for any $\mu_0,\mu_1\in \mathcal{P}_2(X)$.
\medskip

A set $G \subset \Geo(X)$ is a set of non-branching geodesics if and only if for any $\gamma^{1},\gamma^{2} \in G$, it holds:
$$
\exists \;  \bar t\in (0,1) \text{ such that } \ \forall t \in [0, \bar t\,] \quad  \gamma_{ t}^{1} = \gamma_{t}^{2}   
\quad 
\Longrightarrow 
\quad 
\gamma^{1}_{s} = \gamma^{2}_{s}, \quad \forall s \in [0,1].
$$
In the appendix we will only consider essentially non-branching spaces, let us recall their definition (introduced in \cite{rajalasturm}). 
\begin{definition}\label{def:ENB}
A metric measure space $(X,\sfd, \mm)$ is \emph{essentially non-branching} (e.n.b. for short) if and only if for any $\mu_{0},\mu_{1} \in \mathcal{P}_{2}(X)$,
with $\mu_{0},\mu_{1}$ absolutely continuous with respect to $\mm$, any element of $\Opt(\mu_{0},\mu_{1})$ is concentrated on a set of non-branching geodesics.
\end{definition}

The definition of $\MCP(K,N)$ given independently by Ohta \cite{ohtmea} and Sturm \cite{sturm:II}. 
On general metric measure spaces the two definitions slightly differ, but on essentially non-branching spaces they coincide \cite[Appendix A]{Mond-Cav-17}. We use the one given in
\cite{ohtmea}.

\begin{definition}[$\MCP(K,N)$ condition]\label{def:MCP}
Let $K \in \R$ and $N \in [1,\infty)$. A metric measure space  $(X,\sfd,\mm)$ verifies $\MCP(K,N)$ if for any $\mu_{0} \in \cP_{2}(X)$ of the form 
$\mu_{0} = \frac{1}{\mm(a)} \mm\llcorner_{A}$ for some Borel set $A \subset X$ 
with $\mm(A) \in (0,\infty)$, and any $o \in X$ there exists $\Pi \in \Opt(\mu_{0},\delta_{o})$ such that 
\begin{equation}\label{eq:defMCP}
\frac{1}{\mm(A)} \mm 
 \geq  (\ee_{t})_{\sharp} \left( \tau_{K,N}^{(1-t)}(\sfd(\gamma_{0},\gamma_{1})) \Pi(d\gamma) \right), \qquad \forall \ t\in [0,1],
\end{equation}
where the  distortion coefficient $\tau_{K,N}$ was defined in \eqref{eq:deftau}.
\end{definition}

\begin{remark}\label{rem:BG}
A  key property we will use of  $\MCP(K,N)$ spaces is the validity of the Bishop-Gromov Theorem  \ref{thm:BishopGromov}, see \cite[Remark 5.3]{sturm:II} or \cite[Theorem 5.1]{ohtmea}.
\end{remark}

\begin{remark}[Notable examples of spaces fitting in the framework of the appendix]
The class of essentially non-branching $\MCP(K,N)$ spaces include many remarkable families of spaces, among them:
\begin{itemize}
\item  Smooth  Finsler manifolds where the norm on the tangent spaces is strongly convex, and which satisfy lower Ricci curvature bounds. More precisely we consider a $C^{\infty}$-manifold  $M$, endowed with a function $F:TM\to[0,\infty]$ such that $F|_{TM\setminus \{0\}}$ is $C^{\infty}$ and  for each $p \in M$ it holds that $F_p:=T_pM\to [0,\infty]$ is a  strongly-convex norm, i.e.
$$g^p_{ij}(v):=\frac{\partial^2 (F_p^2)}{\partial v^i \partial v^j}(v) \quad \text{is a positive definite matrix at every } v \in T_pM\setminus\{0\}. $$
Under these conditions, it is known that one can write the  geodesic equations and geodesics do not branch; in other words these spaces are non-branching. 
We also assume $(M,F)$ to be geodesically complete and endowed with a $C^{\infty}$  measure $\mm$ in a such a way that the associated m.m.s. $(X,F,\mm)$ satisfies the  $\MCP(K,N)$ condition,  see \cite{ohtafinsler1}. 
\item Sub-Riemannian manifolds. The following are all examples of essentially non-branching $\MCP(K,N)$-spaces:  the $(2n+1)$-dimensional Heisenberg group \cite{Juillet},  any co-rank one Carnot group \cite{Rizzi},   any ideal Carnot group \cite{Rifford}, any generalized H-type Carnot group of rank $k$ and dimension $n$ \cite{BarilariRizzi}.
\item Strong $\CD^{*}(K,N)$ spaces, and in particular $\RCD^{*}(K,N)$ (thus  also $\RCD(K,N)$) spaces  \cite{rajalasturm}. 
\end{itemize}
\end{remark}

\subsection{Disintegration in essentially non-branching $\MCP(K,N)$ spaces}
In the proof of the main result of this appendix, namely Proposition \ref{alm-all-geod}, we will use a disintegration/localization argument. In order to make the appendix as self-contained as possible, we briefly recall the results we will use.

Let $(X,\sfd,\mm)$ be an essentially non-branching m.m.s. satisfying $\MCP(K,N)$, for some $K\in \R, N\in (1,\infty)$. 
Fix a point $\bar{x}\in X$ and let $u(\cdot):=\sfd(\bar{x}, \cdot)$ be the distance function from $\bar{x}$.
Define
\begin{equation}\label{E:Gamma}  
\Gamma_{u} : = \{ (x,y) \in X\times X : u(x) - u(y) = \sfd(x,y) \}.
\end{equation}
Its transpose is given by $\Gamma^{-1}_{u}= \{ (x,y) \in X \times X : (y,x) \in \Gamma_{u} \}$. We define the \emph{transport relation} $R_u$  as:
\begin{equation}\label{E:R}
R_{u} := \Gamma_{u} \cup \Gamma^{-1}_{u}.
\end{equation}
Using that $(X,\sfd,\mm)$ is essentially non-branching, Cavalletti \cite{cava:MongeRCD}  (cf. \cite{biacava:streconv}) proved that $R_{u}$  induces a partition of $X$ (up to a subset $\mathcal N$, with   $\mm(\mathcal N)=0$) into a disjoint family (of equivalence classes) $\{X_{\alpha}\}_{\alpha \in Q}$ each of them isometric to an interval of $\R$. Here $Q$ is any set of indices.

Once an essential partition of $X$ is at disposal, a decomposition of the reference measure $\mm$ can be obtained using the Disintegration Theorem. Denote by $\tilde{X}=X\setminus \mathcal N$ the subset of full measure partitioned by $R_{u}$.  Let  $\QQ : \tilde{X} \to Q$ be the quotient map induced by the partition:
\begin{equation}\label{E:defineQ}
\alpha = \QQ(x) \iff x \in X_{\alpha}.
\end{equation}
Finally, the set of indices $Q$ can be identified with a suitable subset of $X$, intersecting each ray $X_{\alpha}$ exactly once (see \cite[Section 3.1]{CaMo} for the details), enjoying natural measurability properties.
In the next statement, we denote with  $\mathcal{M}_{+}(X)$ the space of non-negative Radon measures over $X$. 

\begin{theorem}[Theorem 3.4 and Theorem 3.6 \cite{CaMo}]\label{thm:DisintMCP}
Let $(X,\sfd,\mm)$ be an essentially non-branching m.m.s. satisfying $\MCP(K,N)$, for some $K\in \R, N\in (1,\infty)$. 
Fix a point $\bar{x}\in X$ and let $u(\cdot):=\sfd(\bar{x}, \cdot)$ be the distance function from $\bar{x}$.

Then  the measure $\mm$ admits the following disintegration formula: 
$$
\mm = \int_{Q} \mm_{\alpha} \, \qq(d\alpha),
$$
where $\qq$ is a Borel probability measure over $Q \subset X$ such that 
$\QQ_{\sharp}( \mm ) \ll \qq$ and the map 
$Q \ni \alpha \mapsto \mm_{\alpha} \in \mathcal{M}_{+}(X)$ satisfies the following properties:
\begin{itemize}
\item[(1)] for any $\mm$-measurable set $B$, the map $\alpha \mapsto \mm_{\alpha}(B)$ is $\qq$-measurable; \smallskip
\item[(2)] for $\qq$-a.e. $\alpha \in Q$, $\mm_{\alpha}$ is concentrated on $\QQ^{-1}(\alpha) = X_{\alpha}$ (strong consistency); \smallskip
\item[(3)] for any $\mm$-measurable set $B$ and $\qq$-measurable set $C$, the following disintegration formula holds: 
$$
\mm(B \cap \QQ^{-1}(C)) = \int_{C} \mm_{\alpha}(B) \, \qq(d\alpha);
$$
\item[(4)] for $\qq$-a.e. $\alpha$, $\mm_{\alpha}$ is a Radon measure  with $\mm_{\alpha}=h_{\alpha} \cH^{1}\llcorner_{X_{\alpha}} \ll \cH^{1}\llcorner_{X_{\alpha}}$
and $(\bar X_{\alpha},\sfd,\mm_{\alpha})$ verifies $\MCP(K,N)$. 
\end{itemize}
\end{theorem}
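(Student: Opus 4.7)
\textbf{Proof plan for Theorem \ref{thm:DisintMCP}.} The plan is to view $u=\sfd(\bar x,\cdot)$ as a Kantorovich potential for the transport problem with target $\delta_{\bar x}$, and realize the partition $\{X_\alpha\}$ as the associated family of transport rays, then leverage essential non-branching and $\MCP(K,N)$ to obtain strong consistency and one-dimensional regularity of the disintegration. First, I would record the elementary properties of $\Gamma_u$: since $u$ is $1$-Lipschitz, $(x,y)\in \Gamma_u$ is equivalent to the existence of a minimizing geodesic $[x,y]$ whose concatenation with some (every) minimizing geodesic $[y,\bar x]$ is still minimizing; equivalently, there is a ray from $\bar x$ through $y$ and then through $x$. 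Hence $R_u$ is reflexive and symmetric, and two points are $R_u$-related exactly when they lie on a common unit-speed ray emanating from $\bar x$.

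The crucial step is to prove that $R_u$ is an equivalence relation on some Borel set $\tilde X\subset X$ of full $\mm$-measure. The only nontrivial axiom is transitivity: given $(x,y),(y,z)\in R_u$, we need concatenability of the two rays through $y$ into a single ray, and the obstruction is precisely branching at $y$. I would therefore let $\mathcal B\subset X$ be the set of points at which at least two distinct maximal unit-speed rays from $\bar x$ pass, and show $\mm(\mathcal B)=0$. This is where essential non-branching enters: if $\mm(\mathcal B)>0$ one could produce two dynamical optimal plans from $\mm\llcorner A$ ($A\subset \mathcal B$ a Borel subset) toward $\delta_{\bar x}$ which, by $\MCP(K,N)$ together with the transport interpretation of $u$, would be concentrated on a set of branching geodesics, contradicting Definition \ref{def:ENB}. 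Setting $\tilde X:=X\setminus \mathcal B$ (removing $\bar x$ itself and the fixed points of $R_u$ that are isolated, which form a null set) then yields the desired equivalence relation and a partition $\{X_\alpha\}_{\alpha\in Q}$ of $\tilde X$ into sets each isometric to a (bounded or unbounded) interval of $\R$, via the parametrization by arclength from $\bar x$.

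Next I would construct the quotient map $\QQ\colon \tilde X\to Q$ as a Borel map using Kuratowski--Ryll-Nardzewski: the multivalued map sending $x$ to the set $X_{\QQ(x)}\cap S$, where $S$ is any Borel set of ``ray representatives'' chosen via selection applied to the unit-speed ray from $\bar x$ through $x$ intersected with a suitable level set of $u$. The classical disintegration theorem of measures then supplies $\qq:=\QQ_\sharp\mm$ and a $\qq$-a.e. defined family $\alpha\mapsto \mm_\alpha$ satisfying (1)--(3) with strong consistency (2), where strong consistency again uses that $\QQ$ is single-valued on $\tilde X$ thanks to non-branching.

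The technical heart is (4): proving $\mm_\alpha\ll\cH^1\llcorner X_\alpha$ and that each $(\bar X_\alpha,\sfd,\mm_\alpha)$ is $\MCP(K,N)$. For a Borel set $A\subset X_\alpha$ contained in a small ``arc'' at distance approximately $t$ from $\bar x$, apply the global $\MCP(K,N)$ inequality \eqref{eq:defMCP} to $\mu_0=\tfrac{1}{\mm(A)}\mm\llcorner A$ with target $\delta_{\bar x}$. Essential non-branching guarantees the optimal $\Pi$ is concentrated on the rays of $\{X_\alpha\}_{\alpha\in Q}$, so the transport stays inside a single ray $X_\alpha$ for $\qq$-a.e.\ $\alpha$. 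Pushing \eqref{eq:defMCP} through the disintegration and shrinking $A$ along the ray produces a uniform upper bound on the ratio $\mm_\alpha(A)/\cH^1(A)$ of the form $C_{K,N}(\text{dist.\ to }\bar x)$, which simultaneously gives the absolute continuity and, by comparing the inequality at two points along the ray, the one-dimensional distortion estimate defining $\MCP(K,N)$ for $(\bar X_\alpha,\sfd,\mm_\alpha)$. The main obstacle I foresee is the measurable-selection bookkeeping needed to make the transport-localization argument rigorous, namely that for $\qq$-a.e.\ $\alpha$ the optimal coupling from $\mm_\alpha$ to $\delta_{\bar x}$ really is the restriction of a global $\MCP$-optimizer; this is where the essentially non-branching hypothesis does almost all the work, and any weakening would make step (4) fail.
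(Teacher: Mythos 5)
The paper does not prove this theorem itself: it is cited verbatim as ``Theorem 3.4 and Theorem 3.6 \cite{CaMo}'' and used as a black box. So there is no in-paper argument to compare against, only the approach in the cited reference (Cavalletti--Mondino) and its precursors (Cavalletti, Bianchini--Cavalletti). Your outline correctly identifies that approach: realize $\{X_\alpha\}$ as the transport rays of the $1$-Lipschitz Kantorovich potential $u=\sfd(\bar x,\cdot)$, use essential non-branching to kill the branching set, disintegrate via the abstract Disintegration Theorem, and push the global $\MCP(K,N)$ inequality through the ray decomposition to get absolute continuity and one-dimensional $\MCP$ on each ray.

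There is, however, one genuine gap in the way you invoke essential non-branching. Definition \ref{def:ENB} only constrains dynamical optimal plans between two measures $\mu_0,\mu_1$ that are \emph{both} absolutely continuous with respect to $\mm$. Your argument for $\mm(\mathcal B)=0$ applies it to plans from $\mm\llcorner A$ to $\delta_{\bar x}$, whose second marginal is a Dirac mass; as stated the hypothesis simply does not cover this case, and in a branching space that phenomenon is exactly where counterexamples would live. In \cite{CaMo} (and the earlier Cavalletti and Bianchini--Cavalletti papers on $L^1$-localization) this is circumvented by working directly with the \emph{transport set} $\mathcal T_u$ and its forward/backward branching subsets: from a positive-measure set of branch points one pushes the conditional ray measures along a \emph{fixed nondegenerate time interval} to produce two distinct dynamical optimal plans between absolutely continuous measures, which then genuinely contradicts Definition \ref{def:ENB}. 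A similar repair is needed in your Step (4): the ``localization'' of \eqref{eq:defMCP} to rays must be done by testing against plans between absolutely continuous measures (or via the $CD_{loc}$-style inequalities used in \cite{CaMo}), not by directly disintegrating a plan to a Dirac. Two smaller bookkeeping points: $\qq=\QQ_\sharp\mm$ is not a probability measure when $\mm$ isn't, so one must normalize or at least track mutual absolute continuity $\QQ_\sharp\mm\ll\qq$ as in the statement; and the Borel structure on $Q$ is obtained by choosing a measurable section intersecting each ray, which you gesture at with Kuratowski--Ryll-Nardzewski and is the right tool, but needs the transport set to be Borel first.
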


}

\subsection{The result}

Let $(X,\sfd, \mm)$ be a metric measure space. For any $\beta\in \R$  we can consider a codimension $\beta$ version of $\mm$ denoted by $\mm_{-\beta}$ as defined by Cheeger and Colding in \cite[Section 2]{CC00a}.
Recall that it's defined as follows:

For $\delta>0$ set 
\[
(\mm_{-\beta})_\delta(U)=\inf_{\mathcal B} \sum_i r_i^{-\beta}\mm(B_{r_i}(q_i))
\]
where $\mathcal B=\{B_{r_i}(q_i)\}$ is a collection of balls covering $U$ with all $r_i\le \delta$.
Then $(\mm_{-\beta})_\delta(U)$ is non-increasing in $\delta$ and we put

\[
\mm_{-\beta}(U)=\lim_{\delta\to0+}(\mm_{-\beta})_\delta(U)
\]
This obviously defines a metric outer measure and hence all Borel subsets of $X$ are $\mm_{-\beta}$ measurable.

\begin{proposition}\label{alm-all-geod}
Let $(X,\sfd,\mm)$ be an { essentially non-branching $\MCP(K,N)$ space}. Let $S\subset X$ be a closed subset with $\mm_{-1}(S)=0$. Let $x_1\in X\backslash S$.
Then for $\mm$-a.e. $y\in X\backslash S$  there exists a geodesic joining $x_{1}$ and $y$ which is entirely contained in $X\backslash S$.
\end{proposition}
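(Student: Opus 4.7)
The plan is to use the one-dimensional localization of $\mm$ associated to the distance function $u(\cdot):=\sfd(x_1,\cdot)$ provided by Theorem~\ref{thm:DisintMCP}. This yields a disintegration $\mm=\int_Q h_\alpha\,\cH^1\llcorner_{X_\alpha}\,\qq(d\alpha)$ along transport rays $X_\alpha$, each isometric to an interval and carrying a unit-speed parametrization $\gamma_\alpha\co I_\alpha\to X$ with $\sfd(x_1,\gamma_\alpha(t))=t$; in particular $X_\alpha$ coincides (up to closure) with a geodesic emanating from $x_1$, so that for every $y\in X_\alpha$ the sub-segment $\gamma_\alpha|_{[0,\sfd(x_1,y)]}$ is a length-minimizing geodesic from $x_1$ to $y$ entirely contained in $X_\alpha$.

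I would then introduce the auxiliary set function
$$\nu(A):=\qq\bigl(\{\alpha\in Q:X_\alpha\cap A\ne\emptyset\}\bigr),\qquad A\subset X \text{ Borel},$$
and reduce the proposition to showing $\nu(S)=0$. Indeed, if $\nu(S)=0$, then for $\qq$-a.e. $\alpha$ one has $X_\alpha\subset X\setminus S$, so that for every $y\in X_\alpha$ the canonical geodesic $\gamma_\alpha|_{[0,\sfd(x_1,y)]}$ avoids $S$; combining this with the fact that $\mm$-a.e. point of $X$ lies on some ray (item (2) of Theorem~\ref{thm:DisintMCP}) and integrating against $\qq$ yields the desired statement for $\mm$-a.e. $y\in X\setminus S$.

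To prove $\nu(S)=0$, since $S$ is closed and $x_1\notin S$ we may localize and assume $S\subset\{d_0\le u\le D_0\}$ for some $0<d_0\le D_0<\infty$ (the general case following by exhausting $X\setminus\{x_1\}$ with such annuli). The key technical estimate to establish is a scale-invariant local comparison of the form
$$\nu(B_r(q))\le C\,r^{-1}\,\mm(B_{3r}(q)) \qquad \text{for every } q \text{ with } d_0\le\sfd(x_1,q)\le D_0 \text{ and } 0<r\le d_0/4,$$
with $C=C(K,N,d_0,D_0)$. Granting it, any cover $\{B_{r_i}(q_i)\}$ of $S$ witnessing $\mm_{-1}(S)=0$, together with the Bishop--Gromov doubling of $\mm$ (cf. Remark~\ref{rem:BG}), yields $\nu(S)\le C\sum_i r_i^{-1}\mm(B_{3r_i}(q_i))$, which is arbitrarily small; hence $\nu(S)=0$.

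The main obstacle is proving the local comparison itself. The geometric half is easy: if $\gamma_\alpha(t_0)\in B_r(q)$ then the triangle inequality forces $\gamma_\alpha([t_0-r,t_0+r]\cap I_\alpha)\subset B_{3r}(q)$, so the chord $X_\alpha\cap B_{3r}(q)$ has $\cH^1$-length of order $r$, after setting aside a controlled portion of rays terminating inside $B_{3r}(q)$. The hard part is to pass from the weighted disintegration identity
$$\mm(B_{3r}(q))=\int_Q\int_{X_\alpha\cap B_{3r}(q)}h_\alpha(t)\,d\cH^1(t)\,\qq(d\alpha)$$
to a bound on $\qq$ itself; this requires a uniform lower bound on the density $h_\alpha$ along each chord, which is where item~(4) of Theorem~\ref{thm:DisintMCP} enters crucially. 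Since each $(\bar X_\alpha,\sfd,\mm_\alpha)$ is a one-dimensional $\MCP(K,N)$ space, the density $h_\alpha$ satisfies the Bishop--Gromov concavity of Theorem~\ref{thm:BishopGromov}; combined with the localization $d_0\le\sfd(x_1,q)\le D_0$ (which ensures that the rays of interest extend uniformly past the chord), this provides the required lower bound in terms of $h_\alpha$ evaluated just past the chord, which can then be absorbed by a further subdivision of rays according to their length. This density bookkeeping, which closely parallels the strategy of \cite[Theorem~3.9]{CC00a} with the segment inequality replaced by $\MCP$-concavity of $h_\alpha$ along each ray, is the most delicate part of the argument.
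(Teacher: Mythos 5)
Your overall strategy (radial localization from $x_1$, a covering of $S$ by small balls, and an MCP density comparison along each ray) is the same as the paper's, and the geometric reduction is sound: it suffices to show that $\qq$-almost every ray $X_\alpha$ misses $S$. Where your proposal diverges from the paper is in the key quantitative step, and I believe that is precisely where it has a genuine gap.

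The estimate you want, $\nu(B_r(q))\le C\, r^{-1}\,\mm(B_{3r}(q))$, is a bound on the \emph{quotient measure} $\qq$ purely in terms of $\mm$. The disintegration $\mm=\int_Q h_\alpha\,\cH^1\llcorner_{X_\alpha}\,\qq(d\alpha)$ only fixes the products $h_\alpha\,\qq(d\alpha)$; the density $h_\alpha$ has no absolute lower bound, and item (4) of Theorem~\ref{thm:DisintMCP} gives only a \emph{relative} Bishop--Gromov bound comparing $h_\alpha(t)$ to $h_\alpha(s)$ on the same ray. Your chord computation gives $\mm_\alpha(X_\alpha\cap B_{3r}(q))\gtrsim r\cdot h_\alpha(\text{chord})$, but dividing out $h_\alpha(\text{chord})$ to reach $\qq(\mathcal A_r)$ requires a uniform lower bound on $h_\alpha$ that MCP simply does not furnish. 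You acknowledge this and gesture at a "further subdivision of rays according to their length," but I do not see how a stratification of $Q$ removes the need for an absolute lower bound; each stratum still carries only a relative comparison. In short, $\qq$ itself is not an intrinsically controlled measure and should not appear alone on the left of an estimate whose right side is $\mm$.

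The paper's Lemma~\ref{lem-vol-est-col} avoids the problem entirely by never bounding $\qq$ directly. It compares $\mm_\alpha$ at two places on the \emph{same ray}: a short interval $I_\alpha$ near $\partial E$ against the chord $J_\alpha = X_\alpha \cap Y$ near a density point $x_2$ of the bad set $Y$. The relative MCP bound gives $\mm_\alpha(I_\alpha)\gtrsim \eps\,\mm_\alpha(J_\alpha)/\delta$; integrating in $\qq$ yields $\mm(U_\eps)\gtrsim \eps\,\mm(B_\delta(x_2))$, hence $M_+(\partial E)\gtrsim \mm(B_\delta(x_2))$, and a Bishop--Gromov bound on the sphere contents of the balls covering $S$ then produces the contradiction. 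So the unknown weights $h_\alpha$ appear on both sides and cancel. If you want to salvage your plan along your lines, replace $\nu(B_r(q))=\qq(\mathcal A_r)$ by a weighted version such as $\tilde\nu(B_r(q))=\int_{\mathcal A_r}\mm_\alpha\bigl(X_\alpha\cap B_\delta(x_2)\bigr)\,\qq(d\alpha)$ and bound that; but this essentially reproduces the paper's argument. A secondary, less serious point: your step reducing the statement to $\nu(S)=0$ implicitly uses that $\qq$-a.e. ray $X_\alpha$ extends in closure to $x_1$, which is the expected behaviour for the distance-function localization in an essentially non-branching space (and is also used implicitly by the paper in locating $x_\alpha\in X_\alpha\cap\partial E$), but it is worth stating and justifying.
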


We first establish the following preliminary lemma
which generalizes and strengthens \cite[Lemma 3.1]{CC00a} in the  $\MCP$ case. 

\begin{lemma}\label{lem-vol-est-col}  
Given $\delta,d,N>0,K\in\R$ there is $C(\delta,d,N,K)>0$ such that the following holds.

Let $(X,\sfd,\mm)$ be an  { essentially non-branching $\MCP(K,N)$ space}. Assume there exist $x_1,x_2\in X$ with $B_{2\delta}(x_1)\cap B_{2\delta}(x_2)=\emptyset$ and satisfying the following conditions:

\begin{itemize}
\item  Denoting $$E=\cup_{j=1}^lB_{r_j}(q_j)$$ the union of finitely many balls in $X$, it holds that
\[
B_{2\delta}(x_1)\cup B_{2\delta}(x_2)\subset B_d(x_1)\backslash E.
\]
\item There is a subset $Y\subset  B_\delta(x_2)$ with $\mm(Y)\ge \frac{1}{2}\mm( B_\delta(x_2))$ such that for every $x\in Y$,  every geodesic from $x_{1}$ to $x$   intersects $E$.
\end{itemize}
Then
\[
0<C(\delta,d,N,K)<\sum_j\frac{\mm( B_{r_j}(q_j))}{r_j \mm(B_d(x_1))}.
\]
\end{lemma}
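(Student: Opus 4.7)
My plan is to disintegrate $\mm$ along transport rays from $x_1$ and then, on each ray, use a one-dimensional Bishop--Gromov comparison to control the mass reaching $B_\delta(x_2)$ by the mass accumulated in the balls of $E$.

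First I would apply Theorem~\ref{thm:DisintMCP} with $u:=\sfd(x_1,\cdot)$ to obtain a decomposition $\mm=\int_Q\mm_\alpha\,\qq(d\alpha)$ along transport rays $\{X_\alpha\}_{\alpha\in Q}$, each a geodesic emanating from $x_1$ and carrying a one-dimensional $\MCP(K,N)$ density $\mm_\alpha=h_\alpha\,\cH^1$. For $\mm$-a.e.\ $y\in Y$, the ray $X_\alpha$ through $y$ carries the essentially unique minimizing geodesic from $x_1$ to $y$, which by hypothesis meets some $B_{r_j}(q_j)$; I would partition these rays as $Q_Y=\bigsqcup_{j=1}^l Q_Y^j$, with $\alpha\in Q_Y^j$ when the first ball of $E$ crossed by the ray is $B_{r_j}(q_j)$. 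For each such $\alpha$ pick entry points $p_\alpha\in X_\alpha\cap B_{r_j}(q_j)$ and $z_\alpha\in X_\alpha\cap B_\delta(x_2)$: the disjointness $B_{r_j}(q_j)\cap B_{2\delta}(x_2)=\emptyset$ forces $\sfd(p_\alpha,z_\alpha)\geq \delta$ along the ray, and $B_{r_j}(q_j)\cap B_{2\delta}(x_1)=\emptyset$ together with $y\in B_d(x_1)$ confines both $p_\alpha$ and $z_\alpha$ to the arclength window $[2\delta,d]$ from $x_1$.

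The key one-dimensional estimate is the $\MCP(K,N)$-Bishop--Gromov monotonicity of $h_\alpha(t)/v'_{K,N}(t)$, which yields $h_\alpha(t)\leq C_1\,h_\alpha(s)$ for $s\leq t$ in $[2\delta,d]$ with $C_1=C_1(K,N,d,\delta)$. Setting $\rho_j:=\min(r_j,\delta/2)$, the 1D ball $B_{\rho_j}(p_\alpha)\cap X_\alpha$ has guaranteed length $2\rho_j$ and, since $2\rho_j\leq\delta\leq\sfd(p_\alpha,z_\alpha)$, lies strictly before $B_\delta(x_2)\cap X_\alpha$. Combining the density comparison with the chord bound $|B_\delta(x_2)\cap X_\alpha|\leq 2\delta$ and the inclusion $B_{\rho_j}(p_\alpha)\subset B_{2r_j}(q_j)$ gives
\[
\mm_\alpha\bigl(B_\delta(x_2)\cap X_\alpha\bigr)\leq C_1\,\frac{\delta}{\rho_j}\,\mm_\alpha\bigl(B_{2r_j}(q_j)\cap X_\alpha\bigr).
\]
Integrating against $\qq$ over $Q_Y^j$, summing in $j$, and applying the ambient Bishop--Gromov bound (Theorem~\ref{thm:BishopGromov}, available under $\MCP(K,N)$ by Remark~\ref{rem:BG}) to convert $\mm(B_{2r_j}(q_j))$ into a constant multiple of $\mm(B_{r_j}(q_j))$ produces
\[
\mm(Y)\leq C_2(K,N,d,\delta)\sum_j\frac{\mm(B_{r_j}(q_j))}{r_j}.
\]
The two regimes $r_j\leq\delta/2$ (where $\delta/\rho_j=\delta/r_j$) and $r_j>\delta/2$ (where $\delta/\rho_j=2$ is absorbed into the $\delta/r_j$ factor via $r_j\leq d$) are handled uniformly in the constant $C_2$.

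Finally, I would combine this with the lower bound $\mm(Y)\geq \tfrac12\mm(B_\delta(x_2))\geq c_3(K,N,d,\delta)\,\mm(B_d(x_1))$, obtained by applying Bishop--Gromov to the nesting $B_\delta(x_2)\subset B_d(x_1)\subset B_{2d}(x_2)$, and divide through by $\mm(B_d(x_1))$ to obtain the desired strict positive lower bound $C(\delta,d,N,K):=c_3/C_2$. The main technical obstacle is that the trace $X_\alpha\cap B_{r_j}(q_j)$ of the ball on the ray may be arbitrarily short for ``grazing'' rays, so a naive chord-to-chord comparison fails. My remedy is to center the comparison 1D ball at $p_\alpha\in X_\alpha$ rather than at $q_j$, which guarantees that $B_{\rho_j}(p_\alpha)\cap X_\alpha$ has full length $2\rho_j$; this is precisely what produces the correct factor $\delta/r_j$ after integration, rather than the weaker $(\delta/r_j)^N$ that a direct $N$-dimensional Bishop--Gromov would give.
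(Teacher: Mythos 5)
Your proof is correct, and it takes a genuinely different route from the paper's. The paper proceeds in two steps: first it establishes the estimate $\mm(B_\delta(x_2)) \leq C(\delta,d,N,K)\,M_{+}(\partial E)$, where $M_{+}$ is the codimension-one Minkowski content, by placing short intervals $I_\alpha$ of length $2\eps$ around the first crossing $x_\alpha$ of the ray $X_\alpha$ with $\partial E$, comparing densities, integrating in $\alpha$, dividing by $\eps$, and letting $\eps\to0$; then, in a second step, it bounds $M_{+}(\partial E)\leq \sum_j M_{+}(\partial B_{r_j}(q_j))\leq \sum_j \mm(B_{r_j}(q_j))/r_j$ by Bishop--Gromov and finishes as you do. Your argument short-circuits the Minkowski content entirely: you work at the finite scale $\rho_j=\min(r_j,\delta/2)$ directly, using the interval $B_{\rho_j}(p_\alpha)\cap X_\alpha$ of guaranteed length $2\rho_j$ inside $B_{2r_j}(q_j)$, and the density comparison already produces the factor $\delta/\rho_j$ per ray. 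This is more elementary (no $\eps\to 0$ limit, no content argument) but bakes the radii $r_j$ into the disintegration step; the paper's two-step structure decouples the localization from the covering, which is marginally cleaner and reusable as a stand-alone estimate against Minkowski content. Both approaches use the same disintegration theorem, the same density comparison, and the same final Bishop--Gromov sandwich $B_\delta(x_2)\subset B_d(x_1)\subset B_{2d}(x_2)$, so the underlying mechanism is the same.

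One justification in your argument is incomplete as written: you claim that $B_{\rho_j}(p_\alpha)\cap X_\alpha$ ``lies strictly before'' $B_\delta(x_2)\cap X_\alpha$ because $2\rho_j\leq\delta\leq\sfd(p_\alpha,z_\alpha)$, but this only shows the two sets are disjoint, not that one precedes the other along the ray. The claim is nevertheless true and needs a one-line argument: if some $z_0\in B_\delta(x_2)\cap X_\alpha$ preceded $p_\alpha$ on the ray, then since $p_\alpha$ would lie on the minimizing geodesic segment from $z_0$ to $y$, one would get $\sfd(z_0,y)=\sfd(z_0,p_\alpha)+\sfd(p_\alpha,y)\geq\delta+\delta=2\delta$, contradicting $\sfd(z_0,y)\leq\sfd(z_0,x_2)+\sfd(x_2,y)<2\delta$. (Equivalently, you can simply run the estimate with $\mm_\alpha(Y\cap X_\alpha)$ in place of $\mm_\alpha(B_\delta(x_2)\cap X_\alpha)$, since $Y\cap X_\alpha$ clearly lies past $p_\alpha$ by the definition of $p_\alpha$, and only $\mm(Y)$ is needed on the left of the final inequality.) Once this is patched, your proof is sound.
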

\begin{proof}

\textbf{Step 1}. The key step in the proof of the lemma is the following inequality

\begin{equation}\label{content-ineq-col}
\mm( B_\delta(x_2))\le C(\delta,d,N,K) M_{+}(\partial E),
\end{equation}
where we denote with $ M_{+}(\partial E)$ the co-dimension one Minkowski content defined as
\begin{equation}\label{def:MinkCont}
M_{+}(\partial E)=\liminf_{\ve\downarrow 0} \frac{\mm(U_\ve)}{\ve},
\end{equation}
where $U_{\ve}:=\{x \in X \,:\, \exists y \in  \partial E \, \text{ such that } \, \sfd(x,y)< \ve \}$ is the $\ve$-neighborhood of $\partial E$ with respect to the metric $\sfd$.

 Applying  Theorem \ref{thm:DisintMCP}, we obtain  the radial disintegration of the background measure $\mm$ with respect to the point $x_1$:

\[
\mm=\int_Q h_\alpha \mathcal H^1\llcorner_{X_\alpha}\mathfrak q(d\alpha)
\]
where $\mm_\alpha= h_\alpha  \cH^1\llcorner_{X_\alpha} \ll \cH^1\llcorner_{X_\alpha}$ satisfies $\MCP(K,N)$.

Let $0<\eps<\delta$ and let $U_\eps$ the $\eps$-tubular neighbourhood of $\partial E$ defined above.

Let $\mathcal A'\subset Q$ be the set of all indices $\alpha$ such that $X_\alpha \cap Y\ne\emptyset$.
For any $\alpha\in\mathcal A'$ let $x_\alpha\in X_\alpha$ be the  point of intersection of $X_\alpha$ with $\partial E$ (note that $X_\alpha\cap\partial E\ne\emptyset$ by the assumptions of the lemma) which is closest to $x_1$.

For each $\alpha\in\mathcal A'$ let $J_\alpha=X_\alpha\cap Y$ and let $I_\alpha=B_\eps(x_\alpha)\cap X_\alpha$. Note that $I_\alpha\subset U_\eps, \mathcal H^1(I_\alpha)=2\eps$ and $\mathcal H^1(J_\alpha)\le 2\delta$ for any $\alpha\in\mathcal A'$.

Also note that $\sfd(x_\alpha,x_1)\ge 2\delta$. Now  the $\MCP(K,N)$ condition implies (see for instance \cite[(2.10)]{CaMo})  that for any $x\in I_\alpha, y\in J_\alpha$  the densities $h_\alpha$ at these points satisfy

\[
h_\alpha(x)\ge C(K,N,d,\delta) h_\alpha(y).
\]
Averaging this inequality over $I_\alpha, J_\alpha$ with respect to $\mathcal H^1$ gives that

\[
\mm_\alpha(I_\alpha)\ge \eps \frac{C(\delta,N,K,d) \mm_\alpha(J_\alpha)}{\delta}
\]
Integrating the last estimate with respect to $\mathfrak q(d\alpha)$ and taking into account that  $\cup_{\alpha} I_\alpha \subset U_\eps$ gives

\[
\mm(U_\eps)\ge \eps \frac{c(\delta, N,K,d) \,  \mm(Y)}{\delta}\ge \eps c(\delta, N,K,d) \, {\mm(B_\delta(x_2))}.
\]
Dividing by $\eps$ and sending $\eps\to 0$ gives \eqref{content-ineq-col}.
\\

\textbf{Step 2}. The estimate \eqref{content-ineq-col} gives

\begin{align*}
 \mm( B_\delta(x_2)) &\le C(\delta,d,N,K) M_{+}(\partial E) \\
& \le C(\delta,d,N,K) \sum_j M_{+} (\partial B_{r_j}(q_j))\le C(\delta,d,N,K) \sum_j \frac{\mm( B_{r_j}(q_j))}{r_j}
\end{align*}
where in the last estimate we used Bishop-Gromov Theorem \ref{thm:BishopGromov} (see Remark \ref{rem:BG}). 
Dividing by $\mm(B_d(x_1))$ and using Bishop-Gromov Theorem again we get 
\[
c(\delta,d,N,K)\le \frac{ \mm( B_\delta(x_2))}{\mm(B_{2d}(x_2))}\le \frac{ \mm( B_\delta(x_2))}{\mm(B_{d}(x_1))}\le  C(\delta,d,N,K) \sum_j \frac{\mm( B_{r_j}(q_j))}{r_j \mm(B_{d}(x_1))}
\]
which finishes the proof of the lemma.

\end{proof}

\begin{proof}[Proof of Proposition~\ref{alm-all-geod}]

It's enough to prove the proposition for compact $S$. 
 Let $Y$ be the set of points $y\in X\backslash S$ such that every geodesic from $x_{1}$ to $y$ intersects $S$. Suppose $\mm(Y)>0$. Let $x_2\in X\backslash S$ be a Lebesgue-density point of $Y$. {Note that $x_{2}\neq x_{1}$ since $S$ is closed and $x_1\notin S$. }Let $d>0$ be big enough so that  $\{x_2\}\cup S\subset B_d(x_1)$. 
 
  Let $\delta>0$ be small enough so  that $\mm(B_\delta(x_2)\cap Y)\ge \frac{1}{2}\mm(B_\delta(x_2))$ and $B_{10\delta}(x_1)\cup B_{10\delta}(x_2)\subset  B_d(x_1)\backslash S$ and $B_{10\delta}(x_1)\cap B_{10\delta}(x_2)=\emptyset$. Since  $\mm_{-1}(S)=0$ and $S$ is compact,  for any $\eta>0$ there exists a finite collection of balls $\{B_{r_j}(q_j)\}_{j=1}^l$ such that all $r_j\le\delta$ and 
\[
 \sum_j\frac{\mm( B_{r_j}(q_j))}{r_j}\le \eta.
 \] 
 On the other hand, applying  Lemma~\ref{lem-vol-est-col} to $x_1,x_2$,  and $E_\eta=\cup_jB_{r_j}(q_j)$ we get that 
 
 \[
0<C(\delta,d,N,K) \, \mm(B_d(x_1))<\sum_j\frac{\mm(B_{r_j}(q_j))}{r_j}.
\]
This gives a contradiction when $\eta$ is sufficiently small.
\end{proof}

\begin{corollary}\label{non-collapsed-alm-all-geod}
Let $(X,\sfd,\cH^{N})$ be a non-collapsed $\RCD(K,N)$ space. Let $B\subset X$ be a closed subset with $\cH^{N-1}(B)=0$. Let $x_1\in X\backslash B$.
Then for $\mathcal H^{N}$-a.e. $y\in X\backslash B$  there exists a geodesic from $x_{1}$ to $y$  which is contained in $X\backslash B$.
\end{corollary}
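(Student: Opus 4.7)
The plan is to deduce the corollary from Proposition~\ref{alm-all-geod} applied to the metric measure space $(X,\sfd,\cH^{N})$. Since every $\RCD(K,N)$ space is essentially non-branching (by \cite[Corollary 1.2]{rajalasturm}) and satisfies $\MCP(K,N)$, the only point that must be verified is that the quantitative hypothesis $(\cH^{N})_{-1}(B)=0$ is met; the conclusion then follows verbatim from Proposition~\ref{alm-all-geod}.

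The heart of the argument is therefore the implication
$$
\cH^{N-1}(B)=0\ \Longrightarrow\ (\cH^{N})_{-1}(B)=0,
$$
which I would establish by a standard localization plus Bishop--Gromov upper bound. First, write $B=\bigcup_{n\in\N}B_n$ where $B_n:=B\cap\bar B_{n}(x_1)$. Since $X$ is proper and $B$ is closed, each $B_n$ is compact and $\cH^{N-1}(B_n)=0$. Because $(\cH^{N})_{-1}$ is a metric outer measure, it is countably subadditive, so it suffices to prove $(\cH^{N})_{-1}(B_n)=0$ for every $n$.

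For fixed $n$, Bishop--Gromov monotonicity (Theorem~\ref{thm:BishopGromov}), combined with the non-collapsed density bound $\vartheta_{N}\le 1$ of \eqref{eq:RegSetDens}, yields a constant $C=C(K,N,n)$ with
$$
\cH^{N}(B_r(q))\le C\, r^{N}\qquad\text{for every }q\in B_n \text{ and every }r\in(0,1].
$$
Given $\eta>0$ and $\delta\in(0,1]$, since $\cH^{N-1}(B_n)=0$, one can cover $B_n$ by balls $\{B_{r_i}(q_i)\}_i$ with $q_i\in B_n$, $r_i\le\delta$, and $\sum_i r_i^{N-1}<\eta$ (any cover by sets of diameter at most $\delta$ is turned into such a ball cover by picking a point of each set as center). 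Then
$$
(\cH^{N})_{-1,\delta}(B_n)\ \le\ \sum_i r_i^{-1}\cH^{N}(B_{r_i}(q_i))\ \le\ C\sum_i r_i^{N-1}\ <\ C\eta.
$$
Letting $\eta\to 0$ and then $\delta\to 0$ shows $(\cH^{N})_{-1}(B_n)=0$, and summing in $n$ yields $(\cH^{N})_{-1}(B)=0$.

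The only minor subtlety is the need to use a Bishop--Gromov estimate with uniform constants; this is precisely what forces the preliminary localization to the bounded pieces $B_n$. Once the codimension-one content of $B$ is shown to vanish, Proposition~\ref{alm-all-geod} supplies the required geodesic from $x_1$ to $\cH^{N}$-a.e. $y\in X\setminus B$ avoiding $B$, completing the proof.
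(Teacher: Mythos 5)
Your proposal is correct and follows essentially the same route as the paper: one shows $\cH^{N-1}(B)=0\Rightarrow (\cH^{N})_{-1}(B)=0$ via the upper Ahlfors bound $\cH^{N}(B_r(q))\le C\,r^{N}$ (coming from Bishop--Gromov together with $\vartheta_N\le 1$), then invokes Proposition~\ref{alm-all-geod}. The localization to $B_n$ is actually unnecessary, since $\vartheta_N\le 1$ holds at \emph{every} point of $X$ and hence the constant in the upper Ahlfors bound is already uniform ($C=C(K,N)$ for $r\le 1$), but including it does no harm.
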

\begin{proof}
Since the measure $\mm=\cH^{N}$ is locally Ahlfors regular on a non-collapsed $\RCD(K,N)$ space, it easily follows from the definition of $\mm_{-1}$ that if $\cH^{N-1}(B)=0$ then $\mm_{-1}(B)=0$ as well.
Now the result follows from Proposition~\ref{alm-all-geod}\\
\end{proof}

\small{
\bibliographystyle{amsalpha}
\bibliography{RCDHN-bib}

}
\end{document}